\theoremstyle{plain} 
\newtheorem{thm}{Theorem}[section]
\newtheorem{lem}[thm]{Lemma}
\newtheorem{cor}[thm]{Corollary}
\newtheorem{prop}[thm]{Proposition}
\newtheorem{clm}[thm]{Claim}
\theoremstyle{definition}
\newtheorem{defn}[thm]{Definition}
\newtheorem{expl}[thm]{Example}
\theoremstyle{remark}
\newtheorem{rem}[thm]{Remark}
\newtheorem{exc}{Exercise}
\newcommand{\ii}{\mathrm{i}}
\newcommand{\N}{\mathbb{N}}
\newcommand{\C}{\mathbb{C}}
\newcommand{\Z}{\mathbb{Z}}
\newcommand{\R}{\mathbb{R}}
\newcommand{\HH}{\mathbb{H}}
\newcommand{\g}{\mathfrak{g}}
\newcommand{\de}{\partial}
\newcommand{\ind}{\mathrm{ind}}
\newcommand{\tind}{\mathrm{top-ind}}
\newcommand{\Hom}{\mathrm{Hom}}
\newcommand{\End}{\mathrm{End}}
\newcommand{\str}{\operatorname{str}}
\newcommand{\tr}{\operatorname{tr}}
\newcommand{\Cl}{\mathrm{Cl}}
\newcommand{\Gr}{\mathrm{Gr}}
\newcommand{\Pin}{\mathrm{Pin}}
\newcommand{\Spin}{\mathrm{Spin}}
\newcommand{\Pf}{\mathrm{Pf}}
\newcommand\restr[2]{{
  \left.\kern-\nulldelimiterspace 
  #1 
  \vphantom{\big|} 
  \right|_{#2} 
  }}
\title{Lecture Notes on Spin Geometry}
\author{Konstantin Wernli}
\date{\today}
\begin{document}
\maketitle
\tableofcontents
\chapter*{Introduction}
\addcontentsline{toc}{chapter}{Introduction}
The subject of Spin Geometry has its roots in physics and the study of spinors. However, once adapted to a mathematical framework, it beautifully intertwines the realms of algebra, geometry and analysis. When combined with the Atiyah-Singer index theorem - one of the most remarkable results in twentieth century mathematics - it has far-reaching applications to geometry and topology. \\
This course has three main goals. The first goal is to understand the concept of Dirac operators. The second is to state, and prove, the Atiyah-Singer index theorem for Dirac operators. The last goal is to apply these concepts to topology: A remarkable number of topological results - including the Chern-Gauss-Bonnet theorem, the signature theorem and the Hirzebruch-Riemann-Roch theorem - can be understood just by computing the index of a Dirac operator. Before starting a serious discussion, let us try to illuminate the central concepts in a leisurely way.

\section*{What are Dirac operators?}
\addcontentsline{toc}{section}{Dirac Operators}

Dirac operators were introduced first by Paul Dirac (of course) in an attempt to understand relativistic quantum mechanics. A simplified version of the problem goes as follows. Consider a particle of mass $m$ moving in $\R^3$ with momentum $p = (p_1,p_2,p_3)$. Relativity tells us that the particle has energy\footnote{In units where the speed of light $c=1$.} \begin{equation}
E = \sqrt{m^2 + p^2}
\end{equation}
where $p^2 = p_1^2 + p_2^2 + p_3^2$. Passing to quantum mechanics\footnote{For us this is an entirely formal operation, and we will not discuss it any further.}, the particle is described by a time-dependent ``wave function'' $\psi_t \in L^2(\R^3)$ and we replace $E \to \ii \frac{\partial}{\partial t}, p_j \to -\ii\frac{\partial}{\partial x_j}$, which formally leads to the equation 
\begin{equation}
\ii \frac{\partial}{\partial t}\psi_t = \sqrt{m^2 + \Delta} \psi_t\label{eq:DiracEquation}
\end{equation}
where we define the Laplacian $\Delta = -\sum_{j=1}^3\frac{\partial^2}{\partial x^2_j}$. To understand this equation, Dirac looked for a first order differential operator with constant coefficients 
\begin{equation}
D = \sum_{j=1}^3 \gamma_j\frac{\partial}{\partial x_j} + \gamma_0
\end{equation}
such that
 \begin{equation} D^2 = m^2 + \Delta.\label{eq:Diracsquare}\end{equation}
  Given such an operator $D$, we can solve equation \eqref{eq:DiracEquation} as  $\psi_t = \exp(-itD)\psi_0$. However, it turns out that \eqref{eq:Diracsquare} cannot be solved if the coefficients $\gamma_i$ are real or complex numbers. If we accept for the moment that the $\gamma_i$ do not commute with each other, equation \eqref{eq:Diracsquare} leads to the set of equations 
\begin{equation}
\begin{cases} 
\gamma_i\gamma_j + \gamma_j\gamma_i = 0 & i \neq j \\
\gamma_0^2 = 1 &  \\
\gamma_j^2 = -1 & j=1,2,3 \\
\end{cases}
\end{equation}
which, introducing the anti-commutator $\{a,b\} = ab + ba$ and the standard Minkowski metric on $\R^4$, $\eta_{ij} = \mathrm{diag}(-1, +1,+1,+1)$, can be summarized as 
\begin{equation}
\{\gamma_i,\gamma_j\} = -2\eta_{ij}. \label{eq:CliffordRelation}
\end{equation}We call equation \eqref{eq:CliffordRelation} the \emph{Clifford Relation}. It leads to very interesting algebra that we will study in Chapter \ref{ch:Algebra}. Another implication is that the wave function $\Psi$ is not real valued, rather, it is a section of a certain ``spinor bundle''. This will be discussed in \ref{ch:Geometry}. In chapter \ref{ch:Analysis}, after discussing some analytic preliminaries, we will finally introduce Dirac Operators and their index. 
\section*{What is the index (theorem)?}
\addcontentsline{toc}{section}{Index Theorem}

The index, and the index theorem, are statements about the solutions to equations. Let us illuminate this by means of two simple examples\footnote{It may be worth to note that the operators appearing here are \emph{not} Dirac operators, but the index theorem still holds.}. \\
First, let $V,W$ be vector spaces and consider a linear map $A \colon V \to W$. We are interested in the equation $Av=0$. The number of independent solutions is $\dim \ker A$. A priori, we do not have any information about this number on its own. However, what we can consider is the rank-nullity theorem 
\begin{equation}
\mathrm{rk}(A) + \dim \ker A = \dim V. 
\end{equation}
This equation can be rewritten as 
\begin{equation}
\dim \ker A - (\dim W - \mathrm{rk}(A)) = \dim V - \dim W
\end{equation}
or, introducing the cokernel of A as $\mathrm{coker}(A) = W/\mathrm{im}A$, as 
\begin{equation}
\dim \ker A - \dim \mathrm{coker} A= \dim V - \dim W.\label{eq:index1}
\end{equation}
In other words, while we cannot say anything about $\dim \ker A$, we know exactly what $\dim \ker A - \dim \mathrm{coker} A$ (in fact it is independent of $A$). If we define the index \begin{equation}\mathrm{ind}(A):= \dim \ker A - \dim \mathrm{coker} A
\end{equation} then we can interpret equation \eqref{eq:index1} is a first incarnation of the index theorem. \\
As a second example, consider the family of differential operators \begin{align*} D_\lambda &\colon C^\infty(S^1) \to C^\infty(S^1) \\ D_\lambda f &= \frac{df}{dx} -2\pi i \lambda f
\end{align*} 
Then, the kernel of $D$ is nonzero if and only if $\lambda \in \Z$, and in this case is spanned by $f(x) = e^{2\pi\ii \lambda x}$. Hence, the dimension of the kernel is given by 
\begin{equation}
\dim \ker D_{\lambda} = \begin{cases} 0 & \lambda \notin \Z \\
1 & \lambda \in \Z 
\end{cases}
\end{equation} 
in particular it does not vary continuously in $\lambda$. On the other hand, let us consider the index of $D_\lambda$. By a general fact\footnote{We will deal with these technicalities in more detail later.}, we have $\mathrm{coker} D_\lambda = \ker D_\lambda^*$, where $D_\lambda^* = \frac{d}{dx}+2\pi\ii\lambda$ is the adjoint of $D_\lambda$. Hence, we see that 
\begin{equation}
\ind(D_\lambda) = \dim \ker D_\lambda - \dim \ker D^*_\lambda \equiv 0 
\end{equation}
which does vary continuously in $\lambda$.  The observation that the index of an elliptic operator is invariant over continuous families led to the discovery of the \emph{topological index} - a number associated to an elliptic operator $D$, denoted $\tind(D)$ that can be computed from topological data - and the celebrated Atiyah-Singer index theorem: 
\begin{thm}
Let $E,F$ be vector bundles over a manifold $M$ and $D\colon \Gamma(E) \to \Gamma(F)$ an elliptic differential operator. Then the index of $D$ equals the topological index of $D$:
\begin{equation}
\ind(D) = \tind(D).
\end{equation}
\end{thm}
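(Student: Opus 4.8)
\emph{Proof plan.} The plan is to run the heat-kernel proof: reduce to twisted Dirac operators, compute the short-time asymptotics of the heat supertrace, and match the result to the topological index. The first step is a reduction. Both $\ind$ and $\tind$ depend only on the K-theory class $[\sigma(D)] \in K(T^*M)$ of the principal symbol — for $\ind$ this is the homotopy invariance glimpsed in the introduction, for $\tind$ it holds by construction. Using the Thom isomorphism and Bott periodicity, which express classes on $T^*M$ in terms of twists of the spinor bundle of $TM$, every such class equals $[\sigma(D_E)]$ for a Dirac operator $D_E$ on a $\Z/2$-graded spin$^c$ spinor bundle $\mathcal S = \mathcal S^+ \oplus \mathcal S^-$ twisted by an auxiliary bundle $E$, modulo a class pulled back from a point; when $M$ admits no spin$^c$ structure one passes to $TM \oplus TM$ or embeds $M$ in $\R^N$. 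So it suffices to prove $\ind(D_E^+) = \tind(D_E^+)$ for twisted Dirac operators.

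Next I would set up the heat equation. Since $D_E$ is elliptic on a closed manifold, $D_E^2$ has discrete spectrum and $e^{-tD_E^2}$ is trace class with smooth kernel $k_t(x,y)$; because $D_E$ reverses the grading and identifies the positive- and negative-chirality $\mu$-eigenspaces of $D_E^2$ for every $\mu \neq 0$, those eigenspaces cancel in the supertrace, giving the McKean--Singer formula
\begin{equation}
\ind(D_E^+) = \str\, e^{-tD_E^2} = \int_M \str\, k_t(x,x)\, d\mathrm{vol}(x) \qquad \text{for all } t>0.
\end{equation}
The left-hand side is $t$-independent, so I can read it off from the short-time expansion $k_t(x,x) \sim (4\pi t)^{-n/2}\sum_{j\ge 0} t^j\,\Phi_j(x)$, whose coefficients $\Phi_j$ are universal polynomials in the curvatures of the Levi-Civita connection and of $E$ (for $n$ odd no $t^0$ term occurs and both indices vanish, so assume $n$ even).

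The hard part will be the local index theorem. A priori $\str\, k_t(x,x)$ contains the divergent terms $t^{j-n/2}$ with $j<n/2$, and showing that they all cancel in the supertrace — leaving the constant term equal to the top-degree component of $\hat A(TM)\,\mathrm{ch}(E)$ — is invisible term by term. My plan here is Getzler's rescaling: in normal coordinates at a point $x$, rescale time and the Clifford generators so that Clifford multiplication carries degree $1$; the Lichnerowicz formula $D_E^2 = \nabla^*\nabla + \tfrac14 s + F^E$ then forces the rescaled operator to converge to a harmonic oscillator, whose heat kernel is given in closed form by Mehler's formula, and the supertrace of that model kernel is precisely $\det^{1/2}\!\big(\tfrac{R/2}{\sinh(R/2)}\big)\mathrm{ch}(E)$, i.e. $\hat A(TM)\,\mathrm{ch}(E)$ in top degree. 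Setting this up carefully — the Clifford filtration, convergence of the rescaled heat semigroup, and the Mehler computation — is the technical heart of the argument; this yields $\ind(D_E^+) = \int_M \hat A(TM)\,\mathrm{ch}(E)$.

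Finally I would identify $\int_M \hat A(TM)\,\mathrm{ch}(E)$ with $\tind(D_E^+)$. The topological index is a K-theoretic wrong-way map to a point; applying the Chern character together with the Riemann--Roch property of $\mathrm{ch}$ converts it into a cohomological pushforward, and tracking the $\hat A$-class of the normal bundle of an embedding $M\hookrightarrow \R^N$, together with the normalization on a point, turns this pushforward into exactly $\int_M \hat A(TM)\,\mathrm{ch}(E)$. Combined with the first reduction, this gives $\ind(D) = \tind(D)$ for every elliptic $D$.
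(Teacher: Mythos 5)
Your analytic core---the McKean--Singer formula, Getzler's rescaling of the Clifford variables, and Mehler's formula for the model harmonic oscillator---is exactly the route these notes follow, so on the Dirac-operator case your plan and the text coincide step for step (including the reduction to geodesic coordinates, the splitting $E_{x_0}\cong \Delta_n\otimes W$, and the identification of the $t^0$-coefficient with $[\hat{A}(M,g)\,\mathbf{ch}_{E/S}]_{(n)}$). The difference is one of scope. The notes state the theorem for a general elliptic $D$ only as motivation; what they actually prove (and even then only sketch at the level of the heat-kernel asymptotics) is $\ind D_E^+=\int_M\hat{A}(M,g)\,\mathbf{ch}_{E/S}$ for Dirac operators on Clifford modules, and the passage to arbitrary elliptic operators via $K$-theory is explicitly listed among the omissions. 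Your opening and closing steps---realizing an arbitrary symbol class in $K(T^*M)$ by a twisted $\mathrm{spin}^c$ Dirac operator, and identifying the cohomological integral with the $K$-theoretic wrong-way map defining $\tind$---are therefore genuinely additional content relative to the notes, and they are also where your plan is thinnest: the claim that every elliptic symbol agrees with a twisted Dirac symbol ``modulo a class pulled back from a point'' requires the Thom isomorphism, excision, and the axiomatic characterization of the topological index from the Atiyah--Singer papers, none of which you set up; and since the notes never define $\tind$ for a general operator, your final identification needs that definition before it can even be stated. In short: your proposal correctly reproduces the proof the notes give, and correctly names, but does not supply, the $K$-theoretic machinery needed for the general statement as quoted.
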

The central goal of this course are to understand the statement and proof of this theorem for Dirac Operators and study some of its applications. 
This requires that apart from understanding Dirac Operators and  their index, we define their topological index. This is done via the theory of characteristic classes, that will be introduced in Chapter \ref{ch:Geometry}. The precise analytical definitions will be given in \ref{ch:Analysis}. Finally, in Chapter \ref{ch:Index}, we will be able to state and prove the index theorem and consider some applications. 
\section*{Course outline}
\addcontentsline{toc}{section}{Course Outline}

Before we begin, let us briefly discuss the outline of the course. As mentioned in the beginning, the subject of Spin Geometry draws from Algebra, Geometry, and Analysis, and the first three chapters will each be devoted to studying the necessary preliminaries in each of these areas. The last and main chapter is reservers for the index theorem, its proof, and applications. \par
Chapter \ref{ch:Algebra} deals with Algebra. After a brief introduction to superalgebra, we will discuss the essential notions of Clifford Algebras, Spin and Pin Groups, and their representations. \par
Chapter \ref{ch:Geometry} deals with Geometry, and consists of three main sections. First, we will recall preliminaries, such as vector bundles, principal bundles and connections. Then we will be concerned with the Chern-Weil theory of characteristic classes. Lastly, we will take the algebra of Chapter \ref{ch:Algebra} and promote it to spin structures and spinor bundles over manifolds. \par
In Chapter \ref{ch:Analysis} we discus Analysis. After introducing some technical machinery on differential operators, we give a precise definitions of the index. We then proceed to define Dirac operators and Dirac bundles.\par
Finally, in Chapter \ref{ch:Index}, we give the precise statement of the index theorem for Dirac operators. We will a corollary - the Chern-Gauss-Bonnet Theorem - and sketch the ideas of the proof. 
\section*{Further reading}
There are a number of notable omissions in these lecture notes - e.g. several other applications of the index theorem, $\mathrm{Pin}$ structures and associated Dirac operators, and the generalization to elliptic operators in terms of $K$-theory. All these very interesting topics can be found in various sources, of which we only list a few here. 
The subject was founded in the works of Atiyah and collaborators: The first proof of the Index theorem was in \cite{Atiyah1963}, \cite{Atiyah1964} introduces the theory of Clifford Modules, the $K$-theoretic proof appeared in \cite{Atiyah1968} (the first in a series of several papers dealing with the theorem). A proof that uses heat kernels instead of $K$-theory was given by Atiyah, Bott and Patodi in \cite{Atiyah1973} and later simplified by Getzler in \cite{Getzler1983,Getzler1986}. This is the proof that we will follow in these notes.  \\
As for review material, the classic textbook on the subject is \cite{Lawson1990}. The topic is also thoroughly reviewed in \cite{Salamon1996} (the main focus of the book lying on Seiberg-Witten invariants). In these notes we follow closely the lecture notes of L. Nicolaesecu \cite{Nicolaescu2013} and X. Dai \cite{Dai2015}.\\
Finally, for the sake of completeness let us admit that we do not discuss at all the extension  to manifolds with boundary, the Atiyah-Patodi-Singer theorem (\cite{Atiyah1975}, for a  complete review see \cite{Melrose1993}) which has recently found great attention in the physics of topological insulators (see e.g. \cite{Fukaya2018} and references there).
 \chapter{Algebra}\label{ch:Algebra}
In this chapter we will mainly be concerned with Clifford algebras, roughly speaking, these are algebras where a version  of the Clifford relation $$\{\gamma_i,\gamma_j\} = -2\eta_{ij}$$ holds. It is useful to employ the terminology of super algebra, which we will set up in the first section. 
\section{Superalgebra}
In this section, the ground field $k$ is either $\R$ or $\C$. 
The basic notion is that of a \emph{super vector space}.
\begin{defn}[Super vector space] 
\hfill
\begin{enumerate}[i)]
\item
A \emph{super vector space} is a vector space $V$ together with a decomposition $V = V_0 \oplus V_1$. 
\item If $V = V_0 \oplus V_1$ is a super vector space and $v\in V_i$, we say that $v$ is \emph{homogeneous of degree $i$} and use the notation $|v| = i$. 
\item We call $V_0$ the \emph{even part} of $V$, and elements $v \in v_0$ are called \emph{even}. Similarly, we call $V_1$ the \emph{odd part} of $V$ and call elements $v \in V_1$ odd. 
\end{enumerate}

\end{defn}

There are two equivalent terminologies: We can talk about the \emph{degree} of an object, which can be either 0 or 1, or the \emph{parity} of an object, which can be even or odd. Even objects have degree 0, and odd objects have degree 1. Indices indicating the grading are always understood mod 2. 
\begin{defn}
Let $V, W$ be super vector spaces. Then we say that a linear map $A\colon V \to W$ has degree $j$ if $A(V_i) \subset W_{i+j}$. 
\end{defn}
This endows the space of linear maps $\mathrm{Hom}(V,W)$ with the structure of a super vector space. 
\begin{defn}[Supertrace]
Let $V$ be a super vector space. 
\begin{enumerate}[i)]
\item Let $E \in \End(V)_0$. Then the \emph{supertrace} of $E$ is defined to be 
\begin{equation}
\str E := \tr \restr{E}{V_0} - \tr \restr{E}{V_1}.
\end{equation}
\item Let $T \in \End(V)$ and decompose it into even and odd part $T = T_0 + T_1$. Then we define 
\begin{equation}
\str T := \str T_0 
\end{equation}
\item The grading operator $\gamma \in \End(V)$ is defined by 
\begin{equation}
\gamma_V = \mathrm{id}_{V_0} - \mathrm{id}_{V_1}.
\end{equation}
\end{enumerate}
\end{defn}
Obviously, the grading operator satisfies $\gamma_V^2 = 1$.
For a general $T \in \End(V)$, we have $\str T = \tr (\gamma T)$.

A concept of central importance for us is that of a \emph{superalgebra}. 
\begin{defn}[Superalgebra] Let $(A,\cdot)$ be an algebra where $A$ is a super vector space $A = A_0 \oplus A_1$. Then we say that $A$ is a \emph{superalgebra} if $A_iA_j \subset A_{i+j}$.
\end{defn}
If $A,B$ are superalgebras, then we say that a map of algebras$ F\colon A \to B$ is \emph{even} (resp. \emph{odd}) if the underlying linear map is even (resp. odd). Even maps are morphisms of superalgebras, while the superalgebra of all algebra maps is usually called the \emph{inner hom} (in the category of superalgebras). \\ 
The following is an easy but important exercise. 
\begin{exc}\label{exc:gradingop}
\begin{enumerate}[i)]
\item Let $V$ be a vectorspace with an operator $\gamma$ that satisfies $\gamma^2 =1$. Prove that $V$ has a super vector space structure for which $\gamma$ is the grading operator.
\item Now suppose that $V$ additionally has an algebra structure such that $\gamma$ is an algebra homomorphism. Prove that $V$ is a superalgebra.
\end{enumerate}
\end{exc}
The following are natural examples of superalgebras. 
\begin{expl}
\begin{enumerate}[i)]
\item Let $V$ be a vector space. Then the tensor algebra $T(V)$, the symmetric algebra $SV$, and the exterior algebra $\bigwedge V$ are all superalgebras. Here the grading operator is given by the extension of the linear map $\varepsilon \colon V \to V, \varepsilon(v) = -v$. 
\item Now, let $V$ be a super vector space. Then $\End(V)$ is a superalgebra (with multiplication the composition of linear maps). 
\end{enumerate}
\end{expl}
The reason to introduce superalgebras is to have a convenient way to deal with signs. The first appearance of signs is in the definition below.
\begin{defn}[Supercommutative]
\begin{enumerate}[i)]
\item
Let $A$ be superalgebra. Then the \emph{supercommutator} is the bilinear map $[\cdot,\cdot]_s \colon A \times A \to A$ defined on homogeneous elements by $[a,b]_s :=ab - (-1)^{|a||b|}ba $.
\item A superalgebra $A$ is \emph{supercommutative} if the supercommutator vanishes identically, equivalently, for all homogeneous $a,b \in A$ we have 
\begin{equation}
ab = (-1)^{|a||b|}ba. 
\end{equation}
\end{enumerate}
\end{defn}

\begin{expl}
If $V$ is a vector space, then the superalgebra $\bigwedge V$ is supercommutative, but the superalgebra $SV$ is not\footnote{Notice however that the algebra $SV$ is commutative, while the algebra $\bigwedge V$ is not.}. 
\end{expl}
Another important notion is that of the tensor product of superalgebras. 
\begin{defn}[Tensor product] \begin{enumerate}[i)]
\item
Let $V,W$ be super vector spaces. Then the tensor product is the super vector space given by $V\hat{\otimes} W = (V\hat{\otimes} W)_0 \oplus (V\hat{\otimes} W)_1$, where 
\begin{align}
(V \hat{\otimes} W)_0 &= V_0 \otimes W_0 \oplus V_1 \otimes W_1 \\
(V \hat{\otimes} W)_1 &= V_1 \otimes W_0 \oplus V_0 \otimes W_1. 
\end{align}
\item Let $A,B$ be superalgebras. Then their tensor product is the superalgebra with underlying super vector space $A\hat{\otimes} B$ and multiplication defined on the tensor product of homogeneous elements $a_1,a_2 \in A$ and $b_1,b_2 \in B$ by 
\begin{equation}
(a_1 \hat{\otimes} b_1)(a_2 \hat{\otimes} b_2) := (-1)^{|a_2||b_1|}a_1a_2 \hat{\otimes} b_1b_2 
\end{equation}
\end{enumerate}
\end{defn}

The sign above is a good example of the \emph{Koszul sign rule}: Whenever we exchange two objects in the graded world (in this case $a_2$ and $b_1$) then we get a sign determined by the product of their degrees. Let us establish one last definition for future purposes: 
\begin{defn}[Supermodule]
A supermodule $V$ over a superalgebra $A$ is a super vector space $V$ together with an even map $\rho$ of superalgebras $ \rho \colon A \to \End(V)$.  
\end{defn}
\begin{defn}[Morphisms of supermodules]
Let $(V_1,\rho_1)$ and $(V_2,\rho_2)$ be supermodules over a superalgebra. Then a \emph{morphism of supermodules} is a linear map $T\colon V_1 \to V_2$ which supercommutes with the action of $A$: For homogeneous $T$ and $a\in A$ this means that 
\begin{equation}
T \rho_1(a) = (-1)^{|T||a|}\rho_2(a)T.
\end{equation}

\end{defn}
The set of morphisms of supermodules is denoted $\Hom_A(V_1,V_2)$ (here we suppress $\rho_1,\rho_2$). For $(V,\rho)$ a supermodule over $A$ we denote $\End_A(V)$ the morphisms from $(V,\rho)$ to itself. \\
After setting up the terminology of superalgebras, we are ready for the discussion of more interesting topics.

\section{Clifford algebras}
Let $V$ be a vector space over $k = \R$ or $k = \C$. Let $g \colon V \times V \to k$ be a symmetric bilinear form on $V$. We often call the pair $(V,g)$ a \emph{quadratic vector space}\footnote{The name comes from the fact that equivalently one can work with quadratic forms, but we shall stick to bilinear forms, with an eye towards the applications.}. 
We denote by $I_g$ the two-sided ideal in $T(V)$ generated by the set $\{v\otimes v + g(v,v)1, v \in V\} \subset T(V)$. 
\begin{defn}[Clifford algebra]
The \emph{Clifford algebra} $\Cl(V,g)$ is the quotient 
\begin{equation}
\Cl(V,g) := T(V)/I_g
\end{equation}
\end{defn}
We denote the quotient map by $\pi\colon T(V) \to \Cl(V,g)$.
Some remarks are immediate from this definition. 
\begin{rem}
\begin{enumerate}[i)]
\item There is an injection $\iota \colon V \hookrightarrow \Cl(V,g)$ and we will identify $V \cong \iota(V) \subset \Cl(V,g)$. 
\item $\Cl(V,g)$ is the algebra generated by the vector space $V$, subject to the relation $v \cdot v = -g(v,v)$. This relation is equivalent to 
\begin{equation}
v\cdot w + w \cdot w = -2g(v,w),
\end{equation}
the Clifford relation that we discussed before.
\item This construction is universal: Given any $k$-algebra $A$ 
and a linear map $f\colon V \to A$ such that $f(v)f(v) = -2g(v,v)1$, there exists a unique algebra map $\tilde{f} \colon Cl(V,g) \to A$ such that $\tilde{f}\circ\iota = f$, i.e. the following diagram commutes: \\
\begin{equation}
\begin{tikzcd}
{Cl(V,g)} \arrow[rd, "\tilde{f}", dotted] &   \\
V \arrow[u, "\iota", hook] \arrow[r, "f"] & A
\end{tikzcd}
\end{equation}\item As a consequence, the association $(V,g)\mapsto\Cl(V,g)$
is \emph{functorial}, i.e. for a linear map $f\colon (V,g) \to (V',g')$ such that\footnote{Here we use the \emph{pullback} defined by $f^*g(v,w) = g(f(v),f(w))$.} $f^*g' = g$, there is a unique map $\tilde{f}\colon \Cl(V,g) \to \Cl(V',g')$. In particular, if $(V,g) \cong (V',g')$, then also $\Cl(V,g) \cong \Cl(V',g')$. 
\end{enumerate}

\end{rem}
Let us consider some examples. 
\begin{expl}
 In the definition, the bilinear form $g$ is not supposed to be non-degenerate. Thus, we can take $g = 0$. By definition, the Clifford algebra then satisfies $v \cdot w + w\cdot v = 0$, hence $\Cl(V,0) \cong \bigwedge V$.  
\end{expl}
\begin{expl}
Let $k=\R$. We denote by $\R^{p,q}$ the vector space $V = \R^n$ with the standard bilinear form $g_{p,q}$ of signature\footnote{Our convention for the signature $(p,q)$ is that $p$ denotes the number of positive eigenvalues and $q$ denotes the number of negative eigenvalues so that $p+q \leq n$. } $(p,q)$ where $p+q = n$, i.e. the matrix of $g_{p,q}$ in the standard basis is $$g_{p,q}=\mathrm{diag}(\underbrace{1,\ldots,1}_p,\underbrace{-1,\ldots,-1}_q).$$ Then we denote 
$\Cl(\R^{p,q}) =: \Cl_{p,q}$. This is calles the \emph{standard Clifford algebra of signature $(p,q)$}. By the remark above, all Clifford algebras over real vector spaces with a non-degenerate symmetric bilinear form is isomorphic to one of the $Cl_{p,q}$. We also denote $\Cl_n := \Cl_{n,0}$.
\end{expl}
\begin{expl}
Consider $\Cl_1$. By definition, is algebra is $T(\R)/I_{g_1}$. Since $T(\R) \cong \R[x]$, we have $\Cl_1 \cong \R[x]/<x^2 + 1> \cong \C$ where we consider $\C$ as an $\R$-algebra. 
\end{expl}

The Clifford algebra inherits some properties from the tensor algebra and does not inherit some others. Let us discuss this in more detail. A $\Z$-graded algebra is an algebra $A$ with a direct sum decomposition $A = \bigoplus_k A_k$ such that $A_iA_j \subset A_{i+j}$. The tensor algebra $T(V)$ is, naturally, a $\Z$-graded algebra. Every $\Z$-graded algebra can be turned into a $\Z_2$-graded algebra by taking the direct sum of even and odd components. A crucial fact about the Clifford-algebra is that it \emph{does not} inherit the $\Z$-grading. This can be seen in by considering e.g. the image of $v^{\otimes 2k}\in T(V)_{2k}$, the image under the projection is 
\begin{equation}
\pi(v^{\otimes 2  k}) = (-1)^kg(v,v)^k \in \pi(T(V)_0).
\end{equation}
However, the Clifford algebra inherits the $\Z_2$-grading of the tensor algebra, and thus is naturally a superalgebra. 
\begin{prop}
Let $(V,g)$ be a vector space with a symmetric bilinear form. Denote $T(V) = T(V)_0 \oplus T(V)_1$ the superalgebra structure of $T(V)$ Then $\Cl(V,g)$ is a superalgebra with $\Cl(V,g)_0 =  \pi(T(V)_0)$, $\Cl(V,g)_1 = \pi(T(V)_1)$. 
\end{prop}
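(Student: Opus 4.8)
The plan is to observe that, although the $\Z$-grading of $T(V)$ is destroyed in passing to the quotient, the defining ideal $I_g$ is homogeneous for the coarser $\Z_2$-grading, so that the $\Z_2$-grading survives. Concretely, the generators $v\otimes v + g(v,v)\,1$ of $I_g$ all lie in $T(V)_0$ (the term $v\otimes v$ sits in $T(V)_2 \subseteq T(V)_0$ and $1\in T(V)_0$), i.e. they are even. I would then invoke the general fact that a two-sided ideal generated by homogeneous elements of a $\Z_2$-graded algebra is itself a graded subspace, and push the grading through the quotient.

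Step by step: (1) recall that $T(V) = T(V)_0 \oplus T(V)_1$ is a superalgebra, with the extension $\varepsilon$ of $v\mapsto -v$ as grading operator. (2) Show $I_g = (I_g \cap T(V)_0) \oplus (I_g \cap T(V)_1)$; the quickest argument uses $\operatorname{char}k = 0$: since $\varepsilon$ is an algebra automorphism of $T(V)$ fixing every generator $v\otimes v + g(v,v)\,1$, it preserves $I_g$, and hence for $t\in I_g$ both $\tfrac12(t+\varepsilon(t))\in T(V)_0$ and $\tfrac12(t-\varepsilon(t))\in T(V)_1$ lie in $I_g$, giving the decomposition. (3) Conclude $\Cl(V,g) = T(V)/I_g \cong T(V)_0/(I_g\cap T(V)_0)\oplus T(V)_1/(I_g\cap T(V)_1)$, so that setting $\Cl(V,g)_i := \pi(T(V)_i)$ realizes exactly this direct sum. (4) Check the grading axiom: since $\pi$ is a surjective algebra map, $\pi(T(V)_i)\,\pi(T(V)_j) = \pi\big(T(V)_i T(V)_j\big) \subseteq \pi(T(V)_{i+j})$.

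An equivalent and perhaps slicker route uses Exercise \ref{exc:gradingop}: the linear map $V\to\Cl(V,g),\ v\mapsto -\iota(v)$ satisfies the Clifford relation, so by the universal property it extends to an algebra endomorphism $\alpha$ of $\Cl(V,g)$; since $\alpha\circ\iota = \iota$ after squaring, uniqueness forces $\alpha^2 = \mathrm{id}$, and then Exercise \ref{exc:gradingop}(ii) equips $\Cl(V,g)$ with a superalgebra structure whose grading operator is $\alpha$. One then identifies the $\pm1$-eigenspaces of $\alpha$ with $\pi(T(V)_0)$ and $\pi(T(V)_1)$ via the identity $\alpha\circ\pi = \pi\circ\varepsilon$ (both sides are algebra maps agreeing on $V$, and $\pi$ is surjective) together with $\operatorname{char}k = 0$. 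Either way, the only step that is more than formal bookkeeping is the claim that $I_g$ is a graded subspace; everything else is the standard fact that a quotient of a graded algebra by a graded ideal is graded, applied in the $\Z_2$-graded setting. I expect no genuine obstacle here — the real content is simply that the Clifford relation is parity-preserving.
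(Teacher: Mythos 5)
Your proposal is correct, and your ``slicker'' second route is essentially the paper's own proof: the paper extends $\varepsilon\colon v\mapsto -v$ to an algebra automorphism of $\Cl(V,g)$ squaring to the identity (via functoriality/universality), invokes Exercise \ref{exc:gradingop} to get the superalgebra structure, and identifies the graded pieces with $\pi(T(V)_0)$ and $\pi(T(V)_1)$ through the relation $\varepsilon\circ\pi=\pi\circ\varepsilon$. Your first route is genuinely different and, if anything, more self-contained: by checking that the generators $v\otimes v+g(v,v)1$ are even and that $\varepsilon(I_g)=I_g$, you prove directly that $I_g=(I_g\cap T(V)_0)\oplus(I_g\cap T(V)_1)$, which is exactly what is needed to see that the sum $\pi(T(V)_0)+\pi(T(V)_1)$ is \emph{direct} -- a point the paper's one-line appeal to $\varepsilon\circ\pi=\pi\circ\varepsilon$ leaves implicit (there one still has to argue that the $\pm1$-eigenspaces of the induced $\varepsilon$ on the quotient are the images of the eigenspaces upstairs, which again uses either the gradedness of $I_g$ or the averaging trick $\tfrac12(t\pm\varepsilon(t))$ that you spell out). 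The paper's approach buys brevity and reuses the universal property; yours makes the key structural fact -- that the Clifford relation is parity-preserving, so the defining ideal is $\Z_2$-homogeneous -- fully explicit.
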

\begin{proof}
Consider again the map $\varepsilon \colon V \to V, \varepsilon(v) = -v$. Since its preserves the bilinear form it extends to an algebra isomorphism $\varepsilon\colon \Cl(V,g) \to \Cl(V,g)$ which squares to 1. Then, by Exercise \ref{exc:gradingop} we know that $\Cl(V,g)$ is a superalgebra for which $\varepsilon$ is the grading operator. Then the claim follows from the easy observation that $\varepsilon(\pi(t)) = \pi(\varepsilon(t))$, for any $t \in T(V)$. 
\end{proof}
There is another structure that $\Cl(V,g)$ inherits from $T(V)$, namely that of a \emph{filtered algebra}\footnote{There are many versions of the concept of filtration in the literature. This one is sufficient for our purposes.}. 
\begin{defn}[Filtered algebra]
A \emph{filtered algebra} $A$ over $k$ is an algebra $A$ over $k$, together with a collection of linear subspaces $\{F^iA\}_{i\in\N}$ satisfying  \begin{align*} A &= \bigcup_{i\in\N}F^iA\\
(F^iA)(F^jA)&\subset F^{i+j}A
\end{align*} 
\end{defn}
The tensor algebra (and, in fact, every $\N$-graded algebra) has a natural filtration $F^iT(V) = \oplus_{k=0}^iT(V)_k$.
While the map $\pi\colon T(V) \to \Cl(V,g)$ does not preserve the grading, it does preserve the filtration in the sense that $F^i\Cl(V,g):=\pi(F^iT(V))$ defines a filtration on $\Cl(V,g)$.   
To every filtered algebra, one can associate a graded algebra as follows. 
\begin{defn}[Associated graded]
Let $A$ be a filtered algebra. Then, the \emph{associated graded algebra} $\Gr(A)$ is the $\N$-graded algebra given by 
$\Gr(A) = \oplus_{i\in \N}\Gr(A)_i$ with 
$$\Gr(A)_i = F^iA/F^{i-1}A.$$
\end{defn}
\begin{exc}Spell out the multiplication on $\Gr(A)$, and prove that it is well-defined and makes $\Gr(A)$ into a graded algebra. 
\end{exc}
\begin{prop}
Let $V$ be a vector space with a symmetric bilinear form $g$. Consider the Clifford algebra $Cl(V,g)$ with its natural filtration. Then $\Gr(\Cl(V,g)) \cong \bigwedge V$, i.e. the associated graded of the Clifford algebra is isomorphic to the exterior algebra of $V$.
\end{prop}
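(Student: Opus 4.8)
The plan is to construct an explicit graded algebra homomorphism $\Phi\colon\bigwedge V \to \Gr(\Cl(V,g))$ from the universal property of the exterior algebra, show it is surjective, and then conclude it is an isomorphism by a dimension count. For the construction I would first pin down the degree-one part: since $F^1T(V) = k\cdot 1\oplus V$, and $\iota(V)$ sits inside the odd summand $\Cl(V,g)_1$ while $1\in\Cl(V,g)_0$, we get $F^1\Cl(V,g) = k\cdot 1\oplus\iota(V)$ and $F^0\Cl(V,g) = k\cdot 1$, so the composite $V\xrightarrow{\iota}F^1\Cl(V,g)\twoheadrightarrow \Gr(\Cl(V,g))_1$ is a linear isomorphism $f$. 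For each $v\in V$ the Clifford relation gives $\iota(v)^2 = -g(v,v)1\in F^0\subset F^1$, hence the class $f(v)^2\in\Gr(\Cl(V,g))_2 = F^2/F^1$ is zero; by the universal property of $\bigwedge V$, $f$ extends uniquely to an algebra map $\Phi$, and since $\bigwedge V$ is generated in degree one and $\Gr(\Cl(V,g))$ is graded, $\Phi$ carries $\bigwedge^kV$ into $\Gr(\Cl(V,g))_k$.

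Surjectivity is then formal: because $F^i\Cl(V,g) = \pi(F^iT(V))$ is spanned by products of at most $i$ vectors of $V$, the quotient $\Gr(\Cl(V,g))_i$ is spanned by classes of products of \emph{exactly} $i$ such vectors (shorter products already lie in $F^{i-1}$), i.e.\ by $i$-fold products of elements of $\Gr(\Cl(V,g))_1$. Since $\Phi$ surjects onto $\Gr(\Cl(V,g))_1$ and is an algebra map, it is onto.

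For injectivity I would assume $\dim V = n < \infty$ (the general case follows by restricting to finite-dimensional subspaces, over which both $\Gr(\Cl(-))$ and $\bigwedge(-)$ commute with the directed colimit). Then $\dim\bigwedge V = 2^n$, while for any filtered vector space $\dim\Gr(\Cl(V,g)) = \sum_i\dim\bigl(F^i/F^{i-1}\bigr) = \dim\Cl(V,g)$, so it suffices to prove $\dim\Cl(V,g)\le 2^n$. Fixing a basis $e_1,\dots,e_n$ of $V$, I claim the $2^n$ ordered monomials $e_{i_1}\cdots e_{i_k}$ with $i_1<\dots<i_k$ span $\Cl(V,g)$: an arbitrary product of basis vectors can be rewritten using $e_ie_j = -e_je_i - 2g(e_i,e_j)1$ to sort adjacent factors and $e_i^2 = -g(e_i,e_i)1$ to delete repeated factors, and each such move strictly decreases either the length of the word or, at fixed length, the number of inversions, so the procedure terminates. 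Hence $\dim\Gr(\Cl(V,g)) = \dim\Cl(V,g)\le 2^n = \dim\bigwedge V$, and a surjection between finite-dimensional spaces of equal dimension is an isomorphism.

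I expect the only genuinely non-formal step to be this last spanning argument, with its double induction on word length and inversion count; one can package it slightly differently by instead checking that the antisymmetrization (Chevalley) map $\bigwedge V \to \Cl(V,g)$, $v_1\wedge\dots\wedge v_k\mapsto \tfrac1{k!}\sum_{\sigma}\operatorname{sgn}(\sigma)\,v_{\sigma(1)}\cdots v_{\sigma(k)}$, is a filtered linear isomorphism inducing the inverse of $\Phi$ on associated gradeds, but establishing bijectivity of that map needs the same dimension input, so nothing is really gained.
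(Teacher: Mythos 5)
Your construction of $\Phi$ via the universal property of $\bigwedge V$ and your surjectivity argument are fine, and they match the paper's first half in substance (the paper descends $V^{\otimes k}\to \Gr(\Cl(V,g))_k$ through $\bigwedge^k V$ rather than invoking the universal property, but it is the same idea). The gap is in the injectivity step: your dimension count runs in the wrong direction. The rewriting procedure (sorting with $e_ie_j=-e_je_i-2g(e_i,e_j)1$ and deleting squares) shows only that the $2^n$ ordered monomials \emph{span} $\Cl(V,g)$, i.e.\ $\dim\Cl(V,g)\le 2^n$. Combined with surjectivity of $\Phi\colon\bigwedge V\to\Gr(\Cl(V,g))$ this gives $\dim\Gr(\Cl(V,g))\le 2^n$ --- which already follows from surjectivity alone --- and says nothing about the kernel of $\Phi$. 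To conclude "surjection between spaces of equal dimension" you need the \emph{lower} bound $\dim\Cl(V,g)\ge 2^n$, i.e.\ linear independence of the ordered monomials, and that is precisely the non-formal content of the proposition; it cannot be extracted from the spanning argument. (Note the paper's logical order is the reverse of yours: it proves this proposition first and only then deduces $\dim\Cl(V,g)=2^n$ in the subsequent basis proposition, so quoting that dimension here would be circular.)

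Two standard ways to close the gap. One is the paper's route: show directly that any degree-$k$ tensor $t$ with $\pi(t)\in F^{k-1}\Cl(V,g)$ lies in the ideal generated by $\{v\otimes v\}$ modulo lower filtration, hence dies in $\bigwedge^k V$; this makes $\bar\varphi^k$ (equivalently your $\Phi$ in degree $k$) injective without any dimension count, though spelling out that identification of $\ker\varphi^k$ takes some care. The other is to supply the missing lower bound by exhibiting a $2^n$-dimensional module: the map $v\mapsto v\wedge{}-\iota_{g(v,\cdot)}$ on $\bigwedge V$ satisfies the Clifford relation, hence factors through $\Cl(V,g)$ by universality, and evaluating at $1\in\bigwedge^0V$ sends $e_{i_1}\cdots e_{i_k}$ to $e_{i_1}\wedge\cdots\wedge e_{i_k}$ plus terms of strictly lower exterior degree; a triangularity argument then shows the ordered monomials are independent. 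Your closing remark about the Chevalley antisymmetrization map concedes it "needs the same dimension input" --- correct, and that input is exactly what your proposal does not supply.
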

Note that the associated graded of $\Cl(V,g)$ is independent of $g$.  
\begin{proof}
Consider the map $\varphi^k\colon V^{\otimes k} \to \Gr^k(Cl(V,g))$ given by the composition
$$V^{\otimes k} \to F^kCl(V,g) \to F^k\Cl(V,g)/F^{k-1}\Cl(V,g)$$ which takes $v_1\otimes \ldots \otimes v_k \mapsto [v_1\cdot\ldots\cdot v_k].$  We claim that this map descends to $\bigwedge^k V$. Indeed, the image of the ideal $I$ generated by $\{v\otimes v, v\in V\}$ under $\pi$ lies in $F^{k-1}\Cl(V,g)$ because $\pi(v\otimes v) = 2g(v,v)$. Hence $I \subset \ker\varphi^k$ and the map descends to a map $\bar{\varphi}^k \colon \bigwedge^kV \to \Gr(\Cl(V,g))_k$. This map is surjective (since $\varphi^k$ is surjective). The direct sum $\bar{\varphi}=\bigoplus_{k=1}^n\bar{\varphi}^k$ (where $n = \dim V$) is an algebra homomorphism. We will now show that the maps $\bar{\varphi}^k$ are injective for every $k$. Namely, consider a tensor $t \in \ker \varphi^k = \pi^{-1}(F^{k-1}\Cl(V,g))$. These are tensors of degree $r$ that are sum of terms of the form $a \otimes v \otimes v \otimes b$, where $a$ and $b$ are of homogeneous degree that add up to $r-2$. Then $t \in I$, i.e. $t$ vanishes in the exterior algebra, which proves that $\bar{\varphi^k}$ is injective.
\end{proof}
\begin{cor}
There is a canonical vector space isomorphism 
\begin{equation}\sigma\colon \Cl(V,g) \to \bigwedge(V)
\end{equation}
preserving the $\Z_2$-grading. 
\end{cor}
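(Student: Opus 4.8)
The plan is to write down an explicit, choice-free linear map $\sigma$ by means of the universal property of $\Cl(V,g)$, and then to deduce that it is bijective from the preceding Proposition by comparing filtrations.

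First I would promote $\bigwedge V$ to a $\Cl(V,g)$-module. For $v\in V$ write $e_v$ for exterior multiplication by $v$ on $\bigwedge V$ and $\iota_v$ for contraction by the covector $g(v,\cdot)\in V^{*}$. A one-line computation gives $e_v^{2}=\iota_v^{2}=0$ and $e_v\iota_v+\iota_v e_v=g(v,v)\,\mathrm{id}$, so $c(v):=e_v-\iota_v$ satisfies the Clifford relation $c(v)^{2}=-g(v,v)\,\mathrm{id}$. By the universal property of the Clifford algebra, $c$ extends uniquely to an algebra homomorphism $c\colon\Cl(V,g)\to\End(\bigwedge V)$, and I define $\sigma(a):=c(a)(1)$, where $1\in\bigwedge^{0}V$. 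Only the data $(V,g)$ enter this construction, so $\sigma$ is canonical.

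Next I would record how $\sigma$ interacts with the filtration. Since $c(v)(1)=v$ and $c(w)(v)=w\wedge v-g(w,v)1$, and more generally commuting or contracting a pair of Clifford factors lowers the number of exterior factors by two, an induction shows $\sigma(v_1\cdots v_k)=v_1\wedge\cdots\wedge v_k+\omega$, where $\omega$ is a sum of terms lying in $\bigwedge^{j}V$ for various $j<k$ with $j\equiv k\pmod{2}$. Hence $\sigma$ is filtered, $\sigma\big(F^{k}\Cl(V,g)\big)\subseteq\bigwedge^{\le k}V:=\bigoplus_{j\le k}\bigwedge^{j}V$, and the induced map $F^{k}\Cl(V,g)/F^{k-1}\Cl(V,g)\to\bigwedge^{k}V$ sends $[v_1\cdots v_k]$ to $v_1\wedge\cdots\wedge v_k$, that is, it is the inverse of the isomorphism $\bar\varphi^{k}$ from the preceding Proposition. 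To conclude that $\sigma$ itself is an isomorphism I would show by induction on $k$ that $\sigma\big(F^{k}\Cl(V,g)\big)=\bigwedge^{\le k}V$: for a decomposable $\eta=v_1\wedge\cdots\wedge v_k$ the element $v_1\cdots v_k\in F^{k}\Cl(V,g)$ maps to $\eta$ plus a term in $\bigwedge^{\le k-1}V=\sigma\big(F^{k-1}\Cl(V,g)\big)$, so $\eta$ lies in the image; combining with the inductive hypothesis gives the claim. Taking $k=\dim V$ shows $\sigma$ is surjective, and since the Proposition yields $\dim\Cl(V,g)=\sum_k\dim\bigwedge^{k}V=\dim\bigwedge V<\infty$, surjectivity forces bijectivity. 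Finally $\sigma$ preserves the $\Z_2$-grading: $\Cl(V,g)_0$ is spanned by products of an even number of vectors and the correction terms above do not change the parity of the number of wedge factors, so $\sigma(\Cl(V,g)_i)\subseteq(\bigwedge V)_i$.

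The only step requiring genuine care is the degree bookkeeping: one must check precisely that pushing one Clifford generator past another, or contracting two of them, produces only strictly lower-filtration corrections of the same parity, so that $\sigma$ is filtered and induces $(\bar\varphi^{k})^{-1}$ on the associated graded; after that the argument is a two-line induction together with a dimension count. As an alternative one could instead define the inverse of $\sigma$ by antisymmetrization, $v_1\wedge\cdots\wedge v_k\mapsto\frac{1}{k!}\sum_{\tau\in S_k}\operatorname{sgn}(\tau)\,v_{\tau(1)}\cdots v_{\tau(k)}$ (well defined since the ground field has characteristic zero), and run the same filtered argument; I would keep the module-theoretic definition because it simultaneously records that $\bigwedge V$ is a $\Cl(V,g)$-module.
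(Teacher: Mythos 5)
Your proof is correct, but it takes a genuinely different route from the notes. The notes dispose of the corollary in two lines by invoking the general fact that a vector space with a finite filtration is isomorphic to its associated graded, and then citing the preceding proposition $\Gr(\Cl(V,g))\cong\bigwedge V$; canonicity is asserted on the grounds that the filtration itself is canonical. You instead build the isomorphism explicitly: you make $\bigwedge V$ into a $\Cl(V,g)$-module via $c(v)=e_v-\iota_v$ and set $\sigma(a)=c(a)(1)$, then verify that $\sigma$ is filtered, induces $(\bar\varphi^{k})^{-1}$ on the associated graded, and is bijective by an induction plus a dimension count. Your version is more work but buys two real things. First, it actually substantiates the word ``canonical'': the general fact the notes appeal to produces an isomorphism $A\cong\Gr(A)$ only after choosing splittings $F^{k}A/F^{k-1}A\to F^{k}A$, so strictly speaking the notes' argument yields a non-canonical isomorphism unless one supplies exactly the kind of explicit map you construct (your $\sigma$ is the standard symbol map, whose inverse is the antisymmetrization/quantization map you mention). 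Second, the module structure $c(v)=e_v-\iota_v$ is not a throwaway device: it reappears verbatim later in the notes as the Clifford module $\bigwedge^{\bullet}T^{*}M$ underlying the Chern--Gauss--Bonnet argument, so your proof records a fact the text needs anyway. The only point to watch is the one you already flag: the bookkeeping that each contraction drops the exterior degree by exactly two, which is what guarantees both the filtered property and the preservation of the $\Z_2$-grading.
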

Here, by canonical we mean that the isomorphism does not depend on any choices. 
\begin{proof}
This follows from the general fact that for a filtered algebra $A$ with a finite filtration, the map $A \to \Gr(A)$ is an isomorphism of vector spaces which depends only on the filtration. Since the filtration is induced by the (canonical) filtration of the tensor algebra, the isomorphism is canonical.
\end{proof}
This theorem allows us to find an easy basis of the Clifford algebra. 
\begin{prop}
Let $(V,g)$ be a quadratic vector space of dimension $n$ and let $v_1,\ldots,v_n$ be a basis of $V$. Then the set $$\{v_{i_1}\cdots v_{i_k}| k=0,\ldots,n, 1\leq i_1 <i_2 \ldots <i_k \leq n\}$$
is a basis of the associated Clifford algebra $\Cl(v,g)$.
\end{prop}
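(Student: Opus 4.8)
The plan is to prove the two halves of the statement separately: first that the given family spans $\Cl(V,g)$, and then that it is linearly independent — where the second part comes essentially for free from the dimension count supplied by the preceding corollary.

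\emph{Spanning.} Since $\Cl(V,g)$ is generated as an algebra by $\iota(V)$ and $V$ is spanned by $v_1,\ldots,v_n$, every element of $\Cl(V,g)$ is a linear combination of monomials $v_{j_1}v_{j_2}\cdots v_{j_m}$ with arbitrary length $m$ and arbitrary (not necessarily distinct or increasing) indices. I would show that any such monomial lies in the span of the claimed set, using the Clifford relations $v_iv_j = -v_jv_i - 2g(v_i,v_j)$ for $i \neq j$ and $v_iv_i = -g(v_i,v_i)$. The argument is an induction on the length $m$, with an inner induction on the number of inversions of the index sequence: if two adjacent indices coincide, the second relation replaces the monomial by a scalar times a monomial of length $m-2$ (covered by the outer induction); if two adjacent indices are out of order, the first relation replaces it by $-1$ times a monomial of length $m$ with one fewer inversion, plus a monomial of length $m-2$. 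Iterating (a bubble sort, in effect) reduces every monomial to a scalar multiple of a strictly increasing monomial $v_{i_1}\cdots v_{i_k}$, the empty product $1$ included. Hence the claimed set spans, and in particular $\dim\Cl(V,g) \leq 2^n$, since this set is indexed by the subsets of $\{1,\ldots,n\}$.

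\emph{Linear independence.} Here I would invoke the canonical vector space isomorphism $\sigma\colon \Cl(V,g) \to \bigwedge V$ of the corollary — equivalently the isomorphism $\Gr(\Cl(V,g)) \cong \bigwedge V$ of the proposition — to get $\dim\Cl(V,g) = \sum_{k=0}^n \binom{n}{k} = 2^n$. A spanning family indexed by a set of size $2^n$ inside a space of dimension $2^n$ must be a basis (in particular its $2^n$ members are distinct); this gives the claim.

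The only real work is the reordering reduction in the spanning step, and the one subtlety there is bookkeeping: every application of a relation either strictly shortens the monomial or strictly decreases its number of inversions, so the double induction needs to be set up so that these correction terms are already handled — this is the point I expect to be the main, though routine, obstacle. I anticipate no difficulty in the independence step, which is precisely what the associated-graded machinery was developed to provide.
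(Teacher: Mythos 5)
Your proposal is correct and follows essentially the same route as the paper: both reduce linear independence to the dimension count $\dim\Cl(V,g)=\dim\bigwedge V=2^n$ supplied by the associated-graded isomorphism, and both prove spanning by reordering monomials with the Clifford relations, absorbing the length-$(m-2)$ correction terms by induction. Your explicit double induction on length and number of inversions is just a slightly more careful bookkeeping of the same bubble-sort argument the paper runs as an induction on the filtration degree.
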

An ordered multiindex is a multiindex $I=(i_1,\ldots,i_k)$ satisfying $i_1 < i_2 \ldots <i_k$. $k$ is called the length of the multiindex and denoted $|I|=k$ ($k=0$ is allowed). By $e_I$ we abbreviate $e_{i_1}\cdots e_{i_k}$. By definition, the empty product ($k=0$) is 1.
\begin{proof}
First note that there are $\sum_{k=0}^n {n\choose k} = 2^n$ ordered multiindices in $\{1,\ldots,n\}$. Since $\dim \Cl(V,g) = \dim \bigwedge V = 2^n$, it is enough to show that the vectors $e_I$ span $\Cl(V,g)$. We prove by induction that $\{e_I||I|\leq k\}$ spans $F^k\Cl(V,g)$. Clearly, this true for $k=0$, since $F^0\Cl(V,g) = k$. Now suppose the claim is true for all $k'<k$. Denote $W_k:= \mathrm{span}\{e_I,|I| \leq k$. We have to show that  $W_k \supset \pi(F^kT(V))$. Since we already know that $\pi(F^{k-1}T(V)) \subset W_{k-1}\subset W_k$, it is enough to show that $\pi(T^k(V)) \subset W_k$. Any element of $T^kV$ is a sum of terms of the form $v_{j_1}\otimes  \cdots \otimes v_{j_k}$, whose image in the Clifford algebra is $v_{j_1} \cdots v_{j_k}$. Using the Clifford relation, we can rewrite this as $v_{j_{\sigma(1)}}\cdots v_{j_{\sigma(k)}} + \alpha$, where $j_{\sigma(1)} < \ldots < j_{\sigma(k)}$ and $\alpha$ contains at most $k-2$ basis vectors. Hence, $alpha \in \pi(F^{k-2}T(V)) \subset W_{k-2} \subset W_k$. This proves the claim. 
\end{proof}
\section{Pin and Spin groups}
Now we fix the ground field $k=\R$. Consider the group of invertible elements $\Cl(V,g)^\times \subset \Cl(V,g)$. Since it is an open subset of $Cl(V,g)$ it is a Lie group of dimension $2^{\dim V}$. The tangent space at the neutral element $1$ is $\Cl(V,g)$. It carries a natural Lie bracket given by the commutator with respect to Clifford multiplication. The Lie group $\Cl(V,g)^\times$ acts on its Lie algebra $\Cl(V,g)$ by the adjoint representation 
\begin{align}
\rho\colon \Cl(V,g)^\times &\to GL(\Cl(V,g)) \notag \\
x &\mapsto (v \mapsto xvx^{-1}) \label{eq:def_adjoint_rep}
\end{align}
For our purposes, the \emph{twisted adjoint representation} will be more important. 
\begin{defn}[Twisted adjoint representation]
The \emph{twisted adjoint representation} is the representation  $\tilde{\rho} \colon\Cl(V,g)^\times \to GL(\Cl(V,g))$ given by $\tilde{\rho}(x)v = \varepsilon(x)vx^{-1}$ where $\varepsilon\colon \Cl(V,g) \to \Cl(V,g)$ is the grading operator. 
\end{defn}
\begin{prop}\label{prop:reflections}
Let $v \in V$ with $g(v,v) \neq 0$. Then
\begin{itemize}
\item $v \in \Cl(V,g)^\times$. 
\item $\tilde{\rho}(v)$ stabilizes $V$, i.e. $\tilde{\rho(v)}(V) \subset V$,
\item for any $w \in V$, we have 
\begin{equation}\tilde{\rho}(v)w  = w - 2\frac{g(v,w)}{g(v,v)}v
\end{equation}
i.e. $\tilde{\rho}(v)$ acts as a reflection by the hyperplane $v^\perp$.
\end{itemize}
\end{prop}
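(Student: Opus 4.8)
The plan is to verify all three bullet points by direct computation in $\Cl(V,g)$, using only the defining relation $v\cdot v = -g(v,v)$ (equivalently the Clifford relation $v\cdot w + w\cdot v = -2g(v,w)$) and the fact that $\varepsilon$ acts as $-\mathrm{id}$ on $V\subset\Cl(V,g)$.

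First I would establish invertibility of $v$. Since $v\cdot v = -g(v,v)$ and $g(v,v)\neq 0$, the element $-\tfrac{1}{g(v,v)}v$ is a two-sided inverse of $v$, so $v\in\Cl(V,g)^\times$ with $v^{-1} = -\tfrac{1}{g(v,v)}v$. Next, for $w\in V$ I would plug into the definition of the twisted adjoint representation: $\tilde\rho(v)w = \varepsilon(v)\,w\,v^{-1} = (-v)\,w\,\bigl(-\tfrac{1}{g(v,v)}v\bigr) = \tfrac{1}{g(v,v)}\,v\,w\,v$. The key algebraic step is then to simplify $v\,w\,v$: using the Clifford relation $v\,w = -2g(v,w) - w\,v$ and then $v\,v = -g(v,v)$ one gets
\begin{equation}
v\,w\,v = \bigl(-2g(v,w) - w\,v\bigr)v = -2g(v,w)\,v - w\,(v\,v) = -2g(v,w)\,v + g(v,v)\,w.
\end{equation}
Dividing by $g(v,v)$ yields $\tilde\rho(v)w = w - 2\tfrac{g(v,w)}{g(v,v)}v$, which is the claimed formula.

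Finally, I would read off the remaining two assertions from this formula. The right-hand side is manifestly a linear combination of $v$ and $w$, hence lies in $V$; since $w\in V$ was arbitrary, $\tilde\rho(v)(V)\subset V$. Moreover the resulting linear map on $V$ fixes the hyperplane $v^\perp = \{w : g(v,w)=0\}$ pointwise and sends $v\mapsto -v$, so it is exactly the orthogonal reflection in $v^\perp$ (and in particular $\tilde\rho(v)|_V \in O(V,g)$).

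There is no real obstacle here; the proof is a short computation. The only point requiring care is bookkeeping of signs: one must remember that the Clifford relation carries a minus sign ($v\cdot v = -g(v,v)$) and that the \emph{twisted} adjoint representation uses $\varepsilon(v) = -v$ rather than $v$ itself — it is precisely this twist that makes $\tilde\rho(v)$ a reflection (fixing $v^\perp$, negating $v$) rather than the map negating $v^\perp$ and fixing $v$ that the untwisted $\rho$ would give.
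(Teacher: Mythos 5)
Your proof is correct and follows essentially the same route as the paper's: invert $v$ via $v^2=-g(v,v)$, compute $\tilde\rho(v)w=\tfrac{1}{g(v,v)}vwv$, and simplify with the Clifford relation to obtain the reflection formula, from which stabilization of $V$ is immediate. The only cosmetic difference is that you apply the Clifford relation to the left factor $vw$ while the paper rewrites the right factor $wv$; the computation and signs agree.
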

\begin{proof}
For i) simply notice that since $v^2 = - g(v,v)1$, if $g(v,v) \neq 0$ we have $v^{-1} = \frac{-v}{g(v,v)}$. Of course ii) is implied by iii), which is a simple computation: 
\begin{align*}
\tilde{\rho}(v)w &= \varepsilon(v) w v^{-1} = \frac{vwv}{g(v,v)} = \frac{v(-vw -2g(v,w)1)}{g(v,v)} \\
&= w - \frac{2g(v,w)}{g(v,v)}v.
\end{align*}
Here we have used the Clifford relations $vw = -wv -g(v,w)1$ and $-v^2 = g(v,v)$.
\end{proof}
\begin{defn}[Clifford group]
The \emph{Clifford group} $\Gamma(V,g)$ is the subgroup of $\Cl(V,g)^\times$ stabilizing $V$ in the twisted adjoint representation, i.e. 
\begin{equation}
\Gamma(V,g) := \{g \in \Cl^\times(V,g) , \tilde{\rho}(g)(V) \subset V\}.
\end{equation}
\end{defn}
Clearly, nonzero elements of $k$ act trivially in the twisted adjoint representation. Hence we have $k^\times \subset \ker \tilde{\rho}$. If $g$ is non-degenerate, also the converse is true: 
\begin{prop}\label{prop:ker}
If $g$ is non-degenerate, then $\ker \restr{\tilde{\rho}}{\Gamma(V,g)} = k^\times$.
\end{prop}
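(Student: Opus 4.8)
The plan is to show that if $x \in \Gamma(V,g)$ acts trivially in the twisted adjoint representation, then $x$ must be a scalar. Since we already know $k^\times \subset \ker\tilde\rho$, only the inclusion $\ker\restr{\tilde\rho}{\Gamma(V,g)} \subset k^\times$ needs proof. So suppose $\tilde\rho(x) = \mathrm{id}_V$, i.e. $\varepsilon(x)v = vx$ for all $v \in V$. Decompose $x$ into even and odd parts, $x = x_0 + x_1$, so that $\varepsilon(x) = x_0 - x_1$. The relation $\varepsilon(x)v = vx$ then splits according to parity: comparing the odd part of both sides gives $x_0 v = v x_0$, and comparing the even part gives $-x_1 v = v x_1$, for every $v \in V$. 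Thus $x_0$ commutes with all of $V$ and $x_1$ anticommutes with all of $V$.

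Next I would argue that an even element commuting with all $v \in V$, and an odd element anticommuting with all $v \in V$, must each be a scalar — and in the odd case must in fact be $0$. The clean way to do this uses the basis $e_1,\dots,e_n$ of $V$ (orthogonal, since $g$ is non-degenerate, with $g(e_i,e_i) =: c_i \neq 0$) and the basis $\{e_I\}$ of $\Cl(V,g)$ from the earlier proposition. Write $x_0 = \sum_I a_I e_I$ over ordered multiindices of even length. For a fixed generator $e_j$, one computes $e_j e_I e_j^{-1} = \pm e_I$, where the sign is $+1$ exactly when $|I|$ is even and $j \notin I$, or $|I|$ is odd and $j \in I$; in the even-length case this sign is $+1$ iff $j \notin I$. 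So the condition $e_j x_0 = x_0 e_j$ forces $a_I = 0$ whenever $j \in I$, for every $j$; hence only $I = \emptyset$ survives and $x_0 \in k$. The identical bookkeeping applied to $x_1$ (odd length, anticommuting) forces every coefficient to vanish, so $x_1 = 0$. Therefore $x = x_0 \in k$, and since $x \in \Cl(V,g)^\times$ we get $x \in k^\times$.

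The only slightly delicate point is the sign computation $e_j e_I e_j^{-1} = \pm e_I$ and checking that for each nonempty $I$ there is always at least one index $j$ producing the "wrong" sign (so that the coefficient is killed). This is the step I expect to be the main obstacle, though it is really just careful combinatorics with the Clifford relations $e_j e_i = -e_i e_j$ for $i \neq j$ and $e_j^2 = -c_j$: pushing $e_j$ past $e_I$ picks up a factor $(-1)^{|I|}$ if $j \notin I$, and $(-1)^{|I|-1}$ together with an $e_j^2 = -c_j$ (which cancels against $e_j^{-1} = -e_j/c_j$) if $j \in I$. Collecting these, for $x_0$ (even $|I|$) one needs $j \notin I$ for commutation, and for $x_1$ (odd $|I|$) one needs $j \in I$ for anticommutation; in either case, for any nonempty $I$ one can choose $j$ to violate the required condition, forcing $a_I = 0$. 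Non-degeneracy of $g$ enters precisely in guaranteeing the $c_i$ are nonzero, so that each $e_i$ is invertible and the reduction goes through.
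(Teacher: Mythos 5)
Your proposal is correct and follows essentially the same route as the paper: split $x$ into even and odd parts using the parity of the relation $\varepsilon(x)v = vx$, then show that each part involves none of the basis vectors, so it must be a scalar. The only difference is bookkeeping — the paper peels off one generator at a time via the decomposition $\varphi_0 = a_0 + v_1a_1$ with $a_0,a_1$ not involving $v_1$, whereas you expand in the full basis $\{e_I\}$ and track the conjugation signs $e_je_Ie_j^{-1}=\pm e_I$; both hinge on non-degeneracy making each basis vector invertible.
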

\begin{proof}
Choose a basis $v_1,\ldots,v_n$ of $v$ such that $g(v_i,v_i) \neq 0$ and $g(v_i,v_j) = 0$ for $i \neq j$. Let $\varphi \in \ker \tilde{\rho}$ be non-zero. Then $\epsilon(\varphi)v = v \varphi$ for all $v \in V$. Decomposing $\varphi = \varphi_0 + \varphi_1$,  we get \begin{align} \varphi_0v &= v\varphi_0\label{eq:proofprop1} \\ 
\varphi_1v &= -v\varphi_1 \label{eq:proofprop2}
\end{align} for all $v \in V$. We can write $\varphi_0 = a_0 + v_1a_1$, where $a_0,a_1$ do not involve $v_1$. Now set $v=v_1$ in \eqref{eq:proofprop1}. Since $a_0$ commutes with $v_1$ and $a_1$ anticommutes with $v_1$, this gives $a_0v_1 - v_1^2va_1 = a_0v_1 + v_1^2a_1$. This implies $a_1 = 0$, which means $\varphi_0$ does not contain $v_1$. Proceeding with $a_0$ we can prove in the same way that it does not contain $v_2$, and so on. We conclude that $\varphi^0$ does not contain any $v_i$.
\begin{exc} Give a similar argument proving that $\varphi_1$ does not contain any $v_i$, using equation \eqref{eq:proofprop2}. \end{exc} Hence, $\varphi$ does not involve any of the $v_i$, which means $\varphi \in k^\times$. 
\end{proof}

\begin{defn}[Pin and Spin]
The \emph{Pin group} $\Pin(V,g)$ associated to $\Cl(V,q)$ is the subgroup of $\Cl(V,q)$ generated by elements $v \in V$ with $g(v,v) = \pm 1$. The \emph{Spin group} $\Spin(V,g)$ is given by $$\Spin(V,g) := \Pin(V,g) \cap \Cl(V,g)_0.$$ 
\end{defn}
\begin{expl}Again, we can consider the standard Pin and Spin groups
\begin{align} \Pin_{p,q} &:= \Pin(\R^{p,q}) \\ \Spin_{p,q}&:=\Spin(\R^{p,q}).\end{align}.In this course, the Euclidean groups $$\Pin_n := \Pin_{n,0}$$ and $$\Spin_n := \Spin_{n,0}$$ will be most important. Below, we will see that $\Spin_1 = \{1\}, \Spin_2 \cong S^1, \Spin_3 \cong SU(2) \cong S^3$. In physics, one usually considers the Lorentz Spin group $\Spin_{1,n}$ (and sometimes also the Pin group $\Pin_{1,n}$).
\end{expl}
Notice that by Proposition \ref{prop:reflections}, these groups act by reflections on $V$. Since reflections with respect to a certain symmetric bilinear form preserve that form, it follows that $\tilde{\rho}(\Pin(V,g)) \subset O(V,g)$. Since reflection always have determinant $-1$, we can further conclude that $\tilde{\rho}(\Spin(V,g)) \subset SO(V,g)$ This is an important step in the proof of the following central theorem.
\begin{thm}\label{thm:exactsequence}
Let $k = \R$. Then there are exact sequences 
\begin{align}1 \to \Z_2 \to &\Pin_{p,q} \to O(p,q) \to 1 \label{eq:Pin_SES}\\
1 \to \Z_2 \to &\Spin_{p,q} \to SO(p,q) \to 1 \label{eq:Spin_SES}
\end{align}
\end{thm}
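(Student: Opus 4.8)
The plan is to analyze the homomorphism $\tilde\rho$ restricted to $\Pin_{p,q}$ and to $\Spin_{p,q}$. The inclusions $\tilde\rho(\Pin_{p,q}) \subseteq O(p,q)$ and $\tilde\rho(\Spin_{p,q}) \subseteq SO(p,q)$ were already recorded before the theorem, so the two exact sequences reduce to the assertions that these restrictions are \emph{surjective} and have \emph{kernel} $\{\pm 1\}\cong\Z_2$. Throughout I will use that $\tilde\rho$ is a group homomorphism $\Cl(V,g)^\times \to GL(\Cl(V,g))$ (immediate, since $\varepsilon$ is an algebra homomorphism), and that $\Pin_{p,q} \subseteq \Gamma(V,g)$ — each generating unit vector $v$ satisfies $\tilde\rho(v)(V)\subseteq V$ by Proposition \ref{prop:reflections}, and $\Gamma(V,g)$ is a subgroup.

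\emph{Surjectivity.} I would invoke the Cartan--Dieudonn\'e theorem: every $T \in O(p,q)$ is a product $R_{w_1}\cdots R_{w_k}$ of reflections in anisotropic hyperplanes, with $k \le n$, and then $\det T = (-1)^k$. Replacing each $w_i$ by $w_i/\sqrt{|g(w_i,w_i)|}$ leaves $R_{w_i}$ unchanged and arranges $g(w_i,w_i) = \pm 1$; by Proposition \ref{prop:reflections}, $R_{w_i} = \tilde\rho(w_i)$, so $T = \tilde\rho(w_1\cdots w_k)$ with $w_1\cdots w_k \in \Pin_{p,q}$. If in addition $T \in SO(p,q)$, then $k$ is even, so $w_1\cdots w_k$ lies in $\Cl(V,g)_0$ and hence in $\Spin_{p,q}$.

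\emph{Kernel.} Since $\Pin_{p,q} \subseteq \Gamma(V,g)$ and $g$ is non-degenerate, Proposition \ref{prop:ker} gives $\ker\!\big(\tilde\rho|_{\Pin_{p,q}}\big) = \Pin_{p,q}\cap \R^\times$. To identify this with $\{\pm 1\}$ I would introduce the reversal anti-automorphism $\tau$ of $\Cl(V,g)$, characterized by $\tau|_V = \mathrm{id}$ and $\tau(ab) = \tau(b)\tau(a)$ (well defined by the universal property applied to $V \to \Cl(V,g)^{\mathrm{op}}$), together with the ``norm'' $N(x) := \tau(x)x$. A telescoping computation using $v_i^2 = -g(v_i,v_i)$ shows that for $x = v_1\cdots v_k \in \Pin_{p,q}$ one has $N(x) = \prod_{i=1}^k(-g(v_i,v_i)) \in \{\pm 1\}$. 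In particular, if $x = \lambda \in \R^\times$ then $\lambda^2 = N(\lambda) = \pm 1$, forcing $\lambda = \pm 1$. Conversely $1$ is the empty product and $-1 \in \Pin_{p,q}$ — e.g. $-1 = v^2$ for a unit vector with $g(v,v) = 1$, or $-1 = v\cdot(-v)$ for one with $g(v,v) = -1$ (possible whenever $n \ge 1$) — and both lie in $\Cl(V,g)_0$, so $\ker\!\big(\tilde\rho|_{\Spin_{p,q}}\big) = \{\pm 1\}$ as well. The first isomorphism theorem then assembles the image and kernel computations into the two short exact sequences.

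\emph{Main obstacle.} Given the earlier propositions the group-theoretic bookkeeping is routine; the two genuine inputs are (i) the Cartan--Dieudonn\'e theorem, which must be imported from the theory of quadratic forms, and (ii) the identity $\Pin_{p,q}\cap\R^\times = \{\pm 1\}$, which forces us to set up the reversal anti-automorphism and its norm and to check they behave as claimed. I expect (ii) to need the most care, along with the minor caveat that the statement should exclude the degenerate case $n = 0$.
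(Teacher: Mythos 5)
Your proof is correct and follows essentially the same route as the paper: Cartan--Dieudonn\'e plus rescaling and Proposition \ref{prop:reflections} for surjectivity, and Proposition \ref{prop:ker} combined with a norm computation (yours uses $N(x)=\tau(x)x$ rather than the paper's $N(\varphi)=\varphi\,\varepsilon(\varphi^T)$, an inessential variant since both equal $\lambda^2$ on scalars and $\pm1$ on products of unit vectors) to pin down the kernel as $\{\pm1\}$. You are in fact somewhat more careful than the paper, which leaves implicit that $\Pin_{p,q}\subseteq\Gamma(V,g)$ and that $-1$ actually lies in $\Spin_{p,q}$.
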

\begin{rem}Interestingly, if $k \neq \R$, a number of things can go wrong. If $\sqrt{-1} \in k$, then \footnote{This happens in $\C$ but also in finite fields, e.g. in $\Z_5$ we have $2^2 = 4 = -1$.} $\Z_2$ has to be replaced by $F = \pm 1, \pm \sqrt{-1}$. If there is $a \in k$ such that neither $a$ nor $-a$ have a square root, then the maps do not cover the orthogonal group. Also, the theorem is not true for degenerate bilinear forms, since the kernel of the twisted adjoint representation becomes larger.
\end{rem}
For the proof we have to introduce another technical tool, the \emph{norm mapping} on the Clifford algebra.
\begin{defn}[Transpose and Norm]
The \emph{transpose} ${}^T\colon \Cl(V,g) \to \Cl(V,g)$ is defined by $(v_1\cdots v_k)^T = v_k \cdots v_1$ and extending linearly.  
The \emph{norm} is the map $N\colon \Cl(V,g) \to \Cl(V,g)$ defined by \begin{equation}N(\varphi) = \varphi \cdot \epsilon(\varphi^T)\end{equation}
\end{defn}
The importance of the norm is explained by the following proposition. 
\begin{prop}
Suppose $g$ is non-degenerate, then $N \colon \Gamma(V,g) \to k^\times$ is a group homomorphism. 
\end{prop}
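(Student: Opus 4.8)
The statement breaks into two parts: that $N$ maps $\Gamma(V,g)$ into the central subgroup $k^\times \subset \Cl(V,g)^\times$, and that, once this is known, $N$ is multiplicative. I would begin by recording two elementary properties of the transpose. First, ${}^T$ is an \emph{anti}-automorphism of $\Cl(V,g)$: on $T(V)$ it is the reversal $v_1\otimes\cdots\otimes v_k \mapsto v_k\otimes\cdots\otimes v_1$, which fixes the symmetric generators $v\otimes v + g(v,v)1$ of $I_g$ and hence descends to $\Cl(V,g)$. Second, ${}^T$ commutes with the grading operator $\varepsilon$ (both act diagonally, up to sign, on the monomial basis $e_I$). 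Consequently $(\varepsilon(\varphi))^T = \varepsilon(\varphi^T)$ and $(\varphi^{-1})^T = (\varphi^T)^{-1}$, and $\varphi^T \in \Cl(V,g)^\times$ whenever $\varphi \in \Cl(V,g)^\times$; also $v^T = v$ for $v\in V$.

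The core of the proof is the claim that $N(\varphi)\in k^\times$ for $\varphi\in\Gamma(V,g)$. Fix such a $\varphi$. For $w\in V$ the element $\tilde{\rho}(\varphi)w = \varepsilon(\varphi)\,w\,\varphi^{-1}$ again lies in $V$, hence is fixed by ${}^T$; applying the transpose together with the identities above gives
\[
\varepsilon(\varphi)\,w\,\varphi^{-1} \;=\; (\varphi^T)^{-1}\,w\,\varepsilon(\varphi^T)\qquad\text{for all } w\in V.
\]
Setting $c := \varphi^T\varepsilon(\varphi)$, which is invertible as a product of invertibles, this rearranges to $c\,w = w\,\varepsilon(c)$ for all $w\in V$. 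Writing $c = c_0 + c_1$ in even and odd parts and comparing the even and odd components of both sides forces $c_0 w = w c_0$ and $c_1 w = -w c_1$; equivalently $\varepsilon(c)\,w\,c^{-1} = w$, i.e.\ $\tilde{\rho}(c)$ fixes $V$ pointwise. In particular $c\in\Gamma(V,g)$ and $c\in\ker\restr{\tilde{\rho}}{\Gamma(V,g)}$, so Proposition \ref{prop:ker} (this is the point at which non-degeneracy of $g$ is used) gives $c\in k^\times$. As a central scalar $c$ commutes with $w$, so comparison with $c\,w = w\,\varepsilon(c)$ forces $\varepsilon(c) = c$, i.e.\ $\varepsilon(\varphi^T)\varphi = \varphi^T\varepsilon(\varphi) = c$. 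Hence $\varepsilon(\varphi^T) = c\,\varphi^{-1}$ and $N(\varphi) = \varphi\,\varepsilon(\varphi^T) = \varphi c\varphi^{-1} = c\in k^\times$.

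With this established, multiplicativity is a one-line computation: for $\varphi,\psi\in\Gamma(V,g)$ we have $(\varphi\psi)^T = \psi^T\varphi^T$ and $\varepsilon(\psi^T\varphi^T) = \varepsilon(\psi^T)\varepsilon(\varphi^T)$, so
\[
N(\varphi\psi) = \varphi\psi\,\varepsilon(\psi^T)\,\varepsilon(\varphi^T) = \varphi\,\bigl(\psi\,\varepsilon(\psi^T)\bigr)\,\varepsilon(\varphi^T) = \varphi\,N(\psi)\,\varepsilon(\varphi^T) = N(\psi)\,\varphi\,\varepsilon(\varphi^T) = N(\psi)\,N(\varphi),
\]
the last two equalities using that $N(\psi)\in k^\times$ is central and $k^\times$ abelian. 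The only genuinely delicate step is showing that $c = \varphi^T\varepsilon(\varphi)$ is a scalar; the plan above reduces precisely that to Proposition \ref{prop:ker} via the twisted adjoint action, with everything else routine bookkeeping with the anti-automorphism ${}^T$ and the automorphism $\varepsilon$. (If one were willing to use the structural fact — not established in the excerpt — that $\Gamma(V,g)$ is generated by $k^\times$ and the anisotropic vectors, an alternative would be to compute $N(v) = v\,\varepsilon(v) = -v^2 = g(v,v)\in k^\times$ and $N(\lambda) = \lambda^2$ and check compatibility with products; I would nonetheless favour the self-contained argument.)
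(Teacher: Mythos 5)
Your proof is correct and follows essentially the same route as the paper's: apply the transpose to $\tilde{\rho}(\varphi)w \in V$, deduce that $\varphi^T\varepsilon(\varphi)$ lies in $\ker\restr{\tilde{\rho}}{\Gamma(V,g)} = k^\times$ via Proposition \ref{prop:ker}, and then get multiplicativity from the anti-automorphism property of ${}^T$. Your write-up is in fact somewhat more careful than the paper's at the final step of part one, where the paper's ``applying $\varepsilon$ and the transpose'' is replaced by an explicit identification $N(\varphi) = \varphi\, c\,\varphi^{-1} = c$.
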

\begin{proof}
First we have to check that for $\varphi \in \Gamma(V,g)$ we have $N(\varphi) \in k^\times$. By definition, we have 
$\varepsilon(\varphi)v\varphi^{-1} \in V$. The transpose acts trivially on $V$, hence we have
\[ \varepsilon(\varphi)v\varphi^{-1} = \left(\varepsilon(\varphi)v\varphi^{-1}\right)^T = (\varphi^T)^{-1}v\varepsilon(\varphi^T). \]
Hence we get 
$$v = \varphi^T\varepsilon(\varphi)v\varphi^{-1}\varepsilon(\varphi^T)^{-1} = \tilde{\rho}(\varepsilon(\varphi^T)\varphi)v.$$
Since for $\varphi \in \Gamma(V,g)$ we have also $\varphi^T,\varepsilon(\varphi) \in \Gamma(V,g)$, we have $\varepsilon(\varphi^T)\varphi \in \ker {\tilde{\rho}}\big|_{\Gamma(V,g)} = k^\times$. Applying $\varepsilon$ and the transpose, we conclude $N(\varphi) \in k^\times.$\\
Next we check that $N$ is a homomorphism. For this, we simply compute
$$N(\varphi\tau) = \varphi\tau\varepsilon(\tau^T)\varepsilon(\varphi^T) = \varphi N(\tau) \varepsilon(\varphi^T) = N(\varphi)N(\tau).$$
\end{proof}
\begin{proof}[Proof of theorem \ref{thm:exactsequence}]
By the theorem of Cartan-Dieudonn\'e, every element of $g \in O(p,q)$ is a product of reflections $g=r_{v_1}\cdots r_{v_k}$. Since we have $r_{tv} = r_v$, and over $\R$ we can rescale every vector to have length $\pm 1$, we conclude that the Pin group surjects on the orthogonal group. Similarly, the special orthogonal group is generated by even products of reflections, hence the Spin group surjects on the special orthogonal group.
Let $x \in \ker \tilde{\rho} \cap \Pin$. Then $x \in k^\times$ by proposition \ref{prop:ker}. Write $x=v_1\cdots v_k$. Then $N(x) = N(v_1)\cdots N(v_k) = \pm 1$. But for $x \in k^\times$, we have $N(x) = x^2$ and hence $$x \in \ker \tilde{\rho} \cap \Pin  \Leftrightarrow x \in k^\times, x^2 = 1 \Leftrightarrow x= \pm 1.$$
\end{proof}
\section{Classification of Clifford algebras over $\R$ and $\C$}
The classification is not strictly needed for the purpose of this course, but we include for two reasons. First, it is a very interesting topic in its own right, and linked to the deep phenomenon of Bott periodicity. Second, we hope that it serves to reduce confusion over complexifying real algebras and representations. \\
We have the following important fact about the tensor product of Clifford algebras. 
\begin{thm}\label{thm:decomposition}
Let $(V_1,q_1)$ and $(V_2,q_2)$ be $k$-vector spaces with symmetric bilinear forms. Denote $(V,g) = (V_1\oplus V_2,g_1\oplus g_2)$ the orthogonal sum of $V_1,V_2$. Then the graded tensor product of $\Cl(V_1,g_1)$ and $\Cl(V_2,g_2)$ satisfies 
$$\Cl(V_1,g_1) \hat{\otimes} \Cl(V_2,g_2) = \Cl(V,g).$$
\end{thm}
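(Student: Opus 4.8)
The plan is to invoke the universal property of the Clifford algebra to build a canonical algebra map out of $\Cl(V,g)$, and then to see that it is bijective by a dimension count.

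First I would define the linear map
\[
f\colon V = V_1\oplus V_2 \longrightarrow \Cl(V_1,g_1)\,\hat{\otimes}\,\Cl(V_2,g_2), \qquad f(v_1+v_2) := v_1\hat{\otimes}1 + 1\hat{\otimes}v_2,
\]
and check that $f(v)^2 = -g(v,v)\cdot(1\hat{\otimes}1)$ for every $v\in V$. This is the step where the \emph{graded} multiplication on $\hat{\otimes}$ does the work: the elements $v_1\in\Cl(V_1,g_1)$ and $v_2\in\Cl(V_2,g_2)$ are odd, so the Koszul sign rule gives $(1\hat{\otimes}v_2)(v_1\hat{\otimes}1) = (-1)^{|v_1||v_2|}\,v_1\hat{\otimes}v_2 = -\,v_1\hat{\otimes}v_2$, while $(v_1\hat{\otimes}1)(1\hat{\otimes}v_2) = v_1\hat{\otimes}v_2$. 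Hence the two cross terms in $f(v_1+v_2)^2$ cancel, and since $(v_1\hat{\otimes}1)^2 = v_1^2\hat{\otimes}1 = -g_1(v_1,v_1)(1\hat{\otimes}1)$ and likewise in the second slot, orthogonality of $V_1$ and $V_2$ gives $f(v_1+v_2)^2 = -\bigl(g_1(v_1,v_1)+g_2(v_2,v_2)\bigr)(1\hat{\otimes}1) = -g(v_1+v_2,v_1+v_2)(1\hat{\otimes}1)$. (With the ordinary tensor product the cross terms would instead add up to $2\,v_1\otimes v_2$, which is precisely why the grading is needed.)

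By the universal property of $\Cl(V,g)$, the map $f$ extends uniquely to an algebra homomorphism $\tilde f\colon \Cl(V,g)\to\Cl(V_1,g_1)\hat{\otimes}\Cl(V_2,g_2)$; since $f$ lands in the odd part, $\tilde f$ is even, i.e.\ a morphism of superalgebras. The image of $\tilde f$ is a subalgebra containing every $v_1\hat{\otimes}1$ and every $1\hat{\otimes}v_2$, and these generate the target, so $\tilde f$ is surjective. By the basis proposition, $\dim\Cl(V,g) = 2^{\dim V} = 2^{\dim V_1}\cdot 2^{\dim V_2}$, which equals $\dim\Cl(V_1,g_1)\cdot\dim\Cl(V_2,g_2) = \dim\bigl(\Cl(V_1,g_1)\hat{\otimes}\Cl(V_2,g_2)\bigr)$; a surjective linear map between finite-dimensional spaces of equal dimension is an isomorphism, so $\tilde f$ is an isomorphism of superalgebras.

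The only genuinely delicate point is the sign bookkeeping in the first step: one must line up the graded-multiplication convention so that the cross terms cancel rather than double. After that, the statement is a formal consequence of the universal property together with the already-established identity $\dim\Cl(V,g) = 2^{\dim V}$; one could equally well prove injectivity of $\tilde f$ by hand using the ordered-monomial basis, but the dimension count is shorter.
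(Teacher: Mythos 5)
Your proposal is correct and follows exactly the route of the paper's proof: the same map $f(v_1+v_2)=v_1\hat{\otimes}1+1\hat{\otimes}v_2$, the same appeal to the universal property, with bijectivity from surjectivity onto generators plus the dimension count $2^{\dim V_1}\cdot 2^{\dim V_2}=2^{\dim V}$. In fact you supply two details the paper leaves implicit (the Koszul-sign verification of the Clifford relation, which the paper relegates to ``check it,'' and the argument that the resulting homomorphism is an isomorphism), so your write-up is a completed version of the same argument.
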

\begin{proof}
Since $V = V_1 \oplus V_2$, any $v \in V$ decomposes as $v = v_1 + v_2$ with $v_i \in V_i$. The map $f \colon V \to  \Cl(V_1,g_1) \hat{\otimes} \Cl(V_2,g_2)$ defined by $f(v) = v_1 \hat{\otimes} 1 + 1 \hat{\otimes} v_2$ satisfies $f(v)f(v) = -g(v,v)1$ (check it), hence by universality $f$ extends to a algebra map $\Cl(V,g) \to \Cl(V_1,g_1) \hat{\otimes} \Cl(V_2,g_2)$. 
\end{proof}
\subsection{Classification of real algebras}
Recall the standard Clifford algebra $\Cl_{p,q}=\Cl(\R^{p+q},g_{p,q})$, where $g_{p,q}$ is the standard metric of signature $(p,q)$. The following is an immediate consequence of theorem \ref{thm:decomposition}. 
\begin{cor}
The standard Clifford algebra $\Cl_{p,q}$ is isomorphic to a tensor product 
\begin{equation}
\Cl_{r,s} \cong \hat{\otimes}^r \Cl_1 \hat{\bigotimes} \hat{\otimes}^s\Cl_{0,1}.
\end{equation}
\end{cor}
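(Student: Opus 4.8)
The plan is to realize $\Cl_{r,s}$ as the Clifford algebra of an orthogonal direct sum of one-dimensional quadratic spaces and then feed this into Theorem \ref{thm:decomposition} repeatedly. First I would take the standard basis $e_1,\dots,e_{r+s}$ of $\R^{r+s}$, so that with respect to $g_{r,s}$ we have $g_{r,s}(e_i,e_j)=0$ for $i\neq j$, while $g_{r,s}(e_i,e_i)=+1$ for $1\le i\le r$ and $g_{r,s}(e_i,e_i)=-1$ for $r<i\le r+s$. This exhibits $(\R^{r+s},g_{r,s})$ as the orthogonal direct sum $\bigoplus_{i=1}^{r}\R e_i\ \oplus\ \bigoplus_{i=r+1}^{r+s}\R e_i$, each summand $\R e_i$ carrying the restriction of $g_{r,s}$. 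By functoriality of the Clifford construction under isometries, the one-dimensional quadratic space $(\R e_i,g_{r,s}|_{\R e_i})$ is isometric to $\R^{1,0}$ when $i\le r$ and to $\R^{0,1}$ when $i>r$, so $\Cl(\R e_i)\cong\Cl_1$ in the first case and $\Cl(\R e_i)\cong\Cl_{0,1}$ in the second.

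Next I would iterate Theorem \ref{thm:decomposition}, say by induction on $n=r+s$. The base case $n=1$ is one of the identifications just made. For the inductive step, applying the theorem with $V_1=\R e_1$ and $V_2=\bigoplus_{i\ge 2}\R e_i$ gives $\Cl_{r,s}\cong\Cl(\R e_1)\hat{\otimes}\Cl\big(\bigoplus_{i\ge 2}\R e_i\big)$, and the second factor is a Clifford algebra of signature $(r-1,s)$ or $(r,s-1)$, to which the inductive hypothesis applies. Unwinding and using associativity of the graded tensor product yields $\Cl_{r,s}\cong\Cl(\R e_1)\hat{\otimes}\cdots\hat{\otimes}\Cl(\R e_{r+s})$; substituting the identifications of the previous paragraph gives exactly $\hat{\otimes}^r\Cl_1\ \hat{\bigotimes}\ \hat{\otimes}^s\Cl_{0,1}$.

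I do not expect a genuine obstacle here. The only points deserving a word of care are that the graded tensor product of superalgebras is associative, so the iterated product is unambiguous, and that the isomorphism in Theorem \ref{thm:decomposition} is natural enough to be composed across the inductive step — both follow directly from the universal property of $\Cl(V,g)$ used in that proof. One should also observe that the grading is preserved throughout, since the isometries inducing the identifications and the map of Theorem \ref{thm:decomposition} are all even.
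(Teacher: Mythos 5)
Your proposal is correct and is exactly the argument the paper intends: the text states the corollary as an "immediate consequence" of Theorem \ref{thm:decomposition} without further proof, and your induction on $r+s$ using the orthogonal splitting into one-dimensional subspaces, together with associativity of the graded tensor product, is the standard way to spell that out.
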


\begin{thm}\label{thm:Clifford_Iso}
There is an algebra isomorphism 
\begin{equation}
\Cl_{p,q} \cong \Cl_{p+1,q}^0.
\end{equation}
\end{thm}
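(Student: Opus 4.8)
The plan is to exhibit the isomorphism explicitly using the universal property of the Clifford algebra, and then to prove bijectivity by a dimension count together with a surjectivity argument. Fix an orthonormal basis $e_0, e_1, \dots, e_n$ of $\R^{p+1,q}$ with $n = p+q$, ordered so that $e_0, \dots, e_p$ span the positive part (hence $e_i^2 = -1$ in $\Cl_{p+1,q}$) and $e_{p+1}, \dots, e_n$ the negative part ($e_i^2 = +1$); here $e_0$ is the adjoined vector. Let $f_1, \dots, f_n$ be the correspondingly ordered orthonormal basis of $\R^{p,q}$, so that $g_{p,q}(f_i,f_i) = g_{p+1,q}(e_i,e_i)$ for $i \ge 1$. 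I would define the linear map $F\colon \R^{p,q} \to \Cl_{p+1,q}$ by $F(f_i) = e_i e_0$. Writing $v = \sum_i c_i f_i$ and $w = \sum_i c_i e_i$, so that $F(v) = w e_0$, and using that $e_0$ anticommutes with every $e_i$ ($i \ge 1$) together with $e_0^2 = -1$, one computes $F(v)^2 = -w^2 e_0^2 = w^2 = \sum_i c_i^2 e_i^2 = -g_{p,q}(v,v)\,1$. Hence by universality $F$ extends uniquely to an algebra homomorphism $\widetilde{F}\colon \Cl_{p,q} \to \Cl_{p+1,q}$. Each $F(f_i) = e_i e_0$ is a product of two generators and so lies in the even subalgebra $\Cl_{p+1,q}^0$; since $\R^{p,q}$ generates $\Cl_{p,q}$, we may regard $\widetilde{F}$ as a map $\Cl_{p,q} \to \Cl_{p+1,q}^0$.

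Next I would compare dimensions. From the explicit basis of the Clifford algebra found above, $\dim \Cl_{p,q} = 2^{p+q}$. On the other hand the canonical isomorphism $\Cl_{p+1,q} \cong \bigwedge \R^{p+q+1}$ preserves the $\Z_2$-grading, and for $m \ge 1$ the even and odd parts of $\bigwedge \R^m$ each have dimension $2^{m-1}$; taking $m = p+q+1$ gives $\dim \Cl_{p+1,q}^0 = 2^{p+q}$. Therefore it suffices to show that $\widetilde{F}$ is surjective.

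For surjectivity, set $a_i := \widetilde{F}(f_i) = e_i e_0$; using $e_0 e_j = -e_j e_0$ and $e_0^2 = -1$ one gets $a_i a_j = e_i e_j$ for all $i \ne j$ in $\{1, \dots, n\}$. The algebra $\Cl_{p+1,q}^0$ is spanned by the monomials $e_K = e_{k_1}\cdots e_{k_{2m}}$ with $0 \le k_1 < \dots < k_{2m} \le n$. If $0 \notin K$, then $e_K = (e_{k_1}e_{k_2})(e_{k_3}e_{k_4})\cdots = (a_{k_1}a_{k_2})(a_{k_3}a_{k_4})\cdots \in \mathrm{im}\,\widetilde{F}$. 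If $k_1 = 0$, then $e_K = (e_0 e_{k_2})(e_{k_3}\cdots e_{k_{2m}}) = -a_{k_2}\,(e_{k_3}\cdots e_{k_{2m}})$, and the last factor is a product of an even number of $e_i$ with $i \ge 1$, hence in $\mathrm{im}\,\widetilde{F}$ by the previous case (with $K = \emptyset$ and $|K| = 2$ handled directly). Thus every basis monomial of $\Cl_{p+1,q}^0$ lies in the image, so $\widetilde{F}$ is surjective and therefore an isomorphism.

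The one place that needs care is the sign bookkeeping: verifying $F(v)^2 = -g_{p,q}(v,v)1$ and $a_i a_j = e_i e_j$. Both hinge on the convention $v^2 = -g(v,v)$, which forces $e_0^2 = -1$ for the adjoined \emph{positive} direction — and this is exactly why it is the even part of $\Cl_{p+1,q}$, rather than that of $\Cl_{p,q+1}$, which reproduces $\Cl_{p,q}$. Once the signs are in order, universality supplies the homomorphism and the rest is routine. Alternatively, one could deduce the result from Theorem \ref{thm:decomposition}, identifying $\Cl_{p+1,q}$ with $\Cl_{p,q}\,\hat{\otimes}\,\Cl_{1,0}$ and checking that the even part of this graded tensor product is isomorphic to $\Cl_{p,q}$.
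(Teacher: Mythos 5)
Your proof is correct and follows essentially the same route as the notes: define a linear map $f_i \mapsto e_i e_0$ into the even subalgebra, verify the Clifford relation, and invoke the universal property of $\Cl_{p,q}$. In fact you go further than the text, which stops once the algebra homomorphism is obtained; your dimension count together with the explicit surjectivity argument (writing each even monomial, with or without $e_0$, as a product of the elements $a_i = e_i e_0$) supplies the bijectivity step that the notes leave to the reader.
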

\begin{proof}
Again we use the universal property of Clifford algebras. Choose a standard basis $e_0,\ldots, e_{p}, e_{p+1},\ldots,e_{p+q}$ of $\R^{p+1,q}$, i.e. $g_{p+1,q}(e_i,e_i)= 1$ for $0 \leq i \leq r$ and $g_{p+1,q}(e_i,e_i)= -1$ for $r+1 \leq i \leq r+q$. Embed $\R^{p,q}$ by dropping $e_0$ and define $f\colon \R^n \to \Cl^0_{p+1,q}$ by $f(e_i) = e_0e_i$ for $i \geq 1$. Observe that $f(e_i)^2 = g_{p,q}(e_i,e_i)$ (Computation). Hence $f$ extends to an algebra morphism $\Cl_{p,q} \to \Cl_{p+1,q}$.  
\end{proof}
To classify the Clifford algebras over $\R$ we revert to the usual (ungraded) tensor product. This will allow us as identifying Clifford algebras as tensor product of known algebras. We will only sketch the proofs of the classification, for details the reader is referred to \cite{Lawson1990}.
The first observation is the following. 
\begin{thm}\label{thm:clifford2}
There are isomorphims 
\begin{align}
\Cl_{n,0} \otimes \Cl_{0,2} &\cong \Cl_{0,n+2} \\
\Cl_{0,n} \otimes \Cl_{2,0} &\cong \Cl_{n+2,0} \\
\Cl_{p,q} \otimes \Cl_{1,1} &\cong \Cl_{p+1,q+1} 
\end{align}
\end{thm}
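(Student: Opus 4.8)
The plan is to prove all three isomorphisms by the same device: in each case exhibit a linear map out of the appropriate $\R^{p',q'}$ into the (ungraded!) tensor product that satisfies the defining Clifford relation, extend it by the universal property of $\Cl$, and check it is an isomorphism by surjectivity together with a dimension count. The trick that makes the \emph{ungraded} tensor product work here — unlike the graded one of Theorem~\ref{thm:decomposition}; the ungraded product is genuinely different, e.g. $\Cl_1\otimes\Cl_1\cong\C\otimes_\R\C\cong\C\oplus\C$ is commutative — is to use the ``volume element'' of the two-dimensional factor to soak up the grading sign. Writing $\delta_1,\delta_2$ for the standard generators of the relevant two-dimensional algebra and $\omega := \delta_1\delta_2$, a one-line computation from the Clifford relation gives $\omega^2 = -\delta_1^2\,\delta_2^2$ and $\omega\delta_j = -\delta_j\omega$; hence $\omega^2 = -1$ in $\Cl_{0,2}$ and in $\Cl_{2,0}$, while $\omega^2 = +1$ in $\Cl_{1,1}$.

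For the first isomorphism I would let $\epsilon_1,\dots,\epsilon_n$ be the standard generators of $\Cl_{n,0}$ (so $\epsilon_i^2 = -1$) and $e_1,\dots,e_{n+2}$ those of $\Cl_{0,n+2}$ (so $e_k^2 = +1$), and define $f\colon \R^{0,n+2}\to\Cl_{n,0}\otimes\Cl_{0,2}$ by $f(e_i) = \epsilon_i\otimes\omega$ for $i\le n$ and $f(e_{n+j}) = 1\otimes\delta_j$ for $j=1,2$. Using $\omega^2=-1$ one gets $f(e_i)^2 = (-1)(-1) = 1$ and $f(e_{n+j})^2 = 1$, and that the $f(e_k)$ pairwise anticommute — the cross term $f(e_i)f(e_{n+j})$ anticommutes precisely because $\omega\delta_j = -\delta_j\omega$ — so $f(v)^2 = -g_{0,n+2}(v,v)1$ and $f$ extends to an algebra map $\tilde f$. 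For surjectivity, $1\otimes\delta_1$ and $1\otimes\delta_2$ lie in the image, hence so does $1\otimes\omega$, hence $(\epsilon_i\otimes\omega)(1\otimes\omega) = -\,\epsilon_i\otimes 1$; these together generate the target, and since $\dim\Cl_{0,n+2} = 2^{n+2} = 2^n\cdot 2^2 = \dim(\Cl_{n,0}\otimes\Cl_{0,2})$, $\tilde f$ is an isomorphism.

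The other two go through verbatim after bookkeeping the signs. For the second, $\Cl_{0,n}$ has $\epsilon_i^2 = +1$ and $\Cl_{2,0}$ has $\omega^2 = -1$, so $\epsilon_i\otimes\omega$ now squares to $-1$ and $1\otimes\delta_j$ squares to $-1$, matching all $n+2$ generators of $\Cl_{n+2,0}$. For the third, $\Cl_{p,q}$ has generators with $\epsilon_i^2 = -1$ for $i\le p$ and $\epsilon_i^2 = +1$ for $i>p$, and $\Cl_{1,1}$ has $\omega^2 = +1$, so $\epsilon_i\otimes\omega$ keeps the square of $\epsilon_i$ while $1\otimes\delta_1$ squares to $-1$ and $1\otimes\delta_2$ to $+1$; listing the $p+1$ elements of square $-1$ first, these are exactly a system of standard generators of $\Cl_{p+1,q+1}$. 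In both cases the surjectivity-plus-dimension argument is identical.

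I do not expect any real obstacle: the content is the volume-element observation $\omega^2 = \mp 1$, and the rest is routine. The only place to be careful is the bookkeeping — making sure in each of the three cases that the images $\epsilon_i\otimes\omega$ and $1\otimes\delta_j$ land in the part of the target algebra with the matching sign of the square, so that they genuinely form a system of standard generators of the claimed $\Cl_{p',q'}$.
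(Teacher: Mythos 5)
Your proposal is correct and follows essentially the same route as the paper: in each case you map the generators of the target Clifford algebra to $\epsilon_i\otimes\omega$ and $1\otimes\delta_j$ (the paper writes $\omega$ as $e_1''e_2''$), verify the Clifford relation using $\omega^2=\mp 1$ and $\omega\delta_j=-\delta_j\omega$, extend by universality, and conclude by surjectivity plus a dimension count. Your sign bookkeeping in all three cases checks out against the paper's convention $v^2=-g(v,v)$.
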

\begin{proof}
The idea of the proof is very similar to proofs we have seen before, and uses again the universal property of Clifford algebras. To prove e.g. the first one defines a map 
$\R^{n+2} \to \Cl_n \otimes \Cl_{0,2}$ by choosing orthonormal bases $e_i,e_i', e_i''$ of $\R^{n+2},\R^n,\R^2$ and defining $f(e_i) = e_i' \otimes e_1''e_2''$ for $1\leq i \leq n$ and $f(e_{n+1}) = 1\otimes e_1''$, $f(e_{n+2}) = 1\otimes e_2''$. One then checks easily that this map satisfies the Clifford relation: For $1\leq i \leq n$ we have 
\begin{align*}
f(e_i)^2 = (e'_i \otimes e_1''e_2'')^2 = (e_i')^2 \otimes (e_1''e_2''e_1''e_2'') = (-1) \otimes (-1) = 1,
\end{align*}
and of course this holds also for $e_{n+1}$ and $e_{n+2}$. Hence $f$ extends to an algebra homomorphism $f\colon \Cl_{0,n+2} \to \Cl_{n,0} \otimes \Cl_{0,2}$. Next, we note that the algebra homomorphism is surjective, since it contains all generators in its image: Clearly it contains the generators $1 \otimes e_i''$, $i=1,2$. On the other hand, we have $e_i' \otimes 1 = f(e_i)(1 \otimes e_2'')(1\otimes e_1'')$, hence also the generators $e_i' \otimes 1$ are contained. Since the two algebras have the same dimension ($2^{n+2} = 2^n\cdot 2^2$), we conclude they are isomorphic. The second case is very similar to the first one. For the last case, let $e_1,\ldots,e_{p+1},\varepsilon_1,\ldots, \varepsilon_{q+1} $ be a basis of $\R^{p+1,q+1}$ satisfying $g(e_i,e_j) = \delta_{ij}, g(\varepsilon_i,\varepsilon_j) = -\delta_{ij}$ and $g(e_j,\varepsilon_k) = 0$, and let $e_i',\varepsilon'$ and $e_1'',\varepsilon_1''$ be similar bases of $\R^{p,q}$ and $\R^{1,1}$ respectively. Then one defines the map $f\colon\R^{p+1,q+1} \to \Cl_{p,q}\otimes \Cl_{1,1}$ by 
\begin{align}
f(e_i) &= \begin{cases} e_i' \otimes e_1''\varepsilon'' & 1 \leq i \leq p \\ 1 \otimes e_1'' &i = p+1 \end{cases} \\
f(\varepsilon_i) &= \begin{cases} \varepsilon_i' \otimes e_1''\varepsilon'' & 1 \leq i \leq p \\ 1 \otimes \varepsilon_1'' & i=q+1 \end{cases}
\end{align}
and proceed as before (notice that $(e_1''\varepsilon_1'')^2 =1)$). 
\end{proof}
This means that all Clifford algebras can be expressed as tensor products of $\Cl_{1,0}, \Cl_{2,0},\Cl_{0,1},\Cl_{0,2}$ and $\Cl_{1,1}$. We summarize those in the next proposition. 
\begin{prop}\label{prop:simplecliffs}
We have  (all isomorphisms are isomorphisms of $\R$-algebras)
\begin{align*}
\Cl_{1,0} &\cong \C \\
\Cl_{2,0} &\cong \HH \\
\Cl_{0,1} &\cong \R \oplus \R \\
\Cl_{0,2} &\cong M_2(\R)  \\
\Cl_{1,1} &\cong M_2(\R)
\end{align*}
We denote by $M_n(k)$  the algebra of $n \times n$ matrices over $k$.
\end{prop}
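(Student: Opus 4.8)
The plan is to establish each of the five isomorphisms in Proposition~\ref{prop:simplecliffs} directly, in each case by exhibiting an explicit linear map $f$ from the generating vector space into the target $\R$-algebra, checking that it satisfies the Clifford relation $f(v)^2 = -g(v,v)1$, invoking the universal property to get an algebra homomorphism out of the Clifford algebra, and then showing this homomorphism is an isomorphism by a dimension count (both sides have dimension $2^n$ over $\R$, so surjectivity suffices, and surjectivity follows once the image contains a generating set of the target).

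First I would do $\Cl_{1,0}$: here $V = \R$ with $g(e_1,e_1)=1$, so we need $f(e_1)^2 = -1$; sending $e_1 \mapsto \ii \in \C$ works, and since $\{1,\ii\}$ spans $\C$ over $\R$ and $\dim_\R \Cl_{1,0} = 2$, this is an isomorphism. (Note this is essentially Example~3 in the excerpt, since $\Cl_1 = \Cl_{1,0}$.) Next, $\Cl_{0,1}$: here $g(e_1,e_1) = -1$, so we need $f(e_1)^2 = +1$; sending $e_1 \mapsto (1,-1) \in \R\oplus\R$ works since $(1,-1)^2 = (1,1) = 1_{\R\oplus\R}$, and $\{(1,1),(1,-1)\}$ spans $\R\oplus\R$. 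For $\Cl_{2,0} \cong \HH$, take $e_1 \mapsto \ii$, $e_2 \mapsto \mathrm{j}$ in the quaternions; then $e_1^2 = e_2^2 = -1$ and $e_1 e_2 = -e_2 e_1$ are exactly the quaternion relations, $e_1 e_2 \mapsto \mathrm{k}$, and $\{1,\ii,\mathrm{j},\mathrm{k}\}$ spans $\HH$, matching $\dim_\R \Cl_{2,0} = 4$. For $\Cl_{0,2} \cong M_2(\R)$, I need two anticommuting real $2\times2$ matrices each squaring to $+1$; the matrices $\begin{pmatrix} 1 & 0 \\ 0 & -1\end{pmatrix}$ and $\begin{pmatrix} 0 & 1 \\ 1 & 0\end{pmatrix}$ do the job, their product is $\begin{pmatrix} 0 & 1 \\ -1 & 0\end{pmatrix}$, and together with the identity these four matrices are linearly independent hence span $M_2(\R)$. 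Finally $\Cl_{1,1} \cong M_2(\R)$: take one generator with square $+1$ and one with square $-1$ that anticommute, e.g. $\begin{pmatrix} 1 & 0 \\ 0 & -1 \end{pmatrix}$ and $\begin{pmatrix} 0 & 1 \\ -1 & 0\end{pmatrix}$; again a dimension count finishes the argument.

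The arguments are all of the same routine flavour, so there is no single hard step; the only place that requires any care is finding the correct explicit matrices for the $M_2(\R)$ cases and verifying the signs in the Clifford relations (in particular keeping straight that signature $(0,2)$ forces $f(e_i)^2 = +1$ while $(2,0)$ forces $f(e_i)^2 = -1$, since $v^2 = -g(v,v)$). One could alternatively reduce the workload slightly by noting $\Cl_{0,1} \cong \Cl_1^0$-type identities or by using Theorem~\ref{thm:clifford2}, but since these five algebras are precisely the base cases feeding that theorem, the cleanest exposition is to verify them by hand as above.
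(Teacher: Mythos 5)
Your proposal is correct and follows essentially the same route as the paper's proof: identify generators and relations, map them to explicit elements ($\ii\in\C$; $\mathrm{i},\mathrm{j}\in\HH$; the same $2\times 2$ matrices for $\Cl_{0,2}$ and $\Cl_{1,1}$), check the Clifford relations with the correct signs $v^2=-g(v,v)$, and conclude by the universal property plus a dimension count. The only cosmetic difference is in the $\Cl_{0,1}$ case, where the paper exhibits the splitting via the idempotents $\tfrac12(1\pm v)$ while you write down the inverse map $e_1\mapsto(1,-1)$ directly; these are the same argument.
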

\begin{proof}
The proof goes by simply inspecting basis generators and relations. A basis of $\Cl_{1,0}$ is $1,v$ with relation $v^2 = -1$. For $\Cl_{2,0}$, we have a basis $1,v_1,v_2,v_{12}$. Denoting $i = v_1,j=v_2,k=v_1v_2$, we get $i^2=j^2=k^2 = -1$, $ij = k$, etc. 
$\Cl_{0,1}$ has a basis $1,v$ where $v^2 = +1$. Again we can look at the $\pm 1 $ eigenspaces of $v$, generated by $1+v$ and $1-v$. This gives a decomposition into commuting copies of $\R$. $\Cl_{0,2}$ has a generator $1,v_1,v_2$ satisfying $v_1^2 = v_2^2 = 1, v_1v_2 = -v_2v_1$. An isomorphism with $M_2(\R)$ is given by sending e.g.
$$1 \mapsto \begin{pmatrix}
1 & 0 \\
0 & 1
\end{pmatrix},
v_1 \mapsto \begin{pmatrix}
1 & 0 \\
0 & -1
\end{pmatrix},
v_2 \mapsto \begin{pmatrix}
0 & 1 \\
1 & 0
\end{pmatrix}
$$
One can check this satisfies the Clifford relations, use the universal property and observe that the resulting map is an isomorphism.
Finally, $\Cl_{1,1}$ is generated by $1,v_1,v_2$ with $v_1^2 = 1, v_2^2 =-1, v_1v_2 = -v_2v_1$. An isomorphism can be constructed via 
$$1 \mapsto \begin{pmatrix}
1 & 0 \\
0 & 1
\end{pmatrix},
v_1 \mapsto \begin{pmatrix}
1 & 0 \\
0 & -1
\end{pmatrix},
v_2 \mapsto \begin{pmatrix}
0 & 1 \\
-1 & 0
\end{pmatrix}
$$
\end{proof}

In particular, all Clifford algebras can be constructed as tensor products of $\R,\C,\HH$ and matrix algebras. This is useful because of the following facts from linear algebra: 
\begin{prop}\label{prop:linalgfacts}
Let $A = \C,\HH$. 
\begin{enumerate}[i)]
\item $M_n(A) \cong M_n(\R) \otimes A$, $M_n(\R) \otimes M_k(\R)  \cong M_{nk}(\R)$
\item $\C \otimes_\R \C \cong \C \oplus \C$
\item $\C \otimes_\R \HH \cong M_2(\C)$
\item $\HH \otimes_\R \HH \cong M_4(\R)$
\end{enumerate}
\end{prop}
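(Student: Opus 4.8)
The plan is to establish each of the four isomorphisms by the same recipe: construct an explicit homomorphism of $\R$-algebras into the target, and then check that it is bijective. In every case the two sides have the same real dimension ($\dim_\R\HH = 4$, $\dim_\R\C = 2$, $\dim_\R M_n(A) = n^2\dim_\R A$), so it is enough to verify surjectivity, or injectivity, on its own. For the statements involving a tensor product $A\otimes_\R B$ I will use the universal property of the tensor product of $\R$-algebras: an $\R$-algebra map $A\otimes_\R B\to C$ is the same datum as a pair of $\R$-algebra maps $A\to C$ and $B\to C$ whose images commute in $C$.

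For i), the map $M_n(\R)\otimes_\R A\to M_n(A)$ sending $m\otimes a$ to the matrix obtained from $m$ by multiplying every entry by $a$ is multiplicative because the entries of $m$, being real, are central in $A$; it is onto since the matrix with $a$ in the $(i,j)$ slot is the image of $E_{ij}\otimes a$. For $M_n(\R)\otimes M_k(\R)\to M_{nk}(\R)$ I would take the Kronecker product, indexing the target by pairs of indices and sending $E_{ij}\otimes E_{ab}\mapsto E_{(i,a),(j,b)}$; multiplicativity and bijectivity are immediate on these basis elements. For ii), I would identify $\C = \R[x]/(x^2+1)$, so that $\C\otimes_\R\C\cong\C[x]/(x^2+1)$, and then apply the Chinese Remainder Theorem to the factorization $x^2+1 = (x-\ii)(x+\ii)$ over $\C$, which gives $\C[x]/(x-\ii)\times\C[x]/(x+\ii)\cong\C\oplus\C$; alternatively one checks directly that $z\otimes w\mapsto(zw,\,z\bar w)$ is a surjective $\R$-algebra homomorphism and concludes by the dimension count.

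For iii), I would combine the standard representation $\varphi\colon\HH\to M_2(\C)$ of the quaternions with the central embedding $\C\hookrightarrow M_2(\C)$, $z\mapsto z\,\mathrm{id}$. Since scalar matrices are central, the two images commute, so these assemble into an $\R$-algebra map $\C\otimes_\R\HH\to M_2(\C)$, $z\otimes q\mapsto z\,\varphi(q)$; both sides have real dimension $8$, and the map is surjective because already the $\C$-linear span of $\varphi$ applied to the standard basis of $\HH$ is all of $M_2(\C)$. For iv), I would let $\HH\otimes_\R\HH$ act $\R$-linearly on $\HH\cong\R^4$ by $(p\otimes q)\cdot x := p\,x\,\bar q$. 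The conjugation is placed exactly so that $(p_1\otimes q_1)(p_2\otimes q_2) = p_1p_2\otimes q_1q_2$ sends $x$ to $p_1p_2\,x\,\bar q_2\bar q_1$, which is the composite of the action of $p_2\otimes q_2$ followed by that of $p_1\otimes q_1$; thus we obtain an $\R$-algebra homomorphism $\HH\otimes_\R\HH\to\End_\R(\HH)\cong M_4(\R)$. Both sides have real dimension $16$, and the homomorphism is injective since its kernel is a two-sided ideal of $\HH\otimes_\R\HH$, which is simple (a tensor product of central simple $\R$-algebras); hence it is an isomorphism.

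The dimension bookkeeping and the appeals to the universal property are routine. The real work is concentrated in iii) and iv): first, fixing the conventions precisely — which complex structure on $\HH$, left versus right multiplication, and where the conjugation sits — so that the constructed maps are genuine algebra homomorphisms (rather than anti-homomorphisms) with commuting images; and second, the surjectivity step in iv), where one either invokes the simplicity of $\HH\otimes_\R\HH$ or, to keep the argument elementary, checks by hand that the operators of left multiplication $L_p$ and right multiplication $R_q$ on $\HH$ jointly span $\End_\R(\HH)$.
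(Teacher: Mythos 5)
Your proof is correct. Note that the paper itself states Proposition \ref{prop:linalgfacts} without proof, offering it as a collection of standard linear-algebra facts, so there is no argument of the author's to compare against; your write-up supplies exactly what is omitted. The constructions you choose are the standard ones, and the delicate points are handled properly: the entries of a real matrix are central in $A$, so the map $m\otimes a\mapsto am$ in i) is genuinely multiplicative even for $A=\HH$; the conjugation in the action $(p\otimes q)\cdot x = p\,x\,\bar q$ in iv) is placed correctly to turn the right multiplication into a homomorphism rather than an anti-homomorphism; and the surjectivity/injectivity arguments (the $\C$-span of $\varphi(1),\varphi(i),\varphi(j),\varphi(k)$ being all of $M_2(\C)$ in iii), and simplicity of $\HH\otimes_\R\HH$ in iv)) are sound, with the elementary fallback of checking that the $L_p$ and $R_q$ span $\End_\R(\HH)$ available if you prefer not to invoke the theory of central simple algebras.
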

Using the two propositions above, one can express \emph{any} standard Clifford algebra over the reals as a tensor product of matrix algebras. For the definite Clifford algebras, this is usually summarized in the following table, which is produced using Theorem \ref{thm:clifford2} and Propositions \ref{prop:simplecliffs} and \ref{prop:linalgfacts}:
\begin{table}[h]
\centering
\begin{tabular}{c|c|c}
$n$ & $\Cl_{n,0}$ & $\Cl_{0,n}$ \\
\hline
$1$ & $\C$ & $\R \oplus \R$ \\
$2$ & $\HH$ & $M_2(\R)$ \\
$3$ & $\HH \oplus \HH $ & $M_2(\C)$ \\
$4$ & $M_2(\HH)$ & $M_2(\HH)$ \\
$5$ & $M_4(\C)$ & $M_2(\HH) \oplus M_2(\HH)$ \\
$6$ & $M_8(\R)$ & $M_4(\HH)$ \\
$7$ & $M_8(\R) \oplus M_8(\R)$ & $M_8(\C)$ \\
$8$ & $M_{16}(\R)$ & $M_{16}(\R)$
\end{tabular}
\caption{Clifford algebras $\Cl_{n,0}$ and $\Cl_{0,n}$ for $n \leq 8$.}
\end{table}
After that, the table repeats in the following sense. Theorem \ref{thm:clifford2} implies that $\Cl_{n+8,0} \cong \Cl_{n,0} \otimes \Cl_{8,0} \cong \Cl_{n,0} \otimes M_{16}(\R)$. Since all Clifford algebras are matrix algebras $M_k(A)$ (or direct sums thereof), we immediately get $M_k(A) \otimes M_{16}(\R)$. This is what people call the $8$-periodicity of real Clifford algebras. One can also read off the isomorphism type of the indefinite Clifford algebras by \ref{thm:clifford2}. 
\begin{rem}
If the bilinear form $g$ is degenerate, one can write $V$ as $(V,g) = (V',g') \oplus (W,0)$. Then one can use theorem \ref{thm:decomposition} to rewrite the corresponding Clifford algebra as the graded tensor product of a matrix algebra and the exterior algebra on $V$. 
\end{rem}
\subsection{Classification of complex Clifford algebras}
Over the complex numbers, the result is considerably simpler. 
First, let $(V,g)$ be a real vector space and denote by $(V_\C,g_\C)$  the complexification $V_\C = V \otimes_\R \C$ and the $g_\C$ the complex bilinear extension of $g$. Then, we observe that $\Cl(V_\C, g_\C) \cong \Cl(V,g)_\C = \Cl(V,g) \otimes \C$. Next, remember that over the complex numbers all non-degenerate symmetric bilinear forms are isometric. These two observations result in the following corollary of theorem \ref{thm:clifford2}: 
\begin{cor}\label{cor:ComplexPeriod}
Denote $\Cl^c_n$ the Clifford algebra on $\C_n$ with standard symmetric bilinear form. Then $\Cl^c_{n+2} \cong \Cl^c_n \otimes \Cl_2^c$.
\end{cor}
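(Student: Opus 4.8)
The plan is to deduce this directly from the real classification, following the two observations recorded just before the statement.

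First I would check that $\Cl^c_n$ is unambiguously defined, i.e. that $\Cl_{p,q}\otimes_\R\C$ depends only on $n = p+q$. This is immediate from the identity $\Cl(V_\C,g_\C)\cong\Cl(V,g)\otimes_\R\C$ together with the fact that $(g_{p,q})_\C$ is a non-degenerate symmetric $\C$-bilinear form on $\C^n$ and all such forms are isometric; hence $\Cl_{p,q}\otimes_\R\C\cong\Cl(\C^n,g_\C)=:\Cl^c_n$ regardless of the signature $(p,q)$.

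Next I would take one of the isomorphisms of Theorem \ref{thm:clifford2}, say $\Cl_{0,n}\otimes_\R\Cl_{2,0}\cong\Cl_{n+2,0}$, and apply $-\otimes_\R\C$ to both sides. On the right this gives $\Cl^c_{n+2}$ by the previous paragraph. On the left I would use the standard base-change identity $(A\otimes_\R B)\otimes_\R\C\cong(A\otimes_\R\C)\otimes_\C(B\otimes_\R\C)$, valid for any real algebras $A,B$, to rewrite $(\Cl_{0,n}\otimes_\R\Cl_{2,0})\otimes_\R\C\cong(\Cl_{0,n}\otimes_\R\C)\otimes_\C(\Cl_{2,0}\otimes_\R\C)\cong\Cl^c_n\otimes_\C\Cl^c_2$, again invoking the first paragraph. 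Combining the two computations yields $\Cl^c_{n+2}\cong\Cl^c_n\otimes_\C\Cl^c_2$.

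There is no serious obstacle here: the argument is pure bookkeeping with extension of scalars and the invariance of complex quadratic forms under signature. The only point requiring a line of care is the base-change identity for tensor products, which is a routine fact. Alternatively, one could sidestep Theorem \ref{thm:clifford2} entirely and give a self-contained proof mimicking its proof: pick an orthonormal basis $e_1,\dots,e_{n+2}$ of $\C^{n+2}$, define $f\colon\C^{n+2}\to\Cl^c_n\otimes_\C\Cl^c_2$ by $f(e_i)=e_i'\otimes e_1''e_2''$ for $i\le n$ and $f(e_{n+1})=1\otimes e_1''$, $f(e_{n+2})=1\otimes e_2''$, check the Clifford relation (over $\C$ the awkward signs that forced the separate ``$\Cl_{0,2}$ versus $\Cl_{2,0}$'' cases in the real setting can be absorbed, since $\sqrt{-1}\in\C$), extend by the universal property, verify surjectivity on generators, and compare dimensions $2^{n+2}=2^n\cdot 2^2$.
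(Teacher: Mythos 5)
Your main argument is exactly the paper's proof: complexify one of the isomorphisms of Theorem \ref{thm:clifford2} and use the two preceding observations (complexification commutes with forming Clifford algebras, and all non-degenerate symmetric bilinear forms over $\C$ are isometric), with the base-change identity $(A\otimes_\R B)\otimes_\R\C\cong(A\otimes_\R\C)\otimes_\C(B\otimes_\R\C)$ spelled out as a welcome extra detail. One caution on your alternative direct construction: as written, $f(e_i)=e_i'\otimes e_1''e_2''$ squares to $(-1)\otimes(-1)=+1$ rather than $-1$ for $i\le n$, so the factor $\ii$ you allude to is genuinely needed, i.e. $f(e_i)=\ii\,e_i'\otimes e_1''e_2''$, which is precisely the map of Exercise \ref{exc:iso}.
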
 
\begin{proof} 
This follows directly from the observations above applied to any of three isomorphisms in theorem \ref{thm:clifford2}.\end{proof}
\begin{exc}\label{exc:iso}
Prove Corollary \ref{cor:ComplexPeriod} directly by following the proof of Theorem \ref{thm:clifford2} for the following map $f\colon \C^{n+2}\to \Cl^c_{n}\otimes \Cl_{2}^c$: 

\begin{equation}
f(e_i) = \begin{cases} \ii e_i' \otimes e_1''e_2'' & 1 \leq i \leq n\\
1 \otimes e_{i-n}'' & i = n+1,n+2
\end{cases}
\end{equation}
where $e_i,e_i',e_i''$ are ONB of $\C^{n+2},\C^n,\C^2$ respectively. 
\end{exc}
Now we can classify the complex Clifford algebras in the same way we classified the real ones. The first two can be read off directly from complexifying proposition \ref{prop:simplecliffs}: 
\begin{prop}\label{prop:ComplexSimpleCliffs}
The first two complex Clifford algebras are given by 
\begin{align}
\Cl_1^c &\cong \C \oplus \C \\
\Cl_2^c &\cong M_2(\C)
\end{align}
\end{prop}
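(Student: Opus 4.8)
The plan is to exploit the observation made just before the statement, namely that complexification commutes with the formation of Clifford algebras: $\Cl_n^c = \Cl(\C^n, g_{\text{std}}) \cong \Cl(\R^n, g_{n,0}) \otimes_\R \C = \Cl_{n,0} \otimes_\R \C$. Once this is in hand, the two isomorphisms follow by feeding the known real algebras from Proposition \ref{prop:simplecliffs} into the linear-algebra facts of Proposition \ref{prop:linalgfacts}.

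Concretely, for $n=1$ I would write $\Cl_1^c \cong \Cl_{1,0} \otimes_\R \C \cong \C \otimes_\R \C$ using $\Cl_{1,0} \cong \C$, and then invoke Proposition \ref{prop:linalgfacts} (ii), $\C \otimes_\R \C \cong \C \oplus \C$, to conclude. For $n=2$ I would write $\Cl_2^c \cong \Cl_{2,0} \otimes_\R \C \cong \HH \otimes_\R \C$ using $\Cl_{2,0} \cong \HH$, and then apply Proposition \ref{prop:linalgfacts} (iii), $\C \otimes_\R \HH \cong M_2(\C)$. (As a sanity check one can instead start from $\Cl_{0,1} \cong \R \oplus \R$ and $\Cl_{0,2} \cong M_2(\R)$ and use parts (i) and the evident $\R$-linearity of $\otimes_\R \C$; both routes give the same answer, which is reassuring since over $\C$ signature is irrelevant.) A dimension count $\dim_\C \Cl_n^c = 2^n$ confirms $\dim_\C(\C \oplus \C) = 2$ and $\dim_\C M_2(\C) = 4$, matching $n=1,2$.

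There is essentially no obstacle here: the real work was already done in Theorem \ref{thm:clifford2}, Proposition \ref{prop:simplecliffs}, and Proposition \ref{prop:linalgfacts}. The only point requiring a word of justification is the claim $\Cl(V_\C, g_\C) \cong \Cl(V,g) \otimes_\R \C$, which I would justify by the universal property: the $\C$-linear map $V_\C \to \Cl(V,g)\otimes_\R\C$, $v \otimes \lambda \mapsto \iota(v)\otimes\lambda$, satisfies the Clifford relation for $g_\C$, hence extends to a $\C$-algebra map; it is surjective because its image contains $V \otimes 1$ and $1 \otimes \C$, and it is an isomorphism by comparing $\C$-dimensions ($2^n$ on both sides). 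If one prefers to avoid even this, an alternative is the fully explicit route: identify $\Cl_1^c = \C[x]/(x^2+1) \cong \C[x]/\big((x-\ii)(x+\ii)\big) \cong \C \oplus \C$ by the Chinese Remainder Theorem, and for $\Cl_2^c$ exhibit the map sending the generators $e_1, e_2$ (with $e_i^2 = -1$, $e_1 e_2 = -e_2 e_1$) to $\left(\begin{smallmatrix} \ii & 0 \\ 0 & -\ii \end{smallmatrix}\right)$ and $\left(\begin{smallmatrix} 0 & 1 \\ -1 & 0 \end{smallmatrix}\right)$, check the Clifford relation, use the universal property, and note surjectivity plus the dimension count $4 = 4$ forces an isomorphism.
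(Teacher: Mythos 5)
Your proposal is correct and follows exactly the route the paper intends: the text introduces the observation $\Cl(V_\C,g_\C)\cong\Cl(V,g)\otimes_\R\C$ immediately before the statement and says the result is ``read off directly from complexifying Proposition \ref{prop:simplecliffs}'' via Proposition \ref{prop:linalgfacts}, which is precisely your argument. Your extra justification of the complexification isomorphism and the explicit alternative via the Chinese Remainder Theorem and matrix generators are sound additions but not a different approach.
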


Hence the complex Clifford algebras are 2-periodic in the same sense the real Clifford algebras are 8-periodic. Let us explicitly write down $\Cl^c_n$ for all $n$: 
\begin{cor}
Let $n \in \N$. 
Then we have 
\begin{equation}
\Cl_n^c \cong \begin{cases} M_{2^k}(\C) \oplus M_{2^k}(\C) & n=2k+1 \text{ odd}\\ 
M_{2^{k}}(\C) & n=2k \text{ even} \end{cases}\label{eq:ComplexCliffClassification}
\end{equation}
\end{cor}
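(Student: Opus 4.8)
The plan is to prove the final corollary by induction on $n$, using the $2$-periodicity isomorphism $\Cl^c_{n+2} \cong \Cl^c_n \otimes \Cl^c_2$ from Corollary \ref{cor:ComplexPeriod} together with the base cases in Proposition \ref{prop:ComplexSimpleCliffs}. The two base cases needed are $n=1$ and $n=2$: we have $\Cl^c_1 \cong \C \oplus \C$ (which matches $M_{2^0}(\C) \oplus M_{2^0}(\C)$ with $k=0$) and $\Cl^c_2 \cong M_2(\C)$ (which matches $M_{2^1}(\C)$ with $k=1$). For completeness one may also note $\Cl^c_0 = \C = M_{2^0}(\C)$, handling the even case at $k=0$.

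For the inductive step I would treat the even and odd cases in parallel. Suppose the formula holds for some $n$. Then $\Cl^c_{n+2} \cong \Cl^c_n \otimes_\C M_2(\C)$ by Corollary \ref{cor:ComplexPeriod} and Proposition \ref{prop:ComplexSimpleCliffs}. In the even case $n = 2k$, the induction hypothesis gives $\Cl^c_n \cong M_{2^k}(\C)$, so $\Cl^c_{n+2} \cong M_{2^k}(\C) \otimes_\C M_2(\C) \cong M_{2^{k+1}}(\C)$, using the elementary fact that $M_a(\C) \otimes_\C M_b(\C) \cong M_{ab}(\C)$; since $n+2 = 2(k+1)$ this is exactly the claimed form. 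In the odd case $n = 2k+1$, the induction hypothesis gives $\Cl^c_n \cong M_{2^k}(\C) \oplus M_{2^k}(\C)$, and tensoring with $M_2(\C)$ distributes over the direct sum, yielding $\bigl(M_{2^k}(\C) \otimes M_2(\C)\bigr) \oplus \bigl(M_{2^k}(\C) \otimes M_2(\C)\bigr) \cong M_{2^{k+1}}(\C) \oplus M_{2^{k+1}}(\C)$, which matches the claim since $n+2 = 2(k+1)+1$.

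The only genuinely substantive ingredient is the linear-algebra identity $M_a(\C) \otimes_\C M_b(\C) \cong M_{ab}(\C)$, which is the complex analogue of the first item of Proposition \ref{prop:linalgfacts} and can be proved directly by sending an elementary tensor $E_{ij} \otimes E_{kl}$ to a matrix unit in $M_{ab}(\C)$ under a fixed identification $\C^a \otimes_\C \C^b \cong \C^{ab}$. Everything else is bookkeeping of the parity of $n$ and the exponent $k$. I expect no real obstacle; the mild care needed is just matching $n = 2k$ versus $n = 2k+1$ consistently through the induction so the exponents line up, and making sure the base of the induction covers both parities (hence the need for two base cases, or one base case plus $\Cl_0^c$).
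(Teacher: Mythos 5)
Your proposal is correct and follows the same route the paper intends: the corollary is presented as an immediate consequence of the $2$-periodicity in Corollary \ref{cor:ComplexPeriod} and the base cases of Proposition \ref{prop:ComplexSimpleCliffs}, which is exactly the induction you carry out. The only detail you supply explicitly, the identity $M_a(\C)\otimes_\C M_b(\C)\cong M_{ab}(\C)$, is the complex analogue of the first item of Proposition \ref{prop:linalgfacts} and is used implicitly in the paper as well.
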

\section{Representations}
We now want to study some representations of Clifford algebras. Since the Clifford algebra is naturally a superalgebra, we are interested in supermodules over $\Cl(V,g)$. Any supermodule defines, in particular, a representation of $\Cl(V,g)$. All the Clifford algebras are matrix algebras or direct sums of two matrix algebras, whose irreducible representations are well known: 
\begin{thm} \label{thm:irreps}
Let $A$ be one of  the $\R$-algebras $\R,\C$ or $\HH$. Then up to isomorphism there is a unique irreducible representation of the matrix algebra $M_n(A)$ and it is given by the action on $A^n$. The algebra $M_n(A) \oplus M_n(A)$ has two irreducible representations up to isomorphism, given by the vector representation of one factor and the trivial representation of the other factor. 
\end{thm}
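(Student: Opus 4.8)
The plan is to prove this by the elementary structure theory of modules over matrix algebras, carried out by hand so as to keep careful track of the ground rings; throughout, ``representation'' means a left module that is finite-dimensional over $\R$. The first step is to check that $A^n$, regarded as column vectors on which $M_n(A)$ acts on the left (and $A$ on the right), is irreducible: if $v \in A^n$ is nonzero, some coordinate $v_j$ is nonzero, hence invertible since $A$ is a division algebra, and multiplying $v$ by the matrices $v_j^{-1}E_{ij}$ produces every standard basis vector, so $M_n(A)\cdot v = A^n$.

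Next I would decompose $M_n(A)$, as a left module over itself, into the direct sum $C_1 \oplus \cdots \oplus C_n$ of its columns $C_j = \{a \in M_n(A) : a_{ik} = 0 \text{ for } k \neq j\}$, each of which is isomorphic to $A^n$ as a left $M_n(A)$-module via $a \mapsto (a_{1j},\dots,a_{nj})$. Uniqueness then follows from the standard quotient argument: given any irreducible $M_n(A)$-module $M$, choose $0 \neq m \in M$; then $a \mapsto am$ is a surjection $M_n(A) \to M$ of left modules, so its restriction to at least one column $C_j$ is nonzero. That restriction has image a nonzero submodule of the simple module $M$, hence all of $M$, and kernel a proper submodule of the simple module $C_j$, hence zero; so $C_j \cong M$, i.e. $M \cong A^n$.

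For the second statement, write $R = M_n(A) \oplus M_n(A)$ and let $e_1 = (1,0)$, $e_2 = (0,1)$ be its central idempotents, so $e_1 + e_2 = 1$ and $e_1 e_2 = 0$. For any $R$-module $M$ one has $M = e_1 M \oplus e_2 M$ as a sum of $R$-submodules; if $M$ is irreducible, one of the summands vanishes, say $e_2 M = 0$, and then $M$ is an irreducible module over $e_1 R \cong M_n(A)$ on which the second factor acts as zero, so by the first part $M \cong A^n$ with $R$ acting through its first projection. The symmetric case $e_1 M = 0$ gives the analogous module for the second factor, and the two are not isomorphic because $e_1$ acts as the identity on one and as zero on the other.

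The only genuinely delicate point is the bookkeeping of sides and scalars: because $A$ may be noncommutative (the case $A = \HH$) while everything happens over $\R$, one has to be consistent that $A^n$ is simultaneously a left $M_n(A)$-module and a right $A$-module, and one should avoid invoking Schur's lemma in its naive complex-coefficient form — the direct quotient argument above circumvents that issue, and is what I would actually write out.
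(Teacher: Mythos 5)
Your argument is correct, and it is considerably more than the paper provides: the paper's ``proof'' of this theorem is a one-line appeal to the simplicity of matrix algebras together with a citation to Etingof et al., whereas you actually carry out the standard module-theoretic argument. Your three steps — irreducibility of $A^n$ via invertibility of a nonzero coordinate in the division algebra $A$, the decomposition of $M_n(A)$ into its columns $C_j\cong A^n$ as left modules, and the quotient argument $C_j\twoheadrightarrow M$ forcing $C_j\cong M$ for any simple $M$ — are exactly the content hidden behind the phrase ``matrix algebras are simple,'' and the central-idempotent decomposition $M=e_1M\oplus e_2M$ is the standard way to handle the direct sum $M_n(A)\oplus M_n(A)$. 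Your closing remark about sides and scalars is well taken and is the one place where a careless write-up could go wrong: with $M_n(A)$ acting on column vectors from the left, the commuting scalars act on the right, so for $A=\HH$ the endomorphism algebra of the simple module is $\HH^{\mathrm{op}}$ rather than $\C$, and the naive Schur's lemma is indeed unavailable; your direct quotient argument avoids this entirely. The only cosmetic caveat is that the expression $v_j^{-1}E_{ij}$ should be read as the matrix with entry $v_j^{-1}$ in position $(i,j)$ (so that it sends $v$ to $e_i$ under left multiplication), which is clearly what you intend.
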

\begin{proof}
This follows from the fact that matrix algebras are simple. See e.g. the book by Etingof et al on representations of algebras. 
\end{proof}
\subsection{Representations of the complex Clifford algebra $\Cl_n^c$} As we have seen before, the complex Clifford algebra $\Cl_n^c$ is either a matrix algebra (for $n$ even) or two copies of a matrix algebra (for $n$ odd). 
Hence, there is exactly one irreducible representation of $\Cl_nc$ for $n$ even, and two irreducible representations for $n$ odd. The irreducible representations of the $\Cl_n^c$ are known as the \emph{complex spinors}.
\begin{defn}[Spin representation of $\Cl_n^c$] \label{def:complex_spin_rep_cl}
For $n = 2k$ even, the \emph{spin representation} of $\Cl_n^c$ is the unique irreducible representation of $\Cl_n^c$ of dimension $2^k$. The action is given by the isomorphism $\Cl_n^c \cong M_{2^k}(\C)$. \\
If $n = 2k+1$ is odd, then the two irreducible representations of dimension $2^k$ are called \emph{spin representations}.
\end{defn} It is desirable to have a better description that does not use the indirect isomorphism above. For even $n$ a nice description exists. 

Let $(V,g)$ be a $2k$-dimensional euclidean space (i.e. $g$ is positive definite). Let $J$ be a complex structure on $V$, i.e. an antisymmetric map $(g(v,Jw) = -g(Jv,w))$ with $J^2 = -1$. Then $J_\C \colon V_\C \to V_\C$ also squares to $-1$, hence 
\begin{equation}V_\C = V^{1,0} \oplus V^{0,1}\label{eq:decomposition}\end{equation} where $V^{1,0}$ (resp. $V^{0,1}$) denotes the $+\ii$ ($-\ii$) eigenspace of $J$. Conjugation on $\C$ extends to an antilinear map $V_\C \to V_\C, v \to \bar{v}$ which exchanges $V^{1,0}$ and $V^{0,1}$. 
\begin{prop}$g$ vanishes when restricted to either $V^{1,0}$ or $V^{0,1}$. 
\end{prop}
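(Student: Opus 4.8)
The plan is to reduce the statement to a one-line computation using only two inputs: the antisymmetry relation $g(Jv,w) = -g(v,Jw)$ (extended $\C$-bilinearly to $V_\C$, where I continue to write $g$ for this extension) and the defining eigenvalue property of the decomposition \eqref{eq:decomposition}. By symmetry between $V^{1,0}$ and $V^{0,1}$ it suffices to treat $V^{1,0}$; the other case is obtained by replacing $\ii$ with $-\ii$ throughout, or by noting that complex conjugation $v \mapsto \bar v$ is an (anti)isometry of $(V_\C, g)$ exchanging the two eigenspaces.

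First I would fix $v, w \in V^{1,0}$, so that $J_\C v = \ii v$ and $J_\C w = \ii w$. Since $J$ is $g$-antisymmetric and both $g$ and $J$ extend complex-(bi)linearly, the relation $g(J_\C v, w) = -g(v, J_\C w)$ holds on all of $V_\C$. Substituting the eigenvalue relations gives $\ii\, g(v,w) = -\ii\, g(v,w)$, hence $2\ii\, g(v,w) = 0$, so $g(v,w) = 0$. As $v,w \in V^{1,0}$ were arbitrary, $g$ vanishes identically on $V^{1,0}$, and the same argument (with eigenvalue $-\ii$) gives the vanishing on $V^{0,1}$.

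There is essentially no obstacle here; the only point requiring a word of care is the claim that the antisymmetry of $J$ and the eigenvalue identities survive the passage to the complexification. Both follow immediately because extension of a bilinear form and of a linear endomorphism to $V_\C = V \otimes_\R \C$ is functorial, so any polynomial identity among them that holds on $V$ holds on $V_\C$. Hence the computation above is legitimate, and the proposition follows. One could additionally remark, as a consistency check, that this forces $g$ to pair $V^{1,0}$ nondegenerately with $V^{0,1}$ (since $g$ is nondegenerate on $V_\C$), which is exactly the structure needed to build the spin representation on $\bigwedge V^{0,1}$ in the sequel.
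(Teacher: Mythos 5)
Your argument is exactly the paper's: extend $g$ and $J$ bilinearly to $V_\C$, use the eigenvalue relations $J_\C v = \ii v$, $J_\C w = \ii w$ together with the antisymmetry $g(Jv,w)=-g(v,Jw)$ to get $\ii g(v,w) = -\ii g(v,w)$, hence $g(v,w)=0$. Your added remarks on why the identities survive complexification and on the nondegenerate pairing between $V^{1,0}$ and $V^{0,1}$ are correct and harmless embellishments.
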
 
\begin{proof}To see this, consider $v,w \in V^{1,0}$. Then $Jv = \ii v, Jw = \ii w$. But then antisymmetry of $J$ implies that 
$$ \ii g(v,w) = g(\ii v, w) = g(Jv,w) = -g(v,Jw) = -g(v, \ii w) = -\ii g(v,w).$$
Hence $g(v,w) = 0.$
\end{proof}
\begin{prop}\label{prop:spinors}
The spinor representation of $\Cl(V_\C,g_\C)$ is isomorphic to the representation $S_V = \bigwedge V^{1,0}$ with action given by the formula

\begin{equation}
v.\alpha = \sqrt{2} (v^{1,0} \wedge \alpha - \iota_{v^{0,1}}\alpha)
\end{equation}
where $v = v^{1,0} + v^{0,1}$ is the decomposition \eqref{eq:decomposition} and the contraction is defined using $g_\C$.
\end{prop}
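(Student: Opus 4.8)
The plan is to use the universal property of the Clifford algebra to promote the stated formula to an algebra homomorphism, and then to identify that homomorphism with the isomorphism $\Cl(V_\C,g_\C)\cong\End_\C(S_V)$ coming from the classification of complex Clifford algebras.

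First I would define, for $v\in V_\C$ with decomposition $v=v^{1,0}+v^{0,1}$, the operator $c(v)\in\End_\C(\bigwedge V^{1,0})$ by $c(v)\alpha=\sqrt2\,(v^{1,0}\wedge\alpha-\iota_{v^{0,1}}\alpha)$. This is well defined because $g_\C$ restricts to a perfect pairing between $V^{0,1}$ and $V^{1,0}$ (it vanishes on each factor separately, by the preceding proposition), and $c$ is clearly linear in $v$. Writing $e(\xi)$ for exterior multiplication by a vector $\xi$ and $\iota_\eta$ for contraction by $\eta$ via $g_\C$ --- so that $e(\xi)^2=0$ and $\iota_\eta^2=0$ --- one computes $c(v)^2 = 2\bigl(e(v^{1,0})-\iota_{v^{0,1}}\bigr)^2 = -2\{e(v^{1,0}),\iota_{v^{0,1}}\} = -2\,g_\C(v^{1,0},v^{0,1})\,\mathrm{id}$, where the last equality is the standard anticommutation identity $\{e(\xi),\iota_\eta\}=g_\C(\xi,\eta)\,\mathrm{id}$ relating exterior multiplication and contraction. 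Since $g_\C$ kills $V^{1,0}$ and $V^{0,1}$ separately, $g_\C(v,v)=2\,g_\C(v^{1,0},v^{0,1})$, hence $c(v)^2=-g_\C(v,v)\,\mathrm{id}$; the normalization $\sqrt2$ is exactly what makes the two constants agree. By the universal property of the Clifford algebra, $c$ then extends to an algebra homomorphism $\phi\colon\Cl(V_\C,g_\C)\to\End_\C(\bigwedge V^{1,0})$.

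Next I would argue that $\phi$ is an isomorphism. Since $\dim_\C V_\C=2k$, the classification \eqref{eq:ComplexCliffClassification} (equivalently, Corollary \ref{cor:ComplexPeriod} together with Proposition \ref{prop:ComplexSimpleCliffs}) gives $\Cl(V_\C,g_\C)\cong M_{2^k}(\C)$, which is simple; as $\phi(1)=\mathrm{id}\neq 0$, the map $\phi$ is injective. Both algebras have complex dimension $2^{2k}=(2^k)^2$, so $\phi$ is in fact an isomorphism onto $\End_\C(\bigwedge V^{1,0})$. Consequently $\bigwedge V^{1,0}$ is an irreducible $\Cl(V_\C,g_\C)$-module of dimension $2^k$, and by uniqueness of the irreducible representation for $n=2k$ even (Theorem \ref{thm:irreps}) it is the spin representation of Definition \ref{def:complex_spin_rep_cl}.

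The only genuinely computational point is the sign-and-factor bookkeeping in the first step; everything after that is formal once the classification is available. If one prefers to avoid invoking simplicity, there is an equally short alternative: choose a basis $w_1,\dots,w_k$ of $V^{1,0}$ with $g_\C(w_i,\bar w_j)=\delta_{ij}$ (possible because the Hermitian form $v\mapsto g_\C(v,\bar v)$ is positive definite on $V^{1,0}$), observe that $c(w_i)=\sqrt2\,e(w_i)$ and $c(\bar w_i)=-\sqrt2\,\iota_{\bar w_i}$ are, up to scalars, the fermionic creation and annihilation operators on the Fock space $\bigwedge V^{1,0}$, and then check directly that the subalgebra they generate acts irreducibly --- the vacuum $1$ is cyclic for the $e(w_i)$, and every nonzero homogeneous element is carried back to the vacuum by a product of the $\iota_{\bar w_i}$ --- which again identifies the $2^k$-dimensional module as the spin representation.
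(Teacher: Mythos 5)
Your proposal is correct. The first half --- verifying $c(v)^2=-g_\C(v,v)\,\mathrm{id}$ via the anticommutation identity between exterior multiplication and contraction, using that $g_\C$ vanishes on $V^{1,0}$ and $V^{0,1}$ separately, and then invoking the universal property --- is exactly what the paper does. Where you diverge is in the endgame: the paper finishes by checking surjectivity of the induced map $\Cl(V_\C,g_\C)\to\End(\bigwedge V^{1,0})$ directly, exhibiting for each pair of multi-indices $I,J$ an element $\varphi_{IJ}=\varepsilon_{J-I}\bar\varepsilon_{I-J}$ carrying the basis vector $\varepsilon^\wedge_I$ to $\varepsilon^\wedge_J$, whereas you instead invoke the classification $\Cl^c_{2k}\cong M_{2^k}(\C)$, so that simplicity plus a dimension count forces the homomorphism to be an isomorphism. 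Your route is a bit cleaner and leans harder on the structure theory already established in the chapter; the paper's route is more self-contained at this step and makes the module structure concrete. Your ``alternative'' via creation and annihilation operators on the Fock space $\bigwedge V^{1,0}$ --- vacuum cyclic under the $e(w_i)$, every nonzero element returned to the vacuum by the $\iota_{\bar w_i}$ --- is essentially the paper's surjectivity argument in different clothing, so you have in effect reproduced both proofs.
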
 
\begin{proof}
 Denote $e_1,f_1, \ldots e_n,f_n$ an ONB of $V$ over $\R$ such that $Jei = f_i, Jf_i = -e_i$.
Then $\varepsilon_i= \frac{1}{\sqrt{2}}(e_i - if_i) $ is a basis of $V^{1,0}$ and $\bar{\varepsilon}$ is a basis of $V^{0,1}$.
Take a vector and decompose it as $v = v^{1,0} + v^{0,1}$. We have to show the Clifford relation 
$$v.(v.(\alpha)) = -g(v,v)\alpha.$$ 
Since $v^{1,0}.(v^{1,0}\alpha ) = v^{0,1}.(v^{0,1}\alpha ) = 0$, we just compute 
\begin{align*}
v^{0,1}.(v^{1,0}.\alpha) + v^{1,0}.(v^{0,1}.\alpha) &= -2\iota_{v^{0,1}}(v^{1,0}\alpha) - 2v^{1,0}\wedge\iota_{v^{0,1}}\alpha  \\
&= -2(\iota_{v^{0,1}}v^{1,0})\alpha + 2v^{1,0}\iota_{v^{0,1}}\alpha = -2g(v^{0,1},v^{1,0})\alpha = -g(v,v)\alpha.
\end{align*}
Hence this defines a representation of $\Cl_{2n}^c$. It also has the correct dimension $2^n$. So, we simply have to check that the induced map $\rho\colon\Cl_{2n}^c \to \End(\bigwedge V^{1,0})$ is surjective. To do so, consider the basis $\varepsilon^\wedge_I$ of $\bigwedge V^{1,0}$. Then, we claim that for any $I$ and $J$ there exists an element $\varphi_{IJ}$ such that $\rho(\varphi)\varepsilon^\wedge_I = \varepsilon^\wedge_J$. Up to a sign, this element is given by  $\varphi_{IJ} = \varepsilon_{J - I} \bar{\varepsilon}_{I-J}$. 
\end{proof}
\subsection{The volume element}
 Choose an orientation of $\C^n$ and let $v_1,\ldots,v_n$ be an oriented basis of $\C^n$. 
\begin{defn}\label{def:complex_volume}
The \emph{complex volume element} $\omega \in \Cl_{n}^c$ is 
\begin{equation}
\omega = \ii^{\lfloor \frac{n+1}{2}\rfloor}v_1\cdot \ldots \cdot v_n
\end{equation}
\end{defn}
\begin{exc}\label{exc:complex_volume}
\begin{enumerate}
\item The complex volume element is independent of the choice of basis. 
\item The complex volume element satisfies 
\begin{align}
\omega^2 &= 1 \\
v\omega &= (-1)^{n-1}\omega v
\end{align}
\end{enumerate}
\end{exc}
Since $\omega^2 = 1$, then as usual we can construct projectors $\pi^+ = \frac{1}{2}(1+\omega), \pi^-= \frac12(1-\omega)$ satisfying $\pi^++\pi^-=1, \pi^+\pi^- = \pi^-\pi^+ = 0$, $(\pi^\pm)^2 = \pi^\pm$. The second equation implies that $\omega$ is central if $n$ is odd. On the other hand, if $n$ is even the $\omega$
\emph{supercommutes} with all elements of $\Cl_n^c$.
This has the following implications: 
\begin{prop}
\hfill 
\begin{enumerate}[i)]
\item If $n$ is odd, then $\Cl^c_{n} \cong \Cl_n^{c,+} \oplus \Cl_{n}^{c,-}$, 
where $\Cl^{c,\pm}_{n} = \pi^{\pm}\Cl_{n}^c$ are isomorphic subalgebras satisfying $\alpha(\Cl^{c,\pm}_{n}) = \Cl^{c,\mp}_{n}$. This decomposition coincides with the one in \eqref{eq:ComplexCliffClassification}.
\item If $n$ is even, any supermodule $(V,\rho)$ of $\Cl_{n}^c$ decomposes into the $+1$ and $-1$ eigenspaces of $\omega$, $V = V^+ \oplus V^-$. For any $v \in \C^n-\{0\}$, $\rho(v)\colon V^\pm \to V^\mp$ is an isomorphism.
\end{enumerate}
\end{prop}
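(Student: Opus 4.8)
The plan is to read off both statements from Exercise~\ref{exc:complex_volume}, which gives $\omega^2=1$ and $v\omega=(-1)^{n-1}\omega v$ for all $v\in\C^n$, together with the classification \eqref{eq:ComplexCliffClassification}. The argument is essentially bookkeeping; I will indicate at the end the one step that needs more.

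\textbf{Part (i), $n$ odd.} Then $(-1)^{n-1}=1$, so $\omega$ is central and $\pi^\pm=\tfrac12(1\pm\omega)$ are central idempotents with $\pi^+\pi^-=0$, $\pi^++\pi^-=1$. I would use the standard fact that a central idempotent $e$ of a unital algebra $A$ gives a decomposition $A=eA\oplus(1-e)A$ into two-sided ideals, each a unital algebra with unit $e$ (resp.\ $1-e$); with $e=\pi^+$ this yields $\Cl_n^c=\Cl_n^{c,+}\oplus\Cl_n^{c,-}$, and both summands are nonzero since $\omega\neq\pm1$ (e.g.\ $\omega$ has nonzero top-degree component under the canonical vector-space isomorphism $\Cl_n^c\cong\bigwedge\C^n$). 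For the isomorphism between the two pieces, note the grading automorphism $\alpha=\varepsilon$ (a $\C$-algebra automorphism, being the extension of $v\mapsto-v$) satisfies $\alpha(\omega)=(-1)^n\omega=-\omega$, hence $\alpha(\pi^\pm)=\pi^\mp$; since $\alpha$ is bijective and multiplicative it carries the ideal $\Cl_n^{c,\pm}=\pi^\pm\Cl_n^c$ onto $\Cl_n^{c,\mp}$, which gives both $\alpha(\Cl_n^{c,\pm})=\Cl_n^{c,\mp}$ and, by restriction, an algebra isomorphism $\Cl_n^{c,+}\cong\Cl_n^{c,-}$. Finally, for the coincidence with \eqref{eq:ComplexCliffClassification}: writing $n=2k+1$ we have $\Cl_n^c\cong M_{2^k}(\C)\oplus M_{2^k}(\C)$, whose only two-sided ideals are $0$, the two matrix summands, and the whole algebra; hence the only way to write this algebra as a direct sum of two nonzero two-sided ideals is as the sum of those summands, so under the isomorphism of \eqref{eq:ComplexCliffClassification} the ideals $\Cl_n^{c,\pm}$ are carried, in some order, onto the two matrix factors.

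\textbf{Part (ii), $n$ even.} Then $(-1)^{n-1}=-1$, so $v\omega=-\omega v$, i.e.\ $\rho(\omega)$ anticommutes with $\rho(v)$ for every $v\in\C^n$. Since $\omega^2=1$, $\rho(\omega)$ is an involution, so $V=V^+\oplus V^-$ with $V^\pm$ its $(\pm1)$-eigenspaces. For $\psi\in V^\pm$, the anticommutation gives $\rho(\omega)\rho(v)\psi=-\rho(v)\rho(\omega)\psi=\mp\rho(v)\psi$, so $\rho(v)\colon V^\pm\to V^\mp$. As $\rho$ is an algebra homomorphism and $v\cdot v=-g_\C(v,v)\,1$ in $\Cl_n^c$, we get $\rho(v)^2=-g_\C(v,v)\,\mathrm{id}_V$; hence whenever $g_\C(v,v)\neq0$ — in particular for every nonzero vector in the standard real form $\R^n\subset\C^n$ — the operator $\rho(v)$ is invertible on $V$, with inverse a scalar multiple of itself, and its restrictions $V^\pm\to V^\mp$ are therefore mutually inverse isomorphisms. (For isotropic $v$ one has $\rho(v)^2=0$, so $\rho(v)$ is not invertible and the hypothesis in (ii) must be read as $g_\C(v,v)\neq0$.)

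\textbf{Main obstacle.} Honestly there is no serious obstacle; beyond the isotropic-vector caveat just noted, the only non-formal input is the ``coincidence'' claim in (i), where one needs the structure of $M_{2^k}(\C)\oplus M_{2^k}(\C)$ — equivalently, the uniqueness of the decomposition of a semisimple algebra into its simple two-sided ideals, i.e.\ that $\pi^+,\pi^-$ are precisely the primitive central idempotents. Everything else reduces to $\omega^2=1$, the (anti)commutation rule of Exercise~\ref{exc:complex_volume}, and $\rho$ being a homomorphism.
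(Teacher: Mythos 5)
Your proof is correct, and for the one non-formal claim in part (i) it takes a genuinely different route from the paper. For the assertion that the splitting $\Cl_n^{c,\pm}=\pi^\pm\Cl_n^c$ coincides with the decomposition in \eqref{eq:ComplexCliffClassification}, you invoke the structure theory of $M_{2^k}(\C)\oplus M_{2^k}(\C)$: since its only two-sided ideals are $0$, the two matrix summands, and the whole algebra, any decomposition into two nonzero two-sided ideals must be the one into the two simple factors (equivalently, $\pi^\pm$ are the primitive central idempotents). The paper instead argues by induction on odd $n$: it checks the base case $n=1$ directly ($\omega=\ii v_1$ is exactly the element splitting $\C\otimes_\R\C\cong\C\oplus\C$) and then pushes the volume element through the explicit periodicity isomorphism of Exercise~\ref{exc:iso}, computing $f(\omega_{2k+1})=\omega_{2k-1}\otimes 1$. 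Your argument is shorter and shows the coincidence is automatic given semisimplicity; the paper's buys a concrete, $n$-compatible identification of which projector lands on which matrix factor under the specific isomorphism used in the classification. In part (ii) you supply more than the paper does: the paper only observes that $\rho(\omega)$ supercommutes with the action and asserts the exchange of eigenspaces, whereas you verify invertibility via $\rho(v)^2=-g_\C(v,v)\,\mathrm{id}_V$. Your caveat about isotropic vectors is well taken — for $n\ge 2$ a nonzero isotropic $v\in\C^n$ has $\rho(v)^2=0$, so the statement's ``any $v\in\C^n\setminus\{0\}$'' should indeed be read as $g_\C(v,v)\neq 0$ (e.g.\ $v$ a nonzero real vector); this is an imprecision in the proposition, not in your argument.
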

\begin{proof}
For i), we notice that $\omega$ is central (i.e. commutes with all elements) if $n$ is odd. Hence also $\pi^\pm$ are central, so elements of the form $\pi^\pm\alpha$ form a subalgebra. Also notice that for $n$ odd we have $\alpha(\omega) = -\omega$ and hence $\alpha\pi^\pm = \pi^\mp$. To see the last point, first notice that it is true for $n=1$: Here the volume element is $\ii v_1$, which is the element defining the splitting to $\C \otimes_\R \C \cong \C \oplus \C$. Next notice that under the isomorphism constructed in Exercise \ref{exc:iso}, the volume element gets sent to 
\begin{align*}
f(\omega_{2k+1}) &= f(\ii^{k+1}e_1\cdots e_{2k+1}) = \ii^{k+1}\ii^{2k-1}(e_1'\otimes e_1''e_2'') \cdots (e_{2k-1}'\otimes e_1''e_2'')(1\otimes e_1'')(1\otimes e_2'') \\
&=\ii^k(-1)^ke_1'\cdots e_{2k-1}'\otimes (e_1''e_2'')^k = \ii^ke_1'\cdots e'_{2k-1}\otimes 1 = \omega_{2k-1}\otimes 1.
\end{align*} Hence, the splitting induced by $\omega_{2k+1}$ is the same as the one induced by $\omega_{2k-1}$.  \\For ii), notice that   $[\omega,\alpha]_s = 0$ for all $\alpha \in \Cl_n^c$. Hence $\rho(\omega) \in \End_{\Cl_{n}^c}V$ In particular, Clifford multiplication by any $v \in \R^n$  exchanges the $\pm 1$ eigenspaces of $\omega$. 
\end{proof}
Next we characterize the complex spinors for odd $n$. 
\begin{prop}
Let $V$ be an irreducible representation of $\Cl_n^c$, $n$ odd. Then $\omega$ acts either by $+1$ or $-1$ on $V$, and these are the two inequivalent representations of $\Cl_n^c$. In particular, if $\omega$ acts by $\pm 1$ then $V$ is the unique irreducible representation of $\Cl_n^{c,\pm}$. 
\end{prop}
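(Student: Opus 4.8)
The plan is to run Schur's lemma against the central element $\omega$ and then invoke the classification of $\Cl_n^{c,\pm}$ as matrix algebras. First I would record that, since $n$ is odd, Exercise \ref{exc:complex_volume} gives $v\omega = (-1)^{n-1}\omega v = \omega v$ for every $v \in \C^n$, and since the $v$ generate $\Cl_n^c$ this shows $\omega$ lies in the center. Hence for any representation $\rho\colon \Cl_n^c \to \End(V)$ the operator $\rho(\omega)$ commutes with the entire image of $\rho$.

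Next, assuming $V$ is an (automatically finite-dimensional) irreducible complex representation, Schur's lemma forces $\rho(\omega) = \lambda\,\mathrm{id}_V$ for some $\lambda \in \C$; since $\omega^2 = 1$ we get $\lambda^2 = 1$, so $\lambda = \pm 1$. This proves the first assertion. In the case $\lambda = +1$ we have $\rho(\pi^-) = \rho(\tfrac12(1-\omega)) = 0$ and $\rho(\pi^+) = \mathrm{id}_V$, so $\rho$ factors through the subalgebra $\Cl_n^{c,+} = \pi^+\Cl_n^c$; symmetrically $\lambda = -1$ makes $\rho$ factor through $\Cl_n^{c,-}$. Conversely, any irreducible module over $\Cl_n^{c,\pm}$ pulls back along the projection $\Cl_n^c \to \Cl_n^{c,\pm}$ to an irreducible $\Cl_n^c$-module on which $\omega$ acts by $\pm 1$.

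Now I would use that $n = 2k+1$ together with \eqref{eq:ComplexCliffClassification} and the preceding proposition: the decomposition $\Cl_n^c \cong M_{2^k}(\C) \oplus M_{2^k}(\C)$ is exactly $\Cl_n^{c,+} \oplus \Cl_n^{c,-}$. By Theorem \ref{thm:irreps} the matrix algebra $M_{2^k}(\C)$ has a unique irreducible representation, so there is exactly one irreducible $\Cl_n^c$-module with $\omega$ acting by $+1$ and exactly one with $\omega$ acting by $-1$, each being the unique irreducible representation of the corresponding $\Cl_n^{c,\pm}$. The two are inequivalent since an isomorphism of representations would respect the action of the central element $\omega$, which differs by a sign; and by Theorem \ref{thm:irreps} applied to $M_{2^k}(\C) \oplus M_{2^k}(\C)$ these exhaust the irreducibles of $\Cl_n^c$.

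I do not expect a serious obstacle here: the only point requiring care is the bookkeeping that ``$\omega$ acts by $\lambda$'' is the same datum as ``$V$ is a module over the summand cut out by the central idempotent $\pi^{\mathrm{sgn}(\lambda)}$'', which is immediate once one observes that $\pi^+$ and $\pi^-$ are precisely the central idempotents projecting onto the two matrix-algebra summands identified in the preceding proposition.
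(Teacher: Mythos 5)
Your proof is correct and follows essentially the same route as the paper's: centrality of $\omega$ for odd $n$ forces it to act as a scalar $\pm 1$ on an irreducible module (the paper phrases this via invariance of the $\omega$-eigenspaces rather than Schur's lemma, but it is the same argument), after which the module is identified with the unique irreducible of the summand $\Cl_n^{c,\pm}$ cut out by the central idempotent $\pi^\pm$. Your extra step of matching $\pi^\pm$ with the two matrix-algebra factors and invoking Theorem \ref{thm:irreps} just makes explicit what the paper leaves as ``clear.''
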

\begin{proof}
Since $\omega^2 =1$, $V$ decomposes into the $\pm 1$ eigenspaces $V^\pm$ of $\omega$. But since $\omega$ is central, we have $V = V^+$ or $V^-$. It is clear that these representations are inequivalent. If $\omega$ acts by $\pm 1$, then $\Cl_n^{c,\mp}$ acts trivially. This proves the claim.
\end{proof}
\begin{prop} Let $n$ be even and let $(V,\rho)$ be the unique irreducible representation of $\Cl_n^c$ with decomposition into $\omega$-eigenspaces $V = V^+ \oplus V^-$.  Then these two representations are invariant under $(\Cl_n^c)_0$, and they give the two irreducible representations of $\Cl_{n-1}^c$ under the isomorphism $\Cl_{n-1}^c \cong (\Cl_n^c)_0$ of Theorem \ref{thm:Clifford_Iso}.
\end{prop}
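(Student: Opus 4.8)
The plan is to exploit the complex volume element $\omega = \omega_n \in \Cl_n^c$ together with the isomorphism $f\colon \Cl_{n-1}^c \xrightarrow{\sim} (\Cl_n^c)_0$ of Theorem~\ref{thm:Clifford_Iso}, which sends $e_i \mapsto e_0 e_i$. First I would dispose of the invariance statement: for $n$ even, Exercise~\ref{exc:complex_volume} gives $v\omega = (-1)^{n-1}\omega v = -\omega v$ for all $v \in \C^n$, so $\omega$ commutes with every product of an even number of generators, i.e.\ $\omega$ lies in the centre of $(\Cl_n^c)_0$. Hence $\rho(\omega)$ commutes with $\rho\big((\Cl_n^c)_0\big)$, and its eigenspaces $V^\pm$ are $(\Cl_n^c)_0$-submodules. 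While I am at it I would fix $e_1 \in \C^n$ with $e_1^2 = -1$ and note that $\rho(e_1)$ is invertible and anticommutes with $\rho(\omega)$, hence restricts to an isomorphism $V^+ \to V^-$; writing $n = 2k$ and using that $\dim V = 2^k$ by~\eqref{eq:ComplexCliffClassification}, this forces $\dim V^\pm = 2^{k-1}$.

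The heart of the matter is the comparison of $\omega_n$ with the volume element $\omega_{n-1} \in \Cl_{n-1}^c$ under $f$, and I claim $f(\omega_{n-1}) = \pm\,\omega_n$. The cleanest route avoids an explicit reordering: since $n-1$ is odd, $(\Cl_n^c)_0 \cong \Cl_{n-1}^c \cong M_{2^{k-1}}(\C) \oplus M_{2^{k-1}}(\C)$ by~\eqref{eq:ComplexCliffClassification}, whose centre is two-dimensional and whose only square-one central elements are $\pm 1$ and $\pm\gamma$ for the ``block-swap'' element $\gamma$. Both $f(\omega_{n-1})$ and $\omega_n$ are central in $(\Cl_n^c)_0$ and square to $1$, and neither is a scalar ($\omega_{n-1}$ acts by $+1$ on one spin representation of $\Cl_{n-1}^c$ and by $-1$ on the other, while $\omega_n$ is non-scalar since $V^\pm \neq 0$); hence both equal $\pm\gamma$, forcing $f(\omega_{n-1}) = \pm\omega_n$. (Alternatively one can check $f(\omega_{n-1}) = \omega_n$ directly by a mildly tedious reordering of the product $\prod_i (e_0 e_i)$.) Either way, regarding $V$ as a $\Cl_{n-1}^c$-module through $f$, the element $\omega_{n-1}$ acts as $\pm\rho(\omega_n)$, so its $+1$- and $-1$-eigenspaces are $V^+$ and $V^-$, up to relabelling.

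To finish, suppose $\omega_{n-1}$ acts by $+1$ on $V^+$. Then $\pi^- = \tfrac12(1-\omega_{n-1})$ annihilates $V^+$, so the ideal $\Cl_{n-1}^{c,-} = \pi^-\Cl_{n-1}^c$ acts by zero and the $\Cl_{n-1}^c$-action on $V^+$ factors through $\Cl_{n-1}^{c,+} \cong M_{2^{k-1}}(\C)$; since $\dim V^+ = 2^{k-1}$ it must be a single copy of the unique irreducible of this matrix algebra (Theorem~\ref{thm:irreps}), hence an irreducible $\Cl_{n-1}^c$-module, namely the spin representation on which $\omega_{n-1} = +1$. Symmetrically, $V^-$ is the spin representation on which $\omega_{n-1} = -1$; these are inequivalent, and since those are the only two irreducibles of $\Cl_{n-1}^c$ for $n-1$ odd, the claim follows. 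The only genuine obstacle is the identity $f(\omega_{n-1}) = \pm\omega_n$ of the middle paragraph; everything else is bookkeeping with the structure theory already in place.
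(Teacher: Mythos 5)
Your proof is correct and follows essentially the same route as the paper: invariance of $V^\pm$ from the centrality of $\omega_n$ in $(\Cl_n^c)_0$, followed by the identification $f(\omega_{n-1}) = \pm\omega_n$ under the isomorphism of Theorem \ref{thm:Clifford_Iso}. The only difference is that you supply a careful justification (via the centre of $M_{2^{k-1}}(\C)\oplus M_{2^{k-1}}(\C)$) for the identity $f(\omega_{n-1})=\pm\omega_n$, which the paper simply asserts.
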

\begin{proof}
It is clear that $V^\pm$ are invariant under $(\Cl_n^c)_0$, since this subalgebra commutes with $\omega_n$. Next, recall that the isomorphism $\Cl_{n-1}^c \to (\Cl_{n}^c)_0$ is given by $f(e_i) =e_ie_n$. Hence, $\omega_{n-1}$ gets sent to $\pm \omega_n$ under this isomorphism, which proves the claim.
\end{proof}
\begin{cor}\label{cor:diagonal}
Let $n$ be even. Then, the isomorphism $\Cl_n^c \cong (\Cl_{n+1}^c)_0$ is the diagonal embedding $M_{2^n}(\C) \to M_{2^n}(\C) \oplus M_{2^n}(\C)$.
\end{cor}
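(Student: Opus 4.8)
Since $n$ is even, $n+1$ is odd, so by the proposition on the complex volume element (part i)) the central idempotents $\pi^{\pm}=\tfrac12(1\pm\omega_{n+1})$ split
\[
\Cl^c_{n+1}=\Cl^{c,+}_{n+1}\oplus\Cl^{c,-}_{n+1}
\]
into two isomorphic simple subalgebras, each $\cong M_{2^{n/2}}(\C)$, and the grading operator $\alpha$ interchanges them, $\alpha(\Cl^{c,\pm}_{n+1})=\Cl^{c,\mp}_{n+1}$. The plan is to choose the identification of $(\Cl^c_{n+1})_0$ with $M_{2^{n/2}}(\C)$ and of $\Cl^c_{n+1}$ with $M_{2^{n/2}}(\C)\oplus M_{2^{n/2}}(\C)$ \emph{compatibly}, so that the inclusion becomes literally $a\mapsto(a,a)$.

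\textbf{Locating $(\Cl^c_{n+1})_0$ as a graph.} First I would describe how the even part sits with respect to this splitting. An element $x\in\Cl^c_{n+1}$ lies in $(\Cl^c_{n+1})_0$ exactly when $\alpha(x)=x$; writing $x=\pi^+x+\pi^-x$ and using $\alpha(\pi^{\pm}x)\in\Cl^{c,\mp}_{n+1}$, comparison of the two summands in $\alpha(x)=x$ forces $\pi^-x=\alpha(\pi^+x)$ (and conversely this condition implies $\alpha(x)=x$). Hence the image of $(\Cl^c_{n+1})_0$ under the splitting is precisely the graph $\{(a,\alpha(a)):a\in\Cl^{c,+}_{n+1}\}$. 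Moreover $\pi^+|_{(\Cl^c_{n+1})_0}$ is an algebra homomorphism (restriction of the central projection $\pi^+$), it is injective (if $\pi^+x=0$ then $\pi^-x=\alpha(\pi^+x)=0$, so $x=0$), and since $\dim_{\C}(\Cl^c_{n+1})_0=2^{n}=\dim_{\C}\Cl^{c,+}_{n+1}$ it is an algebra isomorphism onto $\Cl^{c,+}_{n+1}$; in particular $a$ genuinely runs over all of $\Cl^{c,+}_{n+1}$.

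\textbf{Choosing compatible identifications.} Now fix any algebra isomorphism $\phi^+\colon\Cl^{c,+}_{n+1}\xrightarrow{\ \sim\ }M_{2^{n/2}}(\C)$ (an incarnation of the classification isomorphism, which exists by Theorem \ref{thm:irreps} applied to the simple algebra $\Cl^{c,+}_{n+1}$), and define $\phi^-\colon\Cl^{c,-}_{n+1}\xrightarrow{\ \sim\ }M_{2^{n/2}}(\C)$ by $\phi^-:=\phi^+\circ\alpha$, using that $\alpha\colon\Cl^{c,-}_{n+1}\to\Cl^{c,+}_{n+1}$ is an algebra isomorphism (it is an algebra automorphism of $\Cl^c_{n+1}$ with $\alpha^2=1$ swapping the summands). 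Under $\phi^+\oplus\phi^-\colon\Cl^c_{n+1}\xrightarrow{\ \sim\ }M_{2^{n/2}}(\C)\oplus M_{2^{n/2}}(\C)$, an element $x\in(\Cl^c_{n+1})_0$ is sent to $(\phi^+(\pi^+x),\ \phi^+(\alpha(\pi^-x)))=(\phi^+(\pi^+x),\ \phi^+(\pi^+x))$, since $\alpha(\pi^-x)=\alpha^2(\pi^+x)=\pi^+x$. This is the diagonal, and the composite $\Cl^c_n\xrightarrow{\ \sim\ }(\Cl^c_{n+1})_0\xrightarrow{\ \pi^+\ }\Cl^{c,+}_{n+1}\xrightarrow{\ \phi^+\ }M_{2^{n/2}}(\C)$ — whose first arrow is the isomorphism of Theorem \ref{thm:Clifford_Iso} — is exactly the identification relative to which the claim holds.

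\textbf{Expected obstacle.} The only genuinely delicate point here is interpretive rather than computational: the statement speaks of \emph{the} diagonal embedding, while every classification isomorphism in sight is only canonical up to automorphism. I would resolve this by noting that any two unital algebra embeddings $M_k(\C)\hookrightarrow M_k(\C)\oplus M_k(\C)$ whose two components are both nonzero are conjugate to one another — each component, being a nonzero homomorphism out of a simple algebra of matching dimension, is an automorphism of $M_k(\C)$, hence inner by Skolem--Noether, so a conjugation in the target straightens the embedding to $a\mapsto(a,a)$. Thus ``diagonal'' is well-posed exactly up to the ambiguity already present in the statement, and the construction above exhibits one explicit compatible choice realizing it (here $k=2^{n/2}$).
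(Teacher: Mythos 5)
Your argument is correct and is essentially the paper's own proof, carried out in full detail: the paper's one-line justification is precisely that $\alpha$ fixes $(\Cl_{n+1}^c)_0$ pointwise while swapping the two summands $\Cl_{n+1}^{c,\pm}$, which is exactly your graph description $\{(a,\alpha(a))\}$ together with the compatible choice $\phi^-=\phi^+\circ\alpha$. Your closing Skolem--Noether remark on what ``the'' diagonal embedding means is a worthwhile clarification the paper leaves implicit.
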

\begin{proof}
$\alpha$ acts trivially on $(\Cl_{n+1}^c)_0$ but exchanges the two copies $\Cl_{n+1}^\pm$.
\end{proof}
\subsection{Representations of the Spin group}
Given the spinor representations of the complex Clifford algebra, we can now easily define the Spin representations of the spin groups. 
\begin{defn}[Complex Spin representation of $\Spin_n$]
Let $\Spin_n \subset \Cl_n \subset \Cl_n^c$. Then the \emph{complex spin representation} of $\Spin_n$ is the restriction of the complex spin representation of $\Cl_n^c$. We denote the complex spin representation by $\Delta_n$.
\end{defn}
\begin{rem}
If $n$ is odd, this is independent of which of the two spin representations of $\Cl_n^c$ is used. This follows from the fact $(\Cl_n^c)_0$ sits diagonally (Corollary \ref{cor:diagonal}).
\end{rem}
\begin{prop}
When $n$ is odd, the representation $\Delta_n$ is irreducible. When $n$ is even, the representation $\Delta_n = \Delta_n^+ \oplus \Delta_n^-$ splits into the direct sum of two irreducible representations. 
\end{prop}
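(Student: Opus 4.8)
\emph{Strategy and the key lemma.} The representation $\Delta_n$ is a priori only a representation of the group $\Spin_n$, so the first step is to recognise that it is governed by the even subalgebra $(\Cl_n^c)_0$, whose modules we have already classified. Concretely, I claim the $\C$-linear span of $\Spin_n$ inside $\Cl_n^c$ is all of $(\Cl_n^c)_0$. Indeed, since the bilinear form is positive definite each standard basis vector satisfies $g(e_i,e_i)=1$, hence $e_i \in \Pin_n$, and for every ordered multiindex $I$ of even length the monomial $e_I = e_{i_1}\cdots e_{i_{|I|}}$ lies in $\Pin_n \cap (\Cl_n)_0 = \Spin_n$. These monomials are an $\R$-basis of $(\Cl_n)_0$, so the real span of $\Spin_n$ is $(\Cl_n)_0$, and its complex span in $\Cl_n^c = \Cl_n\otimes_\R\C$ is $(\Cl_n)_0\otimes_\R\C = (\Cl_n^c)_0$. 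Therefore a subspace of $\Delta_n$ is $\Spin_n$-invariant if and only if it is a $(\Cl_n^c)_0$-submodule, and it suffices to prove the (ir)reducibility statements with $\Spin_n$ replaced by $(\Cl_n^c)_0$. This span computation is the only real content of the argument; everything that follows is bookkeeping with the classification, and the place where care is needed is exactly that $\Spin_n$ (generated by definition only by unit vectors) is forced to span the full even subalgebra — which is clean here precisely because in Euclidean signature every $e_i$ already has unit length.

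\emph{Odd $n$.} Let $n=2k+1$. Then $(\Cl_n^c)_0 \cong \Cl_{n-1}^c$ by Theorem \ref{thm:Clifford_Iso}, and $\Cl_{n-1}^c \cong M_{2^k}(\C)$ by \eqref{eq:ComplexCliffClassification}; moreover $\dim_\C\Delta_n = 2^k$. Restricted to $(\Cl_n^c)_0$ the module $\Delta_n$ is nonzero, so, $M_{2^k}(\C)$ being simple, it is a direct sum of copies of the standard $2^k$-dimensional module by Theorem \ref{thm:irreps}; the dimension count leaves exactly one copy. Hence $\Delta_n$ is irreducible over $(\Cl_n^c)_0$, and by the key lemma it is irreducible over $\Spin_n$.

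\emph{Even $n$.} Let $n=2k$ and let $\Delta_n = \Delta_n^+\oplus\Delta_n^-$ be the decomposition into the $\pm1$-eigenspaces of the complex volume element $\omega$. By the proposition preceding Corollary \ref{cor:diagonal}, under the isomorphism $(\Cl_n^c)_0\cong\Cl_{n-1}^c\cong M_{2^{k-1}}(\C)\oplus M_{2^{k-1}}(\C)$ the summands $\Delta_n^+$ and $\Delta_n^-$ are precisely the two inequivalent irreducible representations of $\Cl_{n-1}^c$. Thus each $\Delta_n^\pm$ is an irreducible $(\Cl_n^c)_0$-module, hence by the key lemma an irreducible $\Spin_n$-representation (and the two are inequivalent, being distinct already as $(\Cl_n^c)_0$-modules). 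This yields the asserted splitting $\Delta_n=\Delta_n^+\oplus\Delta_n^-$ into two irreducibles.
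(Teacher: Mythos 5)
Your proof is correct and takes essentially the same route as the paper's: reduce $\Spin_n$-irreducibility to $(\Cl_n^c)_0$-irreducibility using the fact that $\Spin_n$ spans the even subalgebra, then invoke the classification of modules over $(\Cl_n^c)_0 \cong \Cl_{n-1}^c$. You merely make explicit two points the paper asserts without detail — that the even-length monomials $e_I$ lie in $\Spin_n$ and form a basis of $(\Cl_n)_0$, and the dimension count in the odd case — which is a welcome but not substantively different elaboration.
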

\begin{rem}
Elements of $\Delta_n^\pm$ are known as \emph{chiral} or \emph{Weyl} spinors. 
\end{rem}
\begin{proof}
If $n$ is odd, then $\Delta_n$ is an irreducible representation of $(\Cl_n^c)_0$. Hence it is also irreducible under $\Spin_n$, since $\Spin_n$ contains a basis of $(\Cl_n^c)_0$. \\
If $n$ is even, then the complex spinor representation splits as a direct sum of two subspaces invariant under $(\Cl_n^c)_0$. These are two irreducible inequivalent representations of $\Spin_n$. 
\end{proof}
To study representations of Lie groups it is always desirable to know their Lie algebras, in particular because of the following fact: 
\begin{prop}
$\Spin_n$ is simply connected for $n\geq 3$. Hence $\Spin_n \to SO(n)$ is the universal cover for $n\geq 3$. For $n = 2$, it is the nontrivial double cover of $SO(2) \cong S^1$. 
\end{prop}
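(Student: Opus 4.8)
The plan is to combine the short exact sequence \eqref{eq:Spin_SES}, i.e.\ $1 \to \Z_2 \to \Spin_n \to SO(n) \to 1$ from Theorem~\ref{thm:exactsequence}, with a little covering-space theory. Write $\lambda\colon \Spin_n \to SO(n)$ for the map in that sequence. First I would invoke the standard Lie-theoretic fact that a surjective homomorphism of Lie groups onto a connected group with discrete kernel is a covering map; since $\ker\lambda = \{\pm 1\}\cong\Z_2$ is discrete and $SO(n)$ is connected, this makes $\lambda$ a $2$-sheeted covering.

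Next I would check that $\Spin_n$ is connected for $n \ge 2$ by producing a path from $1$ to $-1$ inside it. For $t\in[0,\pi]$ put $\gamma(t) = e_1\cdot(\cos t\, e_1 + \sin t\, e_2)$. Both $e_1$ and $\cos t\, e_1 + \sin t\, e_2$ are unit vectors in $\R^n$, so $\gamma(t)$ is a product of two such vectors and hence lies in $\Spin_n$; moreover $\gamma(t) = -\cos t + \sin t\, e_1 e_2$, so $\gamma(0) = -1$ and $\gamma(\pi) = 1$. Since $\lambda$ is a covering and $SO(n)$ is path-connected, any $y\in\Spin_n$ can be joined to a point of $\lambda^{-1}(\mathrm{id}) = \{1,-1\}$ by lifting a path in $SO(n)$ from $\lambda(y)$ to the identity; concatenating with $\gamma$ when necessary shows $\Spin_n$ is path-connected. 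Since $-1\neq 1$ in $\Cl_n$, the covering $\lambda$ is moreover \emph{nontrivial}. This already disposes of the case $n=2$: $SO(2)\cong S^1$, and a connected double cover of $S^1$ is necessarily the nontrivial one $z\mapsto z^2$ (consistently with $\Spin_2\cong S^1$).

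For $n\ge 3$ I would conclude via the classification of connected coverings. It is a standard fact that $\pi_1(SO(n)) \cong \Z_2$ for $n\ge 3$; if one wishes to prove it here, it follows from $SO(3)\cong\R P^3$ together with the fibrations $SO(n)\hookrightarrow SO(n+1)\to S^n$, whose long exact homotopy sequences give $\pi_1(SO(n))\cong\pi_1(SO(n+1))$ for $n\ge 3$ because $\pi_2(S^n)=\pi_1(S^n)=0$. Connected coverings of $SO(n)$ correspond to subgroups of $\pi_1(SO(n))=\Z_2$, of which there are exactly two: the whole group, yielding the trivial cover $SO(n)\to SO(n)$, and the trivial subgroup, yielding the (simply connected) universal double cover. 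As $\Spin_n$ is a connected double cover of $SO(n)$, it must be the universal one; in particular $\pi_1(\Spin_n)=0$, and $\Spin_n\to SO(n)$ is the universal cover.

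The main obstacle here is not a single hard computation but rather marshalling the right background cleanly: that a Lie group epimorphism with discrete kernel is a covering, that $SO(n)$ is connected with $\pi_1(SO(n))\cong\Z_2$ for $n\ge3$, and --- the one genuinely hands-on input --- that $1$ and $-1$ lie in the same component of $\Spin_n$, which is exactly what the explicit path $\gamma$ supplies. If one prefers to avoid quoting $\pi_1(SO(n))$, an alternative is to argue inductively from $\Spin_3\cong S^3$ using the fibration $\Spin_n\hookrightarrow\Spin_{n+1}\to S^n$ (coming from the transitive action of $\Spin_{n+1}$ on $S^n$ with stabilizer $\Spin_n$), whose homotopy sequence gives $\pi_1(\Spin_{n+1})\cong\pi_1(\Spin_n)$ for $n\ge3$.
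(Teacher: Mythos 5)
Your argument is correct and follows essentially the same route as the paper: quote $\pi_1(SO(n))\cong\Z_2$, observe that the two-sheeted cover $\Spin_n\to SO(n)$ is therefore universal precisely when $\Spin_n$ is connected, and exhibit an explicit path of products of unit vectors joining $1$ to $-1$ (your $\gamma(t)=e_1(\cos t\,e_1+\sin t\,e_2)$ plays the same role as the paper's $(e_1\cos t+e_2\sin t)(-e_1\cos t+e_2\sin t)$). You simply spell out more of the covering-space bookkeeping than the paper does; no gap.
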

\begin{proof}
Recall the short exact sequence \eqref{eq:Spin_SES} from theorem \ref{thm:exactsequence}
\begin{equation}
1 \to \Z_2 \to \Spin_n \to SO(n) \to 1.
\end{equation}
We know that $\pi_1(SO(n)) = \Z_2$. Hence, from covering theory, to prove these statements it is sufficient to prove that the elements $\pm 1 \in \ker \tilde{\rho}$  are connected by a continuous path in $\Spin_n$. Such a path is given by 
$$\gamma(t) = (e_1 \cos t + e_2 \sin t)(-e_1\cos t + e_2 \sin t) \in \Spin_n$$
where $0\leq t \leq \pi/2.$
\end{proof}
We know that the Lie algebras of $\Spin_n$ and $SO(n)$ are isomorphic, but sometimes it will be useful to have an explicit isomorphism. 
\begin{prop}
The Lie subalgebra of $(\Cl_n,[\cdot,\cdot])$ of the subgroup $\Spin_n \subset Cl_n^\times$ is spanned by $\{e_ie_j\}_{i < j}$. 
\end{prop}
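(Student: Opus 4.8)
The plan is to identify the Lie algebra $\mathfrak{spin}_n \subset (\Cl_n, [\cdot,\cdot])$ by exhibiting an explicit curve through the identity for each proposed generator and then matching dimensions. First I would observe that $\Spin_n$ is a Lie subgroup of $\Cl_n^\times$, so its Lie algebra is the subspace of $\Cl_n$ consisting of all derivatives $\gamma'(0)$ of smooth curves $\gamma \colon (-\varepsilon,\varepsilon) \to \Spin_n$ with $\gamma(0)=1$. For a fixed pair $i<j$, consider the curve $\gamma_{ij}(t) = (e_i\cos(t/2) + e_j \sin(t/2))(-e_i\cos(t/2)+e_j\sin(t/2))$, which lies in $\Spin_n$ because each factor is a unit vector in $\R^n$ and the product of two such vectors is an even element of $\Pin_n$; one checks $\gamma_{ij}(0) = e_i^2 = -1$... so instead I would use $\gamma_{ij}(t) = (e_i\cos(t/2)+e_j\sin(t/2))(e_i\cos(t/2)-e_j\sin(t/2))^{-1}$ or more simply the curve $\exp(t\,e_ie_j)$ computed inside the Clifford algebra, after first verifying that $e_ie_j \in \mathfrak{spin}_n$ by producing a genuine $\Spin_n$-valued curve. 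Concretely, $(e_i\cos\theta + e_j\sin\theta)(-e_i\cos\theta+e_j\sin\theta) = 1 + (\text{something})$ — one computes this product using $e_i^2=e_j^2=-1$, $e_ie_j=-e_je_i$, and finds it equals $\cos 2\theta + \sin 2\theta\, e_ie_j$ up to normalization; differentiating at $\theta=0$ and rescaling gives $e_ie_j$ as a tangent vector, so $e_ie_j \in \mathfrak{spin}_n$ for all $i<j$.

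Next I would establish the reverse inclusion via a dimension count. The elements $\{e_ie_j\}_{i<j}$ are linearly independent in $\Cl_n$ (they are distinct basis elements of the form $e_I$ with $|I|=2$, by the basis proposition), so they span a subspace of dimension $\binom{n}{2} = \dim \mathfrak{so}(n)$. Since the short exact sequence $1 \to \Z_2 \to \Spin_n \to SO(n) \to 1$ from Theorem \ref{thm:exactsequence} exhibits $\Spin_n \to SO(n)$ as a covering map, $\dim \Spin_n = \dim SO(n) = \binom{n}{2}$. Therefore $\mathfrak{spin}_n$ has dimension exactly $\binom{n}{2}$, and since it contains the $\binom{n}{2}$-dimensional span of $\{e_ie_j\}_{i<j}$, the two must coincide.

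The main obstacle — really the only subtle point — is making the first step rigorous: one must confirm that the exhibited curves genuinely land in $\Spin_n$ (not merely in $\Cl_n^\times$ or $\Pin_n$) and correctly normalize the angle parameter so that the derivative picks out $e_ie_j$ rather than a scalar multiple or the wrong element. Checking that the product of two unit vectors lies in $\Spin_n$ is immediate from the definition of $\Pin_n$ together with $\Spin_n = \Pin_n \cap (\Cl_n)_0$, since a product of two elements of $\R^n$ is even. The bookkeeping with half-angles (so that the curve descends to a rotation by the full angle in $SO(n)$, consistent with the double cover) is the part most prone to sign and factor-of-two errors, but it is purely computational. Everything else follows formally once the tangent vectors $e_ie_j$ are in hand and the dimension of $\Spin_n$ is known from the covering.
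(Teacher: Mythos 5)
Your proposal is correct and follows essentially the same route as the notes: exhibit the curves $\gamma_{ij}(t)=(e_i\cos t+e_j\sin t)(-e_i\cos t+e_j\sin t)=\cos 2t+\sin 2t\,e_ie_j$ to show each $e_ie_j$ is a tangent vector at $1$, then conclude by the dimension count $\dim\Spin_n=\dim SO(n)=\binom{n}{2}$. Your momentary worry that $\gamma_{ij}(0)=-1$ is just a sign slip --- $(e_i)(-e_i)=-e_i^2=+1$, consistent with your own later expansion --- so no detour is actually needed.
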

\begin{proof}
This Lie algebra consists of tangent vectors to curves in $\Spin_n$ at $1$. In particular, consider a curve similar to the one above
\begin{equation}
\gamma_{ij}(t) =  (e_i \cos t + e_j \sin t)(-e_i\cos t + e_j \sin t) = \cos 2t + \sin 2t e_ie_j
\end{equation}
Then $\gamma(0) = 1$ and $\dot{\gamma}(0) = 2e_ie_j$. The claim now follows from the fact that $\dim \Spin_n = \dim SO(n) = n(n-1)/2$. 
\end{proof}

Recall that the Lie algebra of $SO(n)$ is the space of skew-symmetric matrices and denoted $\mathfrak{so}_n$. There is an isomorphism 
$\bigwedge^2 \R^n \to \mathfrak{so_n}$ given by sending $e_i \wedge e_j\mapsto E_{ji} - E_{ij}$ or equivalently, we send $v \wedge w$ to the antisymmetric endomorphism  $\R^n \to \R^n$ given by 
\begin{equation}v \wedge w (x) = v g(w,x) - w g(v,x).\label{eq:isosonwedge2}\end{equation}
Consider again the adjoint representation 
\begin{equation}
\Spin_n  \xrightarrow{\rho} SO(n)
\end{equation}
(on $(\Cl_n)_0$ the twisted adjoint representation and the adjoint representation agree). 
\begin{prop}
Identify $\mathfrak{so}_n \cong \bigwedge^2\R^n$ using the isomorphism \eqref{eq:isosonwedge2}. Then the Lie algebra isomorphism $\rho_*\colon \Spin_n \to \bigwedge^2\R^n$ induced by $\rho$ is given by 
\begin{equation}
e_ie_j \mapsto 2e_i\wedge e_j\label{eq:propLieAlgIso1}
\end{equation}
Equivalently 
\begin{equation}
(\rho_*)^{-1}(v\wedge w) = \frac{1}{4}[v,w]\label{eq:propLieAlgIso2}
\end{equation}
\end{prop}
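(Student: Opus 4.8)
The plan is to reduce everything to a direct computation of $\rho_*$ on the basis $\{e_ie_j\}_{i<j}$ of $\mathfrak{spin}_n \subset (\Cl_n,[\cdot,\cdot])$ supplied by the preceding proposition. The first step is to recognize $\rho_*$ as the Clifford commutator. For $A \in \mathfrak{spin}_n$ the one-parameter subgroup $t \mapsto \exp(tA)$ lies in $\Spin_n$, and since on $(\Cl_n)_0$ the twisted adjoint representation is ordinary conjugation $\rho(x)v = xvx^{-1}$, differentiating $t \mapsto \exp(tA)\,v\,\exp(-tA)$ at $t = 0$ gives
\begin{equation}
\rho_*(A)v = Av - vA = [A,v], \qquad v \in \R^n ,
\end{equation}
the bracket taken in $\Cl_n$. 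Alternatively one can differentiate the explicit curve $\gamma_{ij}(t) = \cos 2t + \sin 2t\, e_ie_j$ from the earlier proposition, using $\gamma_{ij}(0) = 1$, $\dot\gamma_{ij}(0) = 2 e_ie_j$ and $\gamma_{ij}(t)^{-1} = \gamma_{ij}(-t)$, which yields the same conclusion.

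The second step is the elementary Clifford computation of $[e_ie_j, e_k]$ for a standard orthonormal basis, using only $e_a e_b = -e_b e_a$ $(a \neq b)$ and $e_a^2 = -1$. This gives $[e_ie_j,e_k] = 0$ when $k \notin \{i,j\}$, $[e_ie_j,e_i] = 2e_j$, and $[e_ie_j,e_j] = -2e_i$. Comparing with the identification $\bigwedge^2\R^n \cong \mathfrak{so}_n$ of \eqref{eq:isosonwedge2}, in its matrix form $e_i\wedge e_j \mapsto E_{ji} - E_{ij}$ — under which $e_i\wedge e_j$ sends $e_i \mapsto e_j$, $e_j \mapsto -e_i$ and kills the other basis vectors — one reads off $\rho_*(e_ie_j) = 2\, e_i \wedge e_j$, which is \eqref{eq:propLieAlgIso1}. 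Since $\rho$ is a covering map (Theorem \ref{thm:exactsequence}) its differential $\rho_*$ is a Lie algebra isomorphism, so this formula determines $\rho_*$ on all of $\mathfrak{spin}_n$.

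For the equivalent form \eqref{eq:propLieAlgIso2} I would invert: \eqref{eq:propLieAlgIso1} gives $\rho_*^{-1}(e_i\wedge e_j) = \tfrac12 e_i e_j$, while for $i \neq j$ the Clifford relation gives $[e_i,e_j] = e_ie_j - e_je_i = 2 e_ie_j$, whence $\tfrac14[e_i,e_j] = \tfrac12 e_ie_j = \rho_*^{-1}(e_i\wedge e_j)$. Both sides of \eqref{eq:propLieAlgIso2} are bilinear and antisymmetric in $(v,w)$ and agree on the basis pairs (the diagonal contributions $e_a \wedge e_a$ and $[e_a,e_a]$ both vanishing), so the identity extends to arbitrary $v,w \in \R^n$. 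I do not expect any structural obstacle: the whole content is the commutator computation, and the only thing needing care is sign bookkeeping — checking that the derivative of conjugation is $+[A,\cdot]$, and fixing the orientation convention identifying $\bigwedge^2\R^n$ with $\mathfrak{so}_n$ so that it is compatible with the normalization asserted in the statement.
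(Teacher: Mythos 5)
Your proof is correct and follows essentially the same route as the paper: differentiate the conjugation action to identify $\rho_*(e_ie_j)$ with the Clifford commutator $[e_ie_j,\cdot]$, evaluate it on basis vectors via the Clifford relations, match against the identification $e_i\wedge e_j \mapsto E_{ji}-E_{ij}$, and invert to get \eqref{eq:propLieAlgIso2}. Your caution about the sign convention in \eqref{eq:isosonwedge2} is well placed — the conclusion matches the matrix form $E_{ji}-E_{ij}$, which is the convention the paper's own computation implicitly uses.
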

\begin{proof}
Consider again the curve $\gamma(t) = \gamma_{ij}(t/2) = \cos t + \sin t e_ie_j$ defined above. Then for $x \in \R^n$,
\begin{align*}(\rho_* e_ie_j)(x) &= \restr{\frac{d}{dt}}{t=0}\rho(\gamma(t))(x) = \restr{\frac{d}{dt}}{t=0}\gamma(t)x\gamma(t)^{-1} \\
&= e_ie_j x - x e_ie_j = -2g(e_j,x)e_i + 2g(e_i,x)e_j = 2(e_i \wedge e_j)x.
\end{align*}
This proves \eqref{eq:propLieAlgIso1}. 
To see \eqref{eq:propLieAlgIso2}, note that 
$$\frac14[e_i,e_j] = \frac12e_ie_j = \rho_*^{-1}e_i\wedge e_j.$$
\end{proof}
\begin{expl}[$\Spin_3$]
The Lie algebra of $\Spin_3$ is generated by $u = e_1e_2, v=e_2e_3, w=e_1e_3$ subject to the relations 
\begin{align*}
[u,w] &= [e_1e_2,e_1e_3] = e_1e_2e_1e_3 - e_1e_3e_1e_2 = -e_1^2e_2e_3 + e_1^2e_3e_2 = 2v \\
[u,v] &= [e_1e_2,e_2e_3] = e_1e_2^2e_3 - e_2e_3e_1e_2 = -2w \\
[v,w] &= [e_2e_3,e_1e_3] = e_2e_3e_1e_3 - e_1e_3e_2e_3 = -2u\\
\end{align*}
The complexification of this Lie algebra is isomorphic to $\mathfrak{sl}_2(\C)$: Setting \begin{align*}h &= \ii u \\ e &= 1/2(iv+ w)  \\ f&=1/2 (iv - w)\end{align*}
we obtain the brackets of $\mathfrak{sl}_2(\C)$given by $[e,f] = h, [h,e]=2e, [h,f]=-2f$.
 This Lie algebra sits diagonally in $Cl_3^c = M_2(\C) \oplus M_2(\C)$, and the spinor representation is the fundamental representation of $\mathfrak{sl}_2(\C)$ where $h$ has heighest weight $+1$. In physics this is called the Spin-$\frac12$-representation\footnote{In physics sometimes a different convention is used where $[h,e] = e, [h,f] = -f$, then $h$ has weight $\frac12$ in the fundamental representation}. 
\end{expl}

\subsection{Some remarks on the real case}
In the real case, we can again use theorem \ref{thm:irreps} to classify irreducible representations of the $\Cl_{p,q}$, since they are isomorphic to matrix algebras or a direct sum of matrix algebras. The representation theory of this algebras has very interesting details, but we restrict ourselves to a single case interesting in physics. 
\begin{expl}[Majorana representation]
Consider the algebra $\Cl_{1,3}$. By theorem \ref{thm:clifford2} we know that \begin{equation}\Cl_{1,3} \cong \Cl_{1,1} \otimes \Cl_{0,2} \cong M_2(\R) \otimes M_2(\R) \cong M_4(\R).
\end{equation}
Hence, the \emph{real} algebra $\Cl_{1,3}$ has an irreducible real representation on $\R^4$. In physics, this representation is known as the \emph{Majorana representation}. Note that this fact is unique to signature $(+,-,-,-)$! In fact, $\Cl_{3,1} \cong \Cl_{2,0} \otimes \Cl_{1,1} \cong M_2(\HH)$ does not have a similar representation.
\end{expl}
\chapter{Geometry}\label{ch:Geometry}
In this chapter, we set up the geometric foundations for Spin Geometry. In the first section we will look at the elementary objects that shall play a big role in this course: Vector bundles, principal bundles and connections on them. In the next section, we will see how one constructs \emph{Characteristic classes} out of these objects via Chern-Weil theory. 
Finally, we will discuss some aspects of Riemannian Geometry.
\section{Vector bundles, principal bundles, connections}
\subsection{Vector bundles}
We start with the definition of a vector bundle. 
\begin{defn}[Vector bundle]
Let $M$ be a manifold and $k \in \{\R,\C\}$.
A \emph{rank $n$ $k$-vector bundle over $M$} is  
a pair $(E,\pi)$, where $E$ is a manifold and $\pi \colon E \to M$ is a surjective submersion, such that there is a cover $\mathfrak{U}=\{U_\alpha\}_{\alpha \in A}$ of $M$ satisfying
\begin{enumerate}[i)]
\item{The cover $\mathfrak{U}$ \emph{trivializes} $E$, that is, for every $\alpha \in A$ there exists a diffeomorphism $\Psi_\alpha\colon \pi^{-1}(U_\alpha) \to U_\alpha \times k^n$ such that 
\begin{equation} \begin{tikzcd}
\pi^{-1}(U_\alpha) \arrow[rd, "\pi"] \arrow[rr, "\psi_{\alpha}"] &          & U_\alpha \times k^n \arrow[ld, "\pi_1"'] \\
                                                              & U_\alpha &                                       
\end{tikzcd}
\label{eq:defVB1}
\end{equation}
commutes, 
}
\item For all $\alpha \in A$ and $u \in U_\alpha$, $\pi^{-1}(u)$ is a $k$-vector space and the map \begin{equation}
\restr{\psi_\alpha}{\pi^{-1}(u)}\colon \pi^{-1}(u) \to \{u\} \times k^n \label{eq:defVB2}
\end{equation}
is an isomorphism of vector spaces. 
\end{enumerate}
\end{defn}
Let us introduce some terminology. $M$ is called the \emph{base} (or \emph{base space}) of the vector bundle. $E$ is called the \emph{total space}, and $\pi$ the projection. For $u\in M$, $\pi^{-1}(u)$ is called the \emph{fiber (of $E$) over $u$}, and denoted $E_u$. $(U_\alpha,\psi_\alpha)$ is called a \emph{local trivialization} and $\mathfrak{U}$ is called a \emph{trivializing cover}. For $\alpha,\beta \in A$, let $U_{\alpha\beta} = U_\alpha \cap U_\beta.$ By diagram \eqref{eq:defVB1} and \eqref{eq:defVB2}, the maps\footnote{Often, in the literature one finds opposite convention for the indices. However, we find this intuitive because it is the transition map \emph{from $\alpha$ to $\beta$}.}
\begin{equation}
\tilde{g}_{\alpha\beta}= \psi_\beta\circ\psi^{-1}_\alpha \colon U_{\alpha\beta}\times k^n \to U_{\alpha\beta} \times k^n
\end{equation}
satisfy $\tilde{g}_{\alpha\beta}(u,v) = (u,g_{\alpha\beta}(u)v)$, where $g_{\alpha\beta}(u) \in GL_n(k)$. The corresponding maps 
\begin{equation}
g_{\alpha\beta}\colon U_{\alpha\beta} \to GL_n(k)
\end{equation}
are called the \emph{gluing maps}. By construction, they satisfy, for all $\alpha,\beta,\gamma\in A$  and 
\begin{subequations}\label{eqs:gluingmaps}
\begin{align}
g_{\alpha\alpha}(u) &= \mathrm{id}_{k^n} \forall, \quad u \in U_{\alpha} \\
g_{\alpha\beta}(u) &= g_{\beta\alpha}(u)^{-1} ,\quad \forall u \in U_{\alpha\beta} \\
g_{\beta\gamma}(u)g_{\alpha\beta}(u) &= g_{\alpha\gamma(u)},\quad\forall u \in U_{\alpha\beta\gamma} = U_{\alpha} \cap U_{\beta} \cap U_{\gamma}.
\end{align}
\end{subequations}
\begin{defn}[Vector bundle morphisms]
If $E,F$ are vector bundles over $M$ then a \emph{vector bundle morphism} is a smooth map $\Psi\colon E \to F$ such that the diagram 
\begin{equation}
\begin{tikzcd}
E \arrow[rd, "\pi"] \arrow[rr, "\Psi"] &   & F \arrow[ld, "\pi'"'] \\
                                       & M &                      
\end{tikzcd}
\end{equation}
commutes and $\restr{\Psi}{E_u}=:\Psi_u \colon E_u \to F_u$ is linear. A \emph{vector bundle isomorphism} is a vector bundle morphism which is also a diffemorphism. The set of vector bundle morphisms from $E$ to $F$ is denoted $\underline{\Hom}(E,F)$.
\end{defn}
\begin{rem}
 Given two vector bundles $E$ and $F$ over $M$, we can always find a cover of $M$ that trivializes both. Namely, given a trivializing cover $\{U_\alpha\}_{\alpha \in A}$ of $E$ and $\{V_\beta\}_{\beta \in B}$ of $F$, the cover $\{U_\alpha \cap V_\beta\}_{(\alpha,\beta) \in A \times B}$ is a trivializing cover of both $E$ and $F$. 
\end{rem}
\begin{exc} 
Suppose the rank of $E$ is $n$ and the rank of $F$ is $m$. Prove that a vector bundle morphism $\Psi \colon E \to F$ is given by a collection  of maps $\Psi_{\alpha}\colon U_\alpha \to \Hom(k^n,k^m)$ such that 
\begin{equation}
\Psi_\beta = g^F_{\alpha\beta}\Psi_\alpha g^E_{\beta\alpha}.
\end{equation}
\end{exc}
\begin{expl}
\begin{enumerate}[i)]
\item A vector space is a vector bundle over a point. 
\item For every manifold $M$, the tangent bundle $TM$ is a vector bundle over $M$. The transition maps of the tangent bundle $TM$ can be computed in the following way. Let $(U_\alpha,\varphi_\alpha)$ be an atlas of $M$. Then $d\varphi_{\alpha\beta}(u)\colon U_{\alpha\beta} \to GL_n(\R)$ are the transition maps of $TM$.
\item For any manifold $M$ and natural number $n$ there is the \emph{trivial rank $n$ vector bundle over $k$}, simply given by the direct product $M \times k^n$ with the canonical projection to $M$. This bundle is often denoted $\underline{k}^n$.

\item Recall that $\C\mathbb{P}^n$ is the space of lines in $\C^n$, i.e. $\C\mathbb{P}^n = \C\mathbb{P}^{n+1}/\C^\times$, where $\C^\times$ acts diagonally. The quotient map $\pi\colon\C^{n+1}\to \C\mathbb{P}^n$ is a rank 1 complex vector bundle over $\C\mathbb{P}^n$. Working out the details of this is a marvelous exercise.  
\end{enumerate}
\end{expl}
It is an important fact that the vector bundle is entirely determined up to isomorphism by its trivializing cover and the gluing maps. 
\begin{prop}
Given a cover $\mathfrak{U} = \{U_{\alpha}\}_\alpha$ of a manifold $M$ and a family of snooth maps $g_{\alpha\beta}\colon U_{\alpha\beta} \to GL_n(k)$ satisfying \eqref{eqs:gluingmaps}, there exists a unique (up to isomorphism) vector bundle $\pi\colon E \to M$ with trivializing cover $\mathfrak{U}$ and gluing maps $g_{\alpha\beta}$. 
\end{prop}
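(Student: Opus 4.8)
The plan is to construct $E$ explicitly as a quotient of a disjoint union and then verify the required properties. First I would form the set
\begin{equation}
\tilde{E} = \bigsqcup_{\alpha \in A} U_\alpha \times k^n = \{(\alpha, u, v) : \alpha \in A, u \in U_\alpha, v \in k^n\}
\end{equation}
and introduce the relation $(\alpha, u, v) \sim (\beta, u', v')$ if and only if $u = u'$ (so in particular $u \in U_{\alpha\beta}$) and $v' = g_{\beta\alpha}(u) v$. The cocycle conditions \eqref{eqs:gluingmaps} are exactly what is needed for $\sim$ to be an equivalence relation: reflexivity uses $g_{\alpha\alpha} = \mathrm{id}$, symmetry uses $g_{\alpha\beta} = g_{\beta\alpha}^{-1}$, and transitivity uses the triple condition $g_{\beta\gamma} g_{\alpha\beta} = g_{\alpha\gamma}$. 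I would then set $E = \tilde{E}/\!\sim$ with the quotient topology, and define $\pi \colon E \to M$ by $\pi[\alpha,u,v] = u$, which is well-defined since equivalent points share the same $u$.

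Next I would equip $E$ with a smooth structure. The natural charts are the maps $\psi_\alpha \colon \pi^{-1}(U_\alpha) \to U_\alpha \times k^n$ sending $[\alpha, u, v] \mapsto (u,v)$; one checks these are well-defined bijections onto open sets, and that the transition map $\psi_\beta \circ \psi_\alpha^{-1}$ on $U_{\alpha\beta} \times k^n$ is $(u,v) \mapsto (u, g_{\beta\alpha}(u)^{-1} v) = (u, g_{\alpha\beta}(u) v)$, which is smooth because the $g_{\alpha\beta}$ are smooth and matrix inversion/multiplication is smooth. Composing with charts on $M$ and the identity on $k^n$ gives an atlas on $E$; the Hausdorff and second-countability properties follow from those of $M$ and $k^n$ together with local triviality (here one uses that points in distinct fibers are separated by $\pi$, and points in the same fiber lie in one chart). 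With this structure $\pi$ is a surjective submersion, each $\psi_\alpha$ is a diffeomorphism making diagram \eqref{eq:defVB1} commute, and the vector space structure on $E_u := \pi^{-1}(u)$ is the one transported from $\{u\} \times k^n$ via any $\psi_\alpha$ — this is independent of $\alpha$ precisely because $g_{\alpha\beta}(u)$ is linear, so \eqref{eq:defVB2} holds. This establishes existence, with trivializing cover $\mathfrak{U}$ and gluing maps $g_{\alpha\beta}$ by construction.

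For uniqueness, suppose $\pi' \colon E' \to M$ is another vector bundle with trivializing cover $\mathfrak{U}$ (after passing to a common refinement, using the remark that any two covers admit a common trivializing refinement, one may assume the covers agree) and the same gluing maps $g_{\alpha\beta}$, with trivializations $\psi'_\alpha$. I would define $\Phi \colon E \to E'$ locally by $\Phi|_{\pi^{-1}(U_\alpha)} = (\psi'_\alpha)^{-1} \circ \psi_\alpha$. The point is that these local definitions agree on overlaps: on $\pi^{-1}(U_{\alpha\beta})$ one has $(\psi'_\beta)^{-1}\psi_\beta = (\psi'_\alpha)^{-1} \circ (\psi'_\alpha (\psi'_\beta)^{-1}) \circ (\psi_\beta \psi_\alpha^{-1}) \circ \psi_\alpha$, and the middle two factors are the transition maps $(u,v) \mapsto (u, g_{\beta\alpha}(u)v)$ and $(u,v)\mapsto (u, g_{\alpha\beta}(u)v)$, which are mutually inverse, so the expression collapses to $(\psi'_\alpha)^{-1}\psi_\alpha$. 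Hence $\Phi$ is globally well-defined, smooth, fiberwise linear, covers $\mathrm{id}_M$, and has inverse built the same way, so it is a vector bundle isomorphism.

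The main obstacle — really the only non-formal point — is the verification that the quotient topology on $E$ is Hausdorff and that the smooth structure is well-defined and second countable; everything else is bookkeeping with the cocycle identities. I would handle Hausdorffness by the standard argument: two points of $E$ either lie over distinct points of $M$ (separate them by preimages of disjoint open sets in $M$) or over the same point (they then lie in a common chart $\pi^{-1}(U_\alpha) \cong U_\alpha \times k^n$, which is Hausdorff). One should also note that $\mathfrak{U}$ may be taken locally finite or countable by passing to a refinement to ensure second countability, without loss of generality.
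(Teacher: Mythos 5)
Your construction is correct and follows essentially the same route as the notes: build $E$ as a quotient of $\bigsqcup_\alpha U_\alpha\times k^n$ using the cocycle identities to make $\sim$ an equivalence relation, obtain charts by composing the $\psi_\alpha$ with charts of $M$, and prove uniqueness by gluing the local maps $(\psi'_\alpha)^{-1}\circ\psi_\alpha$, which agree on overlaps because the gluing maps coincide — and you are in fact more careful than the notes about Hausdorffness and second countability. The only blemish is an index slip: with the paper's convention $\tilde g_{\alpha\beta}=\psi_\beta\circ\psi_\alpha^{-1}$ your identification should read $v'=g_{\alpha\beta}(u)v$ rather than $v'=g_{\beta\alpha}(u)v$, since as written one gets $\psi_\beta\circ\psi_\alpha^{-1}(u,v)=(u,g_{\beta\alpha}(u)v)$, i.e.\ the constructed bundle would carry the gluing maps $g_{\beta\alpha}$ instead of $g_{\alpha\beta}$.
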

\begin{proof}
\emph{Existence:}
We can assume that each $U_{\alpha}$ is a contained in a domain of a chart of $M$ (otherwise, cover each $U_{\alpha}$ by charts $V_{\alpha\beta}$ and note that the transition restricted to each $V_{\alpha\beta}$ still satisfy \eqref{eqs:gluingmaps}.) First, construct the fiber over $u$ by \begin{equation} E_u:= \left(\coprod_{\alpha\in A,u\in U_{\alpha}}k^n\right)/\sim = \left(\{\alpha\in A,u \in U_{\alpha}\} \times k^n\right)/\sim
\end{equation}
where $(\alpha,v) \sim (\beta,w)$ if $g_{\alpha\beta}(u)v = w$. This is an equivalence relation \emph{since $g_{\alpha\beta}$ satisfy \eqref{eqs:gluingmaps}.} Then, let $E := \coprod_{u\in M} E_u$ and $\psi_{\alpha}[(u,\alpha,v)] = (u,v).$ Since $U_\alpha$ is contained in a chart, composition with this chart yields a chart of $E$. It is easily checked that this is indeed a smooth atlas.  \\
\emph{Uniqueness:} It is enough to show that two vector bundle with the same trivializing cover and gluing maps are isomorphic. Let $E$,$F$ be such vector bundles. Then, we construct the isomorphism over $U_\alpha$ by the diagram 
\[\begin{tikzcd}
F_\alpha\colon\pi^{-1}(U_\alpha) \arrow[rd, "\pi"] \arrow[r, "\psi^E_{\alpha}"] & U_\alpha \times k^n \arrow[r, "(\psi_\alpha^F)^{-1}"] & (\pi')^{-1}(U_\alpha) \arrow[ld, "\pi'"'] \\
                                                                  & U_\alpha                                              &                                          
\end{tikzcd}\]
Since the gluing maps are the same, the maps $F_\alpha$ and $F_\beta$ agree on $U_{\alpha\beta}$: We have 

\begin{align} F_\beta &= (\psi_\beta^F)^{-1} \circ \psi_\beta^E  \\
&= ( \tilde{g}_{\alpha\beta}^F)\circ \psi_\alpha^F)^{-1} \circ (\tilde{g}_{\alpha\beta}^E\circ \psi_\alpha^E) \\
&= (\psi_{\alpha}^F)^{-1}\circ (\tilde{g}^F_{\alpha\beta})^{-1}\circ \tilde{g}_{\alpha\beta}^E \circ \psi_\alpha^E = F_\alpha, 
\end{align}
since $(\tilde{g}^F_{\alpha\beta})^{-1}\circ \tilde{g}_{\alpha\beta}^E = \mathrm{id}_{U_{\alpha} \times k^n}$.
\end{proof}

This central fact will often help us define vector bundles via trivializing covers and gluing maps. Given a manifold over $M$, we can define a rank $n$ vector bundle $E$ over $k$ by specifying a trivializing cover $\mathfrak{U}$ and transition maps $g_{\alpha\beta}\colon U_{\alpha\beta} \to GL_n(k)$, and we write $E = (\mathfrak{U},g_{\alpha\beta})$ for this vector bundle.
\begin{expl}
Consider the circle $S^1 = \R/\Z$ with the open cover $U_1=(0,1), U_2=(1/2,3/2)$. Then the M\"obius band is the vector bundle with transition function $g_{12}\colon U_{12} \to GL_1(\R) = \R^\times$ given by 
$$g_{12}(x) = \begin{cases} 1 & 
x \in (1/2,1) \\
-1 &x \in (1,3/2)
\end{cases}
$$
\end{expl}
\begin{rem}
One can show this is the only non-trivial vector bundle over $S^1$. In fact, the tangent bundle of $S^1$ is trivial $TS^1 \cong S^1 \times \R$. 
\end{rem}
\begin{defn}[Section]
A \emph{section} of a vector bundle $\pi \colon E \to M$ is a smooth map $\sigma \colon M \to E$ such that $\pi \circ \sigma = \mathrm{id}_M$. 
\end{defn}
The set of sections of $E$ is denoted $\Gamma(M,E)$ or simply $\Gamma(E)$ when no confusion is possible. Note that since $E_x$ is a vector space for all $x \in M$, we can naturally add sections and multiply them by scalars: 
\begin{equation}
(\sigma_1 + \sigma_2)(x) = \sigma_1(x) + \sigma_2(x), \qquad, (\lambda\sigma)(x) = \lambda\sigma(x).
\end{equation}
Thus, $\Gamma(E)$ is a $k$-vector space. 
\begin{expl}
\begin{enumerate}[a)]
\item A section of a trivial bundle $M \times k^n$ is given by $\sigma(x)= (x,f(x))$, where $f\colon M \to k^n$ is a smooth map. Thus, $\Gamma(M,\underline{k}^n) \cong C^\infty(M,k^n)$.
\item 
Over a trivializing cover $\mathfrak{U} = \{U_\alpha\}_{\alpha_\in A}$, a section is given by smooth functions $\sigma_\alpha\colon U_\alpha \to k^n$ satisfying 
\begin{equation}
\sigma_\beta(x) = g_{\alpha\beta}(x)\sigma_\alpha(x).
\end{equation}
\item  A section of the tangent bundle $TM$ is called a \emph{vector field}. 
\end{enumerate}
\end{expl}
The natural constructions on vector spaces, such as dualizing, direct sums and tensor products, carry over to vector bundles. Here the description in terms of transition functions comes in handy. 
\begin{defn} Let $E = (\mathfrak{U},g_{\alpha\beta})$ and $F=(\mathfrak{U},h_{\alpha\beta})$ be two vector bundles over the same trivializing cover. Then we define the following bundles:
\begin{enumerate}[i)]
\item The \emph{dual bundle} $E^*$ by
\begin{equation}
E^* = (\mathfrak{U},(g_{\alpha\beta}^*)^{-1})
\end{equation}
\item The \emph{direct sum} $E \oplus F$ by 
\begin{equation}
E \oplus F = (\mathfrak{U},g_{\alpha\beta}) \oplus h_{\alpha\beta}
\end{equation}
\item The \emph{tensor product} $E \otimes F$ by 
\begin{equation}
E \otimes F = (\mathfrak{U},g_{\alpha\beta}) \otimes h_{\alpha\beta}
\end{equation}
\item The \emph{symmetric} and \emph{exterior powers} $\mathrm{Sym}^kE$ and $\bigwedge^kE$by 
\begin{align}
\mathrm{Sym}^kE &= (\mathfrak{U},\mathrm{Sym}^kg_{\alpha\beta})\\
\bigwedge^kE &= \left(\mathfrak{U},\bigwedge^k g_{\alpha\beta}\right).
\end{align}
\item The \emph{determinant line} 
$\det E$ by 
\begin{equation}
\det E = \bigwedge^{rk(E)}E.
\end{equation}
\end{enumerate}
\end{defn}
\begin{expl}
\begin{enumerate}[a)]
\item The dual of the tangent bundle $TM$ is called the cotangent bundle and denoted $(TM)^* = T^*M$. 
\item Sections of $\bigwedge^k T^*M$ are called differential $k$-forms on $M$. 
\item If $E$ is a vector bundle, then sections of $\bigwedge^k T^*M \otimes E$ are called 
\emph{differential k-forms with values in $E$}.
\item Sections of $E^*\otimes F$ are the same as vector bundle morphisms $E \to F$: $$\Gamma(E^*\otimes F) \cong \underline{\Hom}(E,F)$$
(Exercise!)
\end{enumerate}
\end{expl}

\subsection{Principal bundles}
We begin with a definition. 
\begin{defn}[Principal bundle]
Let $G$ be a Lie group and $M$ be a manifold. A principal $G$-bundle is a triple $(P,\pi,M)$ such that
\begin{enumerate}[i)]
\item $\pi\colon P \to M$ is a smooth submersion, 
\item There is a free and transitive right action $P \times G \to P$ such that $\pi$ is $G$-invariant (that is, $\pi(pg) = \pi(p)$), 
\item There exists a \emph{trivializing cover} $\mathfrak{U} = \{U_\alpha\}_{\alpha \in A}$, that is, a cover of $M$ with the property that for every $\alpha \in A$ there exists a diffeomorphism $\Psi_{\alpha}\colon \pi^{-1}(U_\alpha) \to U_\alpha \times G$ such that 
\begin{equation}
\begin{tikzcd}
\pi^{-1}(U_\alpha) \arrow[rd, "\pi"] \arrow[rr, "\Psi_\alpha"] &          & U_\alpha \times G \arrow[ld, "\pi_1"'] \\
                                                               & U_\alpha &                                       
\end{tikzcd}
\end{equation}
commutes and $\Psi_\alpha(pg) = \Psi(p)g$. 
\end{enumerate}
\end{defn}
Again, we call $M$ the \emph{base} and $P$ the \emph{total space} of the bundle. The Lie group $G$ is called the structure group of the bundle. 
\begin{defn}[Morphism of principal bundles]
A \emph{morphism} of principal $G$-bundles $P$ and $P'$ is a smooth map $f \colon (P,\pi) \to (P',\pi')$ that commutes both with the right $G$-action and the projections, that is 
\begin{align}
f(pg) &= f(p)g \\
\pi(f(p)) &= \pi(p),
\end{align}
and an \emph{isomorphism} of principal $G$-bundles is a morphism which is also a diffeomorphism. 
\end{defn}

Again, we can define  
\begin{equation}
\tilde{g}_{\alpha\beta} =\Psi_\beta \circ \Psi_{\alpha}^{-1} \colon U_{\alpha\beta} \times G \to U_{\alpha\beta} \times G
\end{equation}
which are given by \begin{equation}
\tilde{g}_{\alpha\beta}(u,h) = (u,g_{\alpha\beta}(u)h)
\end{equation} 
since the trivializations commute with the projections. Note that the transition functions act from the left, since they commute with the right $G$-action.  The maps 
\begin{equation}
g_{\alpha\beta}\colon U_{\alpha\beta}\to G
\end{equation}
are called the transition or gluing maps.  They satisfy 
\begin{subequations}\label{eqs:gluingMapsPrincipal}
\begin{align}
g_{\alpha\alpha}(u) &= 1_G \\
g_{\alpha\beta}(u) &= g_{\beta\alpha}^{-1} \\
g_{\alpha\gamma}(u) &= g_{\beta\gamma}(u)g_{\alpha\beta}(u)
\end{align}
\end{subequations} We also have a similar proposition: 
\begin{prop}
Let $\mathfrak{U} = \{U_\alpha\}_{\alpha \in A}$ be a cover of $M$ and suppose $g_{\alpha\beta} \colon U_{\alpha\beta} \to G$ satisfy \eqref{eqs:gluingMapsPrincipal}. Then there exists a principal $G$-bundle $P$ over $M$ with trivializing cover $\mathfrak{U}$ and gluing maps $g_{\alpha\beta}$, and this bundle is unique up to isomorphim. 
\end{prop}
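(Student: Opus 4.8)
The plan is to mirror, almost verbatim, the proof of the analogous statement for vector bundles, replacing $GL_n(k)$ acting on $k^n$ by $G$ acting on itself by left translation.

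\emph{Existence.} I would set
\[
P := \left(\coprod_{\alpha \in A} U_\alpha \times G\right)\big/\!\sim\;,
\]
where $(\alpha,u,h)\sim(\beta,u',h')$ if and only if $u = u' \in U_{\alpha\beta}$ and $h' = g_{\alpha\beta}(u)\,h$. The cocycle conditions \eqref{eqs:gluingMapsPrincipal} are precisely what make $\sim$ an equivalence relation: $g_{\alpha\alpha} = 1_G$ gives reflexivity, $g_{\beta\alpha} = g_{\alpha\beta}^{-1}$ gives symmetry, and $g_{\alpha\gamma} = g_{\beta\gamma}g_{\alpha\beta}$ gives transitivity. Define $\pi\colon P \to M$ by $[\alpha,u,h]\mapsto u$ (well-defined by construction) and a right $G$-action by $[\alpha,u,h]\cdot g := [\alpha,u,hg]$; the latter is well-defined since left multiplication by $g_{\alpha\beta}(u)$ commutes with right multiplication by $g$, and it is manifestly free and transitive on the fibers $\pi^{-1}(u)$ with $\pi$ invariant. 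Finally $\Psi_\alpha\colon \pi^{-1}(U_\alpha)\to U_\alpha\times G$, $[\alpha,u,h]\mapsto(u,h)$, is a bijection commuting with $\pi$ and the $G$-actions, and a short computation gives $\Psi_\beta\circ\Psi_\alpha^{-1}(u,h) = (u, g_{\alpha\beta}(u)h)$, so the gluing maps of $P$ are exactly the prescribed $g_{\alpha\beta}$.

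\emph{Smooth structure.} I would give $P$ the quotient topology and declare each $\Psi_\alpha$ a diffeomorphism onto $U_\alpha\times G$; compatibility of the resulting charts is immediate since the change of charts is $(u,h)\mapsto(u,g_{\alpha\beta}(u)h)$, smooth by smoothness of $g_{\alpha\beta}$ and of the group operations of $G$. Smoothness of $\pi$ and of the $G$-action is then read off in these charts. Second countability and Hausdorffness follow from the corresponding properties of $M$ and $G$ by the usual local arguments: for Hausdorff, two points in distinct fibers are separated by preimages of disjoint opens in $M$, while two points in a common fiber over $u$ both lie in the single chart $\pi^{-1}(U_\alpha)\cong U_\alpha\times G$ for any $\alpha$ with $u\in U_\alpha$, hence are separated there. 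With this, $(P,\pi,M)$ satisfies all three axioms of a principal $G$-bundle, and I regard verifying that $P$ is a manifold as the only step requiring genuine care.

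\emph{Uniqueness.} Suppose $P,P'$ are principal $G$-bundles over $M$ with the same trivializing cover $\mathfrak{U}$ and the same gluing maps, with trivializations $\Psi_\alpha$, $\Psi'_\alpha$. Put $F_\alpha := (\Psi'_\alpha)^{-1}\circ\Psi_\alpha\colon \pi^{-1}(U_\alpha)\to(\pi')^{-1}(U_\alpha)$, a diffeomorphism commuting with the projections and the $G$-actions. On $U_{\alpha\beta}$, since $\tilde g_{\alpha\beta} = \tilde g'_{\alpha\beta}$, the same manipulation as in the vector bundle case gives $F_\beta = (\Psi'_\beta)^{-1}\Psi_\beta = (\Psi'_\alpha)^{-1}(\tilde g'_{\alpha\beta})^{-1}\tilde g_{\alpha\beta}\Psi_\alpha = (\Psi'_\alpha)^{-1}\Psi_\alpha = F_\alpha$, so the $F_\alpha$ patch together to a global isomorphism $F\colon P\to P'$ of principal $G$-bundles. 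I expect no obstacle here beyond bookkeeping; indeed the whole proof is a faithful transcription of the vector bundle argument, the only genuinely new thing to check being that the glued space is a Hausdorff smooth manifold.
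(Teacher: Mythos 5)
Your proof is correct and is exactly the argument the paper intends: the paper states this proposition without proof, immediately after proving the vector bundle analogue, and your construction (gluing the trivial pieces $U_\alpha\times G$ via left multiplication by $g_{\alpha\beta}$, checking the cocycle conditions give an equivalence relation, and patching the fiberwise-identifications $(\Psi'_\alpha)^{-1}\circ\Psi_\alpha$ for uniqueness) is the faithful transcription of that vector bundle proof. The only presentational difference is that the paper builds each fiber $E_u$ first and then takes the disjoint union over $u\in M$, while you glue the local trivializations directly; these are the same construction.
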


Even though vector bundles and principal bundles are different objects, in some sense they are like two sides of the same coin. This is explained by the following fundamental example: 
\begin{expl}
If $V$ is a vector spaces, then a \emph{frame} of $V$ is an ordered basis $\underline{e}= (e_1,\ldots,e_r)$. The set of frames is denoted by $\mathrm{Fr}(V)$. Let $\pi\colon E \to M$ be a rank $n$ $k$-vector bundle. Then, the \emph{frame bundle} $\mathrm{Fr}(E)$ of $E$ is the smooth manifold $\mathrm{Fr}(E) = \sqcup_{x \in M}\mathrm{Fr}(E_x)$. This manifold has a natural projection $\overline{\pi}\colon \mathrm{Fr(E)} \to M$. We can define a right $GL_n(k)$ action on $\mathrm{Fr}(E)$ in the following way. Let $\{U_\alpha\}_\alpha$ be a trivializing cover for $E$. Over a trivializing chart $\psi_\alpha \colon \pi^{-1}(U_\alpha) \to U_\alpha \times k^n$, the right action is given by \begin{equation}\underline{e}\cdot g = \psi_\alpha^{-1}(g^{-1}\psi_\alpha(e_1),\ldots,g^{-1}\psi(e_n)).\end{equation}
 This gives $\mathrm{Fr}(E)$ the structure of a principal $GL_n(k)$-bundle. If $\psi_\alpha = (\pi, A_\alpha$, then a trivialization of $\mathrm{Fr}(E)$ is given by 
 \begin{align*}
 \overline{\psi}_\alpha(\underline{e} = (\overline{pi}(\underline{e}, A_\alpha(\underline{e}).
 \end{align*}
 Here $A_\alpha(\underline{e}) = (A_\alpha e_1,\ldots A_\alpha e_n) \in GL_n(k)$
 Now, one can check that this principal $GL_n(k)$-bundle has \emph{the same} gluing maps $g_{\alpha\beta} = A_\beta A_\alpha^{-1}$: 
 $${A}_\beta(\underline{e}) = A_\beta (A_\alpha A_\alpha^{-1})(\underline{e}) = g_{\alpha\beta}A_\alpha(\underline{e}),$$
 hence $g_{\alpha\beta}$ are the transition functions of $P$ since they satisfy the defining equation
 
$$ \overline{\Psi}_\beta \circ \overline{\Psi}_\alpha(u,h) = (u, g_{\alpha\beta}h).$$
\end{expl}
This fact is important so we record it again: 
\par
\begin{center}
\fbox{\parbox{0.7\textwidth}{Let $E=(\mathfrak{U},g_{\alpha\beta})$ be a vector bundle. Then, its frame bundle is the principal  $GL_n(k)$-bundle $P=(\mathfrak{U},g_{\alpha\beta})$.}}
\end{center}
Thus, we can use the same data to define either  vector bundles or principal $GL_n(k)$ bundles. It is in this sense that we mean they are two sides of the same coin. However, we can construct vector bundles also from principal bundles with other structure groups.
\begin{defn}
Given a principal $G$-bundle $P = (\mathfrak{U},g_{\alpha\beta})$, and a representation $\rho \to GL_n(k)$, we define the \emph{associated vector bundle} $P \times_\rho k^n$ by 
\begin{equation}
E = (\mathfrak{U},\rho(g_{\alpha\beta}))
\end{equation}
\end{defn}
We say that a vector bundle $E$ has a $(G,\rho)$-structure if $E = P\times_\rho k^n$ for a principal $G$-bundle $P$. We denote this bundle by $\mathrm{Ad P}$. 
\begin{expl}
If $G \subset GL_n(k)$ is a subgroup, it has the trivial representation $\iota \colon G \hookrightarrow GL_n(k)$. We say that $E$ has a $G$-structure if it has a $(G,\iota)$ structure. This means that we can find $\mathfrak{U}$ and $g_{\alpha\beta}$ such that $E= (\mathfrak{U},g_{\alpha\beta})$, where the $g_{\alpha\beta}$ take values in $G \subset GL_n(k)$. For example, an orientation of $M$ is the same as an $SL_n(k)$-structure on $TM$.
\end{expl}
\begin{expl}[Adjoint bundle]
Let $G$ be a Lie group and $\mathfrak{g}$ be its Lie algebra. Then $G$ acts on $\mathfrak{g}$ via the adjoint action (if $G$ is a matrix group then this action is given by conjugation $g \cdot X = g X g^{-1}$). Hence, for every principal $G$-bundle $P$ we have the \emph{adjoint bundle} $P \times_{\rho}\mathfrak{g}$. 
\end{expl}
\subsection{Connections on vector bundles}
Very roughly, a \emph{connection} on a fiber bundle is a consistent way to move from one fiber in the bundle to the other. The concept of connection exists over both vector bundles and principal bundles.  We start with the concept of a connection on vector bundles. 

\begin{defn}[Connection on vector bundle]
Let $\pi\colon E \to M$. Then a \emph{connection on $E$} is a linear map
\[\nabla \colon \Gamma(E) \to \Gamma(T^*M \otimes E)\]
such that, for $f \in C^\infty(M)$ and $\sigma \in \Gamma(E)$,  the Leibniz rule
\begin{equation}
\nabla(f\sigma) = df \otimes \sigma + f \nabla \sigma
\end{equation}
\end{defn}
The connection $\nabla$ induces a \emph{covariant derivative} along vector fields on sections on $E$.
\begin{defn}[Covariant derivative]
Let $\nabla$ be a connection on the vector bundle $E$ over $M$. Let $X$ be a vector field on $M$. Then, the map $\nabla_X \colon \Gamma(E) \to \Gamma(E)$ given by 
\begin{equation}
\nabla_X\sigma = \iota_X\nabla\sigma
\end{equation}
is called the \emph{covariant derivative of $\sigma$ along $X$ (with respect to $\nabla$)}.
\end{defn}
The Leibniz rule for the covariant derivative is 
\begin{equation}
\nabla_X(f\sigma) = (L_Xf)\sigma + f \nabla_X\sigma. 
\end{equation}
\begin{expl}
On the trivial bundle $M \times k^n$ we have a connection given by the de Rham differential $(f_1,\ldots f_n) \mapsto (df_1, \ldots df_n)$. This connection is called the \emph{trivial connection}.  
\end{expl}
\begin{prop}
If it is not empty, the space of connections $\mathcal{A}_E$ is an affine space modeled on the vector space $\Gamma(M,T^*M\otimes E^* \otimes E) = \Gamma(M, T^*M \otimes \End E) =  \Omega^1(\End E)$.
\end{prop}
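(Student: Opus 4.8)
The plan is to prove the two assertions separately: first that the difference of two connections is a tensor, i.e.\ lies in $\Omega^1(\End E)$, and second that adding such a tensor to a connection again yields a connection. This establishes that $\mathcal{A}_E$ is an affine space (a torsor) over the vector space $\Omega^1(\End E)$, provided it is nonempty.

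\begin{proof}
Suppose $\mathcal{A}_E \neq \emptyset$ and fix $\nabla_0 \in \mathcal{A}_E$. For any other connection $\nabla \in \mathcal{A}_E$, set $A := \nabla - \nabla_0 \colon \Gamma(E) \to \Gamma(T^*M \otimes E)$. This is $k$-linear, and for $f \in C^\infty(M)$, $\sigma \in \Gamma(E)$ the Leibniz terms cancel:
\begin{equation}
A(f\sigma) = \nabla(f\sigma) - \nabla_0(f\sigma) = \big(df\otimes\sigma + f\nabla\sigma\big) - \big(df\otimes\sigma + f\nabla_0\sigma\big) = f\,A(\sigma).
\end{equation}
Thus $A$ is $C^\infty(M)$-linear, hence tensorial; by the standard fact that $C^\infty(M)$-linear maps $\Gamma(E) \to \Gamma(F)$ are sections of $\underline{\Hom}(E,F) \cong \Gamma(E^* \otimes F)$ (cf.\ the exercise following the examples of vector bundle constructions), we get $A \in \Gamma(T^*M \otimes E^* \otimes E) = \Omega^1(\End E)$. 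Conversely, given $\nabla_0 \in \mathcal{A}_E$ and any $A \in \Omega^1(\End E)$, define $\nabla := \nabla_0 + A$. It is $k$-linear, and since $A(f\sigma) = f A(\sigma)$ by tensoriality,
\begin{equation}
\nabla(f\sigma) = df\otimes\sigma + f\nabla_0\sigma + fA(\sigma) = df\otimes\sigma + f\nabla\sigma,
\end{equation}
so $\nabla \in \mathcal{A}_E$. Therefore the map $\Omega^1(\End E) \to \mathcal{A}_E$, $A \mapsto \nabla_0 + A$, is a bijection, and the induced action is free and transitive: $\mathcal{A}_E$ is an affine space modeled on $\Omega^1(\End E)$.
\end{proof}

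The one genuinely substantive point is the passage from ``$C^\infty(M)$-linear map'' to ``section of a $\Hom$-bundle'': this is the tensoriality lemma, which requires a partition-of-unity or locality argument to show that a $C^\infty(M)$-linear map is pointwise defined. Since the excerpt has already recorded $\Gamma(E^*\otimes F) \cong \underline{\Hom}(E,F)$ as an exercise, I would simply invoke it; everything else is the bookkeeping of cancelling Leibniz terms. A minor caveat worth a remark: one should note $\mathcal{A}_E$ is always nonempty for vector bundles over (paracompact) manifolds — glue trivial connections on a trivializing cover with a partition of unity — but since the proposition is stated conditionally (``if it is not empty''), this need not be addressed in the proof.
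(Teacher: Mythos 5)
Your proof is correct and takes essentially the same approach as the paper's: both compute that the difference of two connections is $C^\infty(M)$-linear, hence tensorial, hence a section of $T^*M\otimes \End E$. You additionally verify the converse direction (that $\nabla_0 + A$ is again a connection), which the paper leaves implicit.
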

\begin{proof}
First observe that if $\nabla^1$ and $\nabla^2$ are connections on $E$, then their difference $A = \nabla^1 - \nabla^0$ satisfies $A(f\sigma) = fA(\sigma)$. Hence $A$ defines a vector bundle morphism $E \to T^*M \otimes E$. It follows that $A \in \Gamma(\Hom(E,T^*M \otimes E)) \cong \Omega^1(\End(E))$.
\end{proof}
In particular, every connection on the trivial bundle $M \times k^n$ is of the form 
$\nabla = d + A$, where $A \in \Omega^1(\End(k^n))$. In a basis of $k^n$ we write\footnote{We follow the Einstein summation convention that repeated indices are summed over. This does not apply to indices labeling covers (usually $\alpha,\beta,\gamma$).} $A(e_j) = A^i_je_i$, then, 
$$\nabla(f_1,\ldots, f_n) = (df_1,\ldots df_n) + (A^i_1f_i,\ldots A^i_nf_i).$$ \ Thus one can think of $A$ as a 1-form with values in matrices, or, equivalenty, as a matrix of 1-forms $A^i_j$. Both viewpoints are sometimes helpful. \par
If $\Psi\colon E \to F$ is an isomorphism of vector bundles, and $\nabla$ is a connection on $E$, then the map $\check{\nabla} = (\mathrm{id} \otimes \Psi)\circ \nabla \circ \Psi^{-1}$ is a connection on $F$. It is the unique map that makes 
\begin{equation}
\begin{tikzcd}
\Gamma(E) \arrow[rr, "\nabla"]                                       &  & \Gamma(T^*M \otimes E) \arrow[d, "\mathrm{id}\otimes\Psi"] \\
\Gamma(F) \arrow[u, "\Psi^{-1}"'] \arrow[rr, "\check\nabla", dashed] &  & \Gamma(T^*M\otimes F)                                     
\end{tikzcd}
\end{equation}
commute. In particular, consider a local trivialization $\Psi_\alpha\colon \restr{E}{U_\alpha} \to U_\alpha\times k^n$ of $E$. Then a connection $\nabla$ on $E$ induces a connection on $U_\alpha \times k^n$, hence an element $A_\alpha$ of $\Omega^1(U_\alpha,\End k^n)$ and we have 
\begin{equation}\label{eq:LocalConnAction}
(\nabla\sigma)_\alpha = d\sigma_\alpha + A_\alpha\sigma_\alpha
\end{equation} (in the second term there is matrix-vector multiplication). This is called the \emph{connection 1-form} of $\nabla$ in $U_\alpha$. If $U_\beta$ is another local trivialization, one can ask how $A_\alpha$ and $A_\beta$ are related. 
\begin{prop}
Let $\nabla$ be a connection on the vector bundle $E$ and $A_\alpha,A_\beta$ be the connecction 1-forms on two local trivializations $U_\alpha$, $U_\beta$. Then 
\begin{equation}\label{eq:ConnTransformVector}
A_\beta = g_{\alpha\beta}A_{\alpha}g_{\alpha\beta}^{-1} - (dg_{\alpha\beta})g_{\alpha\beta}^{-1}.
\end{equation}
\end{prop}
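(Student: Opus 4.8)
The plan is to compute the transformation law directly from the local description \eqref{eq:LocalConnAction} of the connection, using the compatibility condition that sections glue via $\sigma_\beta = g_{\alpha\beta}\sigma_\alpha$. The key point is that the object $\nabla\sigma$ is a genuine section of $T^*M \otimes E$, so its local representatives must also transform by the gluing maps: $(\nabla\sigma)_\beta = g_{\alpha\beta}(\nabla\sigma)_\alpha$.

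First I would write out both sides of this last equation using \eqref{eq:LocalConnAction}. On the left, $(\nabla\sigma)_\beta = d\sigma_\beta + A_\beta\sigma_\beta = d(g_{\alpha\beta}\sigma_\alpha) + A_\beta g_{\alpha\beta}\sigma_\alpha$. Expanding the derivative by the Leibniz rule gives $(dg_{\alpha\beta})\sigma_\alpha + g_{\alpha\beta}\,d\sigma_\alpha + A_\beta g_{\alpha\beta}\sigma_\alpha$. On the right, $g_{\alpha\beta}(\nabla\sigma)_\alpha = g_{\alpha\beta}(d\sigma_\alpha + A_\alpha\sigma_\alpha) = g_{\alpha\beta}\,d\sigma_\alpha + g_{\alpha\beta}A_\alpha\sigma_\alpha$. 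The $g_{\alpha\beta}\,d\sigma_\alpha$ terms cancel, leaving $(dg_{\alpha\beta})\sigma_\alpha + A_\beta g_{\alpha\beta}\sigma_\alpha = g_{\alpha\beta}A_\alpha\sigma_\alpha$ for every local section $\sigma_\alpha$.

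Since this holds for all $\sigma_\alpha$ (in particular for constant local frames), I can drop $\sigma_\alpha$ and get the identity of endomorphism-valued forms $A_\beta g_{\alpha\beta} = g_{\alpha\beta}A_\alpha - dg_{\alpha\beta}$, and then right-multiply by $g_{\alpha\beta}^{-1}$ to obtain exactly \eqref{eq:ConnTransformVector}. The only mild subtlety — and the closest thing to an obstacle — is justifying the step where $\sigma_\alpha$ is removed: one should note that locally we may take $\sigma_\alpha$ to run over a basis of constant $k^n$-valued functions (equivalently, evaluate the forms on arbitrary tangent vectors and arbitrary fiber elements), so equality of the coefficients as matrices of $1$-forms follows pointwise. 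Everything else is the routine Leibniz-rule bookkeeping sketched above, so I would present the computation compactly rather than belaboring it.
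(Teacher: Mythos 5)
Your argument is correct and is essentially identical to the paper's proof: both expand $(\nabla\sigma)_\beta$ two ways using $\sigma_\beta = g_{\alpha\beta}\sigma_\alpha$ and $(\nabla\sigma)_\beta = g_{\alpha\beta}(\nabla\sigma)_\alpha$, cancel the $g_{\alpha\beta}\,d\sigma_\alpha$ terms, and divide out the arbitrary section. Your extra remark justifying why $\sigma_\alpha$ can be dropped is a small improvement over the paper's terse ``since this holds for all $s$.''
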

\begin{proof}
Let $\sigma \in \Gamma(E)$. Then, we know that 
\begin{equation}\label{eq:ProofConnTransform1}
\sigma_\beta = g_{\alpha\beta}\sigma_\alpha
\end{equation}
and 
\begin{equation}\label{eq:ProofConnTransform2}
(\nabla\sigma)_\beta = g_{\alpha\beta}(\nabla\sigma)_\alpha.
\end{equation}
Expanding \eqref{eq:ProofConnTransform2} using \eqref{eq:LocalConnAction}, we obtain 
\begin{align*}
d\sigma_\beta + A_\beta\sigma_\beta &= g_{\alpha\beta}(d\sigma_\alpha + A_\alpha \sigma_\alpha) 
\end{align*}
On the other hand, using \eqref{eq:ProofConnTransform1} we obtain 
\[(\nabla\sigma)_\beta = d\sigma_\beta + A_\beta\sigma_\beta = d(g_{\alpha\beta}\sigma_\alpha) + A_\beta g_{\alpha\beta}\sigma_\alpha  = dg_{\alpha\beta}\sigma_{\alpha} + g_{\alpha\beta}d\sigma_\alpha + A_\beta g_{\alpha\beta}\sigma_{\alpha}.\]
We conclude that 
$$A_\beta g_{\alpha\beta} \sigma_\alpha = g_{\alpha\beta} A_{\alpha} \sigma_\alpha - dg_{\alpha\beta}s_\alpha.$$
Since this holds for all $s$, we see that 
$$ A_\beta g_{\alpha\beta} = g_{\alpha\beta}A_\alpha - dg_{\alpha\beta}$$
from where the claim follows.
\end{proof}
Hence, we can characterize a connection on a bundle $E = (\mathfrak{U},g_{\alpha\beta})$ as a collection of 1-forms $A_\alpha \in \Omega^1(U_\alpha,\End E)$. Notice that $\End E  = \End( k^n)$ is the Lie algebra of $GL_n(k)$.  This suggest a natural generalization of the concept of connections to principal bundles, as discussed in the next subsection. 
\subsection{Connections on principal bundles}
We start with some definitions. Suppose $G$ is a Lie group with Lie algebra $\mathfrak{g}$ that acts on a manifold $P$ from the right. For fixed $p \in P$, there is a map 
\begin{align*}
\mu_p \colon G &\to P \\
 g &\mapsto pg.
\end{align*}
The differential of this map at the identity element $e\in G$ is map
\begin{align*}
(d\mu_p)_e\colon \mathfrak{g} \cong T_eG &\to T_pP \\
X \mapsto (d\mu_p)_eX
\end{align*}

\begin{defn}[Fundamental vector field]
 Let the Lie group $G$ act on the manifold $P$ from the right and let $X \in \mathfrak{g}$. Then, the fundamental vector field $X^\sharp$ on $P$ is the section of $TP$ defined by 
 \begin{equation}
 X^\sharp_p = (d\mu_p)_eX.
 \end{equation}
\end{defn}
In particular, $G$ acts on itself by right multiplication. For fixed $g \in G$, this action reads 
\begin{align*}
\mu_g \colon G &\to G \\
h \mapsto gh
\end{align*}
Hence we have $\mu_g = L_g$ (left multiplication by $g$). Let $X \in \mathfrak{X}$. The fundamental vector field of the right action of $G$ on itself is given by $X \mapsto (dL_g)_eX$. 
\begin{defn}[Maurer-Cartan Form]
The Maurer-Cartan Form $\phi \in \Omega^1(G,\mathfrak{g})$ is defined by $\phi_g(X^\sharp_g) \equiv X \in \mathfrak{g}$, where $X^\sharp$ is the fundamental vector field of the right action of $G$ on itself.
\end{defn}
\begin{rem}
From the discussion above it follows that 
\begin{equation}
\phi_g = (dL_{g^{-1}})_g \colon T_gG \to T_eG \cong \mathfrak{g}.
\end{equation}
In particular, for matrix groups it is given by $\phi_g = g^{-1}dg$. 
\end{rem}
We can now define a connection on a principal bundle.
\begin{defn}[Connection on a principal bundle]
Let $\pi\colon P \to M$ be a principal $G$-bundle, and let $\mathfrak{g}$ be the Lie algebra of $G$. A \emph{connection on $P$} is a 
1-form $\Omega \in \Omega^1(P,\mathfrak{g})$ satisfying\footnote{Here, $\mathrm{Ad}_g \colon \mathfrak{g} \to \mathfrak{g}$ is given by differentiating the map $\mathrm{Ad}_g \colon  G \to G$ at the identity. For matrix groups $G \subset GL_n(k)$ we have $\mathfrak{g} \subset gl_n(k)$ and the adjoint action is $\mathrm{Ad}_{g}X = gXg^{-1}$. }
\begin{enumerate}[i)]
\item For all $g \in G$, 
\begin{equation}
R_g^*\Omega = \mathrm{Ad}_{g^{-1}}\Omega\label{eq:defConn1}\end{equation}
\item For all $X \in \mathfrak{g}$, 
\begin{equation}
\Omega(X^\sharp) = X \in \mathfrak{g} \label{eq:defConn2}
\end{equation}
\end{enumerate}
\end{defn}
Notice that here, the 1-form is on the total space $P$. Recall that a local section $\sigma\colon U \to \restr{P}{U}$ defines a local trivialization $\pi^{-1}(U) \to U \times G$ of $P$ via 
\begin{equation}
p \mapsto (\pi(p),\sigma(\pi(p))).
\end{equation}
Let $\sigma'$ be another section over $U$. Then there exists a map $g\colon U \to G$ such that $\sigma'(x) = \sigma(x)g(x)$. The following lemma describes the behaviour of a connection under such a change of trivialization.
\begin{lem}\label{lem:connTrivChange}
If $\sigma,\sigma'$ are as above, then 
\begin{equation}
(\sigma')^*\Omega = \mathrm{Ad}_{g^{-1}}\sigma^*\Omega + g^*\phi, 
\end{equation}
where $\phi \in \Omega^1(G, \mathfrak{g})$ is the Maurer-Cartan Form introduced above.
\end{lem}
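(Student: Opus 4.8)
The plan is to realize $\sigma'$ as a composition and then differentiate. Write $a\colon P\times G \to P$ for the right action $a(p,h) = ph$ and $(\sigma,g)\colon U \to P\times G$ for the map $x \mapsto (\sigma(x),g(x))$. Then $\sigma' = a\circ(\sigma,g)$, so $(\sigma')^*\Omega = (\sigma,g)^*\,a^*\Omega$, and the whole computation reduces to understanding $a^*\Omega$.

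To compute $a^*\Omega$ at a point $(p,h)$, I would split a tangent vector into its $T_pP$- and $T_hG$-components. Holding $h$ fixed, $a$ restricts to the right translation $R_h\colon P\to P$, so the $P$-direction contributes $(dR_h)_p$; holding $p$ fixed, $a$ restricts to the orbit map $\mu_p\colon G \to P$, $\mu_p(k) = pk$, so the $G$-direction contributes $(d\mu_p)_h$. Applying $\Omega_{ph}$ to the first piece and invoking the equivariance axiom \eqref{eq:defConn1} turns it into $\mathrm{Ad}_{h^{-1}}$ applied to $\Omega_p$ of the $P$-component. For the second piece, I would use the identity $\mu_p\circ L_h = \mu_{ph}$ (which is just $p(hk) = (ph)k$) to rewrite $(d\mu_p)_h w = (d\mu_{ph})_e\big((dL_{h^{-1}})_h w\big) = \big(\phi_h(w)\big)^\sharp_{ph}$, a fundamental vector field at $ph$; then the normalization axiom \eqref{eq:defConn2} evaluates $\Omega_{ph}$ on it as $\phi_h(w)$. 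Hence $a^*\Omega_{(p,h)}(v,w) = \mathrm{Ad}_{h^{-1}}\big(\Omega_p(v)\big) + \phi_h(w)$.

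To conclude, I would pull this back along $(\sigma,g)$: a tangent vector $\xi\in T_xU$ maps to $(\sigma_*\xi, g_*\xi)$, and substituting $(p,h) = (\sigma(x),g(x))$ and $(v,w) = (\sigma_*\xi, g_*\xi)$ into the formula for $a^*\Omega$ gives precisely $(\mathrm{Ad}_{g^{-1}}\sigma^*\Omega)_x(\xi) + (g^*\phi)_x(\xi)$, which is the asserted identity.

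The main obstacle is the bookkeeping in the $G$-direction: one must correctly recognize $(d\mu_p)_h$ precomposed with the derivative of left translation as the derivative at the identity of the orbit map through $ph$, so that the normalization axiom \eqref{eq:defConn2} actually applies; it is easy to put the group element on the wrong side or to invert it. For readers who prefer a hands-on check, I would add the remark that for matrix groups the second piece is literally the familiar computation with $\phi = g^{-1}dg$, and that in a local trivialization one recovers the vector-bundle transformation law \eqref{eq:ConnTransformVector}; but the coordinate-free argument above works verbatim for any Lie group.
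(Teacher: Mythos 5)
Your argument is correct and follows essentially the same route as the paper's proof: factor $\sigma'$ through the action map $P\times G\to P$, compute the pullback of $\Omega$ there by splitting tangent vectors into $P$- and $G$-directions, identify the $G$-direction with a fundamental vector field via $(d\mu_p)_h = (d\mu_{ph})_e\circ(dL_{h^{-1}})_h$, and apply the two connection axioms. Your write-up is in fact a bit more careful about the bookkeeping than the version in the notes.
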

\begin{proof}
Let $x \in U$, and $v\in T_xM$. Then $((\sigma')*\Omega)_xv = \Omega_{\sigma'(x)}d\sigma'_x v$. On the other hand we can write $\sigma'$ as the composition 
\[\begin{tikzcd}
U \arrow[r,"{(s,g)}"]           & P\times G \arrow[r, "{\mu}"]              & P        \\
x  \arrow[r, maps to] & {(s(x),g(x))} \arrow[r, maps to] & s(x)g(x)
\end{tikzcd} \]
We first compute the pullback $\mu^*\Omega$ to $P\times G$. For this, note that $$(d\mu_p)_gw = (d\mu_{pg})_e(dL_{g^{-1}}w$$ 
(this follows from the chain rule). Then, we have 
\begin{align*}
(\mu^*\Omega)_{(p,g)}(v,w) &= \Omega_{pg}(d\mu_p)_g w + \Omega_{pg}(dR_g)_pv \\
&= \Omega_{pg}\underbrace{(d\mu_{pg})_e(dL_{g^{-1}})_gw}_{((dL_{g^{-1}})_gw)^\sharp} + ( R_g^*\Omega)_p v \\
&= (dL_{g^{-1}})_gw + \mathrm{Ad}_{g^{-1}}\Omega_p = \phi_gw + \mathrm{Ad}_{g^{-1}}
\end{align*} 
where in the last equality we used the two properties of a connection. Now, the first term is exactly $\phi_g$. Pulling back to $U$ with $(s,g)$, we obtain the result since pulling back with $\sigma$ commutes with the adjoint action of $g$, which acts only on the Lie algebra factor of $\Omega$.

\end{proof}
The next proposition establishes the relationship of this definition with the one of a connection on a vector bundle.  
\begin{prop}
Let $P=(\mathfrak{U},g_{\alpha\beta}) $be a principal bundle with structure group $G\subset GL_n(k)$, i.e. $G$ is a matrix group\footnote{Notice that the spin group is also a matrix group since the Clifford algebra is isomorphic to a matrix algebra. However, $\Spin_n \nsubseteq Gl_n(k)$, rather, $\Spin_n \subset GL_{2^n}(k)$.}. Then a connection on $P$ is equivalent to a collection of 1-forms $A_\alpha\in \Omega^1(M,\mathfrak{g})$ such that 
\begin{equation}
A_\beta = g_{\alpha\beta}A_\alpha g^{-1}_{\alpha\beta} - dg_{\alpha\beta}g^{-1}_{\alpha\beta}.
\end{equation}
\end{prop}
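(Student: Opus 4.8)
The plan is to use the bijection between local trivializations of $P$ and local sections, together with Lemma \ref{lem:connTrivChange}, which already records how a pulled-back connection form changes under a change of section.

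\emph{From $\Omega$ to the $A_\alpha$.} Given a connection $\Omega \in \Omega^1(P,\mathfrak{g})$, each trivialization $\Psi_\alpha\colon \pi^{-1}(U_\alpha) \to U_\alpha \times G$ determines the canonical local section $\sigma_\alpha(x) := \Psi_\alpha^{-1}(x,1_G)$, and I would set $A_\alpha := \sigma_\alpha^*\Omega \in \Omega^1(U_\alpha,\mathfrak{g})$. The first step is to pin down how the sections overlap: since $\Psi_\beta\circ\Psi_\alpha^{-1}(x,h) = (x,g_{\alpha\beta}(x)h)$, one gets $\Psi_\beta(\sigma_\alpha(x)) = (x,g_{\alpha\beta}(x))$, whence $\sigma_\alpha = \sigma_\beta\cdot g_{\alpha\beta}$, i.e. $\sigma_\beta = \sigma_\alpha \cdot g_{\alpha\beta}^{-1}$. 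Feeding the map $g_{\alpha\beta}^{-1}\colon U_{\alpha\beta}\to G$ into Lemma \ref{lem:connTrivChange} yields $A_\beta = \mathrm{Ad}_{g_{\alpha\beta}}A_\alpha + (g_{\alpha\beta}^{-1})^*\phi$. Since $G$ is a matrix group we may use $\mathrm{Ad}_g X = gXg^{-1}$, $\phi_h = h^{-1}\,dh$ and $d(g^{-1}) = -g^{-1}(dg)g^{-1}$; the Maurer--Cartan term becomes $g_{\alpha\beta}\,d(g_{\alpha\beta}^{-1}) = -(dg_{\alpha\beta})g_{\alpha\beta}^{-1}$, and the stated transformation law drops out.

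\emph{From the $A_\alpha$ to $\Omega$.} Conversely, given forms $A_\alpha$ obeying the transformation law, I would build $\Omega$ by specifying it on each $\pi^{-1}(U_\alpha)$ as the pullback along $\Psi_\alpha$ of the model form on $U_\alpha \times G$,
\[
\Omega^{(\alpha)}_{(x,h)}(v,w) := \mathrm{Ad}_{h^{-1}}\big((A_\alpha)_x(v)\big) + \phi_h(w),
\]
with $v$ tangent to $U_\alpha$ and $w$ tangent to $G$. A short computation checks that $\Omega^{(\alpha)}$ satisfies the two axioms of a principal connection on $U_\alpha\times G$: $\phi(X^\sharp)=X$ gives \eqref{eq:defConn2}, while $R_g$ acting by $(x,h)\mapsto(x,hg)$ together with $\mathrm{Ad}_{(hg)^{-1}}=\mathrm{Ad}_{g^{-1}}\mathrm{Ad}_{h^{-1}}$ and $\phi_{hg}(wg) = g^{-1}h^{-1}w$ gives \eqref{eq:defConn1}. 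The substantive point is that these locally defined forms agree on overlaps, i.e. $(\Psi_\beta\circ\Psi_\alpha^{-1})^*\Omega^{(\beta)} = \Omega^{(\alpha)}$ on $U_{\alpha\beta}\times G$; expanding the differential of $(x,h)\mapsto(x,g_{\alpha\beta}(x)h)$ and simplifying, this reduces exactly to $A_\beta = g_{\alpha\beta}A_\alpha g_{\alpha\beta}^{-1} - (dg_{\alpha\beta})g_{\alpha\beta}^{-1}$, so the hypothesis is precisely what is needed. Finally, pulling $\Omega^{(\alpha)}$ back along $\sigma_\alpha = \Psi_\alpha^{-1}(\cdot,1_G)$ recovers $A_\alpha$ (the image lies in the slice $h=1_G$, where $\mathrm{Ad}_{h^{-1}}=\mathrm{id}$ and the $\phi$-term pulls back to $0$ because the $G$-coordinate is constant), so the two constructions are mutually inverse and the correspondence is a bijection.

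\emph{Main obstacle.} There is no conceptual difficulty beyond Lemma \ref{lem:connTrivChange}; the only thing requiring care is the bookkeeping of conventions --- whether the transition functions enter as $g_{\alpha\beta}$ or $g_{\alpha\beta}^{-1}$ in $\sigma_\beta = \sigma_\alpha g_{\alpha\beta}^{-1}$, whether one gets $\mathrm{Ad}_g$ or $\mathrm{Ad}_{g^{-1}}$, and tracking the sign from $d(g^{-1})$ --- so that what emerges is the stated formula rather than its opposite-convention variant.
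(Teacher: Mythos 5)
Your argument is correct, and the forward direction is essentially identical to the paper's: both define $A_\alpha$ as the pullback of $\Omega$ along the canonical section $\sigma_\alpha = \Psi_\alpha^{-1}(\cdot,1_G)$, derive $\sigma_\beta = \sigma_\alpha g_{\alpha\beta}^{-1}$, and feed $g_{\alpha\beta}^{-1}$ into Lemma \ref{lem:connTrivChange}, using $d(g^{-1}) = -g^{-1}(dg)g^{-1}$ to turn the Maurer--Cartan term into $-(dg_{\alpha\beta})g_{\alpha\beta}^{-1}$. The converse is where you diverge: the paper also writes down the local model $\Omega_\alpha = \mathrm{Ad}_{g^{-1}}A_\alpha + \phi_g$, but then assembles the global form by a partition of unity, appealing to convexity of the connection conditions; you instead verify directly that $(\Psi_\beta\circ\Psi_\alpha^{-1})^*\Omega^{(\beta)} = \Omega^{(\alpha)}$ on overlaps, which reduces precisely to the hypothesized transformation law, and then check that $\sigma_\alpha^*\Omega^{(\alpha)} = A_\alpha$. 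Your route is arguably the more precise one for this statement: a partition-of-unity average produces \emph{some} connection, but one only knows it restricts to the prescribed $A_\alpha$ because the local pieces already agree --- which is exactly the fact you prove and the paper leaves implicit --- and your final pullback computation is what actually establishes that the two constructions are mutually inverse, i.e.\ that the correspondence is a genuine bijection rather than merely a pair of maps.
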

\begin{proof}
Suppose we are given a connection $\Omega$ on $P$ and let $U_\alpha \in \mathfrak{U}$. Consider the constant section $\sigma_\alpha \colon U_\alpha \to  U_\alpha \times G, u\mapsto(u,1).$ Then, we define $A_\alpha := (\Psi_\alpha^{-1}\circ \sigma_{\alpha})^*\Omega$. Now, notice that the section $\sigma_\alpha$ over the $U_\beta$ is given by $g_{\alpha\beta}$. Hence 
$$\sigma_\beta = \sigma_\alpha g_{\alpha\beta}^{-1}.$$
Now we can apply proposition \ref{lem:connTrivChange} for $g^{-1}$.
 Now, notice that we have 
 $0 = d(gg^{-1}) = dg g^{-1} + gd(g^{-1})$ and hence $d(g^{-1}) = - g^{-1}dg g^{-1}$. This implies that for a matrix group, we have $\phi_{g^{-1}} = gd(g^{-1}) = - dg g^{-1}$. This proves that a connection $\Omega$ is described by such 1-forms in a local trivialization. \\
Conversely, assume that we are given a family of such 1-forms. Then we set $\Omega_\alpha(u,g) := \mathrm{Ad}_{g^{-1}}A_\alpha + \phi_g$ on $U_{\alpha} \times G$ and define $\Omega$ on $\pi^{-1}(U_{\alpha})$ as $\Psi_{\alpha}^*\Omega_{\alpha}$. We then glue together the connection using a partition of unity. The resulting 1-form $\Omega$ which is a connection since the local pieces are, and the conditions \eqref{eq:defConn1} and \eqref{eq:defConn2} are convex. 
\end{proof} 
We have the following corollary: 
\begin{cor}
A connection $\nabla$ on a vector bundle $E$ induces a connection $\Omega$ on the bundle of frames $\mathrm{Fr}(E)$ and vice versa. 
\end{cor}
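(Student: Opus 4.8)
The plan is to observe that, by the two propositions established just above, both a connection on $E$ and a connection on $\mathrm{Fr}(E)$ are encoded by \emph{exactly the same} local data, so the correspondence is essentially a tautology once the dictionary is spelled out.

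First I would recall that the frame bundle of $E = (\mathfrak{U},g_{\alpha\beta})$ is the principal $GL_n(k)$-bundle $\mathrm{Fr}(E) = (\mathfrak{U},g_{\alpha\beta})$ — it has the very same trivializing cover and the very same gluing maps (the boxed fact above). Next, by the proposition on connection $1$-forms for vector bundles, a connection $\nabla$ on $E$ is the same thing as a collection of $1$-forms $A_\alpha \in \Omega^1(U_\alpha,\End(k^n))$ satisfying the transformation law \eqref{eq:ConnTransformVector},
$$A_\beta = g_{\alpha\beta}A_\alpha g_{\alpha\beta}^{-1} - (dg_{\alpha\beta})g_{\alpha\beta}^{-1}.$$
On the other hand, by the proposition characterising connections on principal bundles with matrix structure group, a connection $\Omega$ on $\mathrm{Fr}(E)$ is the same thing as a collection of $1$-forms $A_\alpha \in \Omega^1(U_\alpha,\mathfrak{gl}_n(k))$ satisfying precisely the same transformation law. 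Since $\mathfrak{gl}_n(k) = \End(k^n)$ is the Lie algebra of $GL_n(k)$, and the gluing maps of $E$ and $\mathrm{Fr}(E)$ coincide, these two sets of local data are literally identical. Assigning to $\nabla$ the principal connection with the same local $1$-forms, and conversely, gives the asserted bijective correspondence.

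The only point requiring a little care is that this correspondence is well defined independently of the choice of trivializing cover: if one refines $\mathfrak{U}$ or passes to another cover trivialising both $E$ and $\mathrm{Fr}(E)$, the local $1$-forms $A_\alpha$ restrict (and transform) consistently on both sides because on both sides they obey the same cocycle-type rule \eqref{eq:ConnTransformVector}. This is exactly what was already used in the proofs of the two propositions we invoke, so no genuinely new argument is needed; the main ``obstacle'' is really just bookkeeping — keeping straight that $\End(k^n)$ appearing on the vector-bundle side and $\mathfrak{gl}_n(k)$ appearing on the principal-bundle side are the same object, and that \eqref{eq:ConnTransformVector} and its principal-bundle analogue agree term by term.

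As an optional closing remark I would sketch the coordinate-free picture: $\nabla$ determines parallel transport along curves in $M$, hence parallel transport of frames, hence a $G$-invariant horizontal distribution on $\mathrm{Fr}(E)$, i.e.\ a principal connection $\Omega$; conversely, realising $E = \mathrm{Fr}(E)\times_\iota k^n$ as the associated bundle of the defining representation $\iota\colon GL_n(k)\hookrightarrow GL_n(k)$, a principal connection on $\mathrm{Fr}(E)$ induces a covariant derivative on $\Gamma(E)$. Checking that these two constructions are mutually inverse reduces once more to the local formula above, which is why the short proof via local $1$-forms is the most economical.
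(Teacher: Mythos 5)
Your argument is correct and is exactly the route the paper intends: the corollary is stated as an immediate consequence of the two preceding propositions, which identify connections on $E$ and on $\mathrm{Fr}(E)$ with the same local data (1-forms $A_\alpha$ valued in $\End(k^n)=\mathfrak{gl}_n(k)$ obeying the identical transformation law \eqref{eq:ConnTransformVector}), together with the boxed fact that $E$ and $\mathrm{Fr}(E)$ share the same trivializing cover and gluing maps. Your additional remarks on cover-independence and the coordinate-free picture are sound but not needed beyond what the paper already establishes.
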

Hence, one can study connections on vector bundles by studying connections on principal bundles. This will be our approach in this course. 
\subsubsection{Curvature}
An important notion associated to a connection is the concept of \emph{curvature}. 
For this, we need the concept of Lie Bracket on Lie algebra-valued forms, which is defined on elements of the form $A=\alpha \otimes \xi, B=\beta \otimes \xi'$, where $\alpha\in\Omega^k(M),\beta\in\Omega^l(M),\xi,\xi'\in\mathfrak{g}$, by
\begin{align}
[\cdot,\cdot]\colon\Omega^k(M,\mathfrak{g}) \times \Omega^l(M,\mathfrak{g}) \to \Omega^{k+l}(M,\mathfrak{g}) \notag \\
[\alpha\otimes \xi, \beta \otimes \xi'] := \alpha \wedge \beta \otimes [\xi,\xi']
\end{align}
and extended bilinearly. In particular, for matrix groups we have $[\xi,\xi'] = \xi\xi' -\xi'\xi$ and then 
\begin{equation}
[A,B] = A \wedge B - (-1)^{|A||B|} B\wedge A = -(-1)^{|A||B|}[B,A]
\end{equation}
where the wedge product operation is defined by matrix multiplication: If $A=\alpha \otimes \xi, B=\beta \otimes \xi'$ as above, then
\begin{equation}
A \wedge B = \alpha \wedge \beta \otimes \xi\xi'
\end{equation}
The Lie bracket satisfies 
\begin{align}
d[A,B] &= [dA,B] + (-1)^{|A|}[A,dB] \label{eq:LieBracketLeibniz} \\ 
[A,[B,C]] &= [[A,B],C] + (-1)^{|A||B|}[B,[A,C]]\label{eq:GradedJacobi}
\end{align}
Now, the curvature is easily defined from the abstract viewpoint on connections:
\begin{defn}[Curvature]
Let $\Omega \in \Omega^1(P,\mathfrak{g})$ be a connection on a principal $G$-bundle $\pi\colon P \to M$. Then, the \emph{curvature} of $\Omega$ is the 2-form $F \in \Omega^2(P,\mathfrak{g})$ defined by 
\begin{equation}
\mathbf{F} = d\Omega + \frac{1}{2}[\Omega,\Omega]\label{eq:defCurv1}
\end{equation}
\end{defn}
We summarize some properties of the curvature as exercises. 
\begin{exc}
Let $\mathfrak{U}$ be a local trivialization of $P$. Denote $F_{\alpha} := (s_\alpha)^*\mathbf{F} = dA_\alpha + \frac12 [A_\alpha,A_\alpha].$
Then 
\begin{equation}
F_\beta = g_{\alpha\beta}F_\alpha g_{\alpha\beta}^{-1}
\end{equation}
\end{exc}
It follows that the $F_\alpha$ define a section $F \in \Omega^2(M,\mathrm{Ad} P)$. 
\begin{exc}
Let $E \to M$ be a vector bundle and let $\nabla$ be a connection on $E$. Define the two-form $F^\nabla \in \Omega^2(M,\End E)$ by 
\begin{equation}
F^\nabla(X,Y) = \nabla_X \nabla_Y - \nabla_Y\nabla_X - \nabla_{[X,Y]}.
\end{equation}
Show that this is the curvature 2-form of the associated connection on $\mathrm{Fr}(E)$. \\
\emph{Hint:} Work over a trivializing chart and remember the formula for the de Rham differential of a 1-form: 
$$d\omega(X,Y) = X\omega(Y) - Y\omega(X) - \omega([X,Y]).$$
\end{exc}
\subsubsection{Exterior Derivative}
A connection on a principal bundle $P \to M$ induces an \emph{exterior derivative} on $\mathrm{ad} P$-valued differential forms. In a trivializing chart $U_\alpha$, it is defined by 
\begin{equation}
(d_\Omega\omega)_\alpha = d\omega_\alpha + [A_\alpha,\omega_\alpha]\label{eq:defExtDerivative}
\end{equation}
\begin{prop}
\begin{enumerate}[i)]
\item The exterior derivative in local trivializations by \eqref{eq:defExtDerivative} defines a map 
\begin{align*}
d_\Omega \colon \Omega^k(M,\mathrm{ad} P) &\to \Omega^{k+1}(M,\mathrm{ad} P) \\
\omega \mapsto d_\Omega \omega
\end{align*}
\item We have 
\begin{equation}
d_\Omega d_\Omega \omega = [F_\Omega,\omega]
\end{equation}
\item The curvature satisfies 
\begin{equation}
d_\Omega F_\Omega = 0,
\end{equation}
the Bianchi identity. 
\end{enumerate}
\end{prop}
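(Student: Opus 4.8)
The whole statement is local, so the plan is to fix a trivializing cover $\mathfrak U$ and argue chart by chart, writing $\Omega$ in $U_\alpha$ as the $\mathfrak g$-valued $1$-form $A_\alpha$, a section $\omega\in\Omega^k(M,\mathrm{ad}P)$ as matrix-valued forms $\omega_\alpha$ obeying $\omega_\beta = g_{\alpha\beta}\,\omega_\alpha\,g_{\alpha\beta}^{-1}$, and the curvature as $F_\alpha = dA_\alpha + \tfrac12[A_\alpha,A_\alpha]$ with $F_\beta = g_{\alpha\beta}F_\alpha g_{\alpha\beta}^{-1}$ (from the curvature exercise). For (i) I would check that the local expressions $(d_\Omega\omega)_\alpha = d\omega_\alpha + [A_\alpha,\omega_\alpha]$ of \eqref{eq:defExtDerivative} transform like a section of $\Omega^{k+1}(M,\mathrm{ad}P)$, i.e. $(d_\Omega\omega)_\beta = g_{\alpha\beta}(d_\Omega\omega)_\alpha g_{\alpha\beta}^{-1}$. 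Abbreviating $g=g_{\alpha\beta}$ and using $dg^{-1} = -g^{-1}(dg)g^{-1}$, I would expand $d\omega_\beta = d(g\,\omega_\alpha\,g^{-1})$ by the Leibniz rule and $[A_\beta,\omega_\beta]$ via the inhomogeneous rule $A_\beta = gA_\alpha g^{-1} - (dg)g^{-1}$; the two terms carrying $dg$ cancel, leaving $g\big(d\omega_\alpha + [A_\alpha,\omega_\alpha]\big)g^{-1}$, which is exactly what is needed for $d_\Omega$ to be well-defined on $\Omega^k(M,\mathrm{ad}P)$.

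For (ii), I would work in $U_\alpha$ and write $A=A_\alpha$, $\omega=\omega_\alpha$, so that $(d_\Omega d_\Omega\omega)_\alpha = d\big(d\omega + [A,\omega]\big) + \big[A,\,d\omega + [A,\omega]\big]$. Since $d^2=0$ and \eqref{eq:LieBracketLeibniz} gives $d[A,\omega] = [dA,\omega] - [A,d\omega]$, the two $[A,d\omega]$ terms cancel, leaving $[dA,\omega] + [A,[A,\omega]]$. Applying the graded Jacobi identity \eqref{eq:GradedJacobi} with the two odd entries both equal to $A$ yields $[A,[A,\omega]] = \tfrac12[[A,A],\omega]$, so the sum becomes $\big[dA + \tfrac12[A,A],\,\omega\big] = [F_\alpha,\omega_\alpha]$, the chart expression of $[F_\Omega,\omega]$.

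For (iii), the computation is of the same flavor: in $U_\alpha$, $(d_\Omega F_\Omega)_\alpha = dF_\alpha + [A,F_\alpha]$ with $F_\alpha = dA + \tfrac12[A,A]$. Using $d^2=0$, \eqref{eq:LieBracketLeibniz} and the graded antisymmetry of $[\cdot,\cdot]$ one finds $dF_\alpha = -[A,dA]$, while $[A,F_\alpha] = [A,dA] + \tfrac12[A,[A,A]]$; the first terms cancel, and the last vanishes because \eqref{eq:GradedJacobi} forces $3[A,[A,A]] = 0$ for the $1$-form $A$, hence $[A,[A,A]]=0$ over $\R$. Thus $(d_\Omega F_\Omega)_\alpha = 0$ in every chart, so $d_\Omega F_\Omega = 0$.

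I do not expect a conceptual obstacle: the one thing that needs care throughout is the sign bookkeeping in the graded brackets, in particular extracting $[A,[A,\omega]] = \tfrac12[[A,A],\omega]$ and $[A,[A,A]] = 0$ cleanly from \eqref{eq:GradedJacobi}, and tracking the Koszul signs when $d$ or a $1$-form is commuted past a $k$-form in part (i). Once (i) is in place, the rest is a routine unwinding of \eqref{eq:defExtDerivative}, \eqref{eq:defCurv1}, \eqref{eq:LieBracketLeibniz} and \eqref{eq:GradedJacobi}.
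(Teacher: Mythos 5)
Your proposal is correct and follows essentially the same route as the paper: all three parts are verified in a trivializing chart, with (i) by checking the transformation law $(d_\Omega\omega)_\beta = g_{\alpha\beta}(d_\Omega\omega)_\alpha g_{\alpha\beta}^{-1}$ and (ii), (iii) by the same expansions using \eqref{eq:LieBracketLeibniz} and \eqref{eq:GradedJacobi}, including the identities $[A,[A,\omega]]=\tfrac12[[A,A],\omega]$ and $[A,[A,A]]=0$. Your write-up is in fact slightly more detailed than the paper's, which leaves the cancellation of the $dg_{\alpha\beta}$ terms in part (i) to the reader.
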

\begin{proof}
\begin{enumerate}[i)]
\item One simply checks by direct computation that $$(d_\Omega\omega)_\beta = g_{\alpha\beta}(d_\Omega\omega)_\alpha g_{\alpha\beta}^{-1}.$$
\item By the first point, it is enough to check this in a trivializing chart. Here, again the proof is a simple computation: 
\begin{align*}
(d_\Omega d_\Omega \omega)_\alpha &= d_\Omega (d\omega_\alpha + [A_\alpha,\omega_\alpha]) \\
&= d(d\omega_\alpha) + [A_\alpha,d\omega_\alpha] + d[A_\alpha,\omega_\alpha] + [A_\alpha,[A_\alpha,\omega_\alpha]]\\
&= [A_\alpha,d\omega_\alpha] + [dA_\alpha,\omega_\alpha] - [A_\alpha,d\omega_\alpha] + \frac12[[A_\alpha,A_\alpha],\omega_\alpha]\\
&= [F_\alpha,\omega_\alpha]
\end{align*}
where we have used \eqref{eq:LieBracketLeibniz} and \eqref{eq:GradedJacobi}.

\item Again one can check this in a trivializing chart. Here we simply compute 
\begin{align*}
(d_\Omega F)_\alpha &= dF_\alpha + [A_\alpha,F_\alpha]  \\
&= d(dA_\alpha) + \frac12 d[A_\alpha,A_\alpha] + [A_\alpha,dA_\alpha]+\frac12[A_\alpha,[A_\alpha,A_\alpha]]
\end{align*}
The last term vanishes due to \eqref{eq:GradedJacobi} and the other terms cancel due to \eqref{eq:LieBracketLeibniz}.
\end{enumerate}
\end{proof}
\subsection{Metrics and metric compatibility}
Vector bundles a priori have structure group $GL_n(k)$, i.e. the transition functions take values in $GL_n(k)$. An important question is, given a vector bundle $\pi \colon E \to M$, when (and how) one can choose a trivializing cover $(\{(U_\alpha,\psi_\alpha)\}$ such that the associated transition functions take values in a subgroup $G \subset GL_n(k)$ (such a choice is called a \emph{reduction of the structure group to $G$}. One possibility to achieve this is via metrics on vector bundles: They allow to reduce the structure group to $O(p,q)$, or $U(n)$ in the complex case. 
\subsubsection{Metrics}
\begin{defn}
Let $\pi \colon E \to M$ be a real vector bundle. A \emph{metric} $g$ is  a  a smooth family of symmetric, bilinear, non-degenerate maps $g_x\colon E_x \times E_x \to \R$ for $x \in M$. Put differently, a metric $g$ on $E$ is a section of $\mathrm{Sym}^2E^*$ that is non-degenerate at every point. 
\end{defn}
Observe that the signature of $g_x$ is the same for all $x \in M$. A \emph{euclidean vector bundle} is a vector bundle with a positive definite metric. 
\begin{defn}
Let $\pi \colon E \to M$ be a complex vector bundle. A \emph{hermitian metric} $h$ on $E$ is a smooth family of hermitian, sesquilinear, non-degenerate maps $h_x \colon E_x \times E_x \to \C$. Equivalently, $h$ is  a section of $E^* \otimes \overline{E}^*$ that is hermitian and non-degenerate. 
\end{defn}
It is an easy but important observation that choosing a metric on a vector bundle is equivalent to a reduction of the structure group to $O(p,q)$ (in the real case with signature $(p,q)$) or $U(n)$ (in the complex case). We will call a \emph{metric vector bundle} a vector bundle over $\R$ or $\C$ equipped with a real metric or a Hermitian metric. 
\begin{prop}\label{prop:isometriccharts}
Let $\pi \colon E \to M$ be a metric vector bundle. Then there exists a trivializing cover $\{U_\alpha, \psi_\alpha\}$ such that the linear maps 
\[ \restr{\psi_\alpha}{E_x} \colon \pi^{-1}(\{x\}) \to \{x\} \times k^n \]
are isometries for every $x \in M$, where $k^n$ carries the standard metric (of signature $(p,q)$ or Hermitian in the complex case).
\end{prop}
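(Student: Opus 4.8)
The plan is to reduce the statement to a fiberwise Gram--Schmidt orthonormalization carried out smoothly in the base variable. Start with \emph{any} trivializing cover $\{(U_\alpha,\psi_\alpha)\}$ of $E$; pulling back the standard basis of $k^n$ through each $\psi_\alpha^{-1}$ produces a smooth local frame $e^\alpha_1,\dots,e^\alpha_n\in\Gamma(U_\alpha,E)$. The trivialization $\psi_\alpha$ is a fiberwise isometry precisely when this frame is $g$-orthonormal at every point, i.e. $g_x(e^\alpha_i(x),e^\alpha_j(x))$ equals the standard form $\eta_{ij}$ for all $x\in U_\alpha$. Since a refinement of a trivializing cover (with the $\psi_\alpha$ restricted) is again a trivializing cover, it suffices to construct, over each member of some refinement, a \emph{smooth} $g$-orthonormal frame $f^\alpha_1,\dots,f^\alpha_n$; the trivialization sending $f^\alpha_i(x)$ to the $i$-th standard basis vector is then smooth and a fiberwise isometry.

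In the Euclidean and Hermitian cases the form $g_x$ (resp. $h_x$) is positive definite, so ordinary Gram--Schmidt applied to $(e^\alpha_1(x),\dots,e^\alpha_n(x))$ never fails: each partial collection stays a basis, every norm that appears is strictly positive, and the output is orthonormal. As the procedure is built from the smooth operations $g_x(\cdot,\cdot)$, division by nowhere-vanishing functions, and $t\mapsto\sqrt t$ on $(0,\infty)$, the frame $f^\alpha_\bullet$ so produced is smooth, and no shrinking of $U_\alpha$ is needed.

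In the indefinite real case (signature $(p,q)$) the form $g_x$ may have isotropic vectors, so Gram--Schmidt can stall; this is the one genuine obstacle. To get around it, refine the cover so that each $U_\alpha$ is connected, fix $x_0\in U_\alpha$, and by a constant invertible change of frame, arrange that $(e^\alpha_1(x_0),\dots,e^\alpha_n(x_0))$ is already a standard pseudo-orthonormal basis of $(E_{x_0},g_{x_0})$. Run the sign-aware Gram--Schmidt process: subtract $g$-projections as usual to obtain $f_i$, then rescale by $1/\sqrt{|g(f_i,f_i)|}$. At $x=x_0$ every denominator $g_{x_0}(f_i,f_i)$ equals $\pm1$, with the sign dictated by the signature; by continuity there is a neighborhood $V\ni x_0$ on which every denominator stays nonzero, hence the process is defined and smooth there, and --- $V$ being connected --- the signs remain constant and thus still correct. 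Replacing $U_\alpha$ by $V$ yields the required smooth $g$-orthonormal frame over each chart of the refined cover.

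I expect the only real difficulty to be this last point: keeping the orthogonalization away from null vectors and forcing its $\pm1$ pattern to match the \emph{fixed} signature of $g$ rather than some permutation of it. Both are handled by ``adapt the frame at one point, then pass to a small connected neighborhood,'' using that $x\mapsto\operatorname{sign}(g_x)$ is locally constant. The same content can be packaged more structurally --- $GL_n(k)$ acts transitively on metrics of a given signature on $k^n$ with stabilizer $O(p,q)$ (resp. $U(n)$), so a local metric is a smooth map to $GL_n(k)/O(p,q)$, which lifts smoothly to $GL_n(k)$ locally because $GL_n(k)\to GL_n(k)/O(p,q)$ is a locally trivial principal bundle, and such a lift is exactly an orthonormalizing change of frame --- but I would present the concrete Gram--Schmidt argument, as it is self-contained.
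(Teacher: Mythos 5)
Your proof is correct and follows the same basic strategy as the paper: push the metric into the fibers of an arbitrary trivialization and then apply a pointwise linear change of frame bringing it to standard form. In fact your write-up is more complete than the paper's one-line proof, which simply asserts the existence of a trivializing isomorphism $F_\alpha(x)$ at each point without addressing its smooth dependence on $x$ or the indefinite-signature subtlety (isotropic vectors stalling Gram--Schmidt), both of which you handle correctly via the ``normalize at one point, shrink to a connected neighborhood'' argument.
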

We will call these trivializing covers \emph{metric}.
\begin{proof}
Take any trivializing cover $U_\alpha,\psi_\alpha$. For every $x \in U_\alpha$, $g$ induces a metric $g_\alpha(x)$ on $k_n$. Now let $F_\alpha(x) \colon k^n \to k^n$ be the linear isomorphism that trivializes $g_\alpha(x)$ and define $\psi'_\alpha = F_\alpha \circ g_\alpha$. 
\end{proof}
The transition functions of such a trivializing cover take values in the subgroup of $GL_n(k)$ preserving the standard metric. Hence a metric provides a reduction of the structure group to $O(p,q)$ or $U(n)$ respectively. The principal $O(p,q)$ (resp. $U(n)$) bundle defined by a choice of metric on $E$ is the bundle of orthonormal frames\footnote{For mixed signature, ``orthonormal'' means that the vectors in the frame are orthogonal and normalized so that $g(e_i,e_i) = \pm 1$.} $\mathrm{oFr}(E)$. Similarly to the frame bundle, this is the set of all orthonormal frames of $E$: 
\begin{equation}
\mathrm{oFr(E)} = \coprod_{x \in M}\{\underline{e}=(e_1,\ldots,e_r)| g(e_i,e_j) = \pm \delta_{ij}\}.
\end{equation}
Here the sign is determined by the signature of the metric. Equivalently, the frame bundle has fiber over $x$ given by isometries $\underline{e} \colon k^n \to E_x$, where $k^n$ has the standard metric.  \\
The following is an easy but important exercise.
\begin{exc}
Let $E \to M$ be a real or complex bundle. Then, there exist metric of every signature on $E$ (or hermitian metrics in the complex case). 
\emph{Hint: Use a partition of unity subordinate to a trivializing cover.}
\end{exc}
An important special case is the tangent bundle $TM$. 
\begin{defn}
A \emph{Riemannian metric} on $M$ is a Euclidean metric on $TM$. A \emph{pseudo-Riemannian metric} on $M$ is a metric of indefinite signature on $TM$. In particular, a \emph{Lorentzian metric} on $M$ is a metric on $TM$ of signature $(1,n-1)$. 
\end{defn}
\begin{rem}
On the tangent bundle there are particular local trivializations given by coordinate neighbourhoods. Notice that for generic metrics on $TM$, these do not give isometries between $T_xM$ and $k^n$ as in Proposition \ref{prop:isometriccharts}. In fact, if such coordinates exist around every point, the corresponding metric is called \emph{flat}. 
\end{rem}
\subsubsection{Metric compatible connections}
Now we know what metrics and connections on vector bundles are. A natural question is what the relation between these two concepts is. 
\begin{defn}
Let $\pi\colon E \to M$ be a vector bundle with metric $g$. A connection $\nabla$ on $E$ is called \emph{metric} (or \emph{metric compatible}) if, for all vector fields $X$ on $M$ and sections $\sigma, \tau \in \Gamma(E)$, we have 
\begin{equation}
L_Xg(\sigma,\tau) = g(\nabla_X\sigma,\tau) + g(\sigma,\nabla_X\tau).
\end{equation}
\end{defn}
It follows that the connection 1-forms of $\nabla$ take values in anti-symmetric (resp. anti-hermitian) endomorphisms of $E$. In particular, working over a metric trivializing cover, we see that $\nabla$  defines a connection on the orthonormal frame bundle $\mathrm{oFr}(E)$. Conversely, a connection on the orthonormal frame bundle defines a metric connection on $E$. In particular, we conclude that metric covers exist. 
\subsubsection{Digression: Torsion and the Levi-Civita Connection}
In Riemannian geometry, the existence of the Levi-Civita connection is of central importance. It is more easily formulated in terms of connections on vector bundles. To formulate it one needs the concept of torsion, for which one needs the following two remarks. 
\begin{rem}
Let $\pi \colon E \to M$ be a vector bundle with connection $\nabla$, and consider differential forms with values in $E$: 
$$\Omega^\bullet(M,E) = \Gamma(\wedge^\bullet(T^*M) \otimes E).$$
Then the connection induces an exterior covariant derivative $d^\nabla \colon \Omega^k(M,E) \to \Omega^{k+1}(M,E)$. Over a local trivialization $U$ where $\nabla = d + A$, with $A \in \Omega^1(U,\End(k^n))$, it acts on a form $\tau = \omega \otimes \sigma$, where $\omega\in \Omega^k(U)$ and $\sigma \in \Gamma(U,U\times k^n)$, by 
\begin{equation}
d^\nabla\tau = d\omega\otimes \sigma + \omega \otimes A\sigma.
\end{equation}
One can easily check that this local definition gives rise to a globally defined map. 
\end{rem}
The main difference to the exterior derivative of Lie-algebra valued forms is that here one uses matrix-vector multiplication (instead of matrix multiplication). 
\begin{rem}
There is a canonical one form $\theta \in \Omega^1(M,TM)$ given by 
\begin{equation}
\mathrm{id} \in \Hom(TM,TM) \cong \Gamma(T^*M \otimes TM) \cong \Omega^1(M,TM) \ni \theta.
\end{equation} In local coordinates $(x,v)$ on $TM$ it is given by $\theta = \sum_i v_idx^i$. 
\end{rem}
\begin{defn}
Let $\nabla$ be a connection on $TM$. Then the \emph{torsion} of $\nabla$ is defined by \begin{equation} T^\nabla = d^\nabla\theta \in \Omega^2(M,TM). 
\end{equation}
\end{defn}
\begin{exc}
Show that 
\begin{equation}
T(X,Y) = \nabla_X Y - \nabla_Y X - [X,Y] \in \Gamma(TM)
\end{equation}
\end{exc}
The following theorem is of central importance in Riemannian geometry:
\begin{thm} 
Let $(M,g)$ be a Riemannian manifold. Then there exists a unique metric connection $\nabla$ on $TM$ such that $T^\nabla = 0$. 
\end{thm}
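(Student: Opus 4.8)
The theorem to prove is the fundamental theorem of Riemannian geometry: on a Riemannian manifold $(M,g)$ there is a unique connection $\nabla$ on $TM$ that is both metric and torsion-free. The standard route is the Koszul formula. First I would establish uniqueness: assuming $\nabla$ has both properties, I write down the three cyclic permutations of the metric-compatibility identity $L_X g(Y,Z) = g(\nabla_X Y, Z) + g(Y, \nabla_X Z)$ applied to vector fields $X,Y,Z$, add two of them and subtract the third, and use the torsion-free condition $\nabla_X Y - \nabla_Y X = [X,Y]$ to cancel the unwanted $\nabla$-terms. This yields the Koszul formula
\begin{equation}
2g(\nabla_X Y, Z) = L_X g(Y,Z) + L_Y g(Z,X) - L_Z g(X,Y) - g(X,[Y,Z]) + g(Y,[Z,X]) + g(Z,[X,Y]).
\end{equation}
Since $g$ is non-degenerate, the right-hand side determines $\nabla_X Y$ uniquely, so at most one such connection exists.

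**Existence.** For existence I would turn the argument around: \emph{define} $\nabla_X Y$ by declaring $g(\nabla_X Y, Z)$ to equal half the right-hand side of the Koszul formula. One then has to check this is well-defined and actually a connection: the right-hand side is $C^\infty(M)$-linear in $Z$ (so by non-degeneracy of $g$ it defines a genuine vector field $\nabla_X Y$), is $C^\infty(M)$-linear in $X$, is additive in $Y$, and satisfies the Leibniz rule $\nabla_X(fY) = (L_X f) Y + f\nabla_X Y$ — each of these is a routine computation where the $L_X(fg(\cdot,\cdot))$ terms and the $[X, fY]$ terms produce exactly the derivative-of-$f$ contribution and everything else is $f$ times the original expression. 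Finally one verifies directly from the formula that $g(\nabla_X Y - \nabla_Y X, Z) = g([X,Y], Z)$ (the symmetric-in-$(X,Y)$ terms survive, the antisymmetric ones combine) and that $g(\nabla_X Y, Z) + g(Y, \nabla_X Z) = L_X g(Y,Z)$ (adding the two Koszul expressions, most terms cancel in pairs), giving torsion-freeness and metric compatibility respectively.

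**Main obstacle.** There is no deep obstacle here — the content is entirely in the bookkeeping. The step most prone to error, and the one I would write out most carefully, is the verification that the Koszul right-hand side is tensorial in $Z$ and satisfies the Leibniz rule in $Y$: this is where the Lie-derivative terms $L_Y g(Z, fX)$ etc. and the bracket terms $[Z, fX]$ have to be expanded using $L_{fX} = f L_X + (L_\cdot f)\,\iota_X$-type identities and $[X, fY] = f[X,Y] + (L_X f) Y$, and one must check that the $(L f)$-contributions assemble into precisely $2(L_X f) g(Y,Z)$ with correct signs. I would also remark, as is standard, that the construction is local, so it suffices to work in a coordinate chart, and that an alternative existence proof proceeds by taking any metric connection on $TM$ and correcting it by an $\mathrm{End}(TM)$-valued one-form chosen to kill the torsion — but the Koszul approach is cleaner and gives uniqueness and existence in one stroke.
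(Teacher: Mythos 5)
Your proposal is correct: the Koszul-formula argument you outline (derive the formula from metric compatibility plus torsion-freeness to get uniqueness, then use it as the definition and verify tensoriality in $Z$, the Leibniz rule in $Y$, torsion-freeness and metric compatibility for existence) is the standard and complete proof, and your version of the formula checks out by symmetry of $g$. The notes themselves do not prove this theorem but defer to the literature (do Carmo), where exactly this argument is given, so there is nothing to compare beyond noting that your write-up would fill that gap.
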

For a proof see e.g. \cite{Carmo1992}. This connection is called the \emph{Levi-Civita connection} of $(M,g)$.
\section{Chern-Weil theory}
The idea of Chern-Weil theory is to produce, from bundles equipped with connections, cohomology classes on the base of the bundle that are independent of the connection (and hence depend only on the bundle itself). Let us present the rough idea of this construction. \\
Let $G \subset GL_n(k)$ and consider a priniciple $G$-bundle with connection $P = (\mathfrak{U},g_{\alpha\beta}, A_\alpha)$. Then we have that the curvature $F \in \Omega^2(M,\mathrm{Ad} P)$, i.e. it satisfies $F_\beta  = g_{\alpha\beta}F_\alpha g_{\alpha\beta}^{-1}$. It follows that the 2-form $\mathrm{tr}F_{\alpha\beta}$, defined by taking the trace of the Lie algebra component of $F$, defines a global 2-form on $M$, since the trace is invariant under conjugation. The Bianchi identity implies 
\begin{equation}
d\mathrm{tr}(F_\alpha) = \mathrm{tr}dF_\alpha = \mathrm{tr}[F_\alpha,A_\alpha] = 0,
\end{equation}
where the last equality uses that the trace vanishes on commutators\footnote{This is an ``honest'' commutator since $F$ has degree 2.}. 
Hence $\mathrm{tr} F$ defines a cohomology class $\mathrm{tr} F \in H^2(M)$. We will see below that this class is \emph{independent} of the choice of connection on $P$, hence it is a ``characterisitic'' class of the bundle $P$\footnote{Even though we did not empasize this, isomorphisms of bundles also act on connections (via pullback of forms) and so isomorphic bundles give rise to the same class. }. Let us explain how the general construction works. 
\subsection{$\mathrm{Ad}$-invariant polynomials}
Roughly, an $\mathrm{Ad}$-invariant polynomial on a Lie algebra $\g$ is a polynomial invariant under the adjoint action of the Lie group $G$ on $\g$, such as the trace on a matrix algebra. We will only work with matrix groups, but everything we discuss can be generalized to arbitrary Lie groups.
\begin{defn} 
Let $\g \subset gl_n(k)$ be the matrix Lie of a Lie group $G \subset GL_n(k)$. A degree $l$ $\mathrm{Ad}$-invariant polynomial $P$ is a multilinear map $P\colon \g^\otimes l \to \C$ such that for all $X_1,\ldots,X_l \in \g$, $g\in G$, and permutations $\sigma \in S_l$ we have 
\begin{equation}
P(X_{\sigma(1)},\ldots,X_{\sigma(n)}) = P(gX_1g^{-1},\ldots, gX_ng^{-1}) = P(X_1,\ldots,X_n) = P(X_1,\ldots,X_n).
\end{equation}
\end{defn}
We also define
\begin{equation}
I_l(g) = \{ \mathrm{Ad}\text{-invariant polynomials on } \g\}.
\end{equation}
\subsection{Chern-Weil theorem}
We introduce the following notation: If $P\in I_l(\g)$, $F_1, \ldots, F_l \in \Omega^\bullet(U,\g)$ and $F_{i} = \sum (F_i)_j \otimes \xi_i^j$, with $\xi_i^j \in \g$, then 
$$P(F_1, \ldots, F_l) =  \sum_{i_1,\ldots, i_l} (F_1)_{i_1} \wedge \ldots \wedge (F_l)_{i_l}\cdot P(\xi_1^{i_1},\ldots,\xi_l^{i_l}).$$
In particular, for $F \in \Omega^2(U,\g)$, we have 
\begin{equation}
P(F) = P(F,\ldots,F) = \sum_{i_1,\ldots,i_l}F_{i_1}\wedge \ldots \wedge F_{i_l}P(\xi^{i_1},\ldots, \xi^{i_l}). 
\end{equation}
The main theorem of this chapter is the following. 
\begin{thm}[Chern-Weil]
Let $(\mathfrak{U},g_{\alpha\beta},A_\alpha)$ be a principle bundle with connection and let $P \in I_l(\g)$. Then 
\begin{enumerate}[i)]
\item The collection $P(F_\alpha)$ defines a global $2l$-form $P(F) \in \Omega^{2l}(M)$, 
\item $dP(F) = 0$, 
\item If $A_0$ and $A_1$ are connections on $P$ then $P(F(A_1)) - P(F(A_0)) = d \beta$, for some $\beta \in \Omega^{2l-1}(M)$. 
\end{enumerate}
\end{thm}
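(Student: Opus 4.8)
The plan is to handle the three assertions in the order given, with the substance concentrated in part (iii). Throughout I work in a trivialization $\mathfrak{U}$ and use the transformation rule $F_\beta = g_{\alpha\beta}F_\alpha g_{\alpha\beta}^{-1}$ and the Bianchi identity $d_\Omega F_\Omega = 0$ established above, together with the exterior covariant derivative $d_\Omega$ on $\mathrm{Ad}\,P$-valued forms given locally by $(d_\Omega\omega)_\alpha = d\omega_\alpha + [A_\alpha,\omega_\alpha]$. For (i): each $F_\alpha$ is an even-degree ($2$-form) $\g$-valued form, so the wedge factors entering $P(F_\alpha)$ commute and the $S_l$-symmetry of $P$ causes no trouble; on an overlap $U_{\alpha\beta}$ the $\mathrm{Ad}$-invariance of $P$ applied pointwise with $g=g_{\alpha\beta}(u)$ gives $P(F_\beta)=P(g_{\alpha\beta}F_\alpha g_{\alpha\beta}^{-1},\dots,g_{\alpha\beta}F_\alpha g_{\alpha\beta}^{-1})=P(F_\alpha,\dots,F_\alpha)=P(F_\alpha)$, so the local forms glue to a global $P(F)\in\Omega^{2l}(M)$.

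The engine for (ii) and (iii) is the \emph{infinitesimal} $\mathrm{Ad}$-invariance of $P$: differentiating $P(e^{tY}X_1e^{-tY},\dots,e^{tY}X_le^{-tY})=P(X_1,\dots,X_l)$ at $t=0$ yields $\sum_{j=1}^l P(X_1,\dots,[Y,X_j],\dots,X_l)=0$ for all $Y,X_i\in\g$. I would first upgrade this to a Leibniz rule: for $\mathrm{Ad}\,P$-valued forms $\omega_1,\dots,\omega_l$,
\[
d\,P(\omega_1,\dots,\omega_l)=\sum_{j=1}^l(-1)^{|\omega_1|+\cdots+|\omega_{j-1}|}P(\omega_1,\dots,d_\Omega\omega_j,\dots,\omega_l).
\]
This is checked locally: replacing $d_\Omega\omega_j$ by $d\omega_j+[A_\alpha,\omega_j]$, the ordinary-$d$ terms reassemble into $dP(\omega_1,\dots,\omega_l)$ by the graded Leibniz rule for $d$, while the bracket terms sum to zero by the infinitesimal invariance applied with $Y=A_\alpha$ (the $\mathrm{Ad}$-invariance of $P$ also makes the right-hand side independent of the chosen trivialization). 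Part (ii) is then immediate: since each $F_\alpha$ is even, locally $dP(F_\alpha)=\sum_j P(F_\alpha,\dots,dF_\alpha,\dots,F_\alpha)$ with no signs, and substituting the local Bianchi identity $dF_\alpha=-[A_\alpha,F_\alpha]$ the right-hand side vanishes by infinitesimal invariance with $Y=A_\alpha$, $X_i=F_\alpha$; by (i) this says $dP(F)=0$ globally.

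For (iii) I would use the linear homotopy $A_t=A_0+t\,a$, where $a:=A_1-A_0$ is a genuine global section of $T^*M\otimes\mathrm{Ad}\,P$ (the inhomogeneous terms $-dg_{\alpha\beta}g_{\alpha\beta}^{-1}$ cancel in the difference) and each $A_t$ is again a connection, with curvature $F_t$. A local computation gives $\dot F_t = da+[A_t,a]=d_{A_t}a$, and by symmetry of $P$, $\frac{d}{dt}P(F_t)=l\,P(d_{A_t}a,F_t,\dots,F_t)$. Applying the Leibniz rule to $P(a,F_t,\dots,F_t)$ and using the Bianchi identity $d_{A_t}F_t=0$ to kill all but the first term gives $\frac{d}{dt}P(F_t)=d\big(l\,P(a,F_t,\dots,F_t)\big)$; integrating over $t\in[0,1]$ and pulling $d$ out of the integral yields $P(F(A_1))-P(F(A_0))=d\beta$ with
\[
\beta=l\int_0^1 P(a,F_t,F_t,\dots,F_t)\,dt\ \in\ \Omega^{2l-1}(M).
\]

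The main obstacle I anticipate is the careful proof of the Leibniz rule for $P$ paired with $d_\Omega$ — in particular verifying that the graded signs coming from the ordinary Leibniz rule for $d$ are compatible with the placement of the bracket terms, and that the statement is trivialization-independent. Once that lemma is secured, parts (ii) and (iii) are short; the remaining points (part (i), the formula $\dot F_t=d_{A_t}a$, and the commutation of $d$ with $\int_0^1(\cdot)\,dt$) are routine.
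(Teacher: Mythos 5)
Your proposal is correct and follows essentially the same route as the paper's proof: part (i) by pointwise $\mathrm{Ad}$-invariance, part (ii) by the infinitesimal invariance identity combined with the local Bianchi identity, and part (iii) by the linear homotopy $A_t = A_0 + t\,a$ with the transgression form $\beta = l\int_0^1 P(a, F_t, \ldots, F_t)\,dt$ (the paper places $B = A_1 - A_0$ in the last slot rather than the first, which is immaterial by symmetry of $P$). Your explicit isolation of the Leibniz rule for $P$ against $d_\Omega$ is a clean way to organize the sign bookkeeping that the paper handles inline.
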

\begin{proof}
\begin{enumerate}[i)]
\item This is immediate from the $\mathrm{Ad}$-invariance of $P$. Indeed, let $F_\alpha = \sum_i (F_\alpha)_i\otimes\xi^i$, then we have 
\begin{align*}
P(F_\beta) &= P(g_{\alpha\beta}F_\alpha g_{\alpha\beta}^{-1}) \\
&= \sum_{i_1,\ldots,i_l}(F_\alpha)_{i_1}\wedge \ldots \wedge (F_\alpha)_{i_l}P(g_{\alpha\beta}\xi_\alpha^{i_1}g_{\alpha\beta}^{-1},\ldots, g_{\alpha\beta}\xi_\alpha^{i_l}g_{\alpha\beta}^{-1}) \\
&= \sum_{i_1,\ldots,i_l}(F_\alpha)_{i_1}\wedge \ldots \wedge (F_\alpha)_{i_l}P(\xi_\alpha^{i_1},\ldots,\xi_\alpha^{i_l})  \\
&= P(F_\alpha)
\end{align*}
hence the collection $P(F_\alpha)$ defines a  global 2-form $P(F) \in \Omega^{2l}(M)$.  
\item This follows from the Bianchi identity. Namely, the $\mathrm{Ad}$-invariance of $P$ implies for all $\xi, X_1, \ldots, X_l \in \g$
$$P([\xi, X_1],X_2,\ldots,X_l) + P(X_1,[\xi, X_2],X_3,\ldots,X_l) + \ldots + P(X_1, X_2, \ldots, [\xi,X_l]) = 0$$ 
(by taking derivative of the acting with $g_t = \exp(t\xi)$ at $t=0$). But then, we have 
\begin{align*}
dP(F_\alpha) &= P(dF_\alpha,F_\alpha, \ldots F_\alpha) + \ldots P(F_\alpha,\ldots, F_\alpha, dF_\alpha)  \\
&= -P([A_\alpha,F_\alpha],F_\alpha,\ldots,F_\alpha) - \ldots - P(F_\alpha, \ldots, F_\alpha, [A_\alpha,F_\alpha]) =0. 
\end{align*} 
\item The difference between the two connections $B := A_1 - A_0$ is a 1-form with values in $\mathrm{Ad}P$. Then we define 
\begin{equation}
\eta = k \int_0^1P(F_t,\ldots,F_t,B) dt.\label{eq:Transgression}
\end{equation}
Notice that $\eta \in \Omega^{2k-1}(M)$, as in the proof of part i). 
We claim that $d\eta = P(F(A_1)) - P(F(A_0))$. It is enough to show this in a single trivializing chart. Hence, fix a trivializing chart $U_\alpha$. To simplify the notation, we set $B= B_\alpha,A_1 = (A_1)_\alpha, \ldots$. Define $A_t = A_0 + tB$ and $F_t = F(A(t))$. 
Then, letting $A(t) = A_0 + tB$, we have 
\begin{align*}\dot{F}_t &= \frac{d}{dt}F_t = \frac{d}{dt} \left(d(A_0 + tB ) + \frac{1}{2}0 + tB,A_0 + tB] \right) \\ 
&= \frac{d}{dt}\left( F_0 + t (dB + [A_0,B]) + \frac{t^2}{2}[B,B]\right) = dB + [A_0,B] + t[B,B] \\
&= dB + [A_t,B] = d^{A_t}B
\end{align*}
\end{enumerate}
In particular, we have 
\begin{align*}
P(F_1) - P(F_0) &= \int_0^1 \frac{d}{dt}P(F_t,\ldots F_t) dt \\
&= k\int_0^1  P(F_t,\ldots, F_t), \dot{F}_t))) dt \\
&= k \int_0^1 P(F_t,\ldots, F_t), d^{A_t}B) dt.
\end{align*}
Here we have used the symmetry of $P$.
Hence, to prove the claim it is sufficient to show that $dP(F_t,\ldots,F_t,B) = P(F_t, \ldots, F_t, d^{A_t}B)$. This follows again from Bianchi identity and $\mathrm{Ad}$-invariance. Namely, 
\begin{align*}
dP(F_t,\ldots,F_t,B) &= P(dF_t, \ldots, F_t, B) + \ldots + P(F_t, \ldots, F_t, dB) \\
&= P(dF_t, \ldots, F_t, B) + \ldots + P(F_t, \ldots, dF_t, B) - P(F_t,\ldots,F_t,[A_t,B]) \\
&+ P(F_t, \ldots, F_t, dB) + P(F_t,\ldots,F_t,[A_t,B])  \\
&= \underbrace{-P([A_t,F_t], \ldots, F_t, B) - \ldots - P(F_t, \ldots, [A_t,F_t], B) - P(F_t,\ldots,F_t,[A_t,B])}_{=0 \text{(invariance)}} \\
&+ P(F_t, \ldots, F_t, d_{A_t}B).
\end{align*}
This finishes the proof.
\end{proof}
\begin{rem}
The formula in Equation \eqref{eq:Transgression} gives an explicit expression for the homotopy between the closed forms $P(F(A_1))$ and $P(F(A_0))$. 
\end{rem}
\begin{defn}
We define the space of \emph{inhomogeneous $\mathrm{Ad}$-invariant polynomials} on $\g$ by  \begin{equation} 
\C[\g^*]^G := \oplus_{ l\geq 0} I_l(\g) 
\end{equation}
and  the space of \emph{$\mathrm{Ad}$-invariant formal power series} by 
\begin{equation} 
\C[[\g^*]]^G := \prod_{ l\geq 0} I_l(\g) 
\end{equation}
\end{defn}
If $f$ is an $\mathrm{Ad}$-invariant formal power series, then $f(F(A)) \in \Omega^{\text{even}}(M)$ is well-defined (since only finitely many terms in the power series survive). 
\begin{defn}
The map 
\begin{align*}
\C[[g^*]]^G \times \mathcal{A}_P &\to \Omega^{\text{even}}(M) \\
(f,A) &\mapsto f(F(A))
\end{align*}
\end{defn}

is called \emph{Chern-Weil correspondence}.
In the following sections we look at two important examples of Lie groups and $\mathrm{Ad}$-invariant power series on their Lie algebras: The groups $U(n)$ and $O(n)$. 
\subsection{Characteristic classes of $U(n)$-bundles}
The Lie algebra $u(n)$ of $U(n)$ is the Lie algebra of skew-hermitian matrices, i.e. the Lie algebra of matrices $X$ such that $\overline{X}^T = -X$. Such matrices can be diagonalised by unitary matrices. In particular, any $\mathrm{Ad}$-invariant formal power series $f(X)$ on $u(n)$ is a formal power series in the eigenvalues of $X$. Such formal power series can be constructed as follows: Let $$g(x) = a_0 + a_1x + \ldots + a_kx^x + \ldots = \sum a_kx^k \in \C[[x]]$$ be a formal power series with complex coefficients such that $a_0 = 1$ (this ensures that $g$ is invertible as a power series). Then 
$$f(X) = \det f\left(\frac{\ii}{2\pi}X\right) \in \C[[u(n)^*]].$$
The factor $\frac{i}{2\pi}$ is conventional.
\begin{expl}
Let $g(x) = 1 + x$. Then $c(X) = \det \left(I_{n\times n} + \frac{i}{2\pi}X\right)$ is called the \emph{Chern polynomial}. 
\end{expl}
Introducing a parameter $t$ we can write 
\begin{equation}
c(tX) = 1 + tc_1(X) + t^2c_2(X) + \ldots + t^nc_n(X),
\end{equation}
where $c_i(X)$ is an invariant polynomial\footnote{In fact $c_i$ is the $i$-th symmetric polynomial in $n$ variables evaluated on the eigenvales of $X$.}  of degree $i$ on $u(n)$.
\begin{defn}
Let $A$ be a connection on a principal $U(n)$-bundle $P$ with curvature $F$. Then $$\left[c_i(F)\right]\in H^{2i}(M)$$ is called the \emph{$i$-th Chern class} of $P$ and 
$$\left[c\left(F\right)\right] = \left[1 + c_1\left(F\right) + \ldots+ c_n(F)\right] \in H^{\text{even}}(M)$$
is called \emph{total Chern class of $P$.}
\end{defn}
In particular, the first Chern class $c_1$ is simply given by 
$$[c_1(F)] = \left[\mathrm{tr}\left(\frac{\ii}{2\pi}F\right)\right].$$
Another example which is relevant for geometry is given by the \emph{Todd function}
\begin{equation}td(x) = \frac{x}{1-e^{-x}} = 1 + \frac{1}{2} + \sum_{k=1}^{\infty}\frac{B_{2k}}{(2k)!}x^{2k}\label{eq:toddfunction}
\end{equation}	
where the $B_{2k}$ are the \emph{ Bernoulli numbers}, defined by Equation \eqref{eq:toddfunction} and $B_0 = 1, B_1 = -1/2, B_{2k+1} = 0$ for $k \geq 1$.  The associated characterstic class is called the \emph{Todd class of $P$}. Finally, let us mention also the \emph{Chern character} 
$$ch(X) = \mathrm{tr}\exp\left(\frac{\ii}{2\pi}X\right).$$
Its associated characteristic class is called the \emph{Chern character of $P$}. 
\subsection{Characteristic classes of $O(n)$-bundles}
Here we will only consider the case for $n=2k$ even. The Lie algebra $o(n)$ of $O(n)$ is the Lie algebra of skew-symmetric matrices, i.e. the Lie algebra of matrices $X$ satisfying $X^T = - X$. The matrix $\ii X$ is \emph{hermitian} and hence has real eigenvalues. One can check that they come in pairs $\pm \lambda_j, j=1, \ldots k$. Any skew-symmetric matrix is conjugate by an orthogonal matrix to a block matrix of the form 
$$ X[\lambda_1,\ldots,\lambda_j] = \begin{pmatrix} 0 & -\lambda_j & \cdots & 0 \\
\lambda_j & 0 & \cdots & 0 \\
& & \ddots & \\
0 & \cdots &  0 & -\lambda_j  \\
0 & \cdots & \lambda_j & 0
\end{pmatrix}
$$
Thus, any power series in $\lambda_j^2$ defines an $\mathrm{Ad}$-invariant polynomial on $o(n)$. 
In particular, let $f$ be an even power series 
$$f = 1 + a_2x^2 + a_4x^4 + \ldots,$$
then we have 
$$\det(\ii X) = \prod f(\lambda_j)f(-\lambda_j)  = \prod_{j=1}^kf(\lambda_j)$$
and hence 
$$\det{}^{1/2}f(\ii X) = \prod_{j=1}^k f(\lambda_j).$$
The left hand side can be expressed as a formal power series in the entries of $iX$ which is the square root of the formal power series $\det f(\ii X)$ (this square root is unique if fix its first coefficient to be 1). For a power series $f$ we define 
\begin{equation}
p_f(X) := \det{}^{1/2}f\left(\frac{\ii}{2\pi}X\right)  \in \C[[o(n)^*]]^G
\end{equation}
Again, we are interested in applying this to particular power series $f$. 
\begin{defn}
Let $P$ be a principal $O(n)$-bundle with connection $A$.
Let $f = 1 + x^2$. Then, the corresponding characteristic class 
\begin{equation}\left[ p_f(F(A)) \right]= \left[\det{}^{1/2}\left(1 + \left(\frac{i}{2\pi}X\right)^2 \right)\right] \in \bigoplus_{k\geq 0}H^{4k}(M)
\end{equation}
is called the \emph{total Pontryagin class of $P$}. 
\end{defn}
We can expand the total Pontryagin form in homogeneous degrees 
$$p_f(tF(A)) = 1 + t^2p_1(F(A)) + \ldots t^{2k}p_{k}(F(A)).$$
The  cohomology classes of $p_i(F(A))$ is called $i$-th Pontryagin class of $P$. For example, one can show that 
\begin{equation}
p_1(F(A)) = -\frac{1}{8\pi^2}\mathrm{tr}(F(A)\wedge F(A)).
\end{equation} 
Two more power series are important for geometry. Denote 
\begin{align}
L(x) &= \frac{x}{\tanh(x)} = 1 + \sum_{k=0}^\infty \frac{2^{2k}B_{2k}}{(2k)!}x^{2k} \\
\hat{A}(x) &= \frac{x/2}{\sinh(x/2)} = 1 + \sum_{k=1}^\infty \frac{2^{2k-1}-1}{2^{2k-1}(2k)!}B_{2k}x^{2k}
\end{align}
\begin{defn}
Let $P$ be a principal $O(n)$-bundle with connection $A$.
The \emph{$L$ genus} of $P$ is the characteristic class 
\begin{equation}[p_L(F(A))] =\left[ \det{}^{1/2}\left(\frac{\frac{iF(A)}{2\pi}}{\tanh\left(\frac{\ii F(A)}{2\pi}\right)}\right)\right] \in \bigoplus_{k\geq 0}H^{4k}(M) \end{equation}The \emph{$\hat{A}$ genus} of $P$ is the characteristic class\begin{equation}[p_L(F(A))] = \left[\det{}^{1/2}\left(\frac{\frac{iF(A)}{2\pi}}{\tanh\left(\frac{\ii F(A)}{2\pi}\right)}\right)\right] \in \bigoplus_{k\geq 0}H^{4k}(M) \end{equation}
\end{defn}
\section{$\Spin$ structures, spinors, Dirac operators}
In this section we will discuss Spin structures and see our first examples of Dirac operators. Here we will use a lot of the algebra that we encountered in the first part. 
\subsection{Spin structures}
Let $(M,g)$ be an oriented $n$-dimensional Riemannian\footnote{From now on we stick to the Euclidean case. Most concepts have straightforward analogues in the pseudo-Riemannian case, and the interested reader is invited to think of them as exercises.} manifold. Then, we can define the bundle $\mathrm{soFr}(M)$ of \emph{oriented orthonormal frames of $TM$}: The fiber over $x \in M$ is the collection of all orientation-preserving isometries $e_x\colon (\R^n,g_{std}) \to (T_xM,g_x)$. This is a principal $SO(n)$ bundle and we have 
\begin{equation}
TM \cong \mathrm{soFr}(M) \times \iota \R^n,\label{eq:SOn_structure}
\end{equation}
where $\iota\colon SO(n) \to GL_n(k)$ denotes the inclusion. Conversely, an $SO(n)$-structure on $TM$ - i.e. a principal $SO(n)$-bundle $P$ such that $TM \cong P \times_\iota \R^n$ - defines an orientation and a Riemannian metric on $M$, by declaring the fiber over $x$ to consist of oriented orthonormal frames. Now, recall that we have the short exact sequence \eqref{eq:Spin_SES}
$$ 1 \to \Z_2 \to \Spin_n \to^{\rho} SO(n) $$ 
where $\rho$ is the \emph{adjoint representation} defined in \ref{eq:def_adjoint_rep}, given by 
\begin{equation}
\rho(x)v = x v x^{-1}
\end{equation}
where we use Clifford multiplication on the right hand side. 
\begin{defn}
A \emph{spin structure} on $M$ is a principal $\Spin_n$ bundle $P$ together with an isomorphism 
\begin{equation}
TM \cong P \times_\rho \R^k. 
\end{equation}
\end{defn}
The existence of a spin structure $P$ implies the existence of an $SO(n)$-structure $\rho(P)$ on $TM$, hence gives $M$ the structure of an oriented Riemannian manifold. Conversely, given an oriented Riemannian manifold $M$ with tangent bundle $TM = (\mathfrak{U},g_{\alpha\beta})$, a spin structure is a collection of lifts $tilde{g}_{\alpha\beta}$ such that the following diagram commutes: 
\[
\begin{tikzcd}
                                                                                           & \mathrm{Spin}_n \arrow[d, "\rho"'] \\
U_{\alpha\beta} \arrow[r, "g_{\alpha\beta}"] \arrow[ru, "\tilde{g}_{\alpha\beta}", dashed] & SO(n)                             
\end{tikzcd}
\]
and that satisfy the relations \eqref{eqs:gluingMapsPrincipal} for a prinicipal $Spin_n$-bundle: 
\begin{align}
\tilde{g}_{\alpha\alpha} &= 1 \label{eq:lifts1} \\
\tilde{g}_{\alpha\beta} &= \tilde{g}_{\beta\alpha}^{-1}  \label{eq:lifts2}\\
\tilde{g}_{\alpha\gamma} &= \tilde{g}_{\beta\gamma}\tilde{g}_{\alpha\beta}.\label{eq:lifts3}
\end{align}
When do such lifts exist? From covering theory, one can conclude that if $U_{\alpha\beta}$ is simply connected, then lifts satisfying \eqref{eq:lifts1} and \eqref{eq:lifts2} do alwawy exist. The only non-trivial question is whether one can find a lift satisfying \eqref{eq:lifts3} (called the cocycle condition). To answer this question we digress into algebraic topology and discuss the concept of \v{C}ech cohomology. 
\subsection{Digression: \v{C}ech cohomology}
Let $\mathfrak{U}= \{U_\alpha\}_{\alpha\in A}$ be an open cover of $M$. For $\alpha_1,\ldots,\alpha_n$ we set 
$$U_{\alpha_1 \ldots \alpha_n} := U_{\alpha_1} \cap \cdots \cap U_{\alpha_n}.$$
\begin{defn}
A cover $\mathfrak{U}= \{U_\alpha\}_{\alpha\in A}$ is \emph{good} if for all $k \in \N$ and $\alpha_1, \ldots, \alpha_k \in A$ we have $U_{\alpha_1\ldots\alpha_k}$ contractible.
\end{defn}
It is a general fact that manifolds always admit good covers. Given any open cover, one can define the \v{C}ech cochains and coboundary operator as follows. 
\begin{defn}
Let $G$ be an abelian group and $\mathfrak{U}= \{U_\alpha\}_{\alpha\in A}$ be an open cover of $M$. Then the space of \emph{\v{C}ech cochains} is 
\begin{equation}
\check{C}^k(\mathfrak{U},G) = \left\lbrace f\colon \{(\alpha_0,\ldots,\alpha_k)\in A^{k+1} |U_{\alpha_0\ldots\alpha_k}\neq \varnothing\} \to G \right\rbrace
\end{equation}
\end{defn}
We typically denote $f(\alpha_0,\ldots,\alpha_k) = f_{\alpha_0\ldots\alpha_k} \in G$. $\check{C}^k(\mathfrak{U},G)$ is itself an abelian group. 
\begin{defn}
The \v{C}ech coboundary operator $\delta \colon \check{C}^k(\mathfrak{U},G) \to \check{C}^{k+1}(\mathfrak{U},G)$ is defined on elements $\sigma \in \check{C}^k(\mathfrak{U},G)$ by 
\begin{equation}
(\delta\sigma)_{\alpha_0\ldots\alpha_{k+1}} = \sigma_{\alpha_1}\ldots - \sigma_{\alpha_0} + \ldots + (-1)^{k+1}\sigma_{\alpha_0\ldots\alpha_k}
\end{equation}
\end{defn}
Then we have the following Lemma, for whose proof we refer e.g. to \cite{Bott1995}: 
\begin{lem}
$\delta$ is a group homomorphism and 
$\delta^2 = 0$.
\end{lem}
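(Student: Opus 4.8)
The plan is to prove the two assertions separately, both by unwinding the definition of $\delta$. Since the displayed formula for $(\delta\sigma)_{\alpha_0\ldots\alpha_{k+1}}$ is the alternating sum over dropping one index, I would first rewrite it in the unambiguous form
\[
(\delta\sigma)_{\alpha_0\ldots\alpha_{k+1}} = \sum_{j=0}^{k+1}(-1)^j \sigma_{\alpha_0\ldots\widehat{\alpha_j}\ldots\alpha_{k+1}},
\]
where the hat means that the index is omitted, and the cochain is evaluated on the remaining $(k+1)$-tuple (which still indexes a nonempty intersection, since $U_{\alpha_0\ldots\alpha_{k+1}}\subset U_{\alpha_0\ldots\widehat{\alpha_j}\ldots\alpha_{k+1}}$).

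For the homomorphism property there is essentially nothing to do: each component of $\delta\sigma$ is a fixed $\mathbb{Z}$-linear combination of components of $\sigma$, so in the abelian group $G$ it is additive in $\sigma$, giving $\delta(\sigma+\tau)=\delta\sigma+\delta\tau$. I would state this in one line.

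The substance is $\delta^2=0$, and here I would just run the standard simplicial cancellation. Applying the formula twice yields
\[
(\delta^2\sigma)_{\alpha_0\ldots\alpha_{k+2}} = \sum_{j=0}^{k+2}(-1)^j (\delta\sigma)_{\alpha_0\ldots\widehat{\alpha_j}\ldots\alpha_{k+2}},
\]
and then expanding each inner term as an alternating sum over dropping a second index. The one delicate point is the sign bookkeeping: once the index at position $j$ has been removed, an index originally at position $i>j$ has moved to position $i-1$ and so is dropped with sign $(-1)^{i-1}$, whereas for $i<j$ the sign is still $(-1)^i$. Splitting the resulting double sum into the parts with $i<j$ and $i>j$, relabelling the second part by swapping the roles of the two dropped positions, one finds that each unordered pair of positions $p<q$ occurs exactly twice, once with sign $(-1)^{p+q}$ and once with sign $(-1)^{p+q-1}$; the two contributions cancel, so the entire sum is $0$ in $G$.

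The hard part will really only be keeping this index shift visible rather than buried, so I would write out that $(-1)^{i}$ versus $(-1)^{i-1}$ distinction explicitly; everything else is routine. No topological input (good covers, contractibility) is needed for this lemma — it is purely formal — so I would not invoke any of that here.
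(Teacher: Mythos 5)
Your proof is correct, and it is the standard simplicial-identity argument: the homomorphism property is immediate from additivity in each component, and $\delta^2=0$ follows from the pairwise cancellation of the two occurrences of each unordered pair of omitted indices, with the $(-1)^i$ versus $(-1)^{i-1}$ shift being exactly the point to make explicit. The notes do not prove this lemma but defer it to the literature (Bott--Tu), where precisely this argument appears, so your write-up simply supplies the omitted standard proof; your remark that the reduced tuples still index nonempty intersections (so that $\sigma$ is defined on them) is the one hypothesis worth checking, and you handle it correctly.
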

Hence, we can define \v{C}ech cocyles, coboundaries, and cohomology as follows. 
\begin{defn}
The subgroup $Z^k(\mathfrak{U},G) \subset \check{C}^k(\mathfrak{U},G)$ of \emph{\v{C}ech cocycles} is 
\[
Z^k(\mathfrak{U},G) = \ker\left(\delta\colon \check{C}^k(\mathfrak{U},G) \to \check{C}^{k+1}(\mathfrak{U},G) \right)
\]
The subgroup $B^k(\mathfrak{U},G) \subset \check{C}^k(\mathfrak{U},G)$ of \emph{\v{C}ech coboundaries} is 
\[
B^k(\mathfrak{U},G) = \mathrm{im}\left(\delta\colon \check{C}^{k-1}(\mathfrak{U},G) \to \check{C}^{k}(\mathfrak{U},G) \right)
\]
The $k$-th \v{C}ech cohomology group of $\mathfrak{U}$ with coefficients in $G$ is the quotient
\[\check{H}^k(\mathfrak{U},G) = Z^k(\mathfrak{U},G) \big/ B^k(\mathfrak{U},G). 
\]
\end{defn}
The importance of \v{C}ech coohomology comes (partly) from the following theorem (see \cite{}): 
\begin{thm}
Let $\mathfrak{U}$ be a good cover of $M$. Then 
$$\check{H}^k(\mathfrak{U},G) \cong H^k_{sing}(M,G).$$
\end{thm}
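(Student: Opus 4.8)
The plan is to realise both cohomology groups as the cohomology of one and the same double complex, following the ``tic-tac-toe'' argument of \cite{Bott1995}. Write $\mathcal{S}^q(U) = \Hom_{\Z}(S_q(U),G)$ for the group of singular $q$-cochains of an open set $U$ with coefficients in $G$; restriction of cochains makes $U \mapsto \mathcal{S}^q(U)$ a presheaf on $M$. I would form the first-quadrant double complex
\[
K^{p,q} := \check{C}^p(\mathfrak{U},\mathcal{S}^q) = \prod_{\alpha_0 < \cdots < \alpha_p} \mathcal{S}^q(U_{\alpha_0 \cdots \alpha_p}),
\]
with horizontal differential the \v{C}ech coboundary $\delta$ and vertical differential a sign-twisted copy of the singular coboundary $d$, chosen so that $\delta$ and $d$ anticommute; let $(K^\bullet,D)$, $D = \delta + d$, be the associated total complex. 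Since for each fixed total degree only finitely many bidegrees contribute, both spectral sequences of $K^{\bullet,\bullet}$ (equivalently, both halves of a direct diagram chase) converge to $H^\bullet(K^\bullet,D)$.

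First I would compute $H^\bullet(K^\bullet,D)$ by taking vertical ($d$) cohomology first. For fixed $p$, the complex $K^{p,\bullet}$ is a product over $\alpha_0<\cdots<\alpha_p$ of the singular cochain complexes of the sets $U_{\alpha_0\cdots\alpha_p}$; because $\mathfrak{U}$ is \emph{good}, these sets are contractible, so this vertical cohomology is $\prod_{\alpha_0<\cdots<\alpha_p} G = \check{C}^p(\mathfrak{U},G)$ in degree $q=0$ and vanishes for $q>0$. The surviving horizontal differential is exactly the \v{C}ech coboundary, so the spectral sequence degenerates and $H^k(K^\bullet,D) \cong \check{H}^k(\mathfrak{U},G)$.

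Next I would compute $H^\bullet(K^\bullet,D)$ the other way, taking horizontal ($\delta$) cohomology first. This is where care is needed: a general singular simplex of $M$ need not lie in any $U_\alpha$, so I would first pass to the subcomplex $S^{\mathfrak{U}}_\bullet(M) \subseteq S_\bullet(M)$ generated by $\mathfrak{U}$-small simplices, for which the inclusion $S^{\mathfrak{U}}_\bullet(M) \hookrightarrow S_\bullet(M)$ is a chain homotopy equivalence by the small-simplices (barycentric subdivision) theorem; applying $\Hom_\Z(-,G)$ then gives $H^\bullet(\Hom_\Z(S^{\mathfrak{U}}_\bullet(M),G)) \cong H^\bullet_{\mathrm{sing}}(M;G)$. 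Choosing for each $\mathfrak{U}$-small simplex $\sigma$ an index $\nu(\sigma)$ with $\mathrm{im}(\sigma)\subseteq U_{\nu(\sigma)}$ plays the role of a partition of unity and produces a contracting homotopy for the augmented row
\[
0 \to \Hom_\Z(S^{\mathfrak{U}}_q(M),G) \to \check{C}^0(\mathfrak{U},\mathcal{S}^q) \xrightarrow{\delta} \check{C}^1(\mathfrak{U},\mathcal{S}^q) \xrightarrow{\delta} \cdots ,
\]
so the horizontal cohomology of $K^{\bullet,q}$ is $\Hom_\Z(S^{\mathfrak{U}}_q(M),G)$ in column $p=0$ and vanishes for $p>0$. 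The remaining vertical differential computes $H^q_{\mathrm{sing}}(M;G)$, the spectral sequence degenerates, and $H^k(K^\bullet,D) \cong H^k_{\mathrm{sing}}(M;G)$. Comparing the two computations yields $\check{H}^k(\mathfrak{U},G)\cong H^k_{\mathrm{sing}}(M;G)$.

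The main obstacle is precisely the row-exactness used in the second computation: unlike the de Rham situation, where an honest smooth partition of unity is available, here one must first replace singular chains by $\mathfrak{U}$-small chains (invoking subdivision), and then check that the indexing map $\nu$ really defines a chain homotopy, along with verifying that all sign conventions in the double complex are consistent. An alternative route that avoids spectral sequences is induction on the cardinality of a finite good cover, comparing the Mayer--Vietoris sequences in \v{C}ech and in singular cohomology via the five lemma, and then passing to the infinite case by a direct-limit argument; the analytic heart — the small-simplices theorem — is the same in both approaches.
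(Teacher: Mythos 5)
The notes do not actually prove this theorem: the text reads ``see \cite{Bott1995}'' (the citation is in fact left empty in the source) and moves on, so there is no in-paper argument to compare yours against. Your double-complex (``tic-tac-toe'') argument is the standard proof and is correct in outline. You compute the total cohomology of $\check{C}^p(\mathfrak{U},\mathcal{S}^q)$ in two ways, and you correctly identify the two points where the singular theory differs from the de Rham case and where the work actually lies: (a) the rows are only exact after replacing $S_\bullet(M)$ by the $\mathfrak{U}$-small subcomplex $S^{\mathfrak{U}}_\bullet(M)$, with the contraction built from a choice of index $\nu(\sigma)$ for each small simplex (note that the homotopy is most naturally written on the chain-level complex $\bigoplus_{\alpha_0<\cdots<\alpha_p} S_q(U_{\alpha_0\cdots\alpha_p})$ and then dualized; since $\Hom_\Z(\bigoplus,G)=\prod\Hom_\Z(-,G)$ and duals of chain homotopies are chain homotopies, this is harmless but worth saying); and (b) the small-simplices theorem must be invoked as a chain \emph{homotopy equivalence}, not merely a quasi-isomorphism, so that it survives $\Hom_\Z(-,G)$ --- you state it this way, which is the right formulation. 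The column computation using contractibility of the $U_{\alpha_0\cdots\alpha_p}$ is exactly where the goodness of the cover enters, as it should. Two cosmetic points: you index cochains by strictly increasing tuples while the definition in these notes uses all tuples with nonempty intersection, so strictly speaking you should either add the standard remark that the alternating and full \v{C}ech complexes are quasi-isomorphic, or work with the full complex throughout; and the alternative route you sketch (Mayer--Vietoris induction on a finite good cover plus a direct-limit argument) is also standard and would be the more elementary option if one wants to avoid spectral sequences, though it requires finiteness hypotheses that the double-complex argument does not.
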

In particular, this implies that the \v{C}ech cohomology of good covers does not depend on the cover. 
\begin{rem}
The precise definition of the object on the right is not important right now (and one can even take the theorem as a definition). However, it is good to know that there are plenty of ways in algebraic topology to compute the group on the right hand side. 
\end{rem}
\subsubsection{The first Stiefel-Whitney class and orientability}
Let us study an easy but important example. Let $G = \Z_2 = (\{\pm 1\}, \cdot)$. Let $M$ be a Riemannian manifold and $\mathfrak{U}$ a good trivializing cover for the bundle of orthonormal frames $\mathrm{oFr}(M)$, with transition functions $g_{\alpha\beta}\colon U_{\alpha\beta} \to SO(n)$. Define\footnote{One can also drop the Riemannian metric and instead work with $c_{\alpha\beta} = \mathrm{sign} \det g_{\alpha\beta}$, but this is equivalent and heavier on notation.} $$c_{\alpha\beta} := \det g_{\alpha\beta} = \pm +1 \in \Z_2.$$ 
Since $c_{\alpha\beta} \colon U_{\alpha\beta} \to \Z_2$ is continuous and $U_{\alpha\beta}$ is contractible, $c_{\alpha\beta}$ is constant and hence defines a \v{C}ech 1-cochain $c \in \check{C}^k(\mathfrak{U},G)$. We claim that $c$ actually defines a \v{C}ech 1-cocycle. To see this, we simply compute 
\begin{align*}
(\delta c)_{\alpha\beta\gamma} &= c_{\beta\gamma}c^{-1}_{\alpha\gamma}c_{\alpha\beta}  \\
&= \det g_{\beta\gamma} \det g_{\gamma\alpha}\det g_\alpha\beta \\
&= \det (g_{\beta\gamma}\underbrace{g_{\gamma\alpha}g_{\alpha\beta}}_{g_{\gamma\beta}} ) = 1.  
\end{align*}
\begin{defn} The cohomology class $[c]:=w_1(M) \in H^1(M,\Z_2)$ defined by $c$ is called the \emph{first Stiefel-Whitney class} of $M$.
\end{defn}
\begin{thm}
The first Stiefel-Whitney class vanishes if and only if $M$ is orientable.
\end{thm}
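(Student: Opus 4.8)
The plan is to read the cocycle $c$ as the obstruction to reducing the structure group of the orthonormal frame bundle $\mathrm{oFr}(M)$ from $O(n)$ down to $SO(n)$, and to use the fact (already implicit in the discussion of $G$-structures) that such a reduction is precisely the datum of an orientation of $M$. Accordingly I would split the proof into three steps: (1) compute how the cochain $c$ transforms under a change of trivialization, which simultaneously shows that $w_1(M)$ is well defined and supplies the bookkeeping for both implications; (2) the easy direction, orientable $\Rightarrow w_1(M)=0$; (3) the converse, $w_1(M)=0\Rightarrow M$ orientable.

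For step (1) I would fix the good cover $\mathfrak U$ and compare two systems of trivializations of $\mathrm{oFr}(M)$ related by $\psi'_\alpha=h_\alpha\psi_\alpha$ with $h_\alpha\colon U_\alpha\to O(n)$. The new transition functions are then $g'_{\alpha\beta}=h_\beta g_{\alpha\beta}h_\alpha^{-1}$, and since each $U_\alpha$ is contractible the sign $\epsilon_\alpha:=\det h_\alpha\in\Z_2$ is constant, so the associated cochains satisfy $c'_{\alpha\beta}=\epsilon_\beta\,c_{\alpha\beta}\,\epsilon_\alpha^{-1}=(\delta\epsilon)_{\alpha\beta}\,c_{\alpha\beta}$. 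Hence $[c']=[c]$ in $\check H^1(\mathfrak U,\Z_2)$, and in particular $w_1(M)$ does not depend on the chosen trivialization.

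For step (2): a choice of orientation lets me arrange every $\psi_\alpha$ to be orientation-preserving on its (connected) patch --- composing with a fixed reflection when necessary --- after which all $g_{\alpha\beta}$ lie in $SO(n)$ and $c_{\alpha\beta}=\det g_{\alpha\beta}\equiv 1$, so $w_1(M)=[c]=0$. For step (3), from $[c]=0$ I obtain $\epsilon\in\check C^0(\mathfrak U,\Z_2)$ with $\det g_{\alpha\beta}=\epsilon_\beta\epsilon_\alpha^{-1}$; taking $h_\alpha\colon U_\alpha\to O(n)$ to be the constant map equal to $\mathrm{id}$ when $\epsilon_\alpha=+1$ and equal to a fixed reflection when $\epsilon_\alpha=-1$ (so that $\det h_\alpha=\epsilon_\alpha$), the transformation rule of step (1) gives $\det g'_{\alpha\beta}=\epsilon_\beta(\epsilon_\beta\epsilon_\alpha^{-1})\epsilon_\alpha^{-1}=1$, i.e. the modified trivializations $\psi'_\alpha$ reduce $\mathrm{oFr}(M)$ to $SO(n)$; declaring these $\psi'_\alpha$ to be orientation-preserving is consistent on overlaps and hence defines an orientation of $M$.

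I do not expect a serious obstacle here. The only points that need care are getting the variance right in $g'_{\alpha\beta}=h_\beta g_{\alpha\beta}h_\alpha^{-1}$ and the resulting identity $c'=(\delta\epsilon)\,c$, the observation that $\det h_\alpha$ is forced to be locally constant on the contractible patches, and the conceptual input that a reduction of the structure group to $SO(n)$ is the same thing as an orientation; once these are in place, both implications follow directly from the computation of step (1).
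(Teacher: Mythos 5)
Your proposal is correct and follows essentially the same route as the notes: the easy direction via an oriented orthonormal trivializing cover, and the converse by writing $\det g_{\alpha\beta}=\epsilon_\beta\epsilon_\alpha^{-1}$ and post-composing each trivialization with a reflection when $\epsilon_\alpha=-1$ so that the new transition functions land in $SO(n)$. Your step (1), checking invariance of $[c]$ under change of trivialization, is a worthwhile addition that the notes only gesture at.
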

\begin{proof}
Notice that we can compute \v{C}ech cocyles with respect to any good open cover. Suppose $M$ is orientable, and pick an orientation. Then we can find a good open trivializing cover for the principal $SO(n)$- bundle of oriented orthonormal frames. Computing the first Stiefel-Whitney class in this open cover we see it is trivial, since all transition functions have determinant 1. \\
Conversely, suppose the first Stiefel-Whitney class vanishes. Pick a Riemannian metric $g$ on $M$ and a good open trivializing cover for the orthonormal frame bundle. If the first Stiefel-Whitney class $w_1(M)$, we know that the \v{C}ech cocycle $c$ that computes it is a \v{C}ech coboundary: $c_{\alpha\beta} = (\delta s)_{\alpha\beta} = c_\beta c_{\alpha}^{-1}$. Now, we redefine the trivializations $\psi_{\alpha} \colon \pi^{-1}(U_\alpha) \to U_\alpha \times \R^n$ by post-composing with the map $c_\alpha \colon \R^n \to \R^n, (x^1,\ldots,x^n) \to (c_\alpha x^1, \ldots , x^n)$: $\psi'_\alpha = c_\alpha \circ \psi_\alpha$. Then the new transition functions $g'_{\alpha\beta}$ satisfy 
$$\det g'_{\alpha\beta} = \det c_\beta \det g_{\alpha\beta} \det_{c_\alpha^{-1}} = \det g_{\alpha\beta}^2 = 1,$$
and we conclude that $M$ is orientable since it admits a system of transition functions with determinant 1. 
\end{proof}
After this warm-up, let us return to the question of spin structures. 
\subsubsection{The second Stiefel-Whitney class and spin structures}
Let $(M,g)$ be a Riemannian manifold and let $\mathfrak{U}$ be a good trivializing cover for $\mathrm{soFr}(M)$, with transition functions $g_{\alpha\beta} \colon U_{\alpha\beta} \to SO(n)$. Find a lift $\tilde{g}_{\alpha\beta} \colon U_{\alpha\beta}\to \Spin_n$ such that $\tilde{g}_{\alpha\alpha}$ and $\tilde{g}_{\alpha\beta} = \tilde{g}_{\beta\alpha}^{-1}$. Define 
\begin{equation}
\varepsilon_{\gamma\beta\alpha} = \tilde{g}_{\gamma\alpha}\tilde{g}_{\beta\gamma}\tilde{g}_{\alpha\beta}
\end{equation}
We want to show that this assignment gives rise to a \v{C}ech 2-cocycle with values in $\Z_2$ associated to $\mathfrak{U}$ whose cohomology class is independent of the lift. This is done through the following series of claims.
\begin{clm}
For all $\alpha,\beta,\gamma$ we have
$$\varepsilon_{\gamma\beta\alpha} \in \ker\rho\cong\Z_2$$
where $\rho\colon \Spin_n\to SO(n)$ denotes the adjoint representation. 
\end{clm}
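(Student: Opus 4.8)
The proof is a one-line computation: apply the covering homomorphism $\rho$ to the product defining $\varepsilon_{\gamma\beta\alpha}$, use that $\rho$ is a group homomorphism, and collapse the result with the cocycle identities \eqref{eqs:gluingMapsPrincipal} satisfied by the transition functions $g_{\alpha\beta}$.

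Concretely, first I would recall that the lifts were chosen so that $\rho\circ\tilde g_{\alpha\beta} = g_{\alpha\beta}$ on $U_{\alpha\beta}$, and that $\rho\colon\Spin_n\to SO(n)$ (the adjoint representation of \eqref{eq:def_adjoint_rep}) is a homomorphism of groups. Hence, at any point $u\in U_{\alpha\beta\gamma}$,
\[
\rho(\varepsilon_{\gamma\beta\alpha}) = \rho(\tilde g_{\gamma\alpha})\,\rho(\tilde g_{\beta\gamma})\,\rho(\tilde g_{\alpha\beta}) = g_{\gamma\alpha}\,g_{\beta\gamma}\,g_{\alpha\beta}.
\]
Next I would simplify the right-hand side using \eqref{eqs:gluingMapsPrincipal}: the cocycle relation gives $g_{\beta\gamma}g_{\alpha\beta} = g_{\alpha\gamma}$, and then $g_{\gamma\alpha}g_{\alpha\gamma} = g_{\gamma\alpha}g_{\gamma\alpha}^{-1} = 1_{SO(n)}$ since $g_{\gamma\alpha} = g_{\alpha\gamma}^{-1}$. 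Therefore $\rho(\varepsilon_{\gamma\beta\alpha}) = 1$, i.e. $\varepsilon_{\gamma\beta\alpha}\in\ker\rho$ at every point. Finally, by the exact sequence \eqref{eq:Spin_SES} of Theorem \ref{thm:exactsequence} we have $\ker\rho\cong\Z_2$, which is exactly the assertion. I would also remark that $u\mapsto\varepsilon_{\gamma\beta\alpha}(u)$ is continuous into the discrete group $\Z_2$ and $U_{\alpha\beta\gamma}$ is connected (in fact contractible, as $\mathfrak{U}$ is good), so $\varepsilon_{\gamma\beta\alpha}$ is locally constant — this is what makes it a legitimate \v{C}ech cochain for the subsequent claims.

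There is no genuine obstacle here; the only thing needing minor care is bookkeeping with the index convention in \eqref{eqs:gluingMapsPrincipal} — in this text the cocycle relation reads $g_{\alpha\gamma} = g_{\beta\gamma}g_{\alpha\beta}$, with transition maps acting on the left — so that the three factors of $\varepsilon_{\gamma\beta\alpha}$ telescope in the correct order.
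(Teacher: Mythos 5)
Your proof is correct and is essentially identical to the paper's: apply the homomorphism $\rho$ to the defining product, use $\rho(\tilde g_{\alpha\beta}) = g_{\alpha\beta}$, and collapse via the cocycle identity for the $SO(n)$ transition functions. Your extra remark on local constancy of $\varepsilon_{\gamma\beta\alpha}$ is a sensible addition but not part of the paper's argument for this claim.
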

\begin{proof}
Simply apply $\rho$ to $\varepsilon$ and use the cocycle condition for $\mathrm{soFr}(M)$: 
\begin{align*}\rho(\varepsilon_{\gamma\beta\alpha}) &= \rho(\tilde{g}_{\gamma\alpha}\tilde{g}_{\beta\gamma}\tilde{g}_{\alpha\beta} ) \\
&= \rho(\tilde{g}_{\gamma\alpha})\rho(\tilde{g}_{\beta\gamma})\rho(\tilde{g}_{\alpha\beta}) \\
&= g_{\gamma\alpha}g_{\beta\gamma}g_{\alpha\beta} = 1. 
\end{align*}
\end{proof}
\begin{clm}
$\varepsilon \in \check{C}^k(\mathfrak{U},\Z_2)$ defines a \v{C}ech 2-cocycle.
\end{clm}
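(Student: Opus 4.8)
\emph{Proof proposal.}
The plan is to deduce the cocycle identity $\delta\varepsilon=0$ purely from associativity of multiplication in $\Spin_n$, using that $\varepsilon$ takes values in the \emph{center} of $\Spin_n$. First I would rewrite the defining equation in a more usable form. Since $\tilde g_{\gamma\alpha}\tilde g_{\alpha\gamma}=\tilde g_{\gamma\alpha}\tilde g_{\gamma\alpha}^{-1}=1$, multiplying $\varepsilon_{\gamma\beta\alpha}=\tilde g_{\gamma\alpha}\tilde g_{\beta\gamma}\tilde g_{\alpha\beta}$ on the left by $\tilde g_{\alpha\gamma}$ gives the key relation
\[
\tilde g_{\beta\gamma}\,\tilde g_{\alpha\beta}=\tilde g_{\alpha\gamma}\,\varepsilon_{\gamma\beta\alpha}.
\]
By the previous claim $\varepsilon_{\gamma\beta\alpha}\in\ker\rho\cong\Z_2$, and $\ker\rho$ is generated by $-1$, which is a scalar in $\Cl_n$ and hence a central element of $\Spin_n$; so in every manipulation below the $\varepsilon$'s may be commuted freely past products of $\tilde g$'s.

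Next I would fix four indices $\alpha,\beta,\gamma,\delta$ with $U_{\alpha\beta\gamma\delta}\neq\varnothing$ and expand the triple product $\tilde g_{\gamma\delta}\,\tilde g_{\beta\gamma}\,\tilde g_{\alpha\beta}$ in the two ways permitted by associativity, applying the relation above twice each time and pulling the central factors to the left. Bracketing as $(\tilde g_{\gamma\delta}\tilde g_{\beta\gamma})\tilde g_{\alpha\beta}$ produces $\varepsilon_{\delta\gamma\beta}\,\varepsilon_{\delta\beta\alpha}\,\tilde g_{\alpha\delta}$, while bracketing as $\tilde g_{\gamma\delta}(\tilde g_{\beta\gamma}\tilde g_{\alpha\beta})$ produces $\varepsilon_{\gamma\beta\alpha}\,\varepsilon_{\delta\gamma\alpha}\,\tilde g_{\alpha\delta}$. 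Equating the two and cancelling the common $\tilde g_{\alpha\delta}$ leaves the identity
\[
\varepsilon_{\delta\gamma\beta}\,\varepsilon_{\delta\beta\alpha}=\varepsilon_{\gamma\beta\alpha}\,\varepsilon_{\delta\gamma\alpha}\quad\text{in }\Z_2 .
\]
Along the way one observes that $\varepsilon$, being $\Z_2$-valued, is symmetric in its three indices (its product form is central, hence invariant under cyclic permutation, and it equals its own inverse), so the ordering conventions cause no trouble. Rewriting the displayed identity additively and comparing with the formula for the \v{C}ech coboundary $\delta\colon\check C^2(\mathfrak U,\Z_2)\to\check C^3(\mathfrak U,\Z_2)$ shows it is precisely $(\delta\varepsilon)_{\alpha\beta\gamma\delta}=0$, which is the claim.

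I do not expect a genuine obstacle here: there is no analytic or topological input, and the whole content is that associativity in $\Spin_n$ together with centrality of $\ker\rho$ forces the ``projective cocycle'' $\varepsilon$ to be an honest $\Z_2$-valued \v{C}ech $2$-cocycle. The only thing requiring care is bookkeeping --- keeping track of which of the two bracketings yields which pair of $\varepsilon$-factors, and matching the resulting four-term relation to the alternating-sum definition of $\delta$. (The independence of the class $[\varepsilon]$ from the chosen lifts $\tilde g_{\alpha\beta}$ is a separate, equally formal verification, which is the content of the following claim.)
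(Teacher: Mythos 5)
Your proof is correct. The key relation $\tilde g_{\beta\gamma}\tilde g_{\alpha\beta}=\varepsilon_{\gamma\beta\alpha}\,\tilde g_{\alpha\gamma}$ is just the statement that the lifts satisfy the $SO(n)$-cocycle condition up to a central error term, and comparing the two bracketings of $\tilde g_{\gamma\delta}\tilde g_{\beta\gamma}\tilde g_{\alpha\beta}$ does yield exactly $\varepsilon_{\delta\gamma\beta}\varepsilon_{\delta\beta\alpha}=\varepsilon_{\gamma\beta\alpha}\varepsilon_{\delta\gamma\alpha}$, which in $\Z_2$ (where every element is its own inverse) is the vanishing of $(\delta\varepsilon)_{\delta\gamma\beta\alpha}$. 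Your route differs in organization from the one in the notes: there the four factors of $(\delta\varepsilon)_{\delta\gamma\beta\alpha}$ are written out as products of twelve $\tilde g$'s, and centrality of the $\varepsilon$'s together with $\tilde g_{\alpha\beta}\tilde g_{\beta\alpha}=1$ is used to shuffle and cancel everything directly. Your version is the classical factor-set argument from the theory of group extensions: associativity of the group law plus centrality of $\ker\rho$ forces the obstruction cochain to be a cocycle. It is conceptually cleaner and makes transparent \emph{why} the identity holds (and generalizes verbatim to lifting problems along any central extension), at the cost of the small bookkeeping points you correctly flag --- that $\varepsilon$ is symmetric in its indices (cyclic invariance of a central product, plus $\varepsilon=\varepsilon^{-1}$), so the ordering conventions in the coboundary formula are harmless. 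The notes' computation buys nothing conceptually but requires no auxiliary observations. Both rest on the same two inputs: the previous claim that $\varepsilon$ is central, and the group axioms in $\Spin_n$.
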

\begin{proof}
The proof is a straightforward computation with some smart tricks: 
\begin{align*}
(\delta \varepsilon)_{\delta\gamma\beta\alpha} &=  \varepsilon_{\gamma\beta\alpha}\varepsilon^{-1}_{\delta\beta\gamma}\varepsilon_{\delta\gamma\alpha}\varepsilon_{\delta\gamma\beta}^{-1} \\
&= \tilde{g}_{\gamma\alpha}\tilde{g}_{\beta\gamma}\tilde{g}_{\alpha\beta}(\tilde{g}_{\delta\alpha}\tilde{g}_{\beta\delta}\tilde{g}_{\alpha\beta})^{-1}\tilde{g}_{\delta\alpha}\tilde{g}_{\gamma\delta}\tilde{g}_{\alpha\gamma}(\tilde{g}_{\delta\beta}\tilde{g}_{\gamma\delta}\tilde{g}_{\beta\gamma})^{-1} \\
&= \tilde{g}_{\gamma\alpha}\tilde{g}_{\beta\gamma}\underbrace{\tilde{g}_{\alpha\beta}(\tilde{g}_{\beta\alpha}}_{=1}\tilde{g}_{\delta\beta}\underbrace{\tilde{g}_{\alpha\delta}) \tilde{g}_{\delta\alpha}}_{=1}\tilde{g}_{\gamma\delta}\tilde{g}_{\alpha\gamma}(\tilde{g}_{\gamma\beta}\tilde{g}_{\delta\gamma}\tilde{g}_{\beta\delta}) \\
&= \tilde{g}_{\gamma\alpha}\underbrace{\tilde{g}_{\beta\gamma}\tilde{g}_{\delta\beta}\tilde{g}_{\gamma\delta}}_{=\varepsilon_{\beta\delta\gamma}\in \{\pm 1\} }\tilde{g}_{\alpha\gamma}(\tilde{g}_{\gamma\beta}\tilde{g}_{\delta\gamma}\tilde{g}_{\beta\delta}) \\
&=  \underbrace{\tilde{g}_{\gamma\alpha}\tilde{g}_{\alpha\gamma}}_{=1}(\tilde{g}_{\gamma\beta}\varepsilon_{\beta\delta\gamma}\tilde{g}_{\delta\gamma}\tilde{g}_{\beta\delta}) \\
&= \tilde{g}_{\gamma\beta}\tilde{g}_{\beta\gamma}\tilde{g}_{\delta\beta}\tilde{g}_{\gamma\delta}\tilde{g}_{\delta\gamma}\tilde{g}_{\beta\delta} = 1.
\end{align*}
\end{proof}
\begin{clm}
The \v{C}ech cohomology class of $\varepsilon$ is independent of the lift $\tilde{g}_{\alpha\beta}$. In fact, if $\tilde{g}'_{\alpha\beta}$ is another lift, then $\varepsilon' = \varepsilon \delta \kappa$, where $\kappa_{\alpha\beta} = \tilde{g}_{\alpha\beta} \tilde{g}'_{\beta\alpha}$. 
\end{clm}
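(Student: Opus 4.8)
The plan is to express the second lift in terms of the first via a $\Z_2$-valued $1$-cochain and then compute $\varepsilon'$ directly. First I would set $\kappa_{\alpha\beta} := \tilde g_{\alpha\beta}\,\tilde g'_{\beta\alpha}$ and check that this is a legitimate \v{C}ech $1$-cochain with values in $\Z_2$: applying $\rho$ gives $\rho(\kappa_{\alpha\beta}) = g_{\alpha\beta}\,g_{\beta\alpha} = 1$, so $\kappa_{\alpha\beta}\in\ker\rho\cong\Z_2$, and since $U_{\alpha\beta}$ is contractible (hence connected) while $\Z_2$ is discrete, $\kappa_{\alpha\beta}$ is constant. From $\tilde g_{\alpha\alpha} = \tilde g'_{\alpha\alpha} = 1$ we get $\kappa_{\alpha\alpha} = 1$; and from the defining relation together with $(\tilde g'_{\alpha\beta})^{-1} = \tilde g'_{\beta\alpha}$, $\tilde g_{\alpha\beta}^{-1} = \tilde g_{\beta\alpha}$, and the centrality of $\ker\rho = \{\pm 1\}$ in $\Spin_n$, one deduces $\tilde g'_{\alpha\beta} = \kappa_{\alpha\beta}\,\tilde g_{\alpha\beta}$ and then the symmetry $\kappa_{\beta\alpha} = \kappa_{\alpha\beta}$.

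Then I would substitute $\tilde g'_{\alpha\beta} = \kappa_{\alpha\beta}\,\tilde g_{\alpha\beta}$ into $\varepsilon'_{\gamma\beta\alpha} = \tilde g'_{\gamma\alpha}\,\tilde g'_{\beta\gamma}\,\tilde g'_{\alpha\beta}$ and pull the central factors to the front:
\[
\varepsilon'_{\gamma\beta\alpha} = \kappa_{\gamma\alpha}\,\kappa_{\beta\gamma}\,\kappa_{\alpha\beta}\;\tilde g_{\gamma\alpha}\,\tilde g_{\beta\gamma}\,\tilde g_{\alpha\beta} = \kappa_{\gamma\alpha}\,\kappa_{\beta\gamma}\,\kappa_{\alpha\beta}\;\varepsilon_{\gamma\beta\alpha}.
\]
It then only remains to recognize $\kappa_{\gamma\alpha}\kappa_{\beta\gamma}\kappa_{\alpha\beta}$ as $(\delta\kappa)_{\gamma\beta\alpha}$: writing out the \v{C}ech coboundary of a $1$-cochain in the multiplicative group $\Z_2$, where every element equals its inverse and the group is abelian, the formula $(\delta\kappa)_{\alpha_0\alpha_1\alpha_2} = \kappa_{\alpha_1\alpha_2}\kappa_{\alpha_0\alpha_2}^{-1}\kappa_{\alpha_0\alpha_1}$ collapses to exactly this product, the symmetry of $\kappa$ being what lets the indices match. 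Hence $\varepsilon' = \varepsilon\cdot\delta\kappa$; since $\delta\kappa \in B^2(\mathfrak{U},\Z_2)$, this yields $[\varepsilon'] = [\varepsilon]$ in $\check H^2(\mathfrak{U},\Z_2)$, which is the assertion.

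The computation is short and there is no genuine obstacle; the only thing requiring care is bookkeeping — keeping the index order of $\varepsilon$ consistent with the one used for $\delta$ on $\check C^1$, and being explicit about the two distinct places where the centrality of $\ker\rho$ in $\Spin_n$ is invoked (commuting the $\kappa$'s past the $\tilde g$'s, and establishing $\kappa_{\beta\alpha} = \kappa_{\alpha\beta}$). Combined with the two preceding claims, this shows the class $[\varepsilon] \in \check H^2(\mathfrak{U},\Z_2) \cong H^2(M,\Z_2)$ is well defined independently of all auxiliary choices of lifts; one would then remark (or leave as an exercise) that it is moreover independent of the metric, and denote it $w_2(M)$, the second Stiefel-Whitney class.
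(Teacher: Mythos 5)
Your proof is correct and is essentially the paper's argument run in the opposite direction: the paper starts from $\varepsilon_{\gamma\beta\alpha}(\delta\kappa)_{\gamma\beta\alpha}$ and uses the centrality of $\ker\rho$ to reassemble it into $\varepsilon'_{\gamma\beta\alpha}$, while you substitute $\tilde g'_{\alpha\beta}=\kappa_{\alpha\beta}\tilde g_{\alpha\beta}$ into $\varepsilon'$ and factor the central $\kappa$'s out. Your version is slightly more explicit about why $\kappa$ is a constant $\Z_2$-valued cochain and why it is symmetric, but the substance is the same.
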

\begin{proof}
Note that if $\tilde{g}'_{\alpha\beta}$ and $\tilde{g}_{\alpha\beta}$ are lifts of $g_{\alpha\beta}$, then $\kappa_{\alpha\beta} = \tilde{g}_{\alpha\beta} \tilde{g}'_{\beta\alpha}$ satisfies $\rho(\kappa_{\alpha\beta})= 1$, hence $\kappa_{\alpha\beta}$ is a \v{C}ech 1-cochain. Now, we observe that 
\begin{align*}
\varepsilon_{\gamma\beta\alpha}(\delta\kappa)_{\gamma\beta\alpha} 
 &= (\tilde{g}_{\gamma\alpha}\tilde{g}_{\beta\gamma}\tilde{g}_{\alpha\beta} )\kappa_{\beta\alpha}\kappa_{\gamma\alpha}^{-1}\kappa_{\beta\gamma}\\
&= \kappa_{\gamma\alpha}^{-1}\tilde{g}_{\gamma\alpha}\tilde{g}_{\beta\gamma}\kappa_{\gamma\beta}\tilde{g}_{\alpha\beta}\kappa_{\beta\alpha} \\
&= (\tilde{g}'_{\gamma\alpha}\tilde{g}'_{\beta\gamma}\tilde{g}'_{\alpha\beta} ) = \varepsilon'_{\gamma\beta\alpha}.
\end{align*}
\end{proof}
We conclude that the cohomology class $[\varepsilon]  \in H^2(M, \Z_2)$ defined by the \v{C}ech cocycle $\varepsilon$ is independent of the lift.
\begin{defn}
The cohomology class $w_2(M):=[\varepsilon]\in H^2(M,\Z_2)$ is called the \emph{second Stiefel-Whitney class of $M$}. 
\end{defn} 
The importance of this class comes from the following theorem. 
\begin{thm}
$M$ admits a spin structure if and only if the second Stiefel-Whitney class vanishes. 
\end{thm}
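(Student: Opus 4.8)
The plan is to prove both implications directly from the \v{C}ech $2$-cocycle $\varepsilon_{\gamma\beta\alpha}=\tilde g_{\gamma\alpha}\tilde g_{\beta\gamma}\tilde g_{\alpha\beta}$ just constructed, whose class $w_2(M)=[\varepsilon]\in H^2(M,\Z_2)$ has been shown to be independent of the chosen lift $\tilde g_{\alpha\beta}$ and — via the comparison theorem between \v{C}ech and singular cohomology of good covers — independent of the good cover. Throughout I use the description (stated in the excerpt) of a spin structure on $M$ as a system of lifts $\tilde g_{\alpha\beta}\colon U_{\alpha\beta}\to\Spin_n$ of the transition functions $g_{\alpha\beta}$ of $\mathrm{soFr}(M)$ satisfying \eqref{eq:lifts1}, \eqref{eq:lifts2}, \eqref{eq:lifts3}, together with the proposition that reconstructs a principal bundle from gluing data obeying \eqref{eqs:gluingMapsPrincipal}; I also use that $\Z_2=\ker\rho=\{\pm1\}$ is central in $\Spin_n$, since $-1$ is central in $\Cl_n$.

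For the direction ``spin structure $\Rightarrow w_2(M)=0$'': given a spin structure, after possibly passing to a good cover refining both $\mathfrak U$ and a trivializing cover of the spin bundle, we obtain lifts $\tilde g_{\alpha\beta}$ satisfying all of \eqref{eq:lifts1}--\eqref{eq:lifts3}. For this particular lift $\varepsilon_{\gamma\beta\alpha}=\tilde g_{\gamma\alpha}\tilde g_{\beta\gamma}\tilde g_{\alpha\beta}=\tilde g_{\gamma\alpha}\tilde g_{\alpha\gamma}=1$, using \eqref{eq:lifts3} for the middle equality and \eqref{eq:lifts2} for the last. Hence $w_2(M)=[\varepsilon]=0$.

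For the converse ``$w_2(M)=0\Rightarrow$ spin structure'': start from any lift $\tilde g_{\alpha\beta}$ with $\tilde g_{\alpha\alpha}=1$ and $\tilde g_{\alpha\beta}=\tilde g_{\beta\alpha}^{-1}$, which exists because each $U_{\alpha\beta}$ is contractible, hence simply connected, so the covering $\rho\colon\Spin_n\to SO(n)$ admits continuous local sections over $U_{\alpha\beta}$. Since $[\varepsilon]=0$ we may write $\varepsilon=\delta\kappa$ for a \v{C}ech $1$-cochain $\kappa$ valued in $\Z_2$; choosing $\kappa$ symmetric and normalized (the normalization being automatic), set $\tilde g'_{\alpha\beta}:=\kappa_{\alpha\beta}\,\tilde g_{\alpha\beta}$. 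Because $\kappa_{\alpha\beta}\in\ker\rho$, the $\tilde g'_{\alpha\beta}$ are again lifts of the $g_{\alpha\beta}$; because $\Z_2$ is central, the normalization and symmetry of $\kappa$ give \eqref{eq:lifts1} and \eqref{eq:lifts2} for $\tilde g'$, and pulling the central factors $\kappa$ out of the triple product (exactly as in the computation of the last Claim above) yields $\varepsilon'_{\gamma\beta\alpha}=(\delta\kappa)_{\gamma\beta\alpha}\,\varepsilon_{\gamma\beta\alpha}=\varepsilon_{\gamma\beta\alpha}^{2}=1$, i.e. \eqref{eq:lifts3} holds for $\tilde g'$. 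By the reconstruction proposition the $\tilde g'_{\alpha\beta}$ are the transition functions of a principal $\Spin_n$-bundle $P$, and $\rho(\tilde g'_{\alpha\beta})=g_{\alpha\beta}$ gives $P\times_\rho\R^n\cong(\mathfrak U,g_{\alpha\beta})=TM$, so $P$ is a spin structure.

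I expect the only genuinely delicate step to be the middle part of the converse: arranging a primitive $\kappa$ of $\varepsilon$ to be symmetric (and normalized) so that twisting by it preserves \eqref{eq:lifts1}--\eqref{eq:lifts2} while cancelling $\varepsilon$; everything else is routine $\delta$-bookkeeping together with centrality of $\Z_2$. One could alternatively package the whole argument as the assertion that, in the nonabelian \v{C}ech exact sequence attached to $1\to\Z_2\to\Spin_n\to SO(n)\to1$, the connecting map sends the isomorphism class of $\mathrm{soFr}(M)$ to $w_2(M)$, so the $SO(n)$-bundle lifts to a $\Spin_n$-bundle iff $w_2(M)=0$; but the hands-on argument above avoids setting up nonabelian \v{C}ech cohomology. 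Finally, I would append a remark that, when spin structures exist, the residual freedom in the choice of $\kappa$ — an element of $Z^1(\mathfrak U,\Z_2)$ modulo coboundaries producing isomorphic bundles — exhibits the set of spin structures as a torsor over $H^1(M,\Z_2)$.
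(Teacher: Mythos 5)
Your proof is correct, and it actually does more than the text: the notes only prove the easy direction (spin structure $\Rightarrow$ $\varepsilon\equiv 1$, hence $w_2(M)=0$, exactly as in your second paragraph) and refer to the literature for the converse. Your converse argument is the standard cocycle-twisting proof and it works: starting from any lift with \eqref{eq:lifts1}--\eqref{eq:lifts2}, writing $\varepsilon=\delta\kappa$, and replacing $\tilde g_{\alpha\beta}$ by $\kappa_{\alpha\beta}\tilde g_{\alpha\beta}$ kills the obstruction because $\kappa$ is central and $\varepsilon'=(\delta\kappa)\varepsilon=\varepsilon^2=1$ in $\Z_2$; the reconstruction proposition for principal bundles then produces the $\Spin_n$-bundle, and $\rho(\tilde g'_{\alpha\beta})=g_{\alpha\beta}$ identifies the associated bundle with $TM$. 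The one step you flag as delicate is in fact automatic with the paper's conventions: since cochains are defined on \emph{all} tuples with nonempty intersection (degenerate ones included) and $\varepsilon$ is normalized ($\varepsilon_{\alpha\beta\alpha}=\tilde g_{\alpha\alpha}\tilde g_{\beta\alpha}\tilde g_{\alpha\beta}=1$, etc.), evaluating $\delta\kappa=\varepsilon$ on the triples $(\alpha,\alpha,\alpha)$ and $(\alpha,\beta,\alpha)$ forces $\kappa_{\alpha\alpha}=1$ and $\kappa_{\alpha\beta}\kappa_{\beta\alpha}=1$, i.e.\ $\kappa_{\beta\alpha}=\kappa_{\alpha\beta}$ in $\Z_2$, so any primitive is already normalized and symmetric. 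Your closing remarks (the nonabelian connecting-map packaging, and the $H^1(M,\Z_2)$-torsor structure) correctly anticipate the classification theorem stated immediately afterwards in the notes.
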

\begin{proof}
Suppose $M$ admits a spin structure. Then there exists a lift $\tilde{g}_{\alpha\beta}$ satisfying the cocycle condition. In that case $\varepsilon_{\gamma\beta\alpha} \equiv 1$. \\
For the other direction, we refer to the literature (e.g. \cite{Taubes2011}. 
\end{proof}
We also briefly discuss the classification of spin structures. An isomorphism of spin structures is an isomorphism $T = \{T_\alpha\}$ of principal $\Spin_n$-bundles that leaves $\rho$ invariant, i.e. the following diagram commutes: 
\[
\begin{tikzcd}
U_{\alpha}\times\mathrm{Spin}_n \arrow[rd, "\rho"'] \arrow[rr, "T_\alpha"] &                       & U_\alpha\times\mathrm{Spin}_n \arrow[ld, "\rho"] \\
                                                                           & U_\alpha \times SO(n) &                                                 
\end{tikzcd}
\]
This means that $T_{\alpha}(x) \in \ker\rho$, hence $T_\alpha(x) = \pm 1$ is constant. If the two spin structures are given by lifts $\tilde{g}_{\alpha\beta},\tilde{h}_{\alpha\beta}$, then the requirement that $T_{\alpha}$ defines a map of principle bundles is $T_\beta = \tilde{h}_{\alpha\beta}T_{\alpha}\tilde{g}_{\alpha\beta}^{-1}$ or equivalently 
\begin{equation}
h_{\alpha\beta} = T_\beta g_{\alpha\beta} T_{\alpha}^{-1} = g_{\alpha\beta}(\delta T)_{\alpha\beta}
\end{equation}
(since $T_\beta$ is in the center of $\Spin_n$). Hence, two spin structures are isomorphic if and only if they differ by a \v{C}ech 1-coboundary. 
On the other hand, assume we are given a spin structure $\tilde{g}_{\alpha\beta}$ and a \v{C}ech 1-cochain $c_{\alpha\beta} \in \check{C}^2(\mathfrak{U},\Z_2)$. Then $h_{\alpha\beta} = g_{\alpha\beta}c_{\alpha\beta}$ satisfies the cocycle condition, and hence defines a spin structure if and only if $\delta c =1$, i.e. $c$ defines a \v{C}ech 1-cocycle. These two spin structures are isomorphic if and only if $c = \delta T$ is a coboundary. Hence, we get an action of $H^1(M,\Z_2)$ on the set of spin structures, given by $c.\tilde{g} = g_{\alpha\beta}c_{\alpha\beta}$. 
\begin{thm}
The action of $H^1(M,\Z_2)$ is free and transitive. In particular, there is a non-canonical bijection 
$$H^1(M,\Z_2) \leftrightarrow \{ \text{isomorphism classes of spin structures} \}.$$
\end{thm}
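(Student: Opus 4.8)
The plan is to exhibit the set of isomorphism classes of spin structures as a quotient carrying a free transitive action of $H^1(M,\Z_2)$. Fix once and for all a good trivializing cover $\mathfrak{U}$ for $\mathrm{soFr}(M)$ together with its transition cocycle $g_{\alpha\beta}\colon U_{\alpha\beta}\to SO(n)$, and let $L$ be the set of admissible lifts, i.e. families $\tilde g=(\tilde g_{\alpha\beta})$ with $\rho\circ\tilde g_{\alpha\beta}=g_{\alpha\beta}$ satisfying \eqref{eq:lifts1}--\eqref{eq:lifts3}. By the setup above a spin structure \emph{is} an element of $L$, and $\mathcal S:=L/{\sim}$ --- where $\tilde g\sim\tilde g'$ iff $\tilde g'_{\alpha\beta}=\tilde g_{\alpha\beta}(\delta T)_{\alpha\beta}$ for some $T\in\check{C}^0(\mathfrak{U},\Z_2)$ --- is the set of isomorphism classes. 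Throughout I would work with $\check{H}^1(\mathfrak{U},\Z_2)$, which equals $H^1(M,\Z_2)$ because $\mathfrak{U}$ is good.

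First I would define the action on lifts. For $\tilde g\in L$ and a \v{C}ech $1$-cocycle $c$ valued in $\Z_2=\ker\rho\subset\Spin_n$, set $(\tilde g\cdot c)_{\alpha\beta}:=\tilde g_{\alpha\beta}c_{\alpha\beta}$. One checks $\tilde g\cdot c\in L$: it lifts $g_{\alpha\beta}$ since $c_{\alpha\beta}\in\ker\rho$; the normalizations $c_{\alpha\alpha}=1$, $c_{\alpha\beta}=c_{\beta\alpha}^{-1}$ are forced by $\delta c=1$; and the cocycle relation \eqref{eq:lifts3} for $\tilde g\cdot c$ follows from that for $\tilde g$ together with $\delta c=1$, \emph{using that the $c_{\alpha\beta}$ are central in $\Spin_n$} so they slide past the $\tilde g$'s. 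This is an action of the group $Z^1(\mathfrak{U},\Z_2)$ on $L$. It descends to an action of $\check{H}^1(\mathfrak{U},\Z_2)$ on $\mathcal S$ for two reasons: multiplication by a coboundary $\delta T$ gives, by the very definition of $\sim$, an isomorphic spin structure (so $B^1$ acts trivially on $\mathcal S$); and $\sim$ is stable under the action (if $\tilde g'=\tilde g\cdot\delta S$ then $\tilde g'\cdot c=(\tilde g\cdot c)\cdot\delta S\sim\tilde g\cdot c$, since $Z^1(\mathfrak{U},\Z_2)$ is abelian).

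Next I would establish transitivity and freeness. Given spin structures $\tilde g,\tilde h\in L$, put $c_{\alpha\beta}:=\tilde g_{\alpha\beta}^{-1}\tilde h_{\alpha\beta}$; then $\rho(c_{\alpha\beta})=1$ so $c_{\alpha\beta}\in\Z_2$, a one-line manipulation of the two cocycle relations together with centrality gives $c_{\alpha\gamma}=c_{\beta\gamma}c_{\alpha\beta}$, i.e. $c\in Z^1(\mathfrak{U},\Z_2)$, and by construction $\tilde g\cdot c=\tilde h$; hence $[c]\cdot[\tilde g]=[\tilde h]$ and the action is transitive. Being transitive, it is free as soon as one stabilizer is trivial: if $[c]\cdot[\tilde g]=[\tilde g]$ then $\tilde g_{\alpha\beta}c_{\alpha\beta}=\tilde g_{\alpha\beta}(\delta T)_{\alpha\beta}$ for some $T$, so $c=\delta T$ and $[c]=0$. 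Since $\mathcal S\neq\varnothing$ by hypothesis (equivalently $w_2(M)=0$), choosing a basepoint $[\tilde g_0]\in\mathcal S$ then gives a bijection $H^1(M,\Z_2)\to\mathcal S$, $[c]\mapsto[c]\cdot[\tilde g_0]$, which is non-canonical precisely because it depends on $[\tilde g_0]$.

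I do not expect a real obstacle: every computation is short and formal. The single point that actually does the work --- and where one must be careful --- is the centrality of $\ker\rho\cong\Z_2$ in $\Spin_n$: it is what makes the pointwise product of an admissible lift with a $\Z_2$-valued \v{C}ech cocycle admissible again, what makes the $Z^1$-action descend to a $\check{H}^1$-action on isomorphism classes, and what makes the comparison cochain $c_{\alpha\beta}=\tilde g_{\alpha\beta}^{-1}\tilde h_{\alpha\beta}$ of two spin structures genuinely a cocycle. (One should also note that restricting attention to lifts on a single fixed good cover loses no generality, since any two spin structures can be compared on a common refinement and good covers are cofinal --- but in the present framework this is already built in.)
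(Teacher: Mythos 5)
Your proof is correct, and it follows exactly the route the text sets up in the paragraph preceding the theorem (the action $\tilde g\cdot c$ on lifts over a fixed good cover, isomorphism of spin structures being multiplication by a coboundary $\delta T$, with the centrality of $\ker\rho\cong\Z_2$ in $\Spin_n$ doing all the work); the notes themselves defer the details to the literature, and your argument supplies them completely, including the transitivity cocycle $c_{\alpha\beta}=\tilde g_{\alpha\beta}^{-1}\tilde h_{\alpha\beta}$ and the stabilizer computation for freeness. No gaps.
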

For a proof, see \cite{Taubes2011} or \cite{Nicolaescu2013}.
\begin{expl}
As an example, let us consider the circle $S^1$. A good cover $\mathfrak{U} = \{U_1,U_2,U_3\}$ is given by three intervals overlapping only at the ends (see figure \ref{fig:circle}). 
\begin{figure}[h!]
\centering
\begin{tikzpicture}
\draw (0,0) circle (2cm);
\draw[red] (2.5,0) arc (0:140:2.5cm) node[pos=0.5,above,black]{$U_1$};
\draw[blue] (120:3cm) arc (120:260:3cm) node[pos=0.5,left,black]{$U_2$};
\draw[pink] (240:2.75cm) arc (240:380:2.75cm) node[pos=0.5,below right,black]{$U_3$};
\draw[line width=1mm] (120:2cm) arc (120:140:2cm) node[pos=0.5,below right,black]{$U_{12}$};
\draw[line width=1mm] (240:2cm) arc (240:260:2cm) node[pos=0.5,above,black]{$U_{23}$};
\draw[line width=1mm] (0:2cm) arc (0:20:2cm) node[pos=0.5,left,black]{$U_{13}$};
\end{tikzpicture}
\caption{Good cover of the circle}\label{fig:circle}
\end{figure}
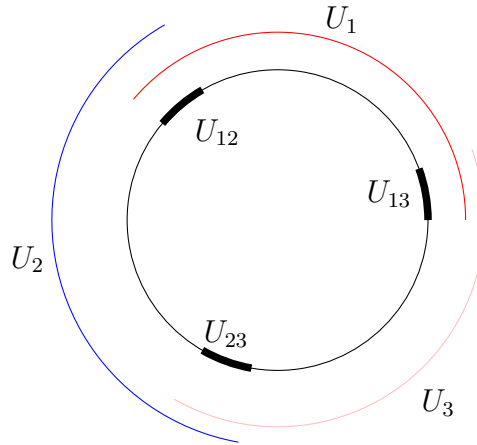 Since all triple intersections are 0, we see immediately that $H^2(S^1,G) = \{0\}$ for any abelian group $G$. In particular, $S^1$ admits spin structures. Since $S^1$ is orientable, its tangent bundle admits an $SO(1)$-structure, but $SO(1) = \{1\}$. Hence the tangent bundle is trivial and the $SO(1)$-transition functions in $\mathfrak{U}$ are $g_{12}=g_{23}=g_{13} = 1$. It follows that we can choose the trivial lifts\footnote{In this particular case, the spin structure is itself a 1-cocycle, this is of course highly peculiar to the 1-dimensional case.} $\tilde{g}_{12} = \tilde{g}_{23} = \tilde{g}_{13} = 1$.  We claim that there is exactly one other spin structure on $S^1$, given (up to isomorphism) by\footnote{This is not the $-1$ in $O(1)$ but in $\Spin_1$. However, we have $\Spin_1 \cong O(1)\cong \Z_2$. This why there are exactly two isomorphism classes of line bundles and two isomorphism classes of spin structures on the circle. } $\tilde{h}_{12} = \tilde{h}_{23} = 1$, $\tilde{h}_{13} = -1$. To see this,  note that any 1-coboundary $(\delta T)_{\alpha\beta} = T_{\beta}T_\alpha^{-1}$ can flip either zero or two signs of the three possible ones. This proves that $H^1(M,\Z_2) = \Z_2)$ with generators $\tilde{g},\tilde{h}$. 
Confusingly, $\tilde{h}$ is usually called the \emph{trivial spin structure} (since it is the one which extends to the disk). In string theory, $\tilde{h}$ is known as the Neveu-Schwarz spin-structure, while $\tilde{g}$ is known as the Ramond spin structure. 

\end{expl}
\begin{exc}
Find good covers of the 2-sphere $S^2$ (four sets are enough) and the 2-torus  $T^2 = S^1 \times S^1$ (three sets are enough). Do they admit spin structures? If so, how many? 
\end{exc}
\begin{rem}
The second Stiefel-Whitney class $w_2(M) \in H^2(M,\Z_2)$ is called the obstruction for a spin structure. This is one of the beginnings of what is known as \emph{obstruction theory}: If we know that the obstruction vanishes then we know spin structures exist. Of course, the easiest case is when $H^2(M,\Z_2) = \{0\}$: The obstruction has simply no place to exist, hence, it must vanish. However, there are many more subtle criteria as to when spin structures exist. We refer to the literature for a deeper discussion of these issues (some further results and references can be found in \cite{Nicolaescu2013}). 
\end{rem}
\subsection{Spinors}
After this long digression on existence and classification of spin structures, let us return on track. Let $(M,g)$ be an $n$-dimensional Riemannian manifold with a spin structure $P$. Recall the complex spinorial representation $\Delta_n$ of $Cl_n^c$ from Definition \ref{def:complex_spin_rep_cl}. It restricts to a representation of $\Spin_n$ via the sequence
\begin{equation}
\Spin_n \subset Cl_n \subset Cl_n^c \overset{c}{\longrightarrow} \End(\Delta_n)
\end{equation}
\begin{defn}
The \emph{spinor bundle} $S_n$ associated to $P$ is the vector associated to the prinicipal $\Spin_n$-bundle $P$ via the complex spinorial representation: 
\begin{equation}
S_n = P \times_c \Delta_n
\end{equation}
\end{defn}
\begin{rem}Recall that if $n$ is even, then $\Delta_n = \Delta_n^+ \oplus \Delta_n^-$ splits as a direct sum of irreps. This implies a splitting of the spinor bundle as $S_n = S^+_n \oplus S_n^-$. 
\end{rem}
Next, we define a bundle of Clifford algebras over $M$. 
\begin{defn}
The \emph{Clifford bundle} is the vector bundle over $M$ with typical fiber the Clifford algebra $ Cl(M)_x = Cl(T^*_xM,g_x)$.
\end{defn} Let us analyze this bundle more closely. Recall that the standard representation of $SO(n)$ embeds into algebra automorphisms of $Cl_n$: For $A \in SO(n)$, the map $\tau(A)\colon Cl_n \to Cl_n$ defined on generators $v\in\R^n$ of the Clifford algebra by $v \mapsto \tau(A)v=Av$ is an algebra homomorphism since 
$$\tau(A)(vw+wv) = Av Aw + Aw Av = -2\langle Av, Aw \rangle = -2\langle v,w\rangle.$$ Notice also that, for a Riemannian manifold $M$, if $\mathfrak{U},g_{\alpha\beta}$ is the bundle orthonormal frames, the cotangent bundle $T^*M = (TM)^*$ is given by transition maps $h_{\alpha\beta} = (g^*)_{\alpha\beta}^{-1} =g_{\alpha\beta}$, since $g_\alpha\beta \in O(n)$. 
This discussion can be summarized in the following proposition: 
\begin{prop}
The Clifford bundle is a bundle associated with the bundle of oriented orthonormal frames: 
$$Cl(M) = \mathrm{soFr(M)} \times_\tau Cl_n.$$
The transition functions act by algebra automorphisms. 
\end{prop}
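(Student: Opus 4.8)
The plan is to exhibit local trivializations of $\Cl(M)$ whose gluing cocycle is exactly $\tau(g_{\alpha\beta})$, and then to invoke the proposition that a vector bundle is determined up to isomorphism by a trivializing cover together with gluing maps satisfying \eqref{eqs:gluingmaps}.

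First I would fix a good trivializing cover $\mathfrak{U}=\{U_\alpha\}$ for the oriented orthonormal frame bundle $\mathrm{soFr}(M)$, with transition maps $g_{\alpha\beta}\colon U_{\alpha\beta}\to SO(n)$. As recalled just before the statement, $T^*M$ also trivializes over $\mathfrak{U}$, with transition maps $h_{\alpha\beta}=(g_{\alpha\beta}^*)^{-1}=g_{\alpha\beta}$, because $g_{\alpha\beta}\in O(n)$. Hence each local trivialization $\psi_\alpha\colon\restr{T^*M}{U_\alpha}\to U_\alpha\times\R^n$ restricts, fiber by fiber, to a linear isometry $\psi_{\alpha,x}\colon(T^*_xM,g_x)\to(\R^n,g_{\mathrm{std}})$.

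The key step is functoriality of the Clifford construction (the remark following the definition of $\Cl(V,g)$): an isometry of quadratic vector spaces induces, by the universal property, a unique algebra isomorphism of the associated Clifford algebras, and this assignment respects composition and identities. Applying it to $\psi_{\alpha,x}$ produces an algebra isomorphism $\Psi_{\alpha,x}\colon\Cl(T^*_xM,g_x)\to\Cl_n$, and letting $x$ vary over $U_\alpha$ assembles these into a local trivialization $\Psi_\alpha\colon\restr{\Cl(M)}{U_\alpha}\to U_\alpha\times\Cl_n$ of the Clifford bundle. (Smoothness is automatic: in the ordered-multiindex basis $\{e_I\}$ of $\Cl_n$ the maps $\Psi_\alpha$ are polynomial in the matrix entries of $\psi_\alpha$.) On an overlap $U_{\alpha\beta}$, functoriality then gives, for every $x$,
\[\Psi_{\beta,x}\circ\Psi_{\alpha,x}^{-1}=\tau\bigl(\psi_{\beta,x}\circ\psi_{\alpha,x}^{-1}\bigr)=\tau\bigl(h_{\alpha\beta}(x)\bigr)=\tau\bigl(g_{\alpha\beta}(x)\bigr),\]
since $\tau(A)$ is by definition the algebra automorphism of $\Cl_n$ extending $v\mapsto Av$ for $A\in O(n)$. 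Thus $\Cl(M)$ and $\mathrm{soFr}(M)\times_\tau\Cl_n$ carry the same cover and the same gluing maps, so they are isomorphic; and since each $\tau(g_{\alpha\beta}(x))$ is an algebra automorphism, the transition maps preserve fiberwise Clifford multiplication, which is the last assertion.

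The one point I would check with care — the rest is bookkeeping with trivializations — is that $\tau\colon O(n)\to\mathrm{Aut}(\Cl_n)$ is a well-defined group homomorphism. By the universal property, for $A\in O(n)$ the map $v\mapsto Av$ satisfies $(Av)(Av)=-g_{\mathrm{std}}(Av,Av)\,1=-g_{\mathrm{std}}(v,v)\,1$ because $A$ is an isometry, hence extends to a unique algebra endomorphism $\tau(A)$ of $\Cl_n$; then $\tau(A)\tau(A^{-1})=\tau(A^{-1})\tau(A)=\mathrm{id}$ and $\tau(AB)=\tau(A)\tau(B)$ both follow from uniqueness, so $\tau(A)\in\mathrm{Aut}(\Cl_n)$ and $\tau$ is a homomorphism. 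This is also what is needed for the collection $\tau(g_{\alpha\beta})$ to satisfy \eqref{eqs:gluingmaps}, i.e.\ for the associated bundle $\mathrm{soFr}(M)\times_\tau\Cl_n$ to be defined at all.
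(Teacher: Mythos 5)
Your proposal is correct and follows essentially the same route as the paper: the text preceding the proposition (which serves as its proof) establishes exactly that $\tau(A)$ extends isometries to algebra automorphisms of $\Cl_n$ via the Clifford relation, and that $T^*M$ has the same $O(n)$-valued transition maps as the frame bundle, so the Clifford bundle is the associated bundle with cocycle $\tau(g_{\alpha\beta})$. You simply spell out the bookkeeping (assembling the fiberwise isomorphisms into trivializations and invoking uniqueness of a bundle from its gluing data) more explicitly than the paper does.
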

We conclude that $Cl(M)$ has a well-defined Clifford multiplication (defined over trivializations by multiplication of sections) 
$Cl(M) \oplus Cl(M) \to Cl(M)$.  We now want to define the action of this algebra bundle of the spinor bundle. Notice that we did not need the spin structure to define the Clifford bundle. However, the spin structure will be essential in defining the Clifford multiplication of spinors. 
\begin{prop} 
The Clifford multiplication $\R^n \times \Delta_n \to \Delta_n, (v,s) \mapsto c(v)s$, extends to a map of sections 
\begin{align}
c \colon \Gamma(T^*M)\times S_n &\to S_n \\
(\theta,\psi) \mapsto c(\theta)\psi \notag
\end{align}
\end{prop}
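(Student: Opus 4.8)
The plan is to exhibit Clifford multiplication of sections as the bundle-level shadow of a $\Spin_n$-equivariant bilinear map between the representations defining the two bundles, and then to invoke the associated-bundle formalism. Recall that $S_n = P\times_c\Delta_n$, and that — since the transition functions $g_{\alpha\beta}$ of $\mathrm{soFr}(M)$ lie in $O(n)$, so that $T^*M$ and $TM$ have the same transition functions — the cotangent bundle is also associated to the spin structure: $T^*M\cong P\times_\rho\R^n$, where $\rho\colon\Spin_n\to SO(n)$ is the adjoint representation $\rho(x)v = xvx^{-1}$ of \ref{eq:def_adjoint_rep}.

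First I would record the equivariance of the fiberwise action $\mu\colon\R^n\times\Delta_n\to\Delta_n$, $\mu(v,s)=c(v)s$. Since $x\in\Spin_n\subset\Cl_n\subset\Cl_n^c$ and $c$ is an algebra homomorphism, we have $c(x)c(v)c(x)^{-1}=c(xvx^{-1})=c(\rho(x)v)$, hence
\begin{equation}
c(x)\bigl(\mu(v,s)\bigr) = c(\rho(x)v)\,c(x)s = \mu\bigl(\rho(x)v,\,c(x)s\bigr),
\end{equation}
so $\mu$ is $\Spin_n$-equivariant for the action $\rho\times c$ on the source and $c$ on the target.

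Next I would transfer this to bundles. Over a trivializing cover $\mathfrak U$ with spin transition functions $\tilde g_{\alpha\beta}$, a section $\theta\in\Gamma(T^*M)$ is a family $\theta_\alpha\colon U_\alpha\to\R^n$ with $\theta_\beta=\rho(\tilde g_{\alpha\beta})\theta_\alpha$, and a section $\psi\in\Gamma(S_n)$ is a family $\psi_\alpha\colon U_\alpha\to\Delta_n$ with $\psi_\beta=c(\tilde g_{\alpha\beta})\psi_\alpha$. Setting $(c(\theta)\psi)_\alpha:=\mu(\theta_\alpha,\psi_\alpha)=c(\theta_\alpha)\psi_\alpha$ pointwise on $U_\alpha$, the equivariance of $\mu$ gives
\begin{equation}
(c(\theta)\psi)_\beta = c(\rho(\tilde g_{\alpha\beta})\theta_\alpha)\,c(\tilde g_{\alpha\beta})\psi_\alpha = c(\tilde g_{\alpha\beta})\,c(\theta_\alpha)\psi_\alpha = c(\tilde g_{\alpha\beta})\,(c(\theta)\psi)_\alpha,
\end{equation}
so the local expressions patch to a global section of $S_n$; smoothness is immediate since $\mu$ is bilinear, and $C^\infty(M)$-bilinearity of $(\theta,\psi)\mapsto c(\theta)\psi$ is clear from the formula. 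Equivalently and more invariantly: an equivariant bilinear map descends to a bundle map $(P\times_\rho\R^n)\otimes(P\times_c\Delta_n)\to P\times_c\Delta_n$, and applying $\Gamma(\,\cdot\,)$ yields $c$.

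The only step needing care — the one I would flag — is the identification of the bundle that houses $\theta$: one must use the metric to identify $T^*M$ with $TM$ as $O(n)$- (hence $\Spin_n$-) associated bundles, and one must know that the $SO(n)$-representation on $\R^n$ coming from $\mathrm{soFr}(M)$ is exactly the one obtained by pushing $\rho$ through $\Spin_n\to SO(n)$ (this is the content of Proposition \ref{prop:reflections} together with the definition of $\rho$). Once this bookkeeping is in place the statement is a purely formal consequence of equivariance; there is no analytic content.
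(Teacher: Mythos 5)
Your proposal is correct and is essentially the paper's own argument: define the multiplication locally over a trivializing cover of $\mathrm{soFr}(M)$ and verify that the local expressions patch, the key identity being $c(\rho(\tilde g_{\alpha\beta})\theta_\alpha)=c(\tilde g_{\alpha\beta}\theta_\alpha\tilde g_{\alpha\beta}^{-1})=c(\tilde g_{\alpha\beta})c(\theta_\alpha)c(\tilde g_{\alpha\beta})^{-1}$, which is exactly the equivariance you record; you also correctly flag the crucial use of the spin structure ($g_{\alpha\beta}=\rho(\tilde g_{\alpha\beta})$), which the paper emphasizes as well. The only difference is presentational: you package the local check as equivariance of a bilinear map of representations before descending to bundles, which adds nothing beyond the paper's direct computation.
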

\begin{proof}
By construction, a trivializing chart $U_\alpha$ for $\mathrm{soFr(M)}$ trivializes both $T^*M$ and $S_n$. Over $U_\alpha$, a section $\theta$ of $T^*M$ is given by $\theta_\alpha \colon U_\alpha\to \R^n$, and a section $\psi$ of $S_n$ is given by $\psi_\alpha\colon U_\alpha \to \Delta_n$. We define 
$$(c(\theta)\psi)_\alpha = c(\theta_\alpha)\psi_\alpha.$$
We have to show that this is a section of $S_n$, that is, we have to show that $$(c(\theta)\psi)_\beta = c(\tilde{g}_{\alpha\beta})(c(\theta)\psi)_\alpha$$
(the transition functions for the spinor bundle are $c(\tilde{g}_{\alpha\beta}$). To see this, we compute 
\begin{align*}
(c(\theta)\psi)_\beta &= c(\theta_\beta)\psi_\beta = c(g_{\alpha\beta}\theta_\alpha)c(\tilde{g}_{\alpha\beta})\psi_\alpha \\
\text{(M is spin !!!)}&= c (\rho(\tilde{g}_{\alpha\beta})\theta_\alpha)\psi_\alpha \\ 
&= c (\tilde{g}_{\alpha\beta}\theta_\alpha\tilde{g}_{\alpha\beta}^{-1})c(\tilde{g}_{\alpha\beta})\psi_\alpha = c (\tilde{g}_{\alpha\beta}\theta_\alpha\tilde{g}_{\alpha\beta}^{-1}\tilde{g}_{\alpha\beta})\psi_\alpha \\ 
&= c(\tilde{g}_{\alpha\beta})c(\theta_\alpha)\psi_\alpha = c(\tilde{g}_{\alpha\beta})(c(\theta)\psi)_\alpha
\end{align*}
\end{proof}
In the proof, the fact that $g_{\alpha\beta} = \rho(\tilde{g}_{\alpha\beta})$ is crucial! Without the spin structure, we cannot define the Clifford multiplication. 
\subsection{Spin connections}
Let $\Omega$ be a connection (for example, the Levi-Civita connection) on the prinicipal $SO(n)$-bundle of oriented orthonormal frames, described over trivializing charts by $A_\alpha \in \Omega^1(U_\alpha,so(n))$. It gives rise to a connection $\nabla$ on $T^*M$ with the same 1-forms $A_\alpha$. The trivializing chart defines a local orthonormal frame $e_i$ of $T^*M$. In this frame, we can express $A_\alpha = \frac{1}{2}A^{ij}e_i\wedge e_j$, where $e_i \wedge e_j$ is the skew-symmetric endomorphism defined in Equation \eqref{eq:isosonwedge2}. Remember that in Equation \eqref{eq:propLieAlgIso1} we computed the explicit isomorphism 
$\rho_*\colon spin_n \to so_n$ and we had $\rho_*^{-1}(e_i \wedge e_j) = \frac{1}{4}[e_i,e_j]= \frac{1}{4}(e_ie_j-e_je_i)$. 
\begin{defn}
The connection $\widetilde{\nabla}$ on the vector bundle $S_n$ defined by the one-forms $B_{\alpha} = c(\rho_*^{-1}(A_\alpha) \in \Omega^1(U_\alpha, \End (\Delta_n)$ is called the \emph{spin connection} associated to $\nabla$. 
\end{defn}
With respect to a local orthonormal frame, the connection 1-forms $B_\alpha$ can be written as $B_\alpha = \frac{1}{8}A^{ij}c([e_i,e_j]) = \frac{1}{4}A^{ij}c(e_iej)$.
The most important fact about spin connections is that they are compatible with spinor multiplication. 
\begin{prop}\label{prop:spinconnection}
Let $\widetilde{\nabla}$ be the spin connection associated to $\nabla$. Then, for all and sections $\theta \in \Gamma(T^*M), \psi \in \Gamma(S_n)$, we have 
 \begin{equation}
 \widetilde{\nabla}(c(\theta)\psi) = c(\nabla\theta)\psi + c(\theta) \widetilde{\nabla}\psi.
 \end{equation}
\end{prop}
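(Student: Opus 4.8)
The plan is to reduce the claimed Leibniz rule to a single pointwise identity in the Clifford algebra and then read that identity off from the computation of $\rho_*$ underlying \eqref{eq:propLieAlgIso1}--\eqref{eq:propLieAlgIso2}. Since both sides of the asserted equation are locally defined sections of $T^*M\otimes S_n$, it is enough to check the equality over one trivializing chart $U_\alpha$ of $\mathrm{soFr}(M)$; recall that such a chart simultaneously trivializes $T^*M$, the Clifford bundle and $S_n$, the last with transition functions $c(\tilde g_{\alpha\beta})$, and that $\widetilde{\nabla}=d+B_\alpha$ on $S_n$ with $B_\alpha=c(\rho_*^{-1}(A_\alpha))$ while the induced connection on $T^*M$ is $\nabla=d+A_\alpha$ with $A_\alpha$ acting on $\R^n$ by the defining representation of $\mathfrak{so}(n)$.

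Next I would simply expand both sides over $U_\alpha$. Writing $\theta$ as $\theta_\alpha\colon U_\alpha\to\R^n$ and $\psi$ as $\psi_\alpha\colon U_\alpha\to\Delta_n$, so that $(c(\theta)\psi)_\alpha=c(\theta_\alpha)\psi_\alpha$, and using the ordinary Leibniz rule for $d$ on the product of the (matrix-valued) function $c(\theta_\alpha)$ and the (vector-valued) function $\psi_\alpha$ together with $d\,c(\theta_\alpha)=c(d\theta_\alpha)$ ($c$ being a fixed linear map), one gets
\[
\bigl(\widetilde{\nabla}(c(\theta)\psi)\bigr)_\alpha=c(d\theta_\alpha)\psi_\alpha+c(\theta_\alpha)\,d\psi_\alpha+B_\alpha\,c(\theta_\alpha)\psi_\alpha ,
\]
whereas expanding $\nabla\theta=d\theta_\alpha+A_\alpha\theta_\alpha$ and $\widetilde{\nabla}\psi=d\psi_\alpha+B_\alpha\psi_\alpha$ yields
\[
\bigl(c(\nabla\theta)\psi+c(\theta)\widetilde{\nabla}\psi\bigr)_\alpha=c(d\theta_\alpha)\psi_\alpha+c(A_\alpha\theta_\alpha)\psi_\alpha+c(\theta_\alpha)\,d\psi_\alpha+c(\theta_\alpha)B_\alpha\psi_\alpha .
\]
Subtracting, the proposition is equivalent to the pointwise identity $[\,B_\alpha,c(v)\,]=c(A_\alpha v)$ on $\Delta_n$ for all $v\in\R^n$, i.e. to $\bigl[\,c(\rho_*^{-1}(A_\alpha)),c(v)\,\bigr]=c(A_\alpha v)$, which is $C^\infty(U_\alpha)$-linear in the $1$-form part and so amounts to a fixed Clifford-algebraic fact.

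The final step is to prove that fact, and here no genuinely new work is needed. Because $c\colon\Cl_n^c\to\End(\Delta_n)$ is an \emph{algebra} homomorphism, $[c(\xi),c(v)]=c(\xi v-v\xi)=c([\xi,v])$ with the bracket taken in the Clifford algebra, so it suffices to show $[\rho_*^{-1}(A_\alpha),v]=A_\alpha v$ in $\Cl_n$ — and this is exactly the content of the proposition computing $\rho_*$: by definition $\rho_*(\xi)(v)=\restr{\tfrac{d}{dt}}{t=0}\exp(t\xi)\,v\,\exp(-t\xi)=[\xi,v]$ for $\xi\in\mathfrak{spin}_n\subset\Cl_n$ and $v\in\R^n$, so applying this to $\xi=\rho_*^{-1}(A_\alpha)$ gives $[\rho_*^{-1}(A_\alpha),v]=\rho_*(\rho_*^{-1}(A_\alpha))(v)=A_\alpha v$. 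One can also check it concretely by taking $A_\alpha=e_i\wedge e_j$, using $\rho_*^{-1}(e_i\wedge e_j)=\tfrac12 e_ie_j$ from \eqref{eq:propLieAlgIso2} and the Clifford relations to compute $\tfrac12[e_ie_j,v]=(e_i\wedge e_j)v$ with $e_i\wedge e_j$ acting as in \eqref{eq:isosonwedge2}. The only real obstacle is therefore bookkeeping: keeping the three identifications $\mathfrak{spin}_n\subset\Cl_n$, $\mathfrak{so}(n)$ as matrices, and $\Lambda^2\R^n$ via \eqref{eq:isosonwedge2} consistent so that all signs and factors of $2$ cancel, in particular making sure the $\mathfrak{so}(n)$-action used in $\nabla$ on $T^*M$ is the one transported from $\mathfrak{spin}_n$ by $\rho_*$; once that is arranged the identity is a one-line consequence of $c$ being an algebra map.
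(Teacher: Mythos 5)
Your proposal is correct and follows essentially the same route as the paper: trivialize everything over a chart of $\mathrm{soFr}(M)$, apply the Leibniz rule for $d$, and reduce the claim to the commutator identity $[c(\rho_*^{-1}(A_\alpha)),c(v)]=c(A_\alpha v)$, which the paper verifies via the explicit Clifford computation $e_ie_jv - ve_ie_j = 2(e_i\wedge e_j)(v)$ — the same fact you obtain by citing the computation of $\rho_*$ and the multiplicativity of $c$.
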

\begin{proof}
First, let $v,e_i,e_j \in \R^n$, where $e_i,e_j$ are elements of an orthonormal basis. Then, in the Clifford algebra $Cl_n$ we have, using the elementary Clifford relation \eqref{eq:CliffordRelation},
$$e_ie_j v = ve_ie_j - 2\langle v,e_j\rangle e_i + 2\langle v,e_i\rangle e_j = ve_ie_j + 2 (e_i \wedge e_j)(v).$$
Using this we simply compute\footnote{Over the trivializing chart given by the local orthonormal frame, both $\theta$ and $\psi$ are sections of trivial bundles, as such $d$ acts upon them satisfying the Leibniz rule.}
\begin{align*}
\widetilde{\nabla}(c(\theta)\psi) &= d(c(\theta)\psi) + \frac{1}{4}A^{ij}c(e_ie_j)c(\theta \psi \\
&= c(d\theta)\psi + c(\theta)d\psi + \frac{1}{4}c(\theta)A^{ij}c(e_ie_j)\psi + \frac12 A^{ij}c(e_i \wedge e_j(\theta))\psi \\
&= c\left(d\theta + \frac12A^{ij}(e_i \wedge e_j)(\theta)\right)\psi + c(\theta)(d\psi + \frac14 A^{ij}c(e_ie_j)\psi) \\
&= c(\nabla\theta)\psi + c(\theta)\widetilde{\nabla}\psi.
\end{align*}

\end{proof}
The Clifford multiplication extends to $Cl(M)$ and we have 
\begin{cor}
Denote $\omega_\C$ the section which on fibers is the complex volume element (of definition \ref{def:complex_volume}) given by $\omega_\C = \ii^{\lfloor \frac{n+1}{2}\rfloor}e_1\ldots e_n$ in a local orthonormal frame.  Then 
\begin{equation}
\widetilde{\nabla}(c(\omega_\C)\psi) = c(\omega_\C)\widetilde{\nabla}\psi\label{eq:corcomplexvolume}
\end{equation}
\end{cor}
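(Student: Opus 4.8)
The plan is to reduce \eqref{eq:corcomplexvolume} to the single fact that the complex volume element $\omega_\C$ is a \emph{parallel} section of the Clifford bundle $Cl(M)$ for the connection $\nabla$ induced by the Levi-Civita connection, and then feed this into the Leibniz-type identity of Proposition \ref{prop:spinconnection}.

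First I would upgrade Proposition \ref{prop:spinconnection} from $1$-forms to arbitrary sections of $Cl(M)$. Over a trivializing chart any section $a$ of $Cl(M)$ is a sum of Clifford products $\theta_1\cdots\theta_k$ of local sections $\theta_i\in\Gamma(T^*M)$, and the induced connection $\nabla$ on $Cl(M)$ is a derivation for Clifford multiplication (this is built into the description of $Cl(M)$ as an associated bundle whose structure group acts by algebra automorphisms). An easy induction on $k$, applying Proposition \ref{prop:spinconnection} to one factor at a time, then gives
\[
\widetilde\nabla\big(c(a)\psi\big)=c(\nabla a)\psi+c(a)\,\widetilde\nabla\psi,\qquad a\in\Gamma(Cl(M)),\ \psi\in\Gamma(S_n).
\]
Taking $a=\omega_\C$ reduces the corollary to the assertion $\nabla\omega_\C=0$.

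It then remains to check that $\omega_\C$ is parallel, and the cleanest route is representation-theoretic: by Exercise \ref{exc:complex_volume} the element $\omega_\C$ does not depend on the choice of oriented orthonormal frame, so under the identification $Cl(M)\cong\mathrm{soFr}(M)\times_\tau Cl_n$ it corresponds to the element $\ii^{\lfloor(n+1)/2\rfloor}e_1\cdots e_n\in Cl_n$, which is fixed by $SO(n)$ since $\tau(A)(e_1\cdots e_n)=(Ae_1)\cdots(Ae_n)=\det(A)\,e_1\cdots e_n=e_1\cdots e_n$ for $A\in SO(n)$. A section of an associated bundle that corresponds to a structure-group-fixed vector in the model fibre is annihilated by every induced connection (its local expression is constant and the connection $1$-form, lying in $so(n)$, kills it infinitesimally), hence $\nabla\omega_\C=0$. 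Alternatively one can argue by direct computation in a local oriented orthonormal coframe: writing $\nabla e_k=\omega^i{}_k\,e_i$ with $(\omega^i{}_k)$ skew-symmetric in $i,k$ (metric compatibility, cf.\ \eqref{eq:isosonwedge2}) and using the derivation property, the diagonal terms vanish, and for $i\neq k$ the monomials obtained by replacing $e_k$ by $e_i$ and by replacing $e_i$ by $e_k$ in $e_1\cdots e_n$ turn out, after anticommuting the repeated factor, to be equal, so the $(i,k)$ and $(k,i)$ contributions cancel against each other.

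I expect the only genuinely delicate point to be the claim that $\nabla$ on $Cl(M)$ is a derivation compatible with Clifford multiplication, since both the extension of Proposition \ref{prop:spinconnection} and the computation of $\nabla\omega_\C$ rest on it; everything else is bookkeeping. This claim, however, follows immediately from the fact that $Cl(M)=\mathrm{soFr}(M)\times_\tau Cl_n$ with transition functions acting by algebra automorphisms, so that the local connection $1$-forms take values in $so(n)$ acting on $Cl_n$ by derivations.
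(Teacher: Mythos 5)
Your proof is correct, but it takes a different route from the one in the notes at the decisive step. The notes fix a point $p$, choose an orthonormal frame that is \emph{synchronous} at $p$ (i.e.\ $\nabla e_i=0$ at $p$), and then iterate Proposition \ref{prop:spinconnection} over the factors of $\omega_\C$; every derivative term dies at $p$ because each individual $e_i$ is parallel there, and $p$ is arbitrary. You instead prove the frame-independent statement $\nabla\omega_\C=0$ globally --- either by observing that $\omega_\C$ corresponds to an $SO(n)$-fixed vector of the model fibre $Cl_n$ (which is exactly part (1) of Exercise \ref{exc:complex_volume} plus the fact that a structure-group-fixed vector is annihilated by the $so(n)$-valued connection forms), or by the explicit skew-symmetry cancellation --- and then feed this into the Leibniz rule extended to all of $\Gamma(Cl(M))$. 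Both arguments are sound; the synchronous-frame trick is shorter and avoids having to formalize the derivation property of $\nabla$ on $Cl(M)$, whereas your version is more structural: it isolates the genuine reason the corollary holds (parallelism of the volume element) and does not rely on the existence of synchronous frames, which the notes only assert in a footnote. Your identification of the derivation property of $\nabla$ on the associated algebra bundle as the one point needing care is accurate, and your justification of it via the automorphism action of the structure group is the right one.
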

\begin{proof} Fix $p \in M$ and assume that the orthonormal frame satisfies $\nabla e_i = 0$, at $p$\footnote{Such orthonormal frames are called \emph{synchronous at $p$} and always exist.}. Then,  repeated use of proposition \ref{prop:spinconnection} immediately implies \eqref{eq:corcomplexvolume} at the point $p$. Since $p$ is arbitrary, we conclude the statement. 
\end{proof}
\subsection{Dirac operators}
Finally, we have all the necessary ingredients to define Dirac operators. Let $(M,g)$ be a Riemannian manifold with spin structure $P$. We recall that this induces a bundle of spinors $S_n$. The lift of the Levi-Civita connection $\nabla_g$ to the spin bundle is a connection $\widetilde{\nabla}_g$ on $S_n$.
\begin{defn}
The \emph{Dirac operator} $D \colon \Gamma(S_n) \to \Gamma(S_n)$ associated to $P$ is given by the composition 
\begin{equation}
\Gamma(S) \overset{\widetilde{\nabla}_g}{ \longrightarrow }\Gamma(T^*M \otimes S) \overset{c}{\longrightarrow}\Gamma(S)
\end{equation}
\end{defn}
In a local orthonormal frame, the Dirac operator is given by $D = \sum c(e_i)\widetilde{\nabla}_{e_i}$. In particular, for $M = \R^4$ with euclidean metric, we have $D = c(e_i)\frac{\de}{\de x^i}$ (the spinor bundle and spin connection are trivial) and hence 
\begin{equation}D^2f = c(e_i)\frac{\de}{\de x^i}\left(c(e_j)\frac{\de}{\de x^j}f\right) = c(e_ie_j) \frac{\de^2}{\de x^i\de x^j}f = \frac{1}{2}c(e_ie_j + e_je_i)\frac{\de^2}{\de x^i\de x^j}f = -\sum \frac{\de^2}{\de (x^i)^2}f.\label{eq:Dsquare}
\end{equation}
We will see that a similar equation holds for all Dirac Operators.
\begin{rem}
If $\dim M = 2k $ is even, the $S_n = S^+_n \oplus S_n^-$ and we have have 
\begin{align*}
D_+:= \restr{D}{S^+_n} &\colon S_n^+ \to S^-_n \\
D_-:= \restr{D}{S^-_n} &\colon S_n^- \to S^+_n \\
\end{align*}
\end{rem}
\section{Clifford Modules}
In the last section, the spin structure was essential in defining the bundle of spinor and the Clifford multiplication.  However, to define the Dirac operator, the spin structure was not needed. All we used was the Clifford multiplication and the spin connection. We can try to extract the structure necessary for defining the Dirac operator. This will prove highly beneficial. 
\begin{defn}
Let $(M,g)$ be an oriented Riemannian manifold with Levi-Civita connection $\nabla^g$. 
\begin{enumerate}[i)]
\item A \emph{super vector bundle} over $M$ is a vector bundle $E \to M$ together with a direct sum decomposition $E = E_+ \oplus E_-$, where $E_0,E_1$ are vector bundles over $M$.
\item A \emph{Clifford module} over $M$ is a complex super vector bundle $E = E_0 \oplus E_1$ with a hermitian metric $h$ and a map \begin{align*} c\colon \Gamma(T^*M) \times \Gamma(E) &\to \Gamma(E) \\
(\theta,\psi) &\mapsto c(\theta)\psi 
\end{align*}
such that for all $\theta,\theta_1,\theta_2 \in\Gamma(T^*M), \psi,\psi_1,\psi_2 \in \Gamma(E)$ we have
\begin{enumerate}[a)]
\item the Clifford relation: 
\begin{equation}
c(\theta_1)c(\theta_2)\psi + c(\theta_2)c(\theta_1)\psi = -2g(\theta_1,\theta_2)\psi,
\end{equation}
\item unitarity: if $g(\theta,\theta) \equiv 1$, then \begin{equation}
h(c(\theta)\psi_1,\psi_2) + h(\psi_1,c(\theta)\psi_2) = 0, \label{eq:unitarity}
\end{equation}
\item the action is odd: 
\begin{equation}
\restr{c(\theta)}{\Gamma(E)}\colon \Gamma(E_\pm) \to E_\mp
\end{equation}
\end{enumerate}
\item A \emph{Clifford connection} on a Clifford bundle $E$ is a metric connection $\nabla^E$ which is compatible with the Clifford multiplication, that is, for all $X \in \Gamma(TM), \theta \in \Gamma(T^*M), \psi \in \Gamma(E)$ we have 
\begin{equation}
\nabla^E_X(c(\theta)\psi) = c(\nabla_X^g\theta)\psi + c(\theta)\nabla_X^E\psi
\end{equation}
(remember that $\nabla^g$ denotes the Levi-Civita connection). 
\item A \emph{Dirac bundle} is a pair $(E,\nabla^E)$ of a Clifford bundle with a Clifford connection. 
\item The \emph{Dirac operator} associated to a Dirac bundle $(E,\nabla^E)$ is the composition 
\begin{equation}
D_E \colon \Gamma(E) \overset{\nabla^E}{\longrightarrow} \Gamma(T^*M \otimes E) \overset{c}{\longrightarrow} \Gamma(E).
\end{equation}
\end{enumerate}
\end{defn}
Let us give some remarks on this definition.
\begin{rem}
Notice that Equation \eqref{eq:unitarity} is equivalent to 
\begin{equation}
h(c(\theta)\psi_1,c(\theta)\psi_2) = g(\theta,\theta)h(\psi_1,\psi_2).
\end{equation} 
In particular, sections of unit norm act unitarily under Clifford multiplication. 
\end{rem}
\begin{rem}
Although we will not prove it explicitly, the important piece of data here is the Clifford multiplication. The metric and a metric Clifford connection always exist, given that one has defined Clifford multiplication. See also example \ref{expl:spinors} below.
\end{rem}
\begin{rem}
By virtue of the Clifford relation, the Clifford multiplication $c \colon \Gamma(T^*M) \to \Gamma(\End E)$ extends to a map of algebra bundles $c \colon Cl(M) \to \End(E)$. 
\end{rem}
This generalizes the notion of Dirac operator on Spin manifolds considerably, as we will see in an example below. First, let us establish that spin structures are examples of Clifford modules: 
\begin{expl}\label{expl:spinors}
Let $M$ be an even-dimensional Riemannian manifold with a Spin structure: $\dim M = n = 2k$. Then the bundle of spinors $S_n$ decomposes as $S_n = S_n^+ \oplus S_n^-$ and we defined a Clifford multiplication $c$ and a connection $\Tilde{\nabla}^g$ satisfying the axioms of a Clifford module and a Clifford connection. The only thing we have to show is that the following. 
\begin{prop} 
\begin{enumerate}[i)]
\item There exists a metric $h$ on $S_n$ which is compatible with the Clifford multiplication (i.e. Equation \eqref{eq:unitarity} is satisfied).
\item Given any such metric, the spin connection $\Tilde{\nabla^g}$ is compatible with it. 
\end{enumerate}
\end{prop}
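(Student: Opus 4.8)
The plan is to construct the metric $h$ on $S_n$ by averaging and then to verify compatibility of the spin connection fiberwise over a synchronous frame. For part i), I would start from any Hermitian metric $h_0$ on the fibers $\Delta_n$ and average it over the action of the group generated by Clifford multiplication by unit vectors — more precisely, over the image in $\mathrm{U}(\Delta_n)$ of the (compact) group $\mathrm{Pin}_n$, or equivalently over $\Spin_n$ together with the generators $c(e_i)$. Concretely, since $\Spin_n$ is compact, pick a $\Spin_n$-invariant Hermitian inner product on $\Delta_n$; then run a finite averaging over the finitely many products $c(e_{i_1})\cdots c(e_{i_k})$ coming from an orthonormal basis, or simply average over $\mathrm{Pin}_n$, which is compact. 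Because the $c(e_i)$ satisfy $c(e_i)^2 = -1$ and anticommute, the group they generate (together with $\pm 1$) is finite, so the averaging is a finite sum; the resulting inner product $\langle\cdot,\cdot\rangle$ then satisfies $\langle c(\theta)\psi_1, c(\theta)\psi_2\rangle = \langle\psi_1,\psi_2\rangle$ for unit $\theta$, which is exactly \eqref{eq:unitarity} in the equivalent form noted in the remark. Since the construction is $\Spin_n$-equivariant, it glues over the trivializing cover to a global metric $h$ on $S_n = P\times_c \Delta_n$.

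For part ii), I would argue pointwise. Fix $p\in M$ and choose a local orthonormal frame $e_i$ of $T^*M$ that is synchronous at $p$, i.e. $\nabla^g e_i|_p = 0$, as in the proof of the corollary on the complex volume element. Over the associated trivializing chart the spin connection is $\widetilde{\nabla}^g = d + B_\alpha$ with $B_\alpha = \frac14 A^{ij} c(e_i e_j)$, and at $p$ the connection form $A^{ij}$ of the Levi-Civita connection vanishes in this frame, so $\widetilde{\nabla}^g|_p = d|_p$ on sections written in this trivialization. Then for sections $\psi_1,\psi_2$ of $S_n$,
\begin{equation}
L_X h(\psi_1,\psi_2)\big|_p = h(d_X\psi_1,\psi_2) + h(\psi_1, d_X\psi_2)\big|_p = h(\widetilde{\nabla}^g_X\psi_1,\psi_2) + h(\psi_1,\widetilde{\nabla}^g_X\psi_2)\big|_p,
\end{equation}
where the first equality is just the Leibniz rule for $d$ applied to the fiberwise-constant inner product $h$ in this trivialization (here one uses that $h$ was built to be constant on $\Delta_n$, or at least covariantly constant, in the synchronous frame). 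Since $p$ is arbitrary, this gives metric compatibility of $\widetilde{\nabla}^g$ on all of $M$.

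The one point needing care — and the main obstacle — is the interaction between the two parts: one must check that the metric $h$ produced in i), which a priori is only invariant under the discrete group generated by the $c(e_i)$ and under $\Spin_n$, is genuinely fiberwise constant (equivalently, $\widetilde{\nabla}^g$-parallel) in a synchronous frame, so that step ii) goes through. This follows because $h$ is $\Spin_n$-invariant and the spin connection has holonomy in $\Spin_n$: in a synchronous frame at $p$ the parallel transport to first order is trivial, and to all orders it lands in $\Spin_n$ under which $h$ is invariant, so $\widetilde{\nabla}^g h = 0$. Alternatively, and perhaps more cleanly, I would phrase ii) abstractly: any connection on an associated bundle $P\times_c \Delta_n$ induced from a principal connection on $P$ preserves any fiber metric coming from a $\Spin_n$-invariant inner product on $\Delta_n$ — this is a general fact about associated bundles with invariant fiber metrics, and it immediately yields both that such $h$ exists (part i) and that $\widetilde{\nabla}^g$ preserves it (part ii). I would likely present the general-nonsense version as the main argument and relegate the explicit averaging computation to a remark.
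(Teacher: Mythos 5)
Your proposal is correct and follows essentially the same route as the paper: averaging an arbitrary inner product on $\Delta_n$ over the finite group generated by the $c(e_i)$ to get invariance under Clifford multiplication by unit vectors, and then deducing ii) from the fact that the spin connection is induced by a principal $\Spin_n$-connection and $\Spin_n$ acts unitarily for such a metric. Your observation that $\Spin_n$-invariance makes the fiber metric glue automatically (rather than via a partition of unity, as the paper does) is a minor, and arguably cleaner, variation.
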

\begin{proof}
First, choose an inner product $\langle\cdot,\cdot\rangle_{\Delta_n}$ on $\Delta_n$ which satisfies $\langle c(\alpha)v,c(\alpha)w\rangle$ for all $\alpha \in Cl_n$ and all $v,w \in \Delta_n$. We can produce such an inner product by averaging: Let $e_1,\ldots, e_n$ be an orthonormal basis of $\R^n$ and consider the finite subgroup $G \subset Cl_n^\times$ generated by $1,e_1,\ldots,e_n$. Then, choose any inner product $\langle\cdot,\cdot\rangle'$ on $\Delta_n$ and define 
$$\langle v,w \rangle_{\Delta_n} := \sum_{g\in G}\langle c(g)v, c(g)w \rangle'.$$
Then $\langle\cdot,\cdot\rangle$ is also an inner product and invariant under the action of $G$. Since the representation is linear, the inner product is invariant under the action on all unit vectors. Now, since we have an invariant inner product on $\Delta_n$, we can construct an invariant metric on $S_n$ by using a partition of unity subordinate to a trivializing cover. This proves i). For ii), notice that Spin group acts unitarily in such a metric. Since the spin connection on spinors is induced by a connection on the principal $\Spin_n$ bundle given by the spin structure, which acts unitarily, we conclude that the connection is compatible with the metric. 
\end{proof}
\end{expl}
\begin{rem}
This proof can be generalized to other Clifford bundles. Existence of compatible connections can be shown similarly (again constructing it locally). 
\end{rem}
Let us now consider another example, which shows that any manifold admits Clifford modules. 
\begin{expl}\label{expl:Tangent_Clifford}
Let $E = \bigwedge^\bullet TM \otimes \C$ denote the exterior algebra of the tangent bundle, with the hermitian metric $h$ given by the complexification of the metric $g$ extended to $\bigwedge T^*M$. It has an obivous super vector bundle structure 
$$ \bigwedge{}^{even}TM \oplus \bigwedge{}^{odd}TM. $$
For $\theta \in \Gamma(T^*M) = \Omega^1(T^*M), \omega \in \Omega^\bullet(T^*M) = \Gamma\left(\bigwedge^\bullet T^*M\right)$  we define the Clifford multiplication by 
\begin{equation}
c(\theta)\omega = \theta \wedge \omega  - \iota_\theta \omega,
\end{equation}
where the contraction $\iota_\theta \colon \Omega^k(M,\C) \to \Omega^{k-1}(M,\C)$ is defined\footnote{Here we put $\alpha$ in the first argument so that $\iota_\theta\omega$ is $\C$-linear in $\omega$ (it is only $\R$-linear in $\theta$).} by 
\begin{align} 
\iota_\theta f &= 0 \quad f \in C^\infty(M,\C) \\
\iota_\theta\alpha &= h(\alpha,\theta) \quad \alpha \in \Omega^1 \\
\iota_\theta (\tau \wedge \omega) &= (\iota_\theta \tau) \wedge \omega + (-1)^\tau \wedge  \iota_\theta \omega. \label{eq:contractionderivation}
\end{align}
Equation \eqref{eq:contractionderivation} says that the contraction is a degree 1 derivation. Equivalently, we can define it as contraction with the vector field $v=g(\theta,\cdot)$. \\
Next, the Levi-Civita connection $\nabla^g$ can be extended to the exterior algebra by defining 
\begin{align*}
\nabla^gf &= df, \\
\nabla^g(\theta_1 \wedge \theta_2) &= \nabla^g(\theta_1) \wedge \theta_2 + \theta_1 \wedge \nabla^g\theta_2
\end{align*}
i.e. by extending it to the de Rham algebra as a degree 0 derivation. By induction on the degree of $\omega$, one can prove that 
\begin{equation}
\nabla^g(\iota_\theta\omega) = \iota_{\nabla^g\theta}\omega + \iota_{\theta}\nabla^g\omega. \label{eq:nablaiota}
\end{equation}
Together with the fact that the Levi-Civita connection is a degree 0 derivation of the wedge product, this gives the compatibility of $\nabla^g$ with $c$. 
\begin{exc}
Check the claims made in this example in detail: Prove the Clifford relation,  Equation \eqref{eq:nablaiota}, and the compatibility of $\nabla$ with $c$.
\end{exc}

Thus, every manifold admits a Dirac bundle $(E,\nabla^g)$ given by picking a Riemannian metric. 
We will see later that the corresponding Dirac operator is given by $(d+d^*)$, whose square is the Hodge - de Rham Laplacian on differential forms.
\end{expl}
Associated to Clifford modules is a differential form called the \emph{twisting curvature} (we will see where terminology comes from in a minute). Recall the map $\rho\colon \Spin_n \to SO(n)$ and the associated isomorphism of Lie algebras
$$\rho_*^{-1}\colon so(n) \to spin_n \subset Cl_n$$
\begin{defn}
Denote $R^g = F^{\nabla^g}\in \Omega^2(so(n))$ and $R^E = c(\rho_*^{-1}(R^g))$. Then, the \emph{twisting curvature} $F^{E/S}$ of $(E,\nabla^E)$ is defined as 
\begin{equation}
F^{E/S} = F^{\nabla^E} - R^E \in \Omega^2(\End E)
\end{equation}
\end{defn}
\begin{expl}Suppose $M$ is spin and consider the Dirac bundle $(S_n, \Tilde{\nabla}^g)$. Then the twisting curvature $F^{S_n/S} = 0$, since $\Tilde{\nabla}^g = c(\rho_*^{-1})\nabla^g$ and hence $F^{\nabla^E} = R^E$. 
\end{expl}
\begin{prop}
Let $(E,\nabla^E)$ be a Dirac bundle and $(W, \nabla^W)$ be a hermitian super vector bundle with a metric connection $\nabla^W$ that preserves the $\Z_2$-grading. Then\footnote{Remember that $E \hat{\otimes} W$ denotes the graded tensor product.} $(E \hat{\otimes} W, \nabla^{E\hat{\otimes} W})$, 
where $$ \nabla^{E \hat{\otimes} W} = \nabla^E \otimes \mathrm{id}_W + \mathrm{id}_E \otimes \nabla^W)$$ is a Dirac bundle with the Clifford multiplication $c^{E \hat{\otimes} W}(\theta) = c^E(\theta) \otimes \mathrm{id}_W$. 
\end{prop}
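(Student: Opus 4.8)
The plan is to check, one clause at a time, the definition of a Dirac bundle for the triple consisting of the graded tensor product $E \hat{\otimes} W$ with its Hermitian metric $h^E \otimes h^W$, the connection $\nabla^{E\hat{\otimes} W}$, and the Clifford multiplication $c^{E\hat{\otimes} W}(\theta) = c^E(\theta)\otimes\mathrm{id}_W$; since everything in sight is multilinear it suffices to work on decomposable sections $\psi = e\hat{\otimes} w$. First I would settle the super vector bundle and Clifford-module axioms. The graded tensor product already carries the super structure $(E\hat{\otimes} W)_0 = E_0\otimes W_0 \oplus E_1\otimes W_1$, etc., and $c^{E\hat{\otimes} W}(\theta)$ is odd because $c^E(\theta)$ is odd and $\mathrm{id}_W$ is even, which is clause (c). Because $\mathrm{id}_W$ is even the Koszul sign rule contributes nothing, so $c^{E\hat{\otimes} W}(\theta)(e\hat{\otimes} w) = (c^E(\theta)e)\hat{\otimes} w$ and $c^{E\hat{\otimes} W}(\theta_1)c^{E\hat{\otimes} W}(\theta_2) = \big(c^E(\theta_1)c^E(\theta_2)\big)\otimes\mathrm{id}_W$; symmetrizing in $\theta_1,\theta_2$ and invoking the Clifford relation for $E$ gives $-2g(\theta_1,\theta_2)\,\mathrm{id}_{E\hat{\otimes} W}$, which is clause (a). For unitarity (b) I would expand $h^{E\hat{\otimes} W}(c(\theta)(e_1\hat{\otimes} w_1), e_2\hat{\otimes} w_2) + h^{E\hat{\otimes} W}(e_1\hat{\otimes} w_1, c(\theta)(e_2\hat{\otimes} w_2))$, pull out the common factor $h^W(w_1,w_2)$, and observe that the remaining $E$-factor vanishes by unitarity of $c^E$ when $g(\theta,\theta)\equiv 1$.

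Next I would treat the two properties of $\nabla^{E\hat{\otimes} W} = \nabla^E\otimes\mathrm{id}_W + \mathrm{id}_E\otimes\nabla^W$. Metric compatibility follows by applying $L_X$ to $h^E(e_1,e_2)h^W(w_1,w_2)$, using the ordinary Leibniz rule and then the metric compatibility of $\nabla^E$ with $h^E$ and of $\nabla^W$ with $h^W$; the resulting terms regroup into $h^{E\hat{\otimes} W}(\nabla^{E\hat{\otimes} W}_X(e_1\hat{\otimes} w_1), e_2\hat{\otimes} w_2) + h^{E\hat{\otimes} W}(e_1\hat{\otimes} w_1, \nabla^{E\hat{\otimes} W}_X(e_2\hat{\otimes} w_2))$. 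At this point I would remark that the hypothesis that $\nabla^W$ preserves the $\Z_2$-grading is exactly what is needed for $\nabla^{E\hat{\otimes} W}$ to be an even operator, so that $D_{E\hat{\otimes} W} = c^{E\hat{\otimes} W}\circ\nabla^{E\hat{\otimes} W}$ is odd and the super structure is genuinely respected.

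Compatibility of the connection with Clifford multiplication is then the short computation
\begin{align*}
\nabla^{E\hat{\otimes} W}_X\big(c(\theta)(e\hat{\otimes} w)\big) &= \nabla^E_X\big(c^E(\theta)e\big)\hat{\otimes} w + \big(c^E(\theta)e\big)\hat{\otimes}\nabla^W_X w \\
&= c^E(\nabla^g_X\theta)e\hat{\otimes} w + c^E(\theta)\nabla^E_X e\hat{\otimes} w + c^E(\theta)e\hat{\otimes}\nabla^W_X w \\
&= c(\nabla^g_X\theta)(e\hat{\otimes} w) + c(\theta)\nabla^{E\hat{\otimes} W}_X(e\hat{\otimes} w),
\end{align*}
the middle step being the Clifford-compatibility of $\nabla^E$. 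I do not anticipate any real obstacle: the only thing to be careful about is sign bookkeeping in the graded tensor product, but because $c^E$ and $\nabla^E$ are tensored only with the even operators $\mathrm{id}_W$ and $\mathrm{id}_E$, every Koszul sign is trivial and the verification collapses to the axioms already established for $(E,\nabla^E)$ and $(W,\nabla^W)$.
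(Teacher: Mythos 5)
Your proposal is correct and follows essentially the same route as the paper: the core step in both is the Leibniz-rule computation on decomposable sections $e\hat{\otimes}w$ showing that $\nabla^{E\hat{\otimes}W}$ is compatible with $c^{E\hat{\otimes}W}(\theta)=c^E(\theta)\otimes\mathrm{id}_W$, with the Clifford relation reducing immediately to that of $E$. You are merely more thorough than the paper, which dismisses the Clifford relation as obvious and omits the metric and grading checks that you spell out.
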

\begin{proof}
The Clifford relation is obivously satisfied. The compatibility is an easy check: 
\begin{align*}
\nabla^{E\hat{\otimes} W} c^{E \hat{\otimes} W}(\theta)\psi^E \otimes \psi^W &= (\nabla^E \otimes \mathrm{id}_W + \mathrm{id}_E \otimes \nabla^W ) (c^E(\theta)\psi^E \otimes \psi^W) \\
&= \nabla^Ec^E(\theta)\psi^E \otimes \psi^W + \psi^E \otimes \nabla^W\psi^W \\ 
&= c^E(\nabla^g\theta)\psi^E \otimes \psi^W + c^E(\theta)\nabla^E\psi^E \otimes \psi^W + \psi^E \otimes \nabla^W\psi^W\\
&= c^{E \hat{\otimes} W}(\nabla^g\theta)\psi^E \otimes \psi^W +  c^{E \hat{\otimes} W}(\theta)\nabla^{E \hat{\otimes W}}(\psi^E \otimes \psi^W) 
\end{align*}
\end{proof}
\begin{defn}We call $(E \hat{\otimes} W, \nabla^{E\hat{\otimes} W})$ the \emph{twist of $E$ by $W$.}
\end{defn}
 From the definition of the twist it is immediate that 
 \begin{equation}
 F^{E\hat{\otimes}W /S} = F^{E /S} + F^W.
 \end{equation}
 \begin{rem}
 On a spin manifold, one can show that all Clifford bundles are twists of the bundles of the bundle of spinors. The twisting curvature of the Clifford bundle is then just the curvature of the twist, which explains the name.
 \end{rem}
 The important property of the twisting curvature is the following. 
 \begin{prop}\label{prop:twist_curv_commutes}
 For all $X,Y \in \Gamma(TM), \theta \in \Gamma(T^*M)$,  $F^{E/S}(X,Y)$ commutes with $c(\theta)$. 
 \end{prop}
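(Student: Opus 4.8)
The plan is to prove the identity by showing that $F^{\nabla^E}(X,Y)$ and $R^E(X,Y)$ have \emph{the same} commutator with $c(\theta)$, so that their difference $F^{E/S}(X,Y)$ commutes with $c(\theta)$. Everything in sight is $C^\infty(M)$-linear and local, so I will work pointwise and freely use that $c$ extends to an algebra homomorphism $Cl(M) \to \End(E)$, hence carries Clifford-algebra commutators to commutators in $\End(E)$.

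First I would extract from the defining compatibility of a Clifford connection, $\nabla^E_X(c(\theta)\psi) = c(\nabla^g_X\theta)\psi + c(\theta)\nabla^E_X\psi$, the operator identity $[\nabla^E_X, c(\theta)] = c(\nabla^g_X\theta)$ on $\Gamma(E)$. Then, applying the Leibniz rule for commutators repeatedly, I would compute $[\nabla^E_X\nabla^E_Y, c(\theta)]$ and $[\nabla^E_Y\nabla^E_X, c(\theta)]$: each produces one second-order term $c(\nabla^g_X\nabla^g_Y\theta)$ (resp. $c(\nabla^g_Y\nabla^g_X\theta)$) together with two first-order terms of the shape $c(\nabla^g_Y\theta)\nabla^E_X$, which occur symmetrically in the two orderings and therefore cancel in the antisymmetric combination. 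Combined with $[\nabla^E_{[X,Y]}, c(\theta)] = c(\nabla^g_{[X,Y]}\theta)$ this gives
\begin{equation}
[F^{\nabla^E}(X,Y), c(\theta)] = c\big(R^g(X,Y)\theta\big),
\end{equation}
where $R^g(X,Y) = F^{\nabla^g}(X,Y) \in so(n)$ acts on $\theta$ via the standard representation, i.e. as the curvature endomorphism of the Levi-Civita connection on $T^*M$.

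Next I would compute $[R^E(X,Y), c(\theta)] = [c(\rho_*^{-1}(R^g(X,Y))), c(\theta)] = c\big([\rho_*^{-1}(R^g(X,Y)), \theta]_{\mathrm{Cl}}\big)$. By the earlier proposition computing the Lie algebra isomorphism $\rho_*$ — which yields $\rho_*(\xi)v = \xi v - v\xi$ for $\xi \in spin_n$, $v \in \R^n$, equivalently $[\rho_*^{-1}(A), v]_{\mathrm{Cl}} = A(v)$ for $A \in so(n)$ — the bracket $[\rho_*^{-1}(R^g(X,Y)), \theta]_{\mathrm{Cl}}$ is exactly $R^g(X,Y)\theta$. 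Hence $[R^E(X,Y), c(\theta)] = c\big(R^g(X,Y)\theta\big)$ as well, and subtracting the two identities gives $[F^{E/S}(X,Y), c(\theta)] = 0$.

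The computation is routine; the only real care is in the bookkeeping of identifications: that $\nabla^g$ on $T^*M$ is the connection induced by the $SO(n)$-connection whose curvature is the $so(n)$-valued $R^g$, that the natural action of $so(n)$ on $\R^n$ coincides with the Clifford-commutator action under $\rho_*^{-1}$ (precisely the content of the earlier $\rho_*$ proposition), and that these pointwise Clifford-algebra identities transfer to the bundles $Cl(M)$ and $\End(E)$ through the algebra map $c$. Once this is in place the two commutators are manifestly equal, which is the assertion.
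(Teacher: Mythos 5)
Your proposal is correct and follows essentially the same route as the paper: extract the operator identity $[\nabla^E_X,c(\theta)]=c(\nabla^g_X\theta)$, deduce $[F^{\nabla^E}(X,Y),c(\theta)]=c(R^g(X,Y)\theta)$, and match this against $[R^E(X,Y),c(\theta)]=c(R^g(X,Y)\theta)$ via the identity $[\rho_*^{-1}(A),v]_{\mathrm{Cl}}=A(v)$. The only cosmetic difference is that you invoke the earlier $\rho_*$ proposition directly where the paper re-derives the commutator formula $[e^ie^j,v]=2(e^i\wedge e^j)(v)$ in an orthonormal frame; the content is identical.
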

 \begin{proof}
Note that the compatibility of the connection with the Clifford multiplication can be rewritten as 
 \begin{equation}
 [\nabla_X^E,c(\theta)] = c(\nabla_X^g\theta).\label{eq:cliff_compatible_comm}
 \end{equation}
 Using this we want to prove that $[F^{E/S}(X,Y),c(\theta)]$ vanishes (here all commutators are regular, not supercommutators). From the Jacobi identity and repeated use of \ref{eq:cliff_compatible_comm} , we obtain 
 \begin{align*}[[\nabla_X^E,\nabla_Y^E],c(\theta)] &=
 [[\nabla_X^E,c(\theta)],\nabla_Y^E] + [\nabla_X^E,[\nabla_Y^E,c(\theta)]] \\
 &= [c(\nabla^g_X\theta),\nabla_Y^E] + [\nabla_X^E,c(\nabla_Y^g\theta)]  \\
 &= -c(\nabla_Y^g\nabla_X^g\theta) + c(\nabla_X\nabla^Y\theta) = c(\nabla_{[X,Y]}^g\theta). 
 \end{align*} 
 Using $F^{\nabla^E}(X,Y) = [\nabla_X^E,\nabla_Y^E] - \nabla_{[X,Y]}^E$, we obtain 
 $$[F^E(X,Y),c(\theta) ] = c(R^g(X,Y)\theta).$$
 Now, since $F^{E/S} = F^{\nabla^E} - R^E$, we have to show that $[R^E(X,Y),c(\theta)] = c(R^g(X,Y)\theta)$. 
 Remember that $R^g(X,Y) = \frac12R^g_{ij}(X,Y)e^i\wedge e^l $ and hence $$\rho_*^{-1}R^g(X,Y) = \frac18R^g_{ij}(X,Y)[e^i,e^j] = \frac14R_{ij}e^ie^j.$$Now we use again the fact that we used in the proof of Proposition \ref{prop:spinconnection}: $e_ie_jv = ve_ie_j + 2(e_i \wedge e_j)(v)$ or $[e^ie^j,v] = 2(e^i \wedge e^j)(v)$ which implies 
 \begin{align*}
 [R^E(X,Y),c(\theta)] &= [c(\rho_*^{-1}(R^g(X,Y))),c(\theta)] = c([\rho_*^{-1}R^g(X,Y),\theta])  \\
 &= c\left[\frac14R^g_{ij}(X,Y)e^ie^j,\theta\right] = c\left(\frac12R^g_{ij}(X,Y)(e^i\wedge e^j)(\theta)\right) \\
 &= c(R^g(X,Y)\theta).
 \end{align*}
 Hence
 $$ [F^{\nabla^E}(X,Y) - R^E(X,Y),c(\theta)] = 0.$$
 \end{proof}
\chapter{Analysis}\label{ch:Analysis}
In this rather short chapter, we recall some of the analysis on manifolds that is necessary to state (and prove) the index theorem. 
\section{Elliptic Operators}
We are interested in the situation where there are two complex vector bundles $E$ and $F$ over a (compact) manifold $M$. In this situation we work in \emph{coordinate charts} $(U,\phi)$ of $M$ that are also trivializing charts for $E$ and $F$ (such charts always exist, simply intersect a coordinate chart with two trivializing charts for $E$ and $F$) such that the situation reduces to the diagram below: 
\[
\begin{tikzcd}
\restr{E}{U}\cong\phi(U)\times\mathbb{C}^k \arrow[rd] &                            & \phi(U)\times\mathbb{C}^l \cong \restr{F}{U} \arrow[ld] \\                                                   & \phi(U)\subset\mathbb{R}^n &                                                                                                           
\end{tikzcd} 
\]
We will call these charts ``good charts'' for the sake of brevity. Let us briefly discuss what this entails. We have coordinates $(x^1,\ldots,x^n,z^1,\ldots,z^k)$ on $\restr{E}{U} \cong \phi(U) \times \C^k \ni$. On the overlap of two good charts $U = (x^1,\ldots,x^n,z^1,\ldots,z^k)=(x,z)$ and $V = (y^1,\ldots,y^n,w^1,\ldots,w^k) = (y,w)$, we have 
\begin{subequations}
\begin{align}
y &= \phi_{UV}(x) \\
w &= g_{UV}^E(x)z
\end{align}
\end{subequations}
where $g_{UV}\colon \phi(U) \to GL_k(\C)$ are the transition functions\footnote{To match the notation with the previous chapters, this should be $g_{UV}^E(\phi^{-1}(x))$, but we suppress this for reasons of brevity.} of $E$. If $\sigma \in \Gamma(E)$, then $\restr{\sigma}{U}$ defines a map $\sigma_U\colon \phi(U) \to \mathbb{C}^k$ and we have 
\begin{equation}
\sigma_V(y) = \sigma_V(\phi_{UV}(x)) = g_{UV}(x)\sigma_U(x)
\end{equation}
It makes sense to take partial derivatives of local sections $\sigma_U\colon \phi(U) \to \C^k$, but they no longer define sections of $E$. In fact 
\begin{equation}
\frac{\de^{|I|}}{\de y^I} \sigma_V(y) = \sum_{|J| = |I|} \frac{\de x^J}{\de y^I}\frac{\de }{\de x^J}(g^E_{UV}(x))\sigma_U(x).
\end{equation}
Here, for multiindices $I = (I_1,\ldots,I_q), J= (j_1,\ldots,j_q)$, $\frac{\de x^J}{\de y^I}$ is short for 
$$ \frac{\de x^{j_1}}{\de y^{i_1}} \cdot  \frac{\de x^{j_2}}{\de y^{i_2}}\cdots  \frac{\de x^{j_q}}{\de y^{i_q}}.$$
After these preliminary discussion, we proceed with the definition of a partial differential operator between vector bundles. 
\subsection{Definitions}
\begin{defn}
Let $M,E,F$ as before. A linear map $D\colon\Gamma(E) \to \Gamma(F)$ is called a  \emph{(partial) differential operator} (pdo for short) if, for all $x \in M$, there is a good chart $U \ni x$ and some $m \in \N$ such that 
\begin{equation}
(L\sigma)_U = \sum_{|I| \leq m}A^I_U(x)\frac{\de^{|I|}}{\de x^I}\sigma_U(x),\label{eq:def_pdo}
\end{equation}
where $A^I_U(x) \colon \C^k \to \C^l$ are linear maps called the \emph{coefficients} of $D$. The smallest $m$ such that $D$ has the form \eqref{eq:def_pdo} is called the \emph{order} of $M$ in $U$.
\end{defn}
\begin{rem}
If $D$ has the form \eqref{eq:def_pdo} in a single good chart $U$, then also in all other good charts $V$ that intersect $U$. Namely, we have 
\begin{align}
(L\sigma)_V(y) &= g_{UV}^F(x)(L\sigma)_U = \label{eq:coord_trafo_pdo}
\end{align}
and because of the product rule, this is again of the form \eqref{eq:def_pdo}. However, it should be emphasized that the coefficients $A^I_U$ in general do \emph{not} transform as tensors. 
\end{rem}
\begin{rem}
The computation above shows that the order of $D$ is independent of the chart $U$ used in the definition, and hence locally constant. If $M$ has several connected components, we require that the order of $D$ is the same on every connected component. In this way we can speak of the order of $D$ on $M$. 
\end{rem}
Next, let us look at some examples that appear naturally on manifolds. 
\begin{expl}
A pdo of order 0 is the same as a vector bundle morphism $T \in \underline{Hom}(E,F)$. 
\end{expl}
\begin{expl}
Let $E = F = \bigwedge{}^\bullet(TM) \otimes \C$ and $d$ the de Rham differential on $M$. The good charts are just the coordinates charts\footnote{In this case it makes sense to trivialize the exterior algebra bundle as $\bigwedge{}^\bullet\C^n$, and not explicitly as $\C^{{n \choose 2}}$, but of course this is irrelevant for the discussions above.} $U = (x^1,\ldots, x^n)$, in which the de Rham differential looks like 
\begin{equation}
d = \sum_{i=1}^n (dx^i \wedge ) \frac{\de }{\de x^i}
\end{equation}
where $dx^i \colon \bigwedge{}^\bullet\C^n \to \bigwedge{}^\bullet\C^n$ is the linear map given by (wedge) multiplying with $dx^i$. We conclude that $d$ is a pdo of order 1.
\end{expl}
\begin{expl}
Let $E$ be any (complex) vector bundle over $M$ and $\nabla \colon \Gamma(E) \to \Gamma(T^*M \otimes E)$ be a connection. On a good chart $U$, we have $(\nabla\sigma)_U = d\sigma_U + A_U\sigma_U$, where $A = A_idx^i\in \Omega^1(U,\End \restr{E}{U}) \cong \Omega^1(U,\End \C^k)$. We can rewrite this as 
\begin{equation}
(\nabla\sigma)_U =  \sum_{i = 1}(dx^i \otimes) \frac{\de}{\de x^i} \sigma_U + \sum_{i=1}^n (dx^i \otimes) A_i (\sigma_U)
\end{equation}
Here the first term is the order 1 part. $dx^i \otimes \C^k \to \C^n \otimes C^k$ is the linear map sending $v \to dx^i \otimes v$. The second term is an order 0 term and is given by the linear map $\C^k \to \C^n \otimes \C^k, v \mapsto \sum_i dx^i \otimes A_i(v)$. Notice that the transformation rule for the connection 1-forms $A$ is precisely the transformation for the order 0 part of an order 1 pdo. 
\end{expl}
\begin{rem}
If $D_1 \colon \Gamma(E) \to \Gamma(F), D_2 \colon \Gamma(F) \to \Gamma(E)$ are pdos of orders $m_1$ and $m_2$ respectively, then their compositon $D= D_2D_1$ is also a pdo and its order $m$ is bounded by the sum $m \leq m_1 + m_2$. 
\end{rem}
\begin{expl}
If $(E, \nabla^E)$ is a Dirac bundle over $M$, then $D_E \colon \Gamma(E) \to \Gamma(E)$ is a pdo of order at most 1 since it is the composition of the two pdos $\nabla^E$ and $c^E$. 
\end{expl}
\subsection{The symbol of a partial differential operator}
Let $D$ be a partial operator of order $m$, 
$$\restr{L}{U} = \sum_{|I| \leq M} A_U^I(x) \de^I_x.$$
Then we define a map $$\restr{\sigma(L)(\cdot)}{U} \colon \Gamma(\restr{T^*M}{U}) \to \Hom(\Gamma(\restr{E}{U}),\Gamma(\restr{F}{U}))$$
by setting, for $\xi = \xi_idx^i$, 
\begin{equation}
\restr{\sigma(L)(\xi)}{U} := \ii^m\sum_{|I|=m}A^I_U(x)\xi_I,
\end{equation}
where $\xi_I = \xi_{i_1}\cdots\xi_{i_n}$. 
We have the following important claim. 
\begin{lem}
The collection $\restr{\sigma(L)(\cdot)}{U}$ defines a global section $$\sigma(L) \in \underline{\Hom}(\mathrm{Sym}^mT^*m \otimes E, F).$$
\end{lem}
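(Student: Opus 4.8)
The plan is to show that the locally-defined maps $\restr{\sigma(L)(\cdot)}{U}$ agree on overlaps, so that they patch together into a global bundle morphism. The key point is understanding how the top-order coefficients $A^I_U$ transform under a change of good chart, and checking that this transformation is precisely compatible with the way symmetric covariant tensors transform.

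First I would recall, from the transformation rule discussed before the definition of a pdo, that on the overlap of two good charts $U$ and $V$ one has
\begin{equation}
(L\sigma)_V(y) = g^F_{UV}(x)(L\sigma)_U(x),
\end{equation}
and expanding both sides via the chain rule, the terms involving the highest number $m$ of derivatives of $\sigma$ can only come from differentiating $\sigma_U$ itself $m$ times (any derivative landing on $g^E_{UV}$ or on the Jacobian factors lowers the $\sigma$-derivative count). Hence, collecting the order-$m$ terms, one finds
\begin{equation}
A^I_V(y) = g^F_{UV}(x)\left(\sum_{|J|=m} A^J_U(x)\,\frac{\de y^I}{\de x^J}\right)g^E_{VU}(x),
\end{equation}
where $\frac{\de y^I}{\de x^J}$ is the product of first partials as in the multiindex notation introduced in the excerpt. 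This is the crucial computation; everything else is bookkeeping.

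Next I would observe that, for a covector $\xi = \xi_i\,dx^i = \eta_j\,dy^j$, the components transform by $\xi_i = \frac{\de y^j}{\de x^i}\eta_j$, so that $\xi_J = \prod_{k}\xi_{j_k}$ satisfies $\sum_{|J|=m}\xi_J\,\frac{\de y^I}{\de x^J} = \eta_I$ (this is exactly the statement that the $m$-fold symmetric power of the cotangent bundle has transition functions $\mathrm{Sym}^m$ of those of $T^*M$, applied to the vector $\xi^{\otimes m}$, written out in coordinates). Combining this with the transformation of the coefficients gives
\begin{equation}
\ii^m\sum_{|I|=m}A^I_V(y)\,\eta_I = g^F_{UV}(x)\left(\ii^m\sum_{|J|=m}A^J_U(x)\,\xi_J\right)g^E_{VU}(x),
\end{equation}
i.e. $\restr{\sigma(L)(\xi)}{V} = g^F_{UV}\,\restr{\sigma(L)(\xi)}{U}\,(g^E_{UV})^{-1}$. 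By the exercise characterizing vector bundle morphisms via cocycles $\Psi_\beta = g^F_{\alpha\beta}\Psi_\alpha g^E_{\beta\alpha}$, and since the symbol is by construction a homogeneous degree-$m$ polynomial in $\xi$ hence factors through $\mathrm{Sym}^m T^*M$, this shows the local pieces glue to a global section $\sigma(L) \in \underline{\Hom}(\mathrm{Sym}^m T^*M \otimes E, F)$.

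I expect the main obstacle to be the first computation: carefully isolating the genuinely order-$m$ contribution to $(L\sigma)_V$ after applying the chain rule and the product rule, and verifying that the ``junk'' terms (those where a derivative hits $g^E_{UV}$ or a Jacobian entry, or where the chain rule produces higher partials $\frac{\de^2 x}{\de y^2}$ etc.) really are of order $\leq m-1$ and thus do not affect the symbol. Once that is pinned down, matching it against the transformation law of $\mathrm{Sym}^m T^*M$ is a routine, if slightly notation-heavy, index manipulation.
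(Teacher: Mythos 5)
Your proof is correct and follows essentially the same route as the paper: apply the chain rule on an overlap, note that the only genuinely order-$m$ contributions come from the Jacobian factors hitting the top derivatives of $\sigma$, and read off that the leading coefficients transform as a section of $\mathrm{Sym}^m TM \otimes E^* \otimes F \cong \underline{\Hom}(\mathrm{Sym}^m T^*M\otimes E,F)$. The only quibble is a transposed index in your intermediate identity, which should read $\sum_{|I|=m}\frac{\de y^I}{\de x^J}\,\eta_I = \xi_J$ (following from $\xi_j = \frac{\de y^i}{\de x^j}\eta_i$); your final displayed transformation law is nonetheless the correct one.
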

\begin{proof}
From equation \eqref{eq:coord_trafo_pdo}, we gather that 
\begin{align*}
 g_{UV}^F(x)\sum_{|I| \leq m} A^I_U(x)\frac{\de^{|I|}}{\de x^I}\sigma_U(x) 
&= g_{UV}^F(x)\sum_{|I| \leq m}\sum_{|J| = |I|}A^I_U(x)\frac{\de y^J}{\de x^I}\frac{\de^{|J|}}{\de y^J}(g^E_{VU}(y)\sigma_V(y)) \\
&= \sum_{|I| = m}g_{UV}^F(x)\sum_{|J| = |I|}A^I_U(x)\frac{\de y^J}{\de x^I}(g^E_{UV}(x))^{-1}\frac{\de^{|J|}}{\de y^J}\sigma_V(y)  + \sum_{|I| < M} (\cdots),
\end{align*}
from which we conclude that the coefficients $A_U^I$, for $|I| = m$ transform as a section of the bundle $\mathrm{Sym^m TM \otimes E^* \otimes F} \cong \underline{Hom}(Sym^m T^*M \otimes E, F)$.
\end{proof}
Concretley, given a section $\xi \in \Gamma(T^*M)$ we obtain a vector bundle morphism $\sigma(L)(\xi) \in \underline{Hom}(E,F)$, and the dependence on $\xi$ is polynomial of order $m$. 
\begin{defn}
$\sigma(L)$ is called the symbol of $L$. 
\end{defn}
\begin{lem}
We have $\sigma(D_1D_2) = \sigma(D_1)\sigma(D_2)$.
\end{lem}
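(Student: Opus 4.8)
The statement to prove is the multiplicativity of the symbol: $\sigma(D_1 D_2) = \sigma(D_1)\sigma(D_2)$, where $D_1 \colon \Gamma(F) \to \Gamma(G)$ and $D_2 \colon \Gamma(E) \to \Gamma(F)$ are partial differential operators of orders $m_1$ and $m_2$, and the composition has order at most $m_1 + m_2$.

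\textbf{Approach.} The statement is local, since the symbol is defined chart by chart and already known (by the preceding lemma) to glue into a global section; so it suffices to verify the identity over a single good chart $U$ in which both $D_1$ and $D_2$ are written in coordinates. There the computation is a direct application of the Leibniz rule, keeping track only of the top-order terms, and the main point is that the ``interesting'' contributions — namely those that produce derivatives of order lower than $m_1 + m_2$ — are precisely the ones that do not survive in the symbol.

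\textbf{Key steps.} First I would write $D_2$ over $U$ as $(D_2\sigma)_U = \sum_{|J|\leq m_2} B_U^J(x)\,\partial_x^J \sigma_U$ and $D_1$ as $(D_1\tau)_U = \sum_{|I|\leq m_1} A_U^I(x)\,\partial_x^I \tau_U$. Composing and expanding $\partial_x^I(B_U^J(x)\,\partial_x^J\sigma_U)$ by the Leibniz rule gives a sum of terms; the highest-order term, of order exactly $|I|+|J|$, arises when all derivatives in $\partial_x^I$ land on $\partial_x^J\sigma_U$ rather than on the coefficient $B_U^J$, yielding $A_U^I(x)B_U^J(x)\,\partial_x^{I+J}\sigma_U$. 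All other terms involve at least one derivative hitting $B_U^J$, hence have order strictly less than $|I|+|J| \leq m_1+m_2$. Therefore the order-$(m_1+m_2)$ part of $D_1 D_2$ has coefficient $\sum_{|I|=m_1,\,|J|=m_2} A_U^I(x) B_U^J(x)$ attached to $\partial_x^{I+J}$. Second, I would plug this into the definition of the symbol: for $\xi = \xi_i dx^i$,
\begin{equation}
\sigma(D_1 D_2)(\xi)\big|_U = \ii^{m_1+m_2} \sum_{|K| = m_1+m_2} C_U^K(x)\,\xi_K,
\end{equation}
where $C_U^K = \sum_{I+J = K} A_U^I B_U^J$ (with $|I| = m_1$, $|J| = m_2$), and since $\xi_K = \xi_I \xi_J$ whenever $K = I+J$ this factors as
\begin{equation}
\ii^{m_1}\!\!\sum_{|I|=m_1}\!\!A_U^I(x)\xi_I \;\cdot\; \ii^{m_2}\!\!\sum_{|J|=m_2}\!\!B_U^J(x)\xi_J = \sigma(D_1)(\xi)\big|_U \,\sigma(D_2)(\xi)\big|_U,
\end{equation}
which is the desired identity. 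Finally, one notes that $\ii^{m_1}\ii^{m_2} = \ii^{m_1+m_2}$, so the normalization factors are consistent, and that the local identities patch to the global one by the gluing lemma.

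\textbf{Main obstacle.} There is no serious obstacle; the only thing requiring a little care is the bookkeeping of multi-indices — verifying that in the Leibniz expansion the unique term contributing to order $m_1+m_2$ is the ``all derivatives pass through'' term, and that the multi-index identity $\xi_{I+J} = \xi_I\xi_J$ holds (which is immediate from the definition $\xi_I = \xi_{i_1}\cdots\xi_{i_{|I|}}$, treating the $\xi_i$ as commuting scalars). One should also remark explicitly that even though $D_1 D_2$ may \emph{a priori} have order $< m_1+m_2$ in degenerate cases, the formula still holds with the convention that the symbol in degree $m_1+m_2$ is then simply zero, consistent with $\sigma(D_1)\sigma(D_2)$ vanishing in that degree.
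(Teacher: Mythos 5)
Your proof is correct: the paper states this lemma without proof, and your argument — compose locally, apply the Leibniz rule, observe that only the terms with $|I|=m_1$, $|J|=m_2$ and all derivatives passing through the coefficients contribute at order $m_1+m_2$, then factor $\xi_{I+J}=\xi_I\xi_J$ — is exactly the standard verification the author is implicitly relying on. Your closing remark about the degenerate case (interpreting $\sigma(D_1D_2)$ as the degree-$(m_1+m_2)$ part, which may vanish) is a worthwhile precision that the paper's statement glosses over.
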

\begin{rem}
Often, one is interested in inverting differential operators. Formally, the observation above implies that the symbol of the inverse should be the inverse of the symbol. This motivates the generalization of the symbol to a much larger class of operators, the so-called pseudo-differential operators, where we allow symbol to live in a larger class of functions than polynomials. See e.g. \cite{} for an introduction to pseudodifferntial operators. 
\end{rem}
A particularly nice class of differential operators are given by elliptic operators.
\begin{defn}
A differential operator is called elliptic if 
$$\sigma(L)(\xi)_p \colon E_p \to F_p $$
is an isomorphism whenever $\xi_p \neq 0$.
\end{defn}
\begin{expl} The de Rham differential $d$ is not elliptic: Its symbol is given (locally) by $\sigma(d)(\xi) = \ii\sum_i (dx^i \wedge)\xi_i$
but the maps $dx^i \colon \bigwedge{}^\bullet\C^n \to \bigwedge{}^\bullet\C^n$ are not invertible. 
\end{expl}
\subsection{Formal adjoints}
We now suppose that $M$ is Riemannian with metric $g$ and that $E,F$ carries hermitian metrics $\langle \cdot,\cdot\rangle_E,\langle \cdot,\cdot\rangle_F$. This defines an inner product on sections of $E$ and $F$: For instance for $u,v \in \Gamma(E)$ we define 
\begin{equation}
(u,v)_{L^2,E} = \int_M \langle u(x),v(x) \rangle_E dvol_g
\end{equation}
where $dvol_g$ is the volume form associated to the Riemannian metric. The spaces of smooth sections $\Gamma(E),\Gamma(F)$ are not complete with respect to this inner product, but one can complete them to obtain Hilbert spaces. We will not do this and work with formal adjoints instead
\begin{defn}
Let $L \colon\Gamma(E) \to \Gamma(F)$ be a pdo. A pdo $L^*\colon\Gamma(F) \to \Gamma(E)$ is called \emph{formal adjoint} of $L$ if, for all $u\in\Gamma(E)$ and $v \in \Gamma(F)$ we have 
\begin{equation}
\int_M \langle Lu,v \rangle_F dvol_g = \int_M \langle u, L^*v \rangle_E dvol_g.
\end{equation}
\end{defn}
We list without proof the following facts.
\begin{prop}Let $L,L_1,L_2$ be  pdo's.
\begin{enumerate}[i)]
\item The formal adjoint of $L$ always exists and is unique. 
\item $L = (L^*)^*$.
\item $(L_1L_2)^* = L_2^*L_1^*$ if the composition makes sense. 
\item $\sigma(L)(\xi)^* = \sigma(L^*)(\xi)$ for all $\xi \in \Gamma(T^*M)$.
\end{enumerate}
\end{prop}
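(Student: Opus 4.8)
The plan is to prove (i) in full and then obtain (ii)--(iv) as formal consequences of the defining identity together with uniqueness. Uniqueness comes first and is easy: if $L_1^*$ and $L_2^*$ both satisfy the adjoint identity for $L$, then $\int_M \langle u, (L_1^*-L_2^*)v\rangle_E\, dvol_g = 0$ for all $u\in\Gamma(E)$ and $v\in\Gamma(F)$; choosing $u$ a bump section concentrated near a point where $(L_1^*-L_2^*)v$ is nonzero forces $L_1^* = L_2^*$. (The same bump-function argument shows the $L^2$-pairing on $\Gamma(E)$ is non-degenerate, which I use throughout.)

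For existence I would argue locally and glue. In a good chart $U$ with coordinates in which $dvol_g = \rho(x)\,dx$ for a positive density $\rho$, write $(Lu)_U = \sum_{|I|\le m} A^I_U(x)\,\partial^I_x u_U$. For $u,v$ with compact support in $U$, expand $\int_U \langle (Lu)_U, v_U\rangle_F\,\rho\,dx$ using the matrix of $\langle\cdot,\cdot\rangle_F$ and integrate by parts $|I|$ times, moving every $\partial_{x_j}$ off $u_U$. By the Leibniz rule the derivatives distribute onto $\rho$, onto the entries of $A^I_U$, and onto the hermitian metric coefficients, yielding $\int_U \langle u_U, \sum_{|J|\le m} B^J_U(x)\,\partial^J_x v_U\rangle_E\,\rho\,dx$ with explicit matrices $B^J_U$ assembled from $A^I_U$, $\rho$, the two metrics and their derivatives; the term in which all derivatives land on $v_U$ gives the top-order coefficients $B^I_U = (-1)^m (A^I_U)^*$ for $|I|=m$, where $*$ is the pointwise hermitian adjoint $\C^l\to\C^k$ with respect to $h^E,h^F$. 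This defines a pdo $L^*_U$ of order $m$ on $U$. On an overlap $U\cap V$ the operators $L^*_U$ and $L^*_V$ are both formal adjoints of $L$ over $U\cap V$, hence agree by local uniqueness, so the $L^*_U$ glue to a global pdo $L^*$; finally a partition of unity subordinate to the cover, together with compactness of $M$, promotes the identity to all (not necessarily compactly supported) sections of $E$ and $F$.

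Granting (i), the remaining parts are short. For (ii): conjugating $\int\langle Lu,v\rangle_F = \int\langle u,L^* v\rangle_E$ and using $\langle a,b\rangle = \overline{\langle b,a\rangle}$ gives $\int\langle L^* v,u\rangle_E = \int\langle v,Lu\rangle_F$, which is precisely the adjoint identity defining $(L^*)^*$, so uniqueness yields $(L^*)^* = L$. For (iii), with $L_2\colon\Gamma(E)\to\Gamma(F)$ and $L_1\colon\Gamma(F)\to\Gamma(G)$, the chain $\int\langle L_1L_2 u,w\rangle_G = \int\langle L_2 u, L_1^* w\rangle_F = \int\langle u, L_2^* L_1^* w\rangle_E$ shows that $L_2^* L_1^*$ --- a pdo, being a composition of pdos --- is the formal adjoint of $L_1L_2$; uniqueness finishes it. For (iv), in a good chart $\sigma(L)(\xi)|_U = \ii^m\sum_{|I|=m} A^I_U(x)\,\xi_I$ with $\xi_I$ real, while the local formula for $L^*$ just obtained has top-order coefficients $(-1)^m(A^I_U)^*$, so $\sigma(L^*)(\xi)|_U = \ii^m(-1)^m\sum_{|I|=m}(A^I_U)^*\xi_I = (-\ii)^m\sum_{|I|=m}(A^I_U)^*\xi_I = \bigl(\ii^m\sum_{|I|=m}A^I_U\xi_I\bigr)^* = \sigma(L)(\xi)^*$.

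The one genuinely laborious step is the existence half of (i): carrying out the repeated integration by parts while correctly tracking the non-constant volume density and the non-constant hermitian metric coefficients, and confirming that what emerges is again a differential operator of order $m$ with the stated top symbol. Everything after that is essentially bookkeeping-free and rests only on the defining identity and uniqueness.
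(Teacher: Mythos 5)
The paper states these four facts explicitly ``without proof,'' so there is no argument of the author's to compare against; your proposal supplies the standard textbook proof and it is correct. The structure is exactly right: non-degeneracy of the $L^2$ pairing gives uniqueness, local integration by parts in a good chart gives existence of $L^*_U$ with top coefficients $(-1)^m (A^I_U)^*$, local uniqueness glues the $L^*_U$ into a global pdo, and (ii)--(iv) then follow formally (for the partition-of-unity step it is worth saying explicitly that $L$ is local, i.e.\ $\mathrm{supp}(L(\varphi_\alpha u)) \subseteq \mathrm{supp}(\varphi_\alpha u) \subset U_\alpha$, which is what lets you apply the chartwise identity term by term). Your symbol computation $\sigma(L^*)(\xi) = (-\ii)^m \sum_{|I|=m} (A^I_U)^* \xi_I = \sigma(L)(\xi)^*$ is the right bookkeeping, since conjugating $\ii^m$ absorbs the $(-1)^m$ from integration by parts.
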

\begin{expl}
Consider the de Rham differential $d \colon \bigwedge^kTM \to \bigwedge^{k+1}TM$ (with metrics induced by $g$) and let 
$d^* = (-1)^{nk + n + 1} * d *$, where $* \colon \Omega^{k}(M) \to \Omega^{n-k}(M)$ is the Hodge star associated to $g$. Then $d^*$ is the formal adjoint of $d$. To see this, remember that 
$$\int_M\langle\tau,\omega\rangle_{\bullet^kTM} dvol_g = \int_M \tau \wedge *\omega$$
(and in fact the Hodge star can be defined this way) and that $*^2 = \pm 1$. Then, we compute 
\begin{align*}
\int_M\langle d\tau,\omega\rangle_{\bigwedge^{k+1}TM} &= \int_M d\tau \wedge *\omega = \pm \int_M \tau \wedge d*\omega = \pm \int_M \tau \wedge * * d * \omega\\
&=\pm \int_M \langle \tau , *d*\omega\rangle_{\bigwedge^kTM} = \int_M\langle\tau,d^*\omega\rangle_{\bigwedge^kTM}.
\end{align*}
\end{expl}
\begin{expl}
Let $E$ be a hermitian vector bundle and let $\nabla$ be a metric connection. Let $F = T^*M \otimes E$ with the product metric. 
Then $$\nabla = dx^i \otimes \left(\frac{\de }{\de x^i} + A_i\right)$$ and hence 
$$\nabla^* =\sum_i \left(\frac{\de }{\de x^i} + A_i\right)^*(dx^i)^* = \sum \left(-\frac{\de }{\de x^i}-A_i \right)\iota_{dx^i}$$
where we used that $(dx^i)^* = \iota_{dx^i}$ and $\frac{ \de}{\de x^i}^* = -\frac{\de}{\de x^i}$. Also, $A_i^* = - A_i$ because the connection is metric (and hence the connection 1-forms take values in $u(n)$). 
\end{expl}
\begin{defn}
If $E$ is a hermitian vector bundle with connection, then $\nabla^*\nabla \colon \Gamma(E) \to \Gamma(E)$ is called the connection Laplacian of $E$. 
\end{defn}
The symbol of such an operator can easily be computed:
\begin{prop}
$\sigma(\nabla^*\nabla)(\xi) = g(\xi,\xi)\otimes \mathrm{id}_E$.
\end{prop}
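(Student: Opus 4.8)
The statement to prove is that the symbol of the connection Laplacian satisfies $\sigma(\nabla^*\nabla)(\xi) = g(\xi,\xi)\otimes \mathrm{id}_E$.

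\textbf{Proof proposal.}

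The plan is to use the multiplicativity of the symbol under composition, established earlier: $\sigma(D_1D_2)(\xi) = \sigma(D_1)(\xi)\sigma(D_2)(\xi)$, together with the behaviour of the symbol under formal adjoints, namely $\sigma(L^*)(\xi) = \sigma(L)(\xi)^*$. Thus the problem reduces to computing $\sigma(\nabla)(\xi)$ and its adjoint, and then multiplying.

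First I would compute $\sigma(\nabla)(\xi)$. Working in a good chart $U$ where $\nabla = d + A$ with $A = A_i\,dx^i$, the operator is $(\nabla\sigma)_U = \sum_i (dx^i\otimes)\frac{\partial}{\partial x^i}\sigma_U + \text{(order $0$)}$, so the order-$1$ part has coefficients $A^i_U(x)\colon \C^k \to \C^n\otimes\C^k$ given by $v\mapsto dx^i\otimes v$. By the definition of the symbol ($m=1$), for $\xi = \xi_i\,dx^i$ we get $\sigma(\nabla)(\xi)\colon E_p \to (T^*M\otimes E)_p$, $v \mapsto \ii\,\xi\otimes v$, i.e. $\sigma(\nabla)(\xi) = \ii\,\xi\otimes(-)$. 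Next, using the formula for $\nabla^*$ from the preceding example (or directly $\sigma(\nabla^*)(\xi) = \sigma(\nabla)(\xi)^*$), the adjoint of the map $v\mapsto \ii\xi\otimes v$ with respect to the metric $g$ on $T^*M$ and the hermitian metric on $E$ is $\eta\otimes w \mapsto -\ii\,g(\xi,\eta)\,w$ (the contraction $\iota_\xi$ up to the conjugated $\ii$); in particular $\sigma(\nabla^*)(\xi)\colon (T^*M\otimes E)_p \to E_p$ sends $\xi\otimes v \mapsto -\ii\,g(\xi,\xi)v$.

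Finally I would compose: $\sigma(\nabla^*\nabla)(\xi) = \sigma(\nabla^*)(\xi)\circ\sigma(\nabla)(\xi)$ applied to $v\in E_p$ gives $\sigma(\nabla^*)(\xi)(\ii\,\xi\otimes v) = \ii\cdot(-\ii)\,g(\xi,\xi)v = g(\xi,\xi)v$, which is exactly $g(\xi,\xi)\otimes\mathrm{id}_E$ as claimed. The only mildly delicate point is getting the sign and the $\ii$-bookkeeping right in the adjoint of $\sigma(\nabla)(\xi)$ — one must remember that the symbol carries a factor $\ii^m = \ii$ and that taking the adjoint conjugates it to $-\ii$, so the product $\ii\cdot\overline{\ii} = 1$ comes out positive; this is where I expect to have to be careful, but it is a short computation rather than a real obstacle. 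An alternative, fully coordinate-based check would be to take the local expression $\nabla^*\nabla = \sum_i(-\partial_i - A_i)(\partial_i + A_i)\otimes(\text{contractions})$ and read off that the second-order part is $-\sum_i \partial_i^2\otimes\mathrm{id}_E$, whose symbol is $\ii^2\sum_i(-1)\xi_i^2\otimes\mathrm{id}_E = \sum_i\xi_i^2\otimes\mathrm{id}_E = g(\xi,\xi)\otimes\mathrm{id}_E$ (since the local frame is chosen so that $g$ is standard at the point, or more generally $g(\xi,\xi) = g^{ij}\xi_i\xi_j$ and the principal symbol of $\nabla^*\nabla$ picks up $g^{ij}$); I would likely present the composition argument as the main proof and mention this as a sanity check.
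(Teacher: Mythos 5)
Your proposal is correct and follows essentially the same route as the paper: compute $\sigma(\nabla)(\xi) = \ii\,\xi\otimes(-)$ and $\sigma(\nabla^*)(\xi) = -\ii\,\iota_\xi$, then use multiplicativity of the symbol to obtain $g(\xi,\xi)\otimes\mathrm{id}_E$. Your attention to the $\ii\cdot(-\ii)=1$ bookkeeping is exactly the point where the signs work out, and the coordinate-based sanity check is a fine supplement but not needed.
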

In particular, the connection Laplacian is always elliptic. 
\begin{proof}
We have 
\begin{align*}
\sigma(\nabla^*\nabla)(\xi)_p &= \sigma(\nabla^*)(\xi)_p\sigma(\nabla)(\xi)_p \\
&= \left(-\ii \sum_i\iota_{dx^i}\xi_i\right)\left(\ii\sum_j (dx^j \otimes ) \xi_j \right) \\
&= \sum_{i,j} g(dx^i,dx^j) \otimes \mathrm{id_E}  \\
&= g(\xi,\xi) \otimes \mathrm{id}_E 
\end{align*}
\end{proof}
This motivates the following definition. 
\begin{defn}
A pdo $\Delta$ on $E$ is called \emph{generalized Laplacian} if $\sigma(\Delta)(\xi) = g(\xi,\xi)\otimes \mathrm{id}_E$.
\end{defn}
\section{Fredholm index} 
An important fact about elliptic operators is the following theorem. 
\begin{thm}
Let $L \colon \Gamma(E) \to \Gamma(F)$ be an elliptic p.d.o. Then $\ker L$ is finite-dimensional. 
\end{thm}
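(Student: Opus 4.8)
The plan is to deduce finite-dimensionality from an \emph{a priori elliptic estimate} combined with the compactness of Sobolev embeddings on the compact manifold $M$. First I would introduce the Sobolev spaces $H^s(E)$, obtained by completing $\Gamma(E)$ in norms $\|\cdot\|_{H^s}$ built from a fixed Riemannian metric, a connection, and a finite cover of $M$ by good charts; since $M$ is compact these are independent of the choices up to equivalence, and an order-$m$ pdo $L$ extends to a bounded map $H^{s+m}(E) \to H^s(F)$ for every $s$.

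The analytic core of the argument is the \emph{G\aa rding inequality}: for each $s$ there is a constant $C=C(s)$ with
\begin{equation}
\|u\|_{H^{s+m}(E)} \le C\bigl(\|Lu\|_{H^s(F)} + \|u\|_{H^s(E)}\bigr) \qquad \text{for all } u \in \Gamma(E),
\end{equation}
where $m = \mathrm{ord}(L)$. I would prove this by first treating the constant-coefficient, principal-symbol model on $\R^n$ via the Fourier transform --- this is exactly where ellipticity enters, since invertibility of $\sigma(L)(\xi)$ for $\xi \neq 0$ forces the symbol to be bounded below on spheres --- then absorbing variable coefficients and lower-order terms by a freeze-the-coefficients/partition-of-unity argument, and finally patching the local estimates over the finite good cover. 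This is the step I expect to be the main obstacle, as it is essentially the entire local theory of elliptic operators (equivalently, the construction of a parametrix); everything afterwards is soft.

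Granting the estimate, I would argue as follows. Elliptic regularity --- a bootstrap obtained by applying the estimate with successively larger $s$ --- shows every distributional solution of $Lu=0$ is smooth, so $\ker L$ is the same space whether computed in $\Gamma(E)$ or in $L^2(E) = H^0(E)$, and it is closed there. Taking $s=0$ and restricting to $u \in \ker L$, the inequality collapses to $\|u\|_{H^m(E)} \le C\|u\|_{L^2(E)}$, so on $\ker L$ the $H^m$- and $L^2$-norms are equivalent. Hence the identity map of $\ker L$ factors as
\begin{equation}
(\ker L, \|\cdot\|_{L^2}) \hookrightarrow (\ker L, \|\cdot\|_{H^m}) \xrightarrow{\ \iota\ } (\ker L, \|\cdot\|_{L^2}),
\end{equation}
where $\iota$ is the restriction of the Rellich--Kondrachov compact embedding $H^m(E) \hookrightarrow L^2(E)$, which is compact precisely because $M$ is compact. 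Thus the identity on the Banach space $\ker L$ is a compact operator, its closed unit ball is therefore compact, and Riesz's lemma gives $\dim \ker L < \infty$. I would close by noting that the same circle of ideas yields $\dim \mathrm{coker}\, L < \infty$, so that the Fredholm index $\ind(L) = \dim\ker L - \dim\mathrm{coker}\, L$ is well-defined --- the invariant whose topological computation is the subject of the index theorem.
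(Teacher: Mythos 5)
The paper does not actually prove this theorem --- it is quoted as a known fact (as are the Fredholm alternative and the homotopy invariance of the index that follow it), so there is no in-text argument to compare against. Your proposal is the standard and correct route: the a priori elliptic estimate $\|u\|_{H^{s+m}} \le C(\|Lu\|_{H^s} + \|u\|_{H^s})$, equivalence of the $H^m$- and $L^2$-norms on $\ker L$, the Rellich--Kondrachov compact embedding (which is where compactness of $M$ is used), and Riesz's theorem that a normed space with compact closed unit ball is finite-dimensional. You correctly identify the G\aa rding-type estimate as the real content, and your sketch of it (Fourier transform for the frozen-coefficient model, where ellipticity gives a lower bound for $\sigma(L)(\xi)$ on $|\xi|=1$, then perturbation and a partition of unity) is the right outline.

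One point deserves a sharper statement: the regularity step, ``every distributional solution of $Lu=0$ is smooth,'' is not a formal consequence of the a priori inequality for smooth $u$, since that inequality presupposes $u \in H^{s+m}$ before it can improve the norm. You need either a parametrix (an approximate inverse modulo smoothing operators, built from $\sigma(L)(\xi)^{-1}$) or a Friedrichs mollifier / difference-quotient argument to bootstrap a priori-unknown regularity. You gesture at the parametrix, so this is a matter of emphasis rather than a gap, but in a written-out proof it is the place where hand-waving most often hides an error. The closing remark about $\mathrm{coker}\, L$ also needs slightly more than ``the same circle of ideas'': one identifies $\mathrm{coker}\, L \cong \ker L^*$, which requires knowing that $L^*$ is elliptic (true, since $\sigma(L^*)(\xi) = \sigma(L)(\xi)^*$) \emph{and} that $L$ has closed range, the latter again coming from the elliptic estimate. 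With those caveats, the argument is sound.
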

Also, we have the following central fact about elliptic operators: 
\begin{thm}[Fredholm alternative]
Let $L$ be an elliptic operator. Then 
\begin{align*}
\mathrm{Im} L = (\ker L^*)^\perp \\
\mathrm{Im} L^* = (\ker L)^\perp 
\end{align*}
\end{thm}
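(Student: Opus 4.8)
The plan is to first dispose of the easy inclusions and then attack the substantial one with Sobolev–space techniques. The inclusions $\mathrm{Im}\,L \subseteq (\ker L^*)^\perp$ and $\mathrm{Im}\,L^* \subseteq (\ker L)^\perp$ are immediate from the defining property of the formal adjoint: if $v = Lu$ and $w \in \ker L^*$ then $(v,w)_{L^2} = (Lu,w)_{L^2} = (u,L^*w)_{L^2} = 0$. By symmetry — recall $(L^*)^* = L$, and that $L^*$ is again elliptic since $\sigma(L^*)(\xi) = \sigma(L)(\xi)^*$ is invertible for $\xi \neq 0$ — it suffices to prove one reverse inclusion, say that every $v \in \Gamma(F)$ with $v \perp \ker L^*$ lies in the image of $L$.

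The main analytic input is the \emph{fundamental elliptic estimate}: for each $s$ there is $C_s > 0$ with $\|u\|_{H^s} \leq C_s\bigl(\|Lu\|_{H^{s-m}} + \|u\|_{H^{s-1}}\bigr)$ for all $u \in \Gamma(E)$, where $m$ is the order of $L$ and $H^s$ denotes the Sobolev completions of $\Gamma(E),\Gamma(F)$. This is proved by gluing local parametrices built from the pointwise inverse of the principal symbol (which exists by ellipticity) via the pseudodifferential calculus; I would cite this together with elliptic regularity (if $Lu = f$ with $f \in H^s$ then $u \in H^{s+m}$, so $f$ smooth forces $u$ smooth), as these are precisely the ingredients behind the already–quoted finiteness of $\dim\ker L$. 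Since $M$ is compact, the Rellich inclusion $H^s \hookrightarrow H^{s-1}$ is compact; combining this with the estimate, a standard argument shows $L : H^{s}(E) \to H^{s-m}(F)$ has \emph{closed range} for every $s$ — restrict $L$ to the $L^2$–orthogonal complement of the finite–dimensional (smooth) kernel, where the estimate upgrades to $\|u\|_{H^s} \leq C\|Lu\|_{H^{s-m}}$ after absorbing the compact lower–order term.

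Next I would run the Hilbert–space orthogonal decomposition at the level $L : H^m(E) \to L^2(F)$. Because the range is closed, $L^2(F) = \mathrm{Im}(L) \oplus \mathrm{Im}(L)^{\perp}$, and $\mathrm{Im}(L)^{\perp}$ is exactly the set of $v \in L^2(F)$ with $(Lu, v)_{L^2} = 0$ for all $u \in H^m(E)$, i.e. the weak solutions of $L^*v = 0$. Elliptic regularity for $L^*$ promotes every such $v$ to a smooth section, so $\mathrm{Im}(L)^{\perp} = \ker L^* \subseteq \Gamma(F)$, whence $\mathrm{Im}\bigl(L : H^m(E) \to L^2(F)\bigr) = (\ker L^*)^{\perp}$. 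To descend to smooth sections: given $v \in \Gamma(F)$ with $v \perp \ker L^*$, this produces $u \in H^m(E)$ with $Lu = v$; since $v$ is smooth, elliptic regularity for $L$ forces $u \in \bigcap_s H^s(E) = \Gamma(E)$. Hence $v \in \mathrm{Im}(L : \Gamma(E) \to \Gamma(F))$, which is the desired inclusion; the identity $\mathrm{Im}\,L^* = (\ker L)^\perp$ follows verbatim with $L$ and $L^*$ interchanged.

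The main obstacle is packaging the elliptic machinery — the Gårding/elliptic estimate, elliptic regularity, and the closed–range consequence — at the right level of generality for operators between bundles; once these are in hand, the remainder is the routine functional analysis above. A fully self–contained treatment would require developing a chunk of pseudodifferential calculus (or at least an explicit parametrix construction), so in these notes I would state those facts and cite, e.g., \cite{Lawson1990} or \cite{Nicolaescu2013}, consistently with how the finiteness of $\dim\ker L$ is invoked.
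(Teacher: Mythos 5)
The paper does not prove this theorem at all: it is quoted as a ``central fact about elliptic operators,'' on the same footing as the finite-dimensionality of $\ker L$ stated just before it, with the analytic machinery deliberately left to the literature. Your sketch is therefore supplying something the notes omit, and it is the standard, correct argument: the trivial inclusions $\mathrm{Im}\,L \subseteq (\ker L^*)^\perp$ from the defining property of the formal adjoint; reduction to a single reverse inclusion via $(L^*)^* = L$ and $\sigma(L^*)(\xi) = \sigma(L)(\xi)^*$; the a priori estimate together with Rellich compactness to obtain $\|u\|_{H^s} \leq C\|Lu\|_{H^{s-m}}$ on the orthogonal complement of the kernel (the ``absorption'' is the usual contradiction-plus-compactness argument, which you may want to spell out) and hence closed range; the Hilbert-space decomposition identifying $\mathrm{Im}(L)^\perp$ with weak, hence by elliptic regularity smooth, solutions of $L^*v = 0$; and a final regularity step to descend from $H^m$ to $\Gamma(E)$. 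The only caveat is the one you already flag yourself: essentially all of the content sits inside the elliptic estimate and elliptic regularity, which require a parametrix or pseudodifferential calculus to establish; black-boxing these with a citation is consistent with how the surrounding results (finiteness of $\ker L$, existence of formal adjoints, homotopy invariance of the index) are treated in the text, so your proposal is at the right level of detail for these notes.
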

A direct corollary is the following.
\begin{cor}
Let $L$ be an elliptic operator. Then both $\ker L$ and $\ker L^*$  are finite dimensional and we have 
$\dim \ker L^* = \dim \mathrm{coker} L$.
\end{cor}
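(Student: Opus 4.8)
The plan is to read the statement off directly from the two theorems just quoted, supplemented by one elementary fact about orthogonal complements of finite-dimensional subspaces. First I would check that $L^*$ is again elliptic: by the properties of formal adjoints listed above, $\sigma(L^*)(\xi) = \sigma(L)(\xi)^*$ for every $\xi \in \Gamma(T^*M)$, and the adjoint of a linear isomorphism between the (finite-dimensional, hence Hilbert) fibers $E_p, F_p$ is again an isomorphism; so $\sigma(L^*)(\xi)_p$ is an isomorphism whenever $\xi_p \neq 0$. Applying the finite-dimensionality theorem to $L$ and separately to $L^*$ then shows that $\ker L$ and $\ker L^*$ are both finite-dimensional, which is the first assertion.

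Next I would use the Fredholm alternative $\mathrm{Im}\, L = (\ker L^*)^\perp$ to identify the cokernel. By definition $\mathrm{coker}\, L = \Gamma(F)/\mathrm{Im}\, L = \Gamma(F)/(\ker L^*)^\perp$, so it suffices to prove that for the finite-dimensional subspace $W = \ker L^* \subset \Gamma(F)$ the natural map $W \to \Gamma(F)/W^\perp$, $w \mapsto [w]$, is a linear isomorphism. Injectivity is immediate: if $w \in W \cap W^\perp$ then $(w,w)_{L^2,F} = 0$, hence $w = 0$. For surjectivity I would pick an $L^2$-orthonormal basis $e_1,\dots,e_d$ of $W$ — these are smooth sections of $F$ since $W \subset \Gamma(F)$ — and, for $v \in \Gamma(F)$, set $Pv = \sum_{i=1}^d (v,e_i)_{L^2,F}\, e_i \in W$; this is again a smooth section, $v - Pv \in W^\perp$, and therefore $[v] = [Pv]$ in the quotient. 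Thus $\mathrm{coker}\, L \cong \ker L^*$, and in particular $\dim \mathrm{coker}\, L = \dim \ker L^*$, as claimed.

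The one point I would be careful to state, rather than a genuine obstacle, is that $\Gamma(F)$ is not complete for the $L^2$-inner product, so one cannot invoke the usual Hilbert-space orthogonal decomposition $\Gamma(F) = W \oplus W^\perp$ wholesale. This is harmless here precisely because $W$ is finite-dimensional: the projection $P$ onto $W$ is given by the explicit finite sum above and visibly maps $\Gamma(F)$ into $\Gamma(F)$. Beyond the two theorems quoted just above, no further analytic input is needed.
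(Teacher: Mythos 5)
Your proof is correct and follows exactly the route the paper intends: the text presents this as a ``direct corollary'' of the finite-dimensionality theorem and the Fredholm alternative, and your argument is precisely the careful fleshing-out of that assertion (ellipticity of $L^*$ via $\sigma(L^*)(\xi)=\sigma(L)(\xi)^*$, then $\mathrm{coker}\,L \cong \ker L^*$ from $\mathrm{Im}\,L=(\ker L^*)^\perp$). Your remark about the incompleteness of $\Gamma(F)$ and the explicit finite-rank projection is a genuinely worthwhile detail that the paper glosses over.
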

\begin{defn}
Let $L$ be an elliptic operator. The \emph{Fredholm index} of $L$ is 
\begin{equation}
\mathrm{ind}_L = \dim \ker L - \dim \mathrm{coker} L = \dim \ker L - \dim \ker L^*.
\end{equation}
\end{defn}
\begin{thm}
Let $L_t, 0\leq t \leq 1$ be a continuous\footnote{In the topology given by the sum of the $L^2$ norms of $L$ and $L^*$.} path of elliptic operators. Then the index of $L_t$ is constant, i.e. 
\begin{equation}
\mathrm{ind}(L_0) = \mathrm{ind}(L_1)
\end{equation}
\end{thm}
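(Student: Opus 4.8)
\emph{Proof idea.} The index is an integer and the parameter space $[0,1]$ is connected, so it suffices to show that $t\mapsto\ind(L_t)$ is \emph{locally constant}. To give this statement content one passes from the (incomplete) spaces of smooth sections to genuine Hilbert spaces, where ellipticity becomes the statement that the operator is Fredholm in the sense of bounded operators, and invokes the fact that the Fredholm index is locally constant in the operator norm. The plan has three ingredients: reduce to bounded operators on Sobolev spaces; prove (or cite) local constancy of the Fredholm index; combine with connectedness of $[0,1]$.

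First, the reduction. If $L\colon\Gamma(E)\to\Gamma(F)$ is elliptic of order $m$, the elliptic estimate shows that $L$ extends to a bounded operator $\overline{L}\colon H^s(E)\to H^{s-m}(F)$ between Sobolev completions, and that this extension is Fredholm; by elliptic regularity every element of $\ker\overline{L}$ is a smooth section, so $\ker\overline{L}=\ker L$, and likewise $\mathrm{coker}\,\overline{L}\cong\mathrm{coker}\,L$ (the cokernel is again represented by smooth sections, by the Fredholm alternative stated above together with elliptic regularity). Hence $\ind(\overline{L})=\ind(L)$, and it is enough to prove homotopy invariance for the family $\overline{L}_t$. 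The continuity hypothesis on $t\mapsto L_t$ is designed precisely so that the coefficients of $L_t$ vary continuously, hence so that $t\mapsto\overline{L}_t$ is continuous into $\mathcal{B}(H^s(E),H^{s-m}(F))$; moreover, since $[0,1]$ is compact the principal symbols $\sigma(L_t)(\xi)$ stay uniformly invertible for $\xi\neq 0$, so each $\overline{L}_t$ is indeed Fredholm.

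Second, the analytic heart: the set $\mathrm{Fred}(H_1,H_2)\subset\mathcal{B}(H_1,H_2)$ is open, and $\ind\colon\mathrm{Fred}(H_1,H_2)\to\Z$ is locally constant. The argument (which I would either reproduce or cite from a functional-analysis text) is the standard stabilization trick: given $T$ Fredholm with $p=\dim\ker T$ and $q=\dim\mathrm{coker}\,T$, add a finite-rank operator $R$ with image $(\mathrm{im}\,T)^\perp$ and vanishing on $(\ker T)^\perp$, chosen so that $R$ maps $\ker T$ onto, or injectively into, $(\mathrm{im}\,T)^\perp$; then $T+R$ is surjective (if $p\ge q$), or injective with closed range (if $p\le q$), has index $p-q=\ind(T)$ because a finite-rank perturbation does not change the index, and a sufficiently small perturbation of such an operator remains of the same type and keeps the same index. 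Transporting back through $-R$ gives $\ind(T+K)=\ind(T)$ for all $K$ of sufficiently small norm.

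Finally, combining the two steps: $t\mapsto\ind(\overline{L}_t)$ is a continuous map from the connected space $[0,1]$ to the discrete space $\Z$, hence constant, and by the first step $\ind(L_t)=\ind(\overline{L}_t)$, so in particular $\ind(L_0)=\ind(L_1)$. The main obstacle is the second ingredient --- the local constancy of the Fredholm index --- together with the routine but fiddly verification that the stated continuity of $L_t$ really translates into operator-norm continuity of the Sobolev extensions and that ellipticity is preserved uniformly along the compact path.
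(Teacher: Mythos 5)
The paper states this theorem without proof --- it appears in a list of quoted facts about elliptic operators --- so there is no in-text argument to compare yours against. Your outline is the standard and correct route: pass to Sobolev completions, use elliptic regularity to identify kernel and cokernel of the bounded extension with those of $L$ itself, and reduce everything to the local constancy of the index on the open set of Fredholm operators between Hilbert spaces, plus connectedness of $[0,1]$.

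Two places where the sketch should be tightened if written out. First, inside the stabilization step the appeal to ``a finite-rank perturbation does not change the index'' is unnecessary and logically heavier than what you need: if $R$ vanishes on $(\ker T)^\perp$ and maps $\ker T$ onto $(\mathrm{im}\,T)^\perp$ (say $p\geq q$), then for any $x$ the vectors $Tx$ and $Rx$ lie in orthogonal subspaces, so $\ker(T+R)=\ker T\cap\ker R$ and $\ind(T+R)=p-q$ is computed directly; and the stability of a surjective Fredholm operator $S$ under small perturbations follows because $S$ restricted to $(\ker S)^\perp$ is an isomorphism onto the target (an open condition in operator norm), after which $\ker S'$ is exhibited as the graph of a linear map $\ker S\to(\ker S)^\perp$ and therefore keeps its dimension. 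Second, the footnote's topology (``sum of the $L^2$ norms of $L$ and $L^*$'') is weaker than $C^\infty$ continuity of the coefficients, so your assertion that it yields operator-norm continuity of $\overline{L}_t\colon H^m\to L^2$ deserves a sentence: either read the hypothesis as graph-norm (gap-topology) continuity of the closed extensions, for which the Fredholm-index stability theorem still holds, or strengthen it to continuous dependence of the coefficients, which is what occurs in every application in these notes. Neither point is a gap in the strategy; both are routine to repair.
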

Remember that this is not true for the dimension of the kernel or cokernel as such. 
\section{Dirac Operators}
Let us summarize some of the analytic properties of Dirac operators. Let $(E,\nabla^E)$ be a Dirac bundle and $D_E\colon \Gamma(E) \to \Gamma(E)$ be a Dirac operator. 
\begin{prop}
The symbol of the Dirac operator is the Clifford multiplication: 
\begin{equation}
\sigma(D_E)(\xi) = ic(\xi)\colon \Gamma(E) \to \Gamma(E)
\end{equation}
\end{prop}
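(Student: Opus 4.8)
The plan is to compute the symbol directly in a good chart using the local formula for the Dirac operator, exploiting the two lemmas already established: that the symbol is multiplicative, $\sigma(D_1 D_2)(\xi) = \sigma(D_1)(\xi)\sigma(D_2)(\xi)$, and the examples computed for connections. Recall that $D_E$ is the composition $\Gamma(E) \xrightarrow{\nabla^E} \Gamma(T^*M \otimes E) \xrightarrow{c} \Gamma(E)$, where $c$ is contraction of the $T^*M$-slot into the Clifford multiplication. So I would first identify the symbols of the two factors.

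First I would recall from the example on connections that $\nabla^E$ is a first-order pdo whose order-one part, in a good chart, is $\sigma \mapsto \sum_i (dx^i \otimes) \frac{\partial}{\partial x^i}\sigma$; hence its symbol is $\sigma(\nabla^E)(\xi) = \ii\, \xi \otimes (\cdot) \colon E_p \to (T^*M \otimes E)_p$, sending $v \mapsto \ii\, \xi \otimes v$ (the order-zero term $A_i$ does not contribute to the symbol). Second, $c \colon \Gamma(T^*M \otimes E) \to \Gamma(E)$ is a pdo of order $0$ — it is a vector bundle morphism — so by the earlier example its symbol is just the morphism itself: $\sigma(c)(\xi) = c$, acting as $\xi \otimes v \mapsto c(\xi)v$. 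Composing via the multiplicativity lemma, $\sigma(D_E)(\xi)v = \sigma(c)(\xi)\big(\sigma(\nabla^E)(\xi)v\big) = c(\ii\, \xi \otimes v) = \ii\, c(\xi)v$, which is exactly the claim.

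There is essentially no hard obstacle here; the only point requiring a little care is bookkeeping the factors of $\ii$ and making sure the convention for the symbol (which inserts $\ii^m$ with $m$ the order) is applied consistently — $\nabla^E$ has order $1$, contributing one factor of $\ii$, while $c$ has order $0$, contributing none, so the total is a single $\ii$, matching $\sigma(D_E)(\xi) = \ii\, c(\xi)$. One should also note the Clifford relation then immediately gives $\sigma(D_E)(\xi)^2 = -c(\xi)^2 = g(\xi,\xi)\,\mathrm{id}_E$, so $D_E$ is elliptic and in fact a generalized Laplacian upon squaring; I would mention this as a remark but it is not needed for the stated proposition. I would write the proof as the three-line computation above, citing the connection example for $\sigma(\nabla^E)$, the order-zero example for $\sigma(c)$, and the multiplicativity lemma $\sigma(D_1 D_2) = \sigma(D_1)\sigma(D_2)$.
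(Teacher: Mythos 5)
Your proposal is correct and follows essentially the same route as the paper: the paper's proof is exactly the one-line computation $\sigma(D_E) = \sigma(c)\sigma(\nabla^E) = \ii c(\xi_i dx^i) = \ii c(\xi)$, using multiplicativity of the symbol, the connection example for $\sigma(\nabla^E)$, and the fact that $c$ is an order-zero operator. You have merely spelled out the bookkeeping in more detail, which is fine.
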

\begin{proof}
We have 
$$\sigma(D_E) = \sigma(c)\sigma(\nabla_E) = ic(\xi_idx^i) = ic(\xi)$$
\end{proof}
\begin{cor}
$D_E$ is elliptic and $D_E^2$ is a generalized Laplacian. 
\end{cor}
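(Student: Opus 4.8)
The plan is to compute the symbol of $D_E$, and then of $D_E^2$, directly — using the formula $\sigma(D_E)(\xi) = i\,c(\xi)$ from the preceding proposition, the Clifford relation, and the multiplicativity $\sigma(D_1D_2) = \sigma(D_1)\sigma(D_2)$. Everything reduces to the single algebraic identity $c(\xi)^2 = -g(\xi,\xi)$ holding fibrewise in $\End(E)$.

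For ellipticity, I would fix $p \in M$ and a covector $0 \neq \xi \in T_p^*M$. The symbol is $\sigma(D_E)(\xi)_p = i\,c(\xi)_p \in \End(E_p)$. Applying the Clifford relation with $\theta_1 = \theta_2 = \xi$ gives $c(\xi)_p^2 = -g(\xi,\xi)\,\mathrm{id}_{E_p}$. Since $g$ is a Riemannian metric it is positive definite, so $g(\xi,\xi) > 0$ for $\xi \neq 0$; hence $c(\xi)_p$ is invertible, with inverse $-g(\xi,\xi)^{-1}c(\xi)_p$, and therefore so is $i\,c(\xi)_p$. Thus $\sigma(D_E)(\xi)_p$ is an isomorphism whenever $\xi_p \neq 0$, which is exactly ellipticity of $D_E$.

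For the second claim, $D_E^2 = D_E \circ D_E$ is a pdo of order at most $2$, and by the composition rule (with the degree-$2$ symbol on the left)
\begin{equation}
\sigma(D_E^2)(\xi) = \big(\sigma(D_E)(\xi)\big)^2 = (i\,c(\xi))^2 = -\,c(\xi)^2 = g(\xi,\xi)\,\mathrm{id}_E .
\end{equation}
This is precisely the defining condition for $D_E^2$ to be a generalized Laplacian; in particular the order of $D_E^2$ is exactly $2$, since nondegeneracy of $g$ means the degree-$2$ symbol does not vanish. As a consistency check, combining this with the earlier observation that every generalized Laplacian is elliptic re-derives ellipticity of $D_E^2$.

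I do not expect any genuine obstacle: the content is entirely the identity $c(\xi)^2 = -g(\xi,\xi)$. The only points deserving a line of care are the bookkeeping about the order of $D_E^2$ together with the convention that $\sigma(\cdot)$ always extracts the \emph{top}-degree coefficients — so that one is entitled to use the degree-$2$ symbol when identifying $D_E^2$ as a generalized Laplacian — and the fact that it is positive-definiteness (respectively nondegeneracy) of the Riemannian metric that makes $g(\xi,\xi)$ nonzero for $\xi \neq 0$.
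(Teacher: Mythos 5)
Your proof is correct and follows essentially the same route as the paper: the paper's one-line argument that $V\subset \Cl(V,g)^\times$ is exactly your observation that the Clifford relation gives $c(\xi)^{-1} = -g(\xi,\xi)^{-1}c(\xi)$ for $\xi\neq 0$, and both arguments derive the generalized-Laplacian property of $D_E^2$ from the Clifford relation via multiplicativity of the symbol. Your version simply spells out the details the paper leaves implicit.
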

\begin{proof}
This follows from the fact that $V \subset Cl(V,g)^\times$ for all vector spaces with non-degenerate bilinear forms $g$. The fact that $D_E^2$ is a generalized Lapacian follows from the Clifford relation. 
\end{proof}
\begin{prop}
The Dirac operator is formally self-adjoint, that is, 
\begin{equation}
\int_M \langle \psi, D\psi \rangle dvol_g = \int_M \langle D\psi, \psi \rangle dvol_g\label{eq:Dselfadjoint} \end{equation}
\end{prop}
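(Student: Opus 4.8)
The plan is to establish the stronger polarized identity $\int_M \langle D\psi_1,\psi_2\rangle\, dvol_g = \int_M \langle \psi_1, D\psi_2\rangle\, dvol_g$ for all $\psi_1,\psi_2 \in \Gamma(E)$; the statement \eqref{eq:Dselfadjoint} is then the special case $\psi_1 = \psi_2 = \psi$. The strategy is to show that the difference of the two integrands is a total divergence and then to invoke the divergence theorem, using that $M$ is compact without boundary. The only structural inputs are the three defining properties of a Dirac bundle: the pointwise skew-adjointness of Clifford multiplication by a unit covector (the unitarity axiom), metric compatibility of $\nabla^E$, and compatibility of $\nabla^E$ with the Clifford multiplication.

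Concretely, I would work in a local orthonormal coframe $\{e^i\}$ with dual frame $\{e_i\}$ chosen \emph{synchronous} at a given point $p$ (i.e. $\nabla^g e_i|_p = 0$), in which $D = \sum_i c(e^i)\nabla^E_{e_i}$. Starting from $\langle D\psi_1,\psi_2\rangle = \sum_i \langle c(e^i)\nabla^E_{e_i}\psi_1,\psi_2\rangle$, I apply the unitarity axiom (valid since $g(e^i,e^i)=1$) to move $c(e^i)$ onto the second slot with a minus sign, then metric compatibility of $\nabla^E$ to rewrite $\langle \nabla^E_{e_i}\psi_1, c(e^i)\psi_2\rangle = e_i\langle\psi_1, c(e^i)\psi_2\rangle - \langle\psi_1, \nabla^E_{e_i}(c(e^i)\psi_2)\rangle$, and finally the Clifford-connection identity $\nabla^E_{e_i}(c(e^i)\psi_2) = c(\nabla^g_{e_i}e^i)\psi_2 + c(e^i)\nabla^E_{e_i}\psi_2$. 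At $p$ the $\nabla^g e^i$ term vanishes by synchronicity and $\sum_i c(e^i)\nabla^E_{e_i}\psi_2 = D\psi_2$. Collecting terms gives, at $p$,
\begin{equation}
\langle D\psi_1,\psi_2\rangle - \langle\psi_1, D\psi_2\rangle = -\sum_i e_i\langle\psi_1, c(e^i)\psi_2\rangle = -\mathrm{div}(V),
\end{equation}
where $V = \alpha^\sharp$ is the (complex) vector field metric-dual to the $1$-form $\alpha$ defined invariantly by $\alpha(X) = \langle\psi_1, c(X^\flat)\psi_2\rangle$, and the last equality again uses synchronicity at $p$. Since $V$ is globally well defined and the left-hand side is manifestly independent of all auxiliary choices, this pointwise identity patches together to an identity of functions on all of $M$. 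Integrating over $M$ and applying the divergence theorem then annihilates the right-hand side, which yields the polarized identity and hence \eqref{eq:Dselfadjoint}.

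The step I expect to require the most care is the frame bookkeeping: one must be sure that the terms involving $\nabla^g e_i$ genuinely drop out — handled cleanly by the fact that synchronous orthonormal frames exist at every point — and that the expression obtained really is the divergence of a globally-defined vector field, so that the identity verified in each synchronous patch is independent of the patch and globalizes. (Alternatively, one could avoid synchronous frames altogether by identifying the right-hand side with $-\delta\alpha$, $\delta$ the codifferential, using $\nabla^g$-compatibility of the metric on $T^*M$; but the synchronous-frame computation is the most economical.) Everything else is routine: the two compatibility properties of a Clifford connection, the skew-adjointness of $c(e^i)$, and Stokes' theorem on the compact manifold $M$.
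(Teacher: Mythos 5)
Your proposal is correct and is essentially the paper's argument with the details filled in: the paper's proof is the one-line remark that the claim follows from metric compatibility of the connection and unitarity of Clifford multiplication, which is exactly the computation you carry out (the divergence-theorem step and the synchronous-frame bookkeeping being the standard way to make that remark precise). No discrepancy to report.
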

\begin{proof}
This follows from the compatibility of the spin connection with the metric and the fact that the Clifford multiplication is unitary.
\end{proof}
\begin{cor}
For the operators $D_E^\pm \colon \Gamma(E^\pm) \to \Gamma(E^\mp)$ we have $(D_E^+)^* = D_E^-$.
\end{cor}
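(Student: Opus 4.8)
The plan is to combine the formal self-adjointness of $D_E$ (the preceding proposition) with the fact that $D_E$ is an \emph{odd} operator for the $\Z_2$-grading $E = E^+ \oplus E^-$, and then to unwind the definition of the formal adjoint. All the analytic input — existence and uniqueness of formal adjoints, and the identity $\int_M \langle D_E u, v\rangle dvol_g = \int_M \langle u, D_E v\rangle dvol_g$ — is already available, so the argument is essentially bookkeeping with the grading.

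First I would record the block structure of $D_E$. Since the Clifford connection $\nabla^E$ preserves the $\Z_2$-grading and the Clifford multiplication $c(\theta)$ is odd (axiom (c) in the definition of a Clifford module), the composition $D_E = c \circ \nabla^E$ maps $\Gamma(E^\pm)$ into $\Gamma(E^\mp)$; this is exactly what makes $D_E^\pm = \restr{D_E}{\Gamma(E^\pm)}\colon \Gamma(E^\pm)\to\Gamma(E^\mp)$ meaningful. Concretely, for $\psi = \psi^+ + \psi^-$ with $\psi^\pm \in \Gamma(E^\pm)$ one has $D_E\psi = D_E^-\psi^- + D_E^+\psi^+$, where $D_E^+\psi^+ \in \Gamma(E^-)$ and $D_E^-\psi^- \in \Gamma(E^+)$.

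Next I would feed homogeneous sections into the self-adjointness identity \eqref{eq:Dselfadjoint}. Take $u \in \Gamma(E^+)$ and $v \in \Gamma(E^-)$, regard them as sections of $E$, and apply the previous proposition:
\begin{equation*}
\int_M \langle D_E u, v\rangle\, dvol_g = \int_M \langle u, D_E v\rangle\, dvol_g .
\end{equation*}
By oddness, $D_E u = D_E^+ u \in \Gamma(E^-)$ and $D_E v = D_E^- v \in \Gamma(E^+)$; moreover the hermitian metrics on $E^\pm$ are the restrictions of $h$ on $E$, so the left-hand side is $(D_E^+ u, v)_{L^2,E^-}$ and the right-hand side is $(u, D_E^- v)_{L^2,E^+}$. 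Hence
\begin{equation*}
(D_E^+ u, v)_{L^2,E^-} = (u, D_E^- v)_{L^2,E^+} \qquad \text{for all } u \in \Gamma(E^+),\ v \in \Gamma(E^-),
\end{equation*}
which says precisely that $D_E^-$ is a formal adjoint of $D_E^+$. By uniqueness of formal adjoints, $(D_E^+)^* = D_E^-$, and taking adjoints of both sides (using $(L^*)^* = L$) also gives $(D_E^-)^* = D_E^+$.

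I do not expect a real obstacle here: the one point that needs a little care is the identification of $\restr{D_E}{\Gamma(E^+)}$, viewed as a map into $\Gamma(E^-)$, with $D_E$ evaluated on a section of $E^+$ inside $\Gamma(E)$ — but this is immediate once the oddness of $D_E$ has been recorded, and the paper has already tacitly used it in the remark introducing $D^\pm$.
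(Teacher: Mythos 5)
Your proof is correct and is essentially the paper's argument: the paper likewise expands the self-adjointness identity \eqref{eq:Dselfadjoint} into components using the orthogonality of $E = E^+ \oplus E^-$ and the oddness of $D_E$. You have simply written out the bookkeeping (feeding homogeneous sections into the polarized form of the identity and invoking uniqueness of formal adjoints) that the paper leaves implicit.
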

\begin{proof}
This follows from the fact that $E = E^+ \oplus E^-$ is an orthogonal sum by expanding \eqref{eq:Dselfadjoint} into components. 
\end{proof}
A central identity in the proof of the index theorem is the Weitzenböck formula (sometimes called also Lichnerowicz formula) for the square of the Dirac operator that we state here (see e.g. \cite{Nicolaescu2013} for a proof). 
\begin{thm}\label{thm:Weitzenboeck}
Let $(E,\nabla^E)$ be a Dirac bundle on a Riemannian manifold $(M,g)$. Denote the scalar curvature of $g$ by $r(g)$ and define, in a local orthonormal frame $e_i$,
$$c(F^{E/S}) = F^{E/S}(e_i,e_j)c(e^i)c(e^j) \in \Gamma(\End E).$$
Then we have 
\begin{equation}
D_E^2 = (\nabla^E)^*\nabla^E + \frac{r(g)}{4} + c(F^{E/S}).
\end{equation}
\end{thm}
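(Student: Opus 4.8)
The plan is to prove the identity pointwise: it is local and both sides are tensorial, so it suffices to verify it at an arbitrary point $p\in M$. I would fix a local orthonormal frame $e_1,\dots,e_n$ of $TM$ with dual coframe $e^1,\dots,e^n$ that is \emph{synchronous at $p$}, so that $\nabla^g e_i=0$ at $p$, hence also $\nabla^g e^i=0$ and $[e_i,e_j]=0$ at $p$. Writing $D_E=\sum_i c(e^i)\nabla^E_{e_i}$ and squaring,
\begin{align*}
D_E^2 &= \sum_{i,j} c(e^i)\,\nabla^E_{e_i}\bigl(c(e^j)\nabla^E_{e_j}\bigr) \\
&= \sum_{i,j} c(e^i)\,\bigl[\nabla^E_{e_i},c(e^j)\bigr]\,\nabla^E_{e_j} \;+\; \sum_{i,j} c(e^i)c(e^j)\,\nabla^E_{e_i}\nabla^E_{e_j}.
\end{align*}
By the Clifford compatibility \eqref{eq:cliff_compatible_comm}, $[\nabla^E_{e_i},c(e^j)]=c(\nabla^g_{e_i}e^j)$, which vanishes at $p$, so only the second sum contributes there.

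Next I would split the surviving sum into its diagonal and off-diagonal parts. The diagonal part is $\sum_i c(e^i)^2(\nabla^E_{e_i})^2=-\sum_i(\nabla^E_{e_i})^2$, and at $p$ — where the $\nabla^E_{\nabla^g_{e_i}e_i}$ correction drops because the frame is synchronous — this is exactly the connection Laplacian $(\nabla^E)^*\nabla^E$. For the off-diagonal part, pairing the terms $(i,j)$ and $(j,i)$ and using $c(e^j)c(e^i)=-c(e^i)c(e^j)$ turns it into $\tfrac12\sum_{i,j}c(e^i)c(e^j)[\nabla^E_{e_i},\nabla^E_{e_j}]$, and since $[e_i,e_j]=0$ at $p$ this equals $\tfrac12\sum_{i,j}c(e^i)c(e^j)\,F^{\nabla^E}(e_i,e_j)$. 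Thus, at $p$,
\begin{equation*}
D_E^2 = (\nabla^E)^*\nabla^E + \tfrac12\sum_{i,j}c(e^i)c(e^j)\,F^{\nabla^E}(e_i,e_j).
\end{equation*}

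Now I would insert the decomposition $F^{\nabla^E}=R^E+F^{E/S}$. Using the antisymmetry of $F^{E/S}$, the $F^{E/S}$-part is precisely the operator denoted $c(F^{E/S})$ in the statement, so it remains to identify $\tfrac12\sum_{i,j}c(e^i)c(e^j)R^E(e_i,e_j)$ with $r(g)/4$. Using $\rho_*^{-1}(e_k\wedge e_l)=\tfrac14[e_k,e_l]$ from \eqref{eq:propLieAlgIso2} and writing $R^g(e_i,e_j)=\tfrac12 R_{klij}\,e^k\wedge e^l$, one gets $R^E(e_i,e_j)=c\bigl(\rho_*^{-1}R^g(e_i,e_j)\bigr)=\tfrac14 R_{klij}\,c(e^k)c(e^l)$, so the term becomes $\tfrac18\sum_{i,j,k,l}R_{klij}\,c(e^i)c(e^j)c(e^k)c(e^l)$.

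The last step is the crux, and the one I expect to be the main obstacle: showing that this quartic Clifford expression collapses to the scalar $r(g)/4$. The mechanism is that for three distinct indices the product $c(e^i)c(e^j)c(e^k)$ is totally antisymmetric, so contracting it against the Riemann tensor produces a cyclic sum of curvature components over those indices, which vanishes by the first Bianchi identity; hence only the terms with $\{i,j\}=\{k,l\}$ survive. Evaluating $c(e^i)c(e^j)c(e^i)c(e^j)=-1$ and $c(e^i)c(e^j)c(e^j)c(e^i)=1$ on those, and using $R_{klij}=-R_{lkij}$, reduces the sum to $-\tfrac14\sum_{i\neq j}R_{ijij}$, which is $\tfrac14 r(g)$ by the definition of scalar curvature (the overall sign being pinned down by the curvature convention used in the notes). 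Combining the three contributions gives the identity at $p$, and since $p$ was arbitrary, everywhere. The only genuinely delicate point is tracking the numerical constants and signs through the Bianchi contraction; everything else is routine bookkeeping with \eqref{eq:cliff_compatible_comm}, the synchronous frame, and — if one wants an independent check that the curvature terms behave as claimed — Proposition~\ref{prop:twist_curv_commutes}.
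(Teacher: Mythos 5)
The notes do not actually prove this theorem --- they state it and refer to Nicolaescu for the proof --- so there is nothing internal to compare against; your argument is the standard Lichnerowicz computation and its overall structure is sound: synchronous frame at $p$, the commutator identity \eqref{eq:cliff_compatible_comm} to kill the first-order terms, the diagonal sum giving $(\nabla^E)^*\nabla^E$ at $p$, the off-diagonal sum giving $\tfrac12\sum_{i,j}c(e^i)c(e^j)F^{\nabla^E}(e_i,e_j)$, and the splitting $F^{\nabla^E}=R^E+F^{E/S}$. Two points deserve tightening. First, your jump from ``Bianchi kills the totally antisymmetric triples'' to ``only the terms with $\{i,j\}=\{k,l\}$ survive'' skips a step: after Bianchi disposes of the three-distinct-index contributions, one is still left with single-overlap terms of the form $\sum_j R_{ijjl}\,c(e^i)c(e^l)$ with $i\neq l$, and these vanish only because $\sum_j R_{ijjl}$ is the (symmetric) Ricci tensor being contracted against the alternating product $c(e^i)c(e^l)$; the Bianchi identity alone does not reduce you to the scalar terms. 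Second, watch the normalization of $c(F^{E/S})$: your computation naturally produces $\tfrac12\sum_{i,j}F^{E/S}(e_i,e_j)c(e^i)c(e^j)=\sum_{i<j}F^{E/S}(e_i,e_j)c(e^i)c(e^j)$, whereas the displayed definition in the statement, read with the Einstein convention, is the unrestricted double sum and hence twice that; you should either read the paper's sum as restricted to $i<j$ or flag the factor of $2$ explicitly rather than asserting the two expressions are ``precisely'' equal. Neither issue is fatal --- both are resolved by the standard bookkeeping you already identify as the delicate part --- but as written the final reduction to $r(g)/4$ is asserted rather than proved.
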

Rememember from Proposition \ref{prop:twist_curv_commutes} that $F^{E/S}(e_i,e_j)$ commutes with $c(e^i)c(e^j)$, so that the order in which we define $c(F^{E/S})$ does not matter. 
\chapter{The index theorem and its applications}\label{ch:Index}
In this central section we state the index theorem, provide some applications and give the idea of the proof. 
\section{Index theorem for spin Dirac operators}
Suppose $(M,g)$ is an even-dimensional spin manifold. Pick a spin structure on $M$ and define accordingly the spinor bundle $S_n$, the spin connection $\tilde{\nabla^g}$ and the Dirac operator $D \colon \Gamma(S_n) \to  \Gamma(S_n)$. We also recall the definition of the $\hat{A}$-genus 
\begin{equation}
\hat{A}(M,g) = \det{}^{1/2}\left(\frac{\frac{i}{4\pi}R^g}{\sinh\left(\frac{i}{4\pi}R^g\right)}\right) = \det{}^{1/2}\hat{A}(iR^g/2\pi) \in \bigoplus_{k}\Omega^{4k}(M)\end{equation}
where $R^g$ is the Riemannian curvature and $\hat{A}(x) = \frac{x/2}{\sinh(x/2)}$.
The index of $D^+\colon\Gamma(S_n^+)\to \Gamma(S_n^-)$ is given by the famous Atiyah-Singer index theorem: 
\begin{thm}[Atiyah-Singer]
\begin{equation}
\mathrm{ind}(D^+) = \int_M \hat{A}(M,g).
\end{equation}
\end{thm}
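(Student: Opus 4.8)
The plan is to follow the heat-kernel proof of Getzler: express the index analytically as a supertrace of a heat operator, use independence of the time parameter to reduce everything to a pointwise local computation, and then perform Getzler's rescaling to identify that local quantity with the $\hat A$-form by Mehler's formula. First I would record that, by the Weitzenböck formula (Theorem~\ref{thm:Weitzenboeck}) applied to the pure spinor bundle, whose twisting curvature vanishes, $D^2 = (\widetilde\nabla^g)^*\widetilde\nabla^g + r(g)/4$ is a nonnegative generalized Laplacian; hence for every $t>0$ the operator $e^{-tD^2}$ is smoothing with a smooth Schwartz kernel $k_t(x,y)$. Splitting $D^2 = D^-D^+ \oplus D^+D^-$ on $S_n = S_n^+\oplus S_n^-$ and using $(D^+)^* = D^-$, Clifford multiplication by $D^+$ identifies the positive eigenspaces of $D^-D^+$ and of $D^+D^-$, so all nonzero eigenvalues cancel in the supertrace and McKean--Singer gives $\str\bigl(e^{-tD^2}\bigr) = \dim\ker D^+ - \dim\ker D^- = \ind(D^+)$ for all $t>0$. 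Thus $\ind(D^+) = \int_M \str k_t(x,x)\, dvol_g(x)$ and I am free to take $t\to 0^+$.

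Next I would invoke the standard parametrix construction for generalized Laplacians: $k_t(x,x)$ has an asymptotic expansion $k_t(x,x)\sim (4\pi t)^{-n/2}\sum_{j\ge 0} t^j \Phi_j(x)$, with $\Phi_j(x)\in\End(S_{n,x})$ built universally and locally out of the metric and its derivatives. Since the left-hand side is $t$-independent and finite, the integrated supertraces of the singular terms must vanish and only the term constant in $t$ survives in the limit, giving the exact local formula
\[
\ind(D^+) = (4\pi)^{-n/2}\int_M \str \Phi_{n/2}(x)\, dvol_g(x).
\]
The problem is now reduced to computing $\str\Phi_{n/2}(x_0)$ at an arbitrary point $x_0\in M$.

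For this I would carry out Getzler's rescaling. Work in geodesic normal coordinates at $x_0$, trivialize $S_n$ by radial parallel transport, and use the symbol isomorphism $\sigma\colon\Cl_n\xrightarrow{\sim}\bigwedge\R^n$ of the earlier Corollary so that the fibrewise supertrace of an endomorphism becomes, up to the normalizing constant $(-2\ii)^{n/2}$, the coefficient of the top form $e^1\wedge\cdots\wedge e^n$. Assigning degree $1$ to each coordinate $x^i$, to each Clifford generator, and to $t^{1/2}$, and rescaling $(t,x)\mapsto(ut,u^{1/2}x)$ together with the Clifford variables, one checks using the Lichnerowicz formula that $u\,D^2$ converges as $u\to 0$ to the generalized harmonic oscillator
\[
K = -\sum_{i=1}^n\Bigl(\frac{\de}{\de x^i} - \tfrac14\sum_j (R^g_{x_0})_{ij}\,x^j\Bigr)^2,
\]
where $R^g_{x_0}$ is the Riemannian curvature at $x_0$ regarded as a skew matrix of $2$-forms and the scalar-curvature term $r(g)/4$ is of lower degree and drops out; correspondingly the rescaled heat kernels converge to the heat kernel of $K$. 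Mehler's formula computes that heat kernel on the diagonal at time $1$ as $(4\pi)^{-n/2}\det{}^{1/2}\!\left(\tfrac{R^g_{x_0}/2}{\sinh(R^g_{x_0}/2)}\right)$, and feeding this back through the supertrace normalization and the conventional replacement $R^g\mapsto \tfrac{\ii}{2\pi}R^g$ identifies $(4\pi)^{-n/2}\str\Phi_{n/2}(x_0)$ with the top-degree component of $\hat A(M,g)$ at $x_0$. Integrating over $M$ yields $\ind(D^+) = \int_M \hat A(M,g)$.

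The genuine obstacle is the third step. Making ``rescaling'' precise requires introducing a filtration on the algebra of spinor differential operators and proving that $u\,D^2$ converges to $K$ in the appropriate graded sense; more seriously, one must upgrade this to convergence of the associated heat kernels, which needs uniform Gaussian off-diagonal estimates and a Duhamel argument controlling the error terms. Steps one, two and four are comparatively routine: McKean--Singer is a spectral cancellation, the short-time expansion of $k_t$ is standard elliptic theory (and can be quoted), and Mehler's formula is an explicit Gaussian computation.
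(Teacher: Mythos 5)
Your proposal is the same heat-kernel argument the paper gives: McKean--Singer via spectral cancellation, reduction to a pointwise short-time limit, Getzler's rescaling in geodesic normal coordinates with the symbol isomorphism $\Cl_n \cong \bigwedge \R^n$, and Mehler's formula for the model harmonic oscillator (specialized to the spinor bundle, where $F^{E/S}=0$). You correctly identify the same analytic gap the paper itself flags --- justifying that convergence of the rescaled operators implies convergence of the heat kernels --- so the proposal matches the paper's proof in both structure and level of rigor.
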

A few immediate remarks are in order.
\begin{rem}
\begin{itemize}
\item Notice that we always have $\mathrm{ind} D = 0$ (since $D$ is formally self-adjoint).
\item The theorem shows that $\ind D^+ = 0$ if $ 4\nmid \dim M$ (since the $\hat{A}$-genus is concentrated in degrees divisible by 4).
\item Expanding $\hat{A}(M,g)$ in degrees one obtains 
$$\hat{A}(M,g) = 1 -\frac{1}{14}p_1(M,g) + \ldots,$$ 
where 
$p_1(M,g)$ is the first Pontryagin form is given by 
$$p_1(M,g) = \frac{1}{8\pi^2}\mathrm{tr}R^g \wedge R^g.$$
Hence the index theorem implies that the first Pontryagin number 
$$p_1(M) = \int_M p_1(M,g)$$
is divisible by $24$. 
\end{itemize}
\end{rem}
\section{Index theorem for Clifford Modules}
The index theorem has an extension to Clifford modules which has very interesting applications to topology. To state it, we have to introduce the relative Chern character of a Clifford module. 
\begin{defn}
Let $V$ be a representation of the Clifford algebra $Cl_n^c$, and let $F \in \End_{Cl_n^c} V$ (a linear map that commutes with the Clifford action).  Then we define the \emph{relative supertrace of $F$} to be 
\begin{equation}
\mathrm{str}^{E/S} = \frac{1}{2^{n/2}}\mathrm{str}(c(\omega_\C)F)
\end{equation}
\end{defn}
\begin{rem}
One can show that any Clifford module is of the form $V = \Delta_n \otimes W$, where the Clifford action is trivial on $W$. A map that commutes with the Clifford action is then just a linear map on $W$, and the relative supertrace is the supertrace of that map. 
\end{rem}
Let $(E,\nabla^E)$ be a Clifford module with twisting curvature $F^{E/S}$. 
\begin{defn}
We define the \emph{relative Chern character} $\mathbf{ch}_{E/S}$ by 
\begin{equation}
\mathbf{ch}_{E/S} = \mathrm{str}^{E/S} \exp \frac{i}{2\pi}F^{E/S}
\end{equation}
\end{defn}
We can now state the index theorem for general Clifford modules. 
\begin{thm}[Atiyah-Singer]
\begin{equation}
\ind D_E^+ = \int_M \hat{A}(M,g) \mathbf{ch}_{E/S}
\end{equation}
\end{thm}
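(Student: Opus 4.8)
The plan is to deduce the Clifford-module index theorem from the spin Dirac version together with the Weitzenböck/heat-kernel machinery, following the Getzler-rescaling proof sketched in the references. First I would reduce to the heat-kernel expression for the index: since $D_E$ is elliptic and formally self-adjoint, $D_E^2$ is a generalized Laplacian (by the corollary after Proposition on the Dirac symbol), so $e^{-tD_E^2}$ is a smoothing operator with a well-defined pointwise supertrace, and the McKean–Singer formula gives
\begin{equation}
\ind D_E^+ = \str\big(e^{-tD_E^2}\big) = \int_M \str\big(k_t(x,x)\big)\,dvol_g
\end{equation}
for every $t>0$, where $k_t$ is the heat kernel of $D_E^2$. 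This is independent of $t$ because the nonzero eigenspaces of $D_E^2$ cancel in pairs between $E^+$ and $E^-$. The strategy is then to compute the $t\to 0$ limit of the integrand.

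Next I would invoke the local index theorem: the pointwise supertrace $\str(k_t(x,x))$ admits an asymptotic expansion in powers of $t$, and the claim is that as $t\to 0$ it converges to the top-degree part of $\hat A(M,g)\,\mathbf{ch}_{E/S}$, evaluated at $x$. The Weitzenböck formula (Theorem \ref{thm:Weitzenboeck}),
\begin{equation}
D_E^2 = (\nabla^E)^*\nabla^E + \frac{r(g)}{4} + c(F^{E/S}),
\end{equation}
is the key input: it exhibits $D_E^2$ as a connection Laplacian plus a zeroth-order curvature term, and it is precisely the scalar curvature term and the twisting curvature term that will produce $\hat A$ and $\mathbf{ch}_{E/S}$ respectively. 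The mechanism is Getzler's rescaling: one works in normal coordinates around a fixed $x_0$, uses a synchronous frame (as in the corollary on the complex volume element), and assigns degrees to the Clifford variables $c(e^i)$ and to the coordinates so that the rescaled operator $\lim D_E^2$ becomes a harmonic-oscillator-type operator — a "Mehler" operator whose kernel is explicitly Gaussian. Mehler's formula then evaluates the rescaled supertrace, and the answer is exactly $\det^{1/2}\!\big(\tfrac{R^g/2}{\sinh(R^g/2)}\big)$ coupled with $\str^{E/S}\exp(F^{E/S})$, which after inserting the conventional factors of $i/2\pi$ is the top form of $\hat A(M,g)\,\mathbf{ch}_{E/S}$.

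The main obstacle is making the Getzler rescaling rigorous: one must show that the rescaled family of operators converges, in an appropriate sense, to the model harmonic oscillator, and that the corresponding heat kernels converge so that the limit of the supertrace is genuinely computed by Mehler's formula. This requires (i) controlling the off-diagonal decay and the uniformity of the heat-kernel asymptotics (Duhamel/parametrix estimates), (ii) the combinatorial bookkeeping that the only surviving term in the rescaled limit is the one saturating the Clifford degree — this is where the relative supertrace $\str^{E/S}$ and the factor $2^{-n/2}$ enter, since $\str$ on $\Cl_n^c$ annihilates everything below the volume element. A secondary, more routine point is checking that the definition of $\mathbf{ch}_{E/S}$ via $F^{E/S}$ is connection-independent at the level of cohomology, which follows from Chern–Weil applied to $\End_{Cl}(E)$-valued forms, so that the right-hand side depends only on $M$ and the Clifford module, matching the topological invariance of the left-hand side established by the constancy of the Fredholm index under continuous deformations.
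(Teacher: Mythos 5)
Your proposal is correct and follows essentially the same route as the paper's own (sketched) proof: the McKean--Singer formula with the pairing of nonzero eigenspaces for $t$-independence, then the $t\to 0$ limit computed by Getzler's rescaling in geodesic/synchronous coordinates using the local splitting $E_{x_0}\cong \Delta_n\otimes W$, the Weitzenb\"ock formula, and Mehler's formula for the model harmonic oscillator, with the relative supertrace and the factor $2^{-n/2}$ isolating the top Clifford degree. The analytical gaps you flag (convergence of the rescaled operators and their heat kernels) are exactly the ones the paper also defers to the references.
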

This index theorem has far-reaching implications in topology, and unites a number of seemingly very different-looking results. The strategy is as follows. One constructs a Clifford module $E$ such that the index of the associated $D_E^+$ is an invariant of $M$ (or maybe some extra structure associated with $M$). The index theorem provides a \emph{local} expression for that invariant in terms of characteristic classes of $M$. An important application is discussed in the following section. 
\section{Chern-Gauss-Bonnet theorem}
The goal of this section is to prove the Chern-Gauss-Bonnet theorem. Let us first introduce the \emph{dramatis personae}. 
\subsection{Definitions}
\begin{defn}
Let $M$  be a manifold. Then the \emph{Euler characteristic\footnote{The Euler characteristic can be defined for much more general classes of topological spaces, but it is not necessary for our discussion.} of $M$} is  
\begin{equation}
\chi(M) = \sum_{i = 0}^{\dim M} (-1)^ib_i = \sum_{i = 0}^{\dim M} (-1)^i\dim H^i(M,\R).
\end{equation}
\end{defn}
For the next definition we need the concept of Pfaffian. 
\begin{defn}
Let $A = A_{ij}$ be an antisymmetric $n \times n$ matrix for some even $n=2k$. Define $\omega_A = \frac12 A_{ij}e_i \wedge e_j$. Then the \emph{Pfaffian} of $A$ is defined by 
\begin{equation}
\Pf(A) e_1 \wedge  \cdots \wedge e_{2k} = \frac{1}{n!}\omega_A^n
\end{equation}
\end{defn}
The Pfaffian satisfies $\Pf(A)^2 = \det A$ and $\Pf(\lambda A) = \lambda^k\Pf(A)$. 
\begin{defn}[Euler form]
Let $(M,g)$ be a Riemannian manifold of dimension $2k$. Then we define the \emph{Euler form} by 
\begin{equation}
e(M,g) = \Pf\left(\frac{1}{2\pi}R^g\right)
\end{equation}
\end{defn}
We can now state the Chern-Gauss-Bonnet theorem. 
\begin{thm}
Let $(M,g)$ be an even-dimensional Riemannian manifold. Then
\begin{equation}
\chi(M) = \int_M e(M,g). 
\end{equation}
\end{thm}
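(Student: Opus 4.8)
The plan is to realize the Euler characteristic as the index of a Dirac operator and then apply the index theorem for Clifford modules (Theorem, preceding section). The natural candidate is the Dirac bundle $E = \bigwedge^\bullet T^*M \otimes \C$ with the Levi-Civita connection, constructed in Example \ref{expl:Tangent_Clifford}, whose Clifford action is $c(\theta)\omega = \theta\wedge\omega - \iota_\theta\omega$. First I would identify the associated Dirac operator $D_E$ with $d + d^*$; this follows from the local formula $D_E = \sum_i c(e^i)\nabla^g_{e_i}$ together with $c(e^i) = e^i\wedge\, - \iota_{e^i}$ and the standard local expressions for $d$ and $d^*$ (using a synchronous frame at the point in question to kill the connection terms). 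Consequently $D_E^2 = (d+d^*)^2 = dd^* + d^*d$ is the Hodge--de Rham Laplacian.

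Next I would pin down the $\Z_2$-grading. The super vector bundle structure on $\bigwedge^\bullet T^*M\otimes\C$ should \emph{not} be the naive even/odd form degree splitting but the one for which $c(\theta)$ is odd and for which the grading operator is (up to normalization) Clifford multiplication by the complex volume element $c(\omega_\C)$; one checks $c(\omega_\C)$ supercommutes appropriately and squares to $1$, so it defines $E = E^+\oplus E^-$ as its $\pm 1$ eigenbundles. Then $\ker D_E^+ \oplus \ker D_E^- = \ker D_E = \ker(dd^*+d^*d) = \mathcal{H}^\bullet(M)$, the harmonic forms, which by Hodge theory compute $H^\bullet(M,\R)\otimes\C$. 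The point is that the grading operator $c(\omega_\C)$ acts on the harmonic $k$-forms by a sign $(-1)^{?}$ depending only on $k$ (and $n$), in such a way that the supertrace over harmonic forms recovers $\sum_k (-1)^k b_k = \chi(M)$; thus $\ind D_E^+ = \dim\ker D_E^+ - \dim\ker D_E^- = \chi(M)$. This sign bookkeeping — reconciling the Clifford grading with the cohomological grading so that the alternating sum comes out right — is the step I expect to require the most care.

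Having established $\ind D_E^+ = \chi(M)$, I would invoke the index theorem for Clifford modules: $\ind D_E^+ = \int_M \hat A(M,g)\,\mathbf{ch}_{E/S}$. It then remains to compute the relative Chern character of this particular Clifford module and show
\begin{equation}
\hat A(M,g)\,\mathbf{ch}_{E/S} = e(M,g) = \Pf\!\left(\tfrac{1}{2\pi}R^g\right)
\end{equation}
in top degree. This is a purely algebraic identity about characteristic forms built from the curvature $R^g$, which I would prove by the splitting principle: formally diagonalize the curvature into $2\times 2$ blocks with "angles" $x_1,\dots,x_k$ (as in the discussion of $O(n)$-characteristic classes), express $\hat A = \prod_j \frac{x_j/2}{\sinh(x_j/2)}$, identify the twisting curvature $F^{E/S}$ of $\bigwedge^\bullet T^*M\otimes\C$ (the twisting bundle being, roughly, $\bigwedge^\bullet$ of the cotangent bundle viewed against the spinor bundle, so its relative Chern character is $\prod_j (e^{x_j/2} - e^{-x_j/2}) = \prod_j 2\sinh(x_j/2)$ up to normalization and the $2^{n/2}$ in $\mathrm{str}^{E/S}$), and observe the $\sinh$ factors cancel, leaving $\prod_j x_j$, which is exactly the Pfaffian of $R^g/2\pi$ in the top-degree part. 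The main obstacle overall is the first half: correctly setting up the Clifford grading on $\bigwedge^\bullet T^*M$ and tracking signs so that the supertrace of the identity on harmonic forms is genuinely $\chi(M)$; the characteristic-class computation in the second half is routine once the splitting principle is in place.
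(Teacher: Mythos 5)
Your overall strategy is exactly the paper's: realize $\chi(M)$ as $\ind D_E^+$ for the Clifford module $E=\bigwedge^\bullet T^*M\otimes\C$ with $c(\theta)=\theta\wedge{}-\iota_\theta$, identify $D_E=d+d^*$, apply the index theorem for Clifford modules, and then check $\hat A(M,g)\,\mathbf{ch}_{E/S}=\Pf(R^g/2\pi)$ by block-diagonalizing the curvature so that the $\sinh$ factors cancel. The second half of your sketch is essentially the paper's computation (which it carries out via the auxiliary action $\tilde c(\theta)=\theta\wedge{}+\iota_\theta$ and the identity $\gamma\circ c(\omega_\C)=\tilde c(\omega_\C)$).

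However, there is a genuine error in your choice of $\Z_2$-grading, and it is precisely at the step you flagged as delicate. The correct grading \emph{is} the ``naive'' even/odd form-degree splitting $\bigwedge^{\mathrm{even}}\oplus\bigwedge^{\mathrm{odd}}$, which is what the paper uses (note that $c(\theta)$ is already odd for this splitting, since both $\theta\wedge{}$ and $\iota_\theta$ shift form degree by one, so your stated reason for rejecting it does not apply). The grading by the $\pm1$ eigenspaces of $c(\omega_\C)$ that you propose instead is the \emph{signature} grading: $c(\omega_\C)$ is essentially the Hodge star up to a power of $\ii$, so it maps $k$-forms to $(n-k)$-forms and does \emph{not} act on harmonic $k$-forms by a sign depending on $k$. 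Its eigenbundles are the (anti-)self-dual forms, the supertrace of the identity on harmonic forms with respect to that grading is the signature of $M$ rather than $\sum_k(-1)^kb_k$, and the corresponding relative Chern character produces $\prod_j 2\cosh(x_j/2)$ instead of $\prod_j(e^{x_j/2}-e^{-x_j/2})$ --- so your route, carried out consistently, proves the Hirzebruch signature theorem, not Chern--Gauss--Bonnet. With the form-degree grading the index computation is immediate from Hodge theory, $\ind D_E^+=\sum_{k\ \mathrm{even}}b_k-\sum_{k\ \mathrm{odd}}b_k=\chi(M)$, and the grading operator $\gamma=(-1)^{\deg}$ is what makes $\gamma\,c(\omega_\C)=\tilde c(\omega_\C)$ and hence makes the Pfaffian, rather than the $L$-genus, come out of the supertrace.
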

For odd-dimensional manifolds, the Euler characteristic is always zero as a consequence of Poincar\'e duality. 
In the following subsections we want to prove that this is the consequence of the Atiyah-Singer index theorem. 
\subsection{The Clifford module and its index}
The relevant Clifford module for this application is the one we met in Example \ref{expl:Tangent_Clifford}. We repeat here the main points. Let $E = \bigwedge{}^\bullet TM \otimes \C = \bigwedge{}^{even}TM \otimes \bigwedge{}^{odd}TM$ with Clifford multiplication 
$c(\theta) \omega = \theta \wedge \omega - \iota_\theta\omega$. The spin connection is the lift of the Levi-Civita connection. 
\begin{clm}
The Dirac operator of this Clifford module is 
\begin{equation}
D_E = d + d^*.
\end{equation}
\end{clm}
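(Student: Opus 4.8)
The plan is to identify $D_E$ with $d+d^{*}$ in two stages: first show the two operators have the same principal symbol, so that their difference is a bundle endomorphism, and then show that endomorphism vanishes by a pointwise computation at an arbitrary point, using a frame that is synchronous there.

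For the symbols: by the proposition on the symbol of a Dirac operator we have $\sigma(D_E)(\xi)=\ii c(\xi)=\ii\big(\xi\wedge\,\cdot\,-\iota_{\xi}\big)$, with $c(\theta)\omega=\theta\wedge\omega-\iota_{\theta}\omega$ as in Example~\ref{expl:Tangent_Clifford}. On the other hand $\sigma(d)(\xi)=\ii\,(\xi\wedge\,\cdot\,)$, and since $(\xi\wedge\,\cdot\,)^{*}=\iota_{\xi}$ pointwise and $\sigma(L^{*})(\xi)=\sigma(L)(\xi)^{*}$, we get $\sigma(d^{*})(\xi)=-\ii\,\iota_{\xi}$. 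Hence $\sigma(D_E)=\sigma(d+d^{*})$, so $A:=D_E-(d+d^{*})$ is a differential operator of order $0$, i.e.\ a section of $\End\big(\bigwedge^{\bullet}T^{*}M\otimes\C\big)$. In particular $(A\omega)_{p}$ depends only on the value $\omega_{p}$, so it suffices to fix $p\in M$ and show $A_{p}=0$.

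Fix $p$ and $\omega_{p}\in\bigwedge^{\bullet}T^{*}_{p}M\otimes\C$, and extend $\omega_{p}$ to a local section $\omega$ with $(\nabla^{g}\omega)_{p}=0$ (e.g.\ parallel transport along the radial geodesics through $p$); the value $(A\omega)_{p}=A_{p}(\omega_{p})$ is independent of this choice. Since the connection on $E$ in Example~\ref{expl:Tangent_Clifford} is the Levi--Civita connection extended to $\bigwedge^{\bullet}T^{*}M$ as a derivation, also $(\nabla^{E}\omega)_{p}=0$, so for a local orthonormal coframe $e^{1},\dots,e^{n}$ we immediately get $(D_E\omega)_{p}=\sum_{i}c(e^{i})(\nabla^{E}_{e_{i}}\omega)_{p}=0$. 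Next, for any torsion-free connection one has $d=\sum_{i}e^{i}\wedge\nabla^{g}_{e_{i}}$: both sides are degree-$+1$ derivations of $\Omega^{\bullet}(M)$ agreeing on $\Omega^{0}$ (both give $df$) and on $\Omega^{1}$ (here the torsion-freeness $\nabla^{g}_{X}Y-\nabla^{g}_{Y}X=[X,Y]$ enters), hence agree everywhere; therefore $(d\omega)_{p}=\sum_{i}e^{i}\wedge(\nabla^{g}_{e_{i}}\omega)_{p}=0$. Finally, by linearity reduce to homogeneous $\omega$ of degree $k$ and use $d^{*}=(-1)^{nk+n+1}\ast d\ast$. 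The Hodge star is built from $g$ and the orientation, both $\nabla^{g}$-parallel, so $\ast$ commutes with $\nabla^{g}$ and $(\nabla^{g}(\ast\omega))_{p}=0$; applying the identity for $d$ to $\ast\omega$ gives $(d\ast\omega)_{p}=0$ and hence $(d^{*}\omega)_{p}=0$. Thus $(A\omega)_{p}=(D_E\omega)_{p}-(d\omega)_{p}-(d^{*}\omega)_{p}=0$, and since $\omega_{p}$ and $p$ were arbitrary, $D_E=d+d^{*}$.

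The main obstacle is bookkeeping rather than substance: checking that $\nabla^{E}$ really is the extension of $\nabla^{g}$ so that ``synchronous at $p$'' annihilates $\nabla^{E}\omega$; applying the Cartan/derivation description of $d$ and the parallelism of $\ast$ with consistent degree and sign conventions; and the harmless identification $\bigwedge^{\bullet}TM\cong\bigwedge^{\bullet}T^{*}M$ implicit in Example~\ref{expl:Tangent_Clifford}. If one prefers to avoid the symbol reduction, a more computational route works globally: expand $D_E=\sum_{i}c(e^{i})\nabla^{E}_{e_{i}}=\sum_{i}e^{i}\wedge\nabla^{g}_{e_{i}}-\sum_{i}\iota_{e^{i}}\nabla^{g}_{e_{i}}$, identify the first sum with $d$ as above, and identify the second with $d^{*}$ by taking formal adjoints of $d=\sum_{i}e^{i}\wedge\nabla^{g}_{e_{i}}$; the only genuine computation there is that the divergence-type correction terms cancel, which comes down to $\sum_{i}(\mathrm{div}\,e_{i})\,e_{i}+\sum_{i}\nabla^{g}_{e_{i}}e_{i}=0$.
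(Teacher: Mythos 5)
Your proof is correct, but it takes a genuinely different route from the one in the notes. The notes argue directly: in a local orthonormal frame one writes $d=\sum_i e^i\wedge\nabla_{e_i}$, observes that its formal adjoint is $-\sum_i\iota_{e^i}\nabla_{e_i}$, and matches the sum against $D_E=\sum_i c(e^i)\nabla_{e_i}=\sum_i e^i\wedge\nabla_{e_i}-\sum_i\iota_{e^i}\nabla_{e_i}$. The whole content there is hidden in the claim that the adjoint of $\sum_i e^i\wedge\nabla_{e_i}$ has no zeroth-order remainder --- exactly the cancellation of divergence terms you flag at the end of your proposal, and which the notes do not spell out. Your main argument sidesteps this: the symbol computation ($\sigma(D_E)(\xi)=\ii(\xi\wedge{}-\iota_\xi)=\sigma(d)(\xi)+\sigma(d^*)(\xi)$, using $\sigma(L^*)=\sigma(L)^*$) disposes of the first-order parts once and for all, reducing everything to showing that the zeroth-order difference $A=D_E-(d+d^*)$ vanishes, which you do pointwise with a section parallel at $p$. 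This is cleaner in that every zeroth-order contribution ($D_E$, $d$ via the torsion-free Cartan formula, $d^*$ via the parallelism of $\ast$) visibly dies on a $\nabla^g$-parallel germ, at the cost of being longer and of invoking the formula $d^*=\pm\ast d\ast$ and the fact that $\ast$ commutes with $\nabla^g$. One small caveat: when you say both $d$ and $\sum_i e^i\wedge\nabla^g_{e_i}$ are ``degree-$+1$ derivations,'' you mean odd (anti)derivations of the wedge product; with that reading the agreement on degrees $0$ and $1$ does determine them, and the step is sound. Your closing alternative is essentially the proof in the notes.
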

\begin{proof}
Again, this is best seen in a local orthonormal frame $e_1, \ldots, e_n$ with coframe $e^1,\ldots,e^n$. Then we can express $d = e^i \wedge \nabla_{e_i}$. We can then see that its formal adjoint is given by $-\nabla_{e_i} \iota_{e_i}$.
\end{proof}
Now that we know the Dirac operator, we proceed to compute its index. 
\begin{clm}
The index of the chiral Dirac operator $D_E^+$ is 
\begin{equation}
\ind D_E^+ = \chi(M).
\end{equation}
\end{clm}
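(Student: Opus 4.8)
The plan is to identify $D_E^+$ with the even-to-odd part of the de Rham--Hodge operator and to read off its index directly from Hodge theory. First I would use the preceding claim, which identifies $D_E = d + d^*$ acting on $E = \bigwedge^\bullet TM \otimes \C$. Since $(d+d^*)^2 = dd^* + d^*d =: \Delta$ is the Hodge--de Rham Laplacian and, on the closed manifold $M$, $\int_M \langle \Delta\omega,\omega\rangle\, dvol_g = \int_M\big(|d\omega|^2 + |d^*\omega|^2\big)dvol_g$, a form is annihilated by $d+d^*$ precisely when it is harmonic. Hence $\ker D_E = \mathcal{H}^\bullet(M)$, the space of harmonic forms, and $\ker D_E$, $\ker D_E^*$ are finite dimensional by the general facts on elliptic operators from Chapter \ref{ch:Analysis}. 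Because $\Delta$ preserves the $\Z$-grading by form degree, $\mathcal{H}^\bullet(M) = \bigoplus_{k=0}^{n}\mathcal{H}^k(M)$ with $\mathcal{H}^k(M) = \ker\big(\Delta|_{\Omega^k(M,\C)}\big)$.

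Next I would invoke the Hodge theorem: for each $k$ the natural map $\mathcal{H}^k(M) \to H^k(M,\C)$ is an isomorphism, so $\dim_\C \mathcal{H}^k(M) = \dim_\C H^k(M,\C) = \dim_\R H^k(M,\R) = b_k$, using that complexification $H^k(M,\C) = H^k(M,\R)\otimes_\R\C$ does not change Betti numbers. The $\Z_2$-grading on $E$ is exactly even versus odd form degree, so $\ker D_E^+ = \bigoplus_{k\ \mathrm{even}}\mathcal{H}^k(M)$, and since $(D_E^+)^* = D_E^-$ we get $\mathrm{coker}\, D_E^+ \cong \ker D_E^- = \bigoplus_{k\ \mathrm{odd}}\mathcal{H}^k(M)$. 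Combining these,
\[
\ind D_E^+ = \sum_{k\ \mathrm{even}} b_k - \sum_{k\ \mathrm{odd}} b_k = \sum_{k=0}^{n}(-1)^k b_k = \chi(M).
\]

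The functional-analytic ingredients — ellipticity of $D_E$, finite dimensionality and self-adjointness properties of its kernels, and the $L^2$ identity characterizing harmonic forms — are all covered by the material of Chapter \ref{ch:Analysis}, so that part is routine. The one genuine external input is the Hodge theorem identifying harmonic forms with de Rham cohomology, which I would cite rather than reprove; everything else is bookkeeping with the $\Z_2$-grading. The only point requiring care is that the whole computation takes place in the complexified exterior bundle, so I would state explicitly that passing from $H^k(M,\R)$ to $H^k(M,\C)$ leaves dimensions unchanged, ensuring the alternating sum of complex dimensions is the genuine Euler characteristic $\chi(M)$.
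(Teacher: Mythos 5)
Your proof is correct and follows essentially the same route as the paper: identify $D_E = d+d^*$, use the Hodge theorem to equate $\ker(d+d^*)$ on $k$-forms with $H^k(M)$, and take the alternating sum over the even/odd splitting. Your extra care about the $L^2$ characterization of harmonic forms and the invariance of Betti numbers under complexification is a welcome refinement of what the paper leaves implicit.
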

\begin{proof}
For the proof we use a little Hodge theory. 
Define $\Delta = (d + d^*)^2 = dd^* + d^*d$. Then, the Hodge theorem states that 
\begin{equation}
\Omega^k(M) = \ker \restr{\Delta}{\Omega^k(M)} \oplus d\Omega^{k-1} \oplus d^*\Omega^{k-1}(M)
\end{equation}
and $$\ker \restr{\Delta}{\Omega^k(M)} \cong H^k(M,\R).$$
Next, notice that $\ker d + d^* = \ker d \cap \ker d^* = \Delta_k \cong H^k(M,\R)$ (this is an easy exercise using the formal adjointness of $d$ and $d^*$). Using this we compute the index of the chiral Dirac operator: 
\begin{align*}
\ind D_E^+ &= \dim \ker D_E^+ - \dim \ker D_E^- \\
&= \dim \ker \restr{d+d^*}{\Omega^{even}(M)} -\dim\ker \restr{d+d^*}{\Omega^{odd}(M)} \\
&= \sum_{k \text{ even} }\dim H^k(M) - \sum_{ k \text{ odd}} H^k(M) \\
&= \chi(M).
\end{align*}

\end{proof}
In particular, the index of $D_E^+$ is zero on odd-dimensional manifolds.
\subsection{The relative Chern character}
We now restrict to the case $n=2k$ even. 
The main ingredient in the proof of the Chern-Gauss-Bonnet theorem is the computation of the relative Chern character. We divide this computation into several steps. We start with the definition of another action of $T^*M$ on $\Omega^k(M)$.
\begin{defn}
Let $\theta \in \Gamma(T^*M)$, then we define 
\begin{equation}
\tilde{c}(\theta) \omega = \theta \wedge \omega + \iota_\theta \omega. 
\end{equation}
\end{defn}
We state some of the properties of this new action.
\begin{prop}\label{prop:tilde_c_properties}
Let $e_i$ be a local orthonormal frame, then 
$\tilde{c}$ satisfies
\begin{align}
\{c(e^i),\tilde{c}(e^j)\}&=0\\
\{\tilde{c}(e^i),\tilde{c}(e^j)\} &= +2\delta_{ij}
\end{align}
(note the difference in sign to usual Clifford multiplication).
\end{prop}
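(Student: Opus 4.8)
The plan is to reduce everything to two elementary ``building-block'' identities for the operators $e^i\wedge(\cdot)$ and $\iota_{e^i}$ acting on $\Omega^\bullet(M)$, and then expand the anticommutators bilinearly. Concretely, I would first record that for an orthonormal coframe $e^1,\ldots,e^n$ one has
\begin{align}
\{e^i\wedge,\ e^j\wedge\} &= 0, \label{eq:ww}\\
\{\iota_{e^i},\ \iota_{e^j}\} &= 0, \label{eq:ii}\\
\{e^i\wedge,\ \iota_{e^j}\} &= \delta_{ij}\cdot\mathrm{id}. \label{eq:wi}
\end{align}
Identity \eqref{eq:ww} is immediate from associativity and graded-anticommutativity of the wedge product on $1$-forms; \eqref{eq:ii} follows since $\iota_{e^i}$ is a degree $-1$ derivation of $\bigwedge^\bullet T^*M$ and two such anticommute; and \eqref{eq:wi} is the standard Cartan-type relation, proved by checking it on a monomial $e^{k_1}\wedge\cdots\wedge e^{k_p}$ using that $\iota_{e^j}$ is a derivation and $\iota_{e^j}e^i = g(e^i,e^j)=\delta_{ij}$ in an orthonormal coframe. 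I expect this first step to be the only place where one actually computes, and it is entirely routine.

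Next I would simply substitute $c(e^i) = e^i\wedge - \iota_{e^i}$ and $\tilde c(e^i) = e^i\wedge + \iota_{e^i}$ into the two anticommutators and expand using bilinearity of $\{\cdot,\cdot\}$. For the cross term,
\[
\{c(e^i),\tilde c(e^j)\}
= \{e^i\wedge,e^j\wedge\} + \{e^i\wedge,\iota_{e^j}\} - \{\iota_{e^i},e^j\wedge\} - \{\iota_{e^i},\iota_{e^j}\}
= 0 + \delta_{ij} - \delta_{ji} - 0 = 0,
\]
using \eqref{eq:ww}--\eqref{eq:wi}. For the second relation,
\[
\{\tilde c(e^i),\tilde c(e^j)\}
= \{e^i\wedge,e^j\wedge\} + \{e^i\wedge,\iota_{e^j}\} + \{\iota_{e^i},e^j\wedge\} + \{\iota_{e^i},\iota_{e^j}\}
= 0 + \delta_{ij} + \delta_{ji} + 0 = 2\delta_{ij},
\]
which is the claimed sign, opposite to the Clifford relation $\{c(e^i),c(e^j)\} = -2\delta_{ij}$ that one recovers by the same expansion. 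This completes the proof.

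The only genuine obstacle is getting \eqref{eq:wi} cleanly, i.e. verifying that wedging with $e^i$ and contracting with $e^j$ anticommute to $\delta_{ij}$; everything after that is formal bookkeeping. An alternative, slightly slicker route would be to observe that $\tilde c(\theta)$ is the Clifford action for the \emph{opposite-sign} metric $-g$ on $T^*M$ (since $\tilde c(\theta)^2 = +g(\theta,\theta)$ by the same computation as in Example \ref{expl:Tangent_Clifford}), so $\{\tilde c(e^i),\tilde c(e^j)\} = +2\delta_{ij}$ is just the Clifford relation for $(-g)$; and then $c$ and $\tilde c$ anticommute because $c(e^i)\pm\tilde c(e^i)$ are, up to a factor, $2(e^i\wedge)$ and $-2\iota_{e^i}$, whose squares vanish. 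I would present the direct expansion as the main argument and mention this reinterpretation as a remark.
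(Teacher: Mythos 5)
Your proof is correct, and since the paper leaves this proposition as an exercise there is no authorial proof to compare against; the direct expansion via $c=e^i\wedge{}-\iota_{e^i}$, $\tilde c=e^i\wedge{}+\iota_{e^i}$ and the relations $\{e^i\wedge,e^j\wedge\}=\{\iota_{e^i},\iota_{e^j}\}=0$, $\{e^i\wedge,\iota_{e^j}\}=\delta_{ij}$ is exactly the intended argument. One tiny imprecision: two odd derivations do not anticommute in general (their anticommutator is merely an even derivation); the correct justification for $\{\iota_{e^i},\iota_{e^j}\}=0$ is that this anticommutator is a derivation of degree $-2$, hence vanishes on the generators $\Omega^0\oplus\Omega^1$ and therefore identically.
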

\begin{exc}
Prove this proposition. 
\end{exc}
Now we investigate the twisting curvature of this Clifford bundle. 
\begin{prop}
Let $x \in M$ and $e_i$ a synchronous orthonormal frame at $x$, i.e. $(\nabla^g e_i)_x = 0$, with dual frame $e^i$. 
Let $R_{ijkl} = g(e_i,R^g(e_k,e_l)e_j)$. Then at $x$ we have
\begin{equation}
F^{E/S}(e_k,e_l) = -\frac{1}{4}R_{ijkl}\tilde{c}(e^i)\tilde{c}(e^j).
\end{equation}
\end{prop}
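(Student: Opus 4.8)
The plan is to compute the curvature $F^{\nabla^E}$ of the lifted Levi-Civita connection acting on $\bigwedge^\bullet T^*M$, and then subtract the ``spinorial part'' $R^E = c(\rho_*^{-1}(R^g))$ to isolate the twisting curvature. The key observation is that on $\bigwedge^\bullet T^*M$ there are two anticommuting Clifford-type actions: the usual $c(\theta)\omega = \theta\wedge\omega - \iota_\theta\omega$ and the ``opposite'' action $\tilde c(\theta)\omega = \theta\wedge\omega + \iota_\theta\omega$ from Proposition \ref{prop:tilde_c_properties}. The wedge and contraction operators can be recovered as $\theta\wedge(\cdot) = \frac12(c(\theta)+\tilde c(\theta))$ and $\iota_\theta = \frac12(\tilde c(\theta)-c(\theta))$, so that the skew-symmetric endomorphism $e^i\wedge e^j$ of $\bigwedge^\bullet T^*M$ (i.e. $e^i\wedge\iota_{e^j} - e^j\wedge\iota_{e^i}$, the natural extension of the $\mathfrak{so}(n)$-action to the exterior algebra) decomposes into a $c$-part and a $\tilde c$-part.

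First I would work at a point $x$ in a synchronous orthonormal frame, so that $\nabla^g e_i = 0$ at $x$ and $F^{\nabla^E}(e_k,e_l) = [\nabla^E_{e_k},\nabla^E_{e_l}]$ at $x$. The curvature of the Levi-Civita connection extended to $\bigwedge^\bullet T^*M$ acts (by the Leibniz/derivation property) as the natural $\mathfrak{so}(n)$-action of $R^g(e_k,e_l)$ on the exterior algebra, which in components is $F^{\nabla^E}(e_k,e_l) = \frac12 R_{ijkl}\,(e^i\wedge\iota_{e^j} - e^j\wedge\iota_{e^i}) = R_{ijkl}\,e^i\wedge\iota_{e^j}$ using the antisymmetry $R_{ijkl} = -R_{jikl}$. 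Next, from the proof of Proposition \ref{prop:twist_curv_commutes} we already have $R^E(e_k,e_l) = c(\rho_*^{-1}(R^g(e_k,e_l))) = \frac14 R_{ijkl}\, c(e^i)c(e^j)$. So the computation reduces to the purely algebraic identity: rewrite $R_{ijkl}\,e^i\wedge\iota_{e^j} - \frac14 R_{ijkl}\,c(e^i)c(e^j)$ in terms of the $\tilde c$ operators, and check it equals $-\frac14 R_{ijkl}\,\tilde c(e^i)\tilde c(e^j)$.

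The heart of the matter is that purely algebraic rewriting, and it is where I expect the bookkeeping to be delicate: one must expand $c(e^i)c(e^j)$ and $\tilde c(e^i)\tilde c(e^j)$ in terms of $e^i\wedge$, $\iota_{e^i}$ and their products, and then use the first Bianchi identity $R_{ijkl}+R_{iklj}+R_{iljk}=0$ (equivalently the symmetries $R_{ijkl}=R_{klij}$, $R_{ijkl}=-R_{jikl}=-R_{ijlk}$) to kill the unwanted terms. Concretely, $c(e^i)c(e^j) = e^i\wedge e^j\wedge - e^i\wedge\iota_{e^j} - \iota_{e^i}(e^j\wedge) + \iota_{e^i}\iota_{e^j}$ and similarly for $\tilde c$ with plus signs in the cross terms; contracting against $R_{ijkl}$, the terms $e^i\wedge e^j\wedge(\cdot)$ and $\iota_{e^i}\iota_{e^j}$ both vanish by the Bianchi symmetry (they are contractions of something symmetric in $i,j$ against $R_{ijkl}$ which is antisymmetric — wait, one must be careful: $e^i\wedge e^j\wedge$ is antisymmetric in $i,j$, so that term does \emph{not} automatically vanish; it is the combination $R_{ijkl}e^i\wedge e^j\wedge(\cdot)$ that vanishes by the \emph{first Bianchi identity}, since $\sum_{i,j}R_{ijkl}e^i\wedge e^j$ picks out the totally antisymmetric part of $R$ in its first two and — via $R_{ijkl}=R_{klij}$ — cross indices). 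Carrying out this cancellation carefully and collecting the surviving cross terms should produce exactly $-\frac14 R_{ijkl}\tilde c(e^i)\tilde c(e^j)$; this is the main obstacle and the only real content of the proof. Since $x$ was arbitrary and both sides are tensorial, the identity then holds everywhere.
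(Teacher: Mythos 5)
Your overall strategy is the same as the paper's: work in a synchronous frame so that $F^{\nabla^E}(e_k,e_l)$ is the derivation extension of $R^g(e_k,e_l)$ to the exterior algebra, write it as $\pm R_{ijkl}\,(e^i\wedge)\circ\iota_{e_j}$, recall $R^E(e_k,e_l)=\frac14 R_{ijkl}c(e^i)c(e^j)$, and finish by pure algebra with $c$ and $\tilde c$. The setup is fine; the gap is in the final algebraic step, which you yourself call ``the only real content of the proof'' but do not carry out, and the mechanism you propose for it is wrong. Neither $\sum_{i,j}R_{ijkl}\,e^i\wedge e^j\wedge(\cdot)$ nor $\sum_{i,j}R_{ijkl}\,\iota_{e^i}\iota_{e^j}$ vanishes, and the first Bianchi identity is irrelevant here: both $e^i\wedge e^j\wedge$ and $\iota_{e^i}\iota_{e^j}$ are already antisymmetric in $(i,j)$, so contracting with $R_{ijkl}$ for fixed $k,l$ just reproduces them. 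In constant curvature, $R_{ijkl}=K(\delta_{ik}\delta_{jl}-\delta_{il}\delta_{jk})$ gives $\sum_{i,j}R_{ijkl}e^i\wedge e^j\wedge(\cdot)=2K\,e^k\wedge e^l\wedge(\cdot)\neq 0$. The first Bianchi identity is a cyclic sum over three of the four indices and says nothing about $\sum_{i,j}R_{ijkl}e^i\wedge e^j$ for fixed $k,l$ (which is essentially the curvature $2$-form itself).

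The correct mechanism is simpler and uses no curvature symmetry beyond $R_{ijkl}=-R_{jikl}$: the terms $e^i\wedge e^j\wedge$ and $\iota_{e^i}\iota_{e^j}$ occur with the \emph{same} coefficient in $c(e^i)c(e^j)$ and in $\tilde c(e^i)\tilde c(e^j)$, so they drop out of the difference,
\begin{equation*}
c(e^i)c(e^j)-\tilde c(e^i)\tilde c(e^j) = -2\bigl(e^i\wedge\iota_{e^j}+\iota_{e^i}(e^j\wedge)\bigr),
\end{equation*}
and contracting with the antisymmetric $R_{ijkl}$ (using $\iota_{e^i}(e^j\wedge)=\delta_{ij}-e^j\wedge\iota_{e^i}$) yields $-4R_{ijkl}\,e^i\wedge\iota_{e^j}$. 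Hence $F^{\nabla^E}(e_k,e_l)=-R_{ijkl}\,e^i\wedge\iota_{e_j}=-\tfrac14R_{ijkl}\bigl(\tilde c(e^i)\tilde c(e^j)-c(e^i)c(e^j)\bigr)$, and subtracting $R^E(e_k,e_l)=\tfrac14R_{ijkl}c(e^i)c(e^j)$ gives the claim. (Equivalently, the paper substitutes $e^i\wedge=\tfrac12(c(e^i)+\tilde c(e^i))$ and $\iota_{e_j}=-\tfrac12(c(e^j)-\tilde c(e^j))$ directly into $e^i\wedge\iota_{e_j}$ and kills the two cross terms $c\,\tilde c$ and $\tilde c\,c$ using $R_{ijkl}=-R_{jikl}$ together with $\{c(e^i),\tilde c(e^j)\}=0$.) Note finally that your sign $F^{\nabla^E}(e_k,e_l)=+R_{ijkl}\,e^i\wedge\iota_{e^j}$ is the opposite of what the stated formula requires; with that sign the two $c(e^i)c(e^j)$ contributions add instead of cancelling, so the sign of the action of $R^g(e_k,e_l)$ on the coframe must be pinned down, not assumed.
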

\begin{proof}
For a multi-index $I= (i_1,i_2,\ldots,i_m)$, denote  $e^I:= e^{i_1}\wedge \cdots \wedge e^{i_m}$. Recall that $\nabla^E$ is the lift of $\nabla^g$ to $\wedge T^*M$ as a derivation of the wedge product and hence 
$$(\nabla_{e_j}e^I) =(( \nabla_{e_i}e^{i_1} ) \wedge e^{i_2} \wedge \cdots \wedge e^{i_m} + \ldots + e^{i_1} \wedge \cdots \wedge (\nabla_{e_j}e^{i_m}))_x = 0.$$
Hence, working again at $x$ we have 
\begin{align*}
F^{E}(e_k,e_l)e^I &= ([\nabla^E_{e_k},\nabla^E_{e_l}] -\nabla^E_{[e_k,e_l]})e^I \\
&=  [\nabla^E_{e_k},\nabla^E_{e_l}]e^I\\
&=\sum_{j=1}^m e^{i_1}\wedge \cdots\wedge [\nabla^g_{e_k},\nabla^g_{e_l}]e^{i_j}  \wedge \cdots \wedge e^{i_m}\\ &+
\sum_{j_1 \neq j_2 = 1}^m e^{i_1}\wedge \cdots \nabla^g_{e_k}e^{i_{j_1}}\wedge\cdots \wedge \nabla^g_{e_l}e^{i_{j_2}} \wedge \cdots \wedge e^{i_m} - (k \leftrightarrow l) 
\end{align*}
Since $\nabla_{e_k}e^i =0$ at $x$, the second sum vanishes and the first sum is equal to 
$$F^E(e_k,e_l)=\sum_{j=1}^m e^{i_1}\wedge \cdots \wedge R^g(e_k,e_l)e^{i_j}  \wedge \cdots \wedge e^{i_m} =- R_{ijkl}(e^i \wedge) \circ \iota_{e_j}e^I$$
(notice that $(e^i \wedge) \circ \iota_{e_j}$ is a degree 0 derivation, and Einstein summation convention is understood). The sign comes from the fact that $R^g(e_k,e_l)e^j = -R_{ijkl}e^i$. 
Now we express 
\begin{align*}
e^i \wedge &= \frac{1}{2}(c(e^i) + \tilde{c}(e^i)) \\
\iota_{e_i} &= -\frac{1}{2}(c(e^i) -\tilde{c}(e^i)) 
\end{align*}
so that the expression above becomes 
$$F^E(e_k,e_l) = -\frac14 R_{ijkl}(\tilde{c}(e^i)\tilde{c}(e^j) -c(e^i)c(e^j) + c(e^i)\tilde{c}(e^j) - \tilde{c}(e^i)c(e^j)).$$
The last two terms cancel by because $R_{ijkl} = - R_{jikl}$ and $\{c(e_i),\tilde{c}(e_j)\} = 0$. Then, notice that 
$R^E(e_k,e_l) = c(\rho_*^{-1}R^g(e_k,e_l)) = \frac12c(\rho_*^{-1}R_{ijkl}(e^i\wedge e^j)) = \frac14 R_{ijkl}c(e^i)c(e^j) $, 
which concludes the proof. 
\end{proof}
Having established the twisting curvature, we turn to the investigation of the relative supertrace.  Remember that $\mathrm{str} F = \mathrm{tr}\gamma F$, where $\gamma$ is the grading operator. We have the following important observation. 
\begin{clm}\label{clm:gradingoperators}
Let $\gamma$ be the grading operator on $\bigwedge T^*M$, then 
\begin{equation}
\gamma \circ c(\omega_\C) = \tilde{c}(\omega_\C). 
\end{equation}
\end{clm}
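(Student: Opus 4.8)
The plan is to verify the identity $\gamma \circ c(\omega_\C) = \tilde{c}(\omega_\C)$ by working in a local orthonormal coframe $e^1, \dots, e^n$ and expanding both sides on a basis element $e^I = e^{i_1}\wedge \cdots \wedge e^{i_m}$ of $\bigwedge^m T^*M$. Recall $\omega_\C = \ii^{\lfloor (n+1)/2\rfloor} e^1 \cdots e^n$ in the Clifford algebra, so $c(\omega_\C) = \ii^{\lfloor (n+1)/2\rfloor}\, c(e^1)\cdots c(e^n)$ and similarly $\tilde{c}(\omega_\C) = \ii^{\lfloor(n+1)/2\rfloor}\,\tilde{c}(e^1)\cdots\tilde{c}(e^n)$ (using the presentation of $\omega_\C$ via an oriented orthonormal basis, which by Exercise \ref{exc:complex_volume} is basis-independent). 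Since the prefactors agree, it suffices to prove $\gamma \circ c(e^1)\cdots c(e^n) = \tilde{c}(e^1)\cdots \tilde{c}(e^n)$ as operators on $\bigwedge T^*M$.

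First I would compute $c(e^1)\cdots c(e^n)$ on $e^I$ directly. Using $c(e^j)\omega = e^j \wedge \omega - \iota_{e^j}\omega$, one sees that $c(e^j)$ acting on a wedge monomial either deletes an $e^j$ (with a sign from moving it into position) or inserts one; the full product $c(e^1)\cdots c(e^n)$ is the composite of all of these. A clean way to organize the computation is to observe that $c(e^j)$ applied to $e^I$ produces $\pm e^{I \triangle \{j\}}$ (symmetric difference), and the sign is governed by the position of $j$ relative to $I$; composing over all $j = 1, \dots, n$ sends $e^I$ to $\pm e^{I^c}$ (the complementary multi-index), i.e. $c(\omega_\C)$ is, up to the $\ii$-power and a sign depending on $|I|$ and $n$, the Hodge-star-like operator exchanging $e^I \leftrightarrow e^{I^c}$. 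Similarly $\tilde{c}(e^j)\omega = e^j \wedge \omega + \iota_{e^j}\omega$ and $\tilde{c}(e^1)\cdots\tilde{c}(e^n)$ sends $e^I$ to $\pm e^{I^c}$ with a possibly different sign. The point of the argument is that the two signs differ exactly by $(-1)^{|I|}$, which is precisely the eigenvalue of the grading operator $\gamma$ on $e^I$; composing $c(\omega_\C)$ with $\gamma$ therefore corrects the sign to match $\tilde c(\omega_\C)$.

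The main obstacle is the bookkeeping of signs: tracking how the contractions and wedge insertions reorder the factors of $e^I$ as each $c(e^j)$ or $\tilde c(e^j)$ is applied in turn. To keep this manageable I would either (a) reduce to checking the identity on a generating set and using that both sides are products of operators satisfying the anticommutation relations of Proposition \ref{prop:tilde_c_properties} — namely $\{c(e^i),\tilde c(e^j)\} = 0$, $\{\tilde c(e^i),\tilde c(e^j)\} = 2\delta_{ij}$, together with $\{c(e^i),c(e^j)\} = -2\delta_{ij}$ — so that both $c(\omega_\C)$ and $\tilde c(\omega_\C)$ square to $\pm 1$ and anticommute/commute with the individual $c(e^k)$ in the expected way; or (b) note that $\tilde c(e^j) = \gamma\, c(e^j)\, \gamma$ does \emph{not} hold, but rather that on the parity-homogeneous pieces the relation between the $c$- and $\tilde c$-actions is a uniform sign twist. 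The slickest route is to verify the identity on the single basis element $e^\varnothing = 1$ (where $c(e^1)\cdots c(e^n)\cdot 1 = e^1 \wedge \cdots \wedge e^n = \tilde c(e^1)\cdots\tilde c(e^n)\cdot 1$ and $\gamma$ acts by $+1$ on $1$... but by $(-1)^n$ on the top form, so a compensating check is needed) and then propagate using the (anti)commutation relations, reducing the whole claim to the two elementary facts that $c(e^j)$ and $\tilde c(e^j)$ differ by conjugation-type signs and that $\gamma$ anticommutes with each $c(e^j)$. I expect the final writeup to be short once the sign of $c(\omega_\C)$ on each $\bigwedge^m$ is pinned down.
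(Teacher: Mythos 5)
Your ``slickest route'' is exactly the paper's argument: the paper proves the equivalent identity $\gamma = c(\omega_\C)\tilde{c}(\omega_\C)$ by checking it on $1$ and propagating to $e^I = c(e^I)1$ via the commutation relations $c(\omega_\C)c(e^j) = -c(e^j)c(\omega_\C)$ and $\tilde{c}(\omega_\C)c(e^j) = c(e^j)\tilde{c}(\omega_\C)$ (valid for $n$ even, which is the standing assumption in this section), which is precisely your check-on-$1$-and-propagate plan. The proposal is correct; the only caution is that the brute-force sign computation you sketch first is unnecessary once you commit to the propagation argument.
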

As a corollary, we have the following formula for the supertrace: 
\begin{cor}
\begin{equation}
\mathrm{str}^{E/S}(F) = \frac{1}{2^{n/2}}\mathrm{tr}(\tilde{c}(\omega_\C F)) =: \frac{1}{2^{n/2}}\mathrm{str}'F
\end{equation}
where we denote by $\mathrm{str}'$ the supertrace induced by the grading operator $\tilde{c}(\omega_\C)$. 
\end{cor}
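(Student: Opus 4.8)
The Corollary is a one-line consequence of Claim~\ref{clm:gradingoperators}, so the plan is to dispose of that deduction and then concentrate on the Claim. For the deduction: by definition the supertrace of $T\in\End(V)$ on a super vector space with grading operator $\gamma$ is $\mathrm{str}(T)=\mathrm{tr}(\gamma T)$, so applying this fiberwise on $V=\bigwedge^\bullet T^*_xM$ with $T=c(\omega_\C)F$ and using $\gamma\circ c(\omega_\C)=\tilde c(\omega_\C)$ gives
\[
\mathrm{str}^{E/S}(F)=\tfrac1{2^{n/2}}\mathrm{str}\!\big(c(\omega_\C)F\big)=\tfrac1{2^{n/2}}\mathrm{tr}\!\big(\gamma\,c(\omega_\C)F\big)=\tfrac1{2^{n/2}}\mathrm{tr}\!\big(\tilde c(\omega_\C)F\big).
\]
To see that the last expression really is the supertrace $\mathrm{str}'$ attached to the grading operator $\tilde c(\omega_\C)$, I would note that $c(\omega_\C)$, being a product of $n=2k$ odd operators, is even and hence commutes with $\gamma$, so $\tilde c(\omega_\C)^2=(\gamma\,c(\omega_\C))^2=\gamma^2c(\omega_\C)^2=\mathrm{id}$ using $c(\omega_\C)^2=\mathrm{id}$ from Exercise~\ref{exc:complex_volume} (then Exercise~\ref{exc:gradingop} makes $\tilde c(\omega_\C)$ a legitimate grading).

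The substance is therefore Claim~\ref{clm:gradingoperators}. I would prove it pointwise in an orthonormal coframe $e^1,\dots,e^n$ with $n=2k$. Since $\omega_\C=\ii^k e^1\cdots e^n$ we have $c(\omega_\C)=\ii^k c(e^1)\cdots c(e^n)$ and $\tilde c(\omega_\C)=\ii^k\tilde c(e^1)\cdots\tilde c(e^n)$, the prefactors cancel, and the task reduces to $\gamma\,c(e^1)\cdots c(e^n)=\tilde c(e^1)\cdots\tilde c(e^n)$. First I would prove, by induction on $n$, the dimension-free refinement
\[
\gamma_n\,c(e^1)\cdots c(e^n)=(-1)^n\,\tilde c(e^1)\cdots\tilde c(e^n),
\]
which for even $n$ is exactly the Claim; for $n=1$ it reads $\gamma\,c(\omega_\C)=-\tilde c(\omega_\C)$, so the Claim genuinely fails in odd dimension and the hypothesis $n=2k$ cannot be dropped. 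The base case $n=1$ is a $2\times2$ check on $\bigwedge^\bullet\mathbb{R}=\mathbb{C}\langle1,e\rangle$: in the basis $(1,e)$, $\gamma_1=\mathrm{diag}(1,-1)$, $c(e)\colon1\mapsto e,\ e\mapsto-1$, and $\tilde c(e)\colon1\mapsto e,\ e\mapsto1$, whence $\gamma_1 c(e)=-\tilde c(e)$.

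For the inductive step I would split off the last factor via the graded tensor decomposition $\bigwedge^\bullet(\mathbb{R}^n)^*\cong\bigwedge^\bullet(\mathbb{R}^{n-1})^*\,\hat{\otimes}\,\bigwedge^\bullet\mathbb{R}e^n$: by the Koszul sign rule $c(e^i)$ and $\tilde c(e^i)$ act only on the first factor for $i<n$, while $c(e^n)=\gamma_{n-1}\otimes c_1(e^n)$, $\tilde c(e^n)=\gamma_{n-1}\otimes\tilde c_1(e^n)$, and $\gamma_n=\gamma_{n-1}\otimes\gamma_1$. Writing $C=c(e^1)\cdots c(e^{n-1})$ and $\tilde C=\tilde c(e^1)\cdots\tilde c(e^{n-1})$ — products of $n-1$ odd operators, so $\gamma_{n-1}$ anticommutes past each — a short manipulation of these identities together with the one‑factor relation $\gamma_1 c_1(e^n)=-\tilde c_1(e^n)$ on the last slot reduces the statement at level $n$ to the inductive hypothesis at level $n-1$, the $(-1)^{n-1}$ from commuting $\gamma_{n-1}$ through $C$ (resp.\ $\tilde C$) and the extra $-1$ from $\gamma_1 c_1(e^n)$ combining into the advertised $(-1)^n$.

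The computation is routine; the one place where I expect to have to be careful is exactly this sign bookkeeping — the per‑factor identity $\gamma_1 c(e)=-\tilde c(e)$ and the Koszul signs in the tensor decomposition — since any slip flips the final $(-1)^n$ and wrecks the even‑dimensional conclusion. As a cross‑check (or an alternative, non‑inductive route) I would establish the single identity $\gamma=\tilde c(e^1)c(e^1)\,\tilde c(e^2)c(e^2)\cdots\tilde c(e^n)c(e^n)$ — each factor $\tilde c(e^i)c(e^i)$ acts as $\gamma_1$ on the $i$‑th tensor slot and trivially elsewhere, and distinct factors commute, so the product is $\gamma_1^{\otimes n}=\gamma$ — and then reorder the $2n$ pairwise‑anticommuting letters (cost $(-1)^{\binom n2}$, using $\{c(e^i),\tilde c(e^j)\}=0$ from Proposition~\ref{prop:tilde_c_properties}) together with $(c(e^1)\cdots c(e^n))^2=(-1)^{n(n+1)/2}$; this gives $\gamma\,c(e^1)\cdots c(e^n)=(-1)^{n^2}\tilde c(e^1)\cdots\tilde c(e^n)=\tilde c(e^1)\cdots\tilde c(e^n)$ because $n$ is even, and with the deduction of the first paragraph this proves the Corollary.
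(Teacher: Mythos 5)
Your proposal is correct. The reduction of the Corollary to Claim~\ref{clm:gradingoperators} is exactly what the text intends (the Corollary is stated as an immediate consequence of the Claim, via $\mathrm{str}(T)=\mathrm{tr}(\gamma T)$), and your extra remark that $\tilde c(\omega_\C)^2=\mathrm{id}$, so that $\mathrm{str}'$ really is a supertrace, is a worthwhile addition the notes leave implicit. Where you genuinely diverge is in the proof of the Claim itself. The notes prove the equivalent identity $\gamma=c(\omega_\C)\tilde c(\omega_\C)$ by evaluating on basis monomials: writing $e^I=c(e^I)1$, using that $c(\omega_\C)\tilde c(\omega_\C)$ anticommutes with each $c(e_j)$ (a $-1$ from Exercise~\ref{exc:complex_volume} and a $+1$ from Proposition~\ref{prop:tilde_c_properties}) and that $c(\omega_\C)1=\tilde c(\omega_\C)1$, so that $c(\omega_\C)\tilde c(\omega_\C)e^I=(-1)^{|I|}e^I$. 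You instead either induct on the dimension through the graded tensor decomposition, or -- in your fully worked alternative -- observe that $\gamma=\tilde c(e^1)c(e^1)\cdots\tilde c(e^n)c(e^n)$ and then reorder, with total sign $(-1)^{\binom n2}\cdot(-1)^{n(n+1)/2}=(-1)^{n^2}$. Both computations are correct and rest on the same anticommutation relations; the paper's basis-evaluation argument is a little shorter, while your factorized form of $\gamma$ makes the role of the parity of $n$ completely explicit (the identity visibly acquires a sign $(-1)^n$ and fails for odd $n$), which is a useful sanity check that the text does not provide.
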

\begin{proof}[Proof of Claim \ref{clm:gradingoperators}]
We prove the equivalent statement that $\gamma = (-1)^{\deg} = c(\omega_\C)\tilde{c}(\omega_\C)$. 
For even $n$, we have $\tilde{c}(\omega_\C)c(e_j) = c(e_j)\tilde{c}(\omega_\C)$ as a consequence of Proposition \ref{prop:tilde_c_properties} and $c(\omega_\C) c(e_j) = - c(\omega_\C)c(e_j)$ (this is Exercise \ref{exc:complex_volume}). 
Notice also that $c(\omega_\C)1 = \tilde{c}(\omega_\C)1$. Let $I = (i_1, \ldots, i_k)$ be a multi-index. Putting things together, we obtain 
\begin{align*}
c(\omega_\C)\tilde{c}(\omega_\C) e^I & = c(\omega_\C)\tilde{c}(\omega_\C) c(e^I)1  \\
&= (-1)^k c(e^I)c(\omega_\C)\tilde{c}(\omega_\C) 1 \\
&= (-1)^ke^I.
\end{align*}

\end{proof}
This allows to compute the relative supertrace of $\tilde{c}(e^I)$ for all monomials $e^I$: 
\begin{lem}\label{lem:supertraces} Let $I=(i_1,\ldots,i_k)$, then we have 
\begin{equation}\label{eq:supertraces}
\mathrm{str}^{E/S}\tilde{c}(e^I) = \begin{cases} 0 & k < n \\
2^{n/2}(-i)^{n/2} & I = (1, \ldots, n) 
\end{cases}
\end{equation}
\end{lem}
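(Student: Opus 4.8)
The plan is to reduce everything to traces of monomials in the operators $\tilde{c}(e^i)$ on a single fibre. Fix $x\in M$ and an orthonormal coframe $e^1,\ldots,e^n$ at $x$, so that $\tilde{c}(e^I)=\tilde{c}(e^{i_1})\cdots\tilde{c}(e^{i_k})$ acts on the $2^n$-dimensional space $\bigwedge T^*_xM$. Since $n=2k$ is even we have $\lfloor\frac{n+1}{2}\rfloor=\tfrac n2$, hence $\tilde{c}(\omega_\C)=\ii^{n/2}\tilde{c}(e^1)\cdots\tilde{c}(e^n)$; combining $\str T=\tr(\gamma T)$ with Claim \ref{clm:gradingoperators} ($\gamma\circ c(\omega_\C)=\tilde{c}(\omega_\C)$) gives
\[
\str^{E/S}\tilde{c}(e^I)=\frac{1}{2^{n/2}}\tr\bigl(\tilde{c}(\omega_\C)\,\tilde{c}(e^I)\bigr)=\frac{\ii^{n/2}}{2^{n/2}}\tr\bigl(\tilde{c}(e^1)\cdots\tilde{c}(e^n)\,\tilde{c}(e^{i_1})\cdots\tilde{c}(e^{i_k})\bigr).
\]
By Proposition \ref{prop:tilde_c_properties} the $\tilde{c}(e^i)$ satisfy $\tilde{c}(e^i)^2=1$ and anticommute for distinct indices, so cancelling each $\tilde{c}(e^{i_j})$ against the matching factor of $\tilde{c}(e^1)\cdots\tilde{c}(e^n)$ yields $\tilde{c}(e^1)\cdots\tilde{c}(e^n)\,\tilde{c}(e^I)=\pm\,\tilde{c}(e^J)$ for the complementary ordered multiindex $J=\{1,\ldots,n\}\setminus I$, of length $n-k$.

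The heart of the matter is the \emph{vanishing lemma}: $\tr_{\bigwedge T^*_xM}\tilde{c}(e^J)=0$ for every ordered multiindex $J=(j_1,\ldots,j_m)$ with $1\le m<n$. I would prove it by exhibiting, in each parity, an invertible $u$ with $u\,\tilde{c}(e^J)\,u^{-1}=-\tilde{c}(e^J)$ and invoking cyclicity of the trace. If $m$ is odd, choose $l\notin J$ (possible since $m<n$) and set $u=\tilde{c}(e^l)$: it anticommutes with all $m$ factors and $u^2=1$, so $u\tilde{c}(e^J)u^{-1}=(-1)^m\tilde{c}(e^J)=-\tilde{c}(e^J)$. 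If $m$ is even (so $m\ge2$), set $u=\tilde{c}(e^{j_1})$: it commutes with the leading factor and anticommutes with the remaining $m-1\ge1$, giving $u\tilde{c}(e^J)u^{-1}=(-1)^{m-1}\tilde{c}(e^J)=-\tilde{c}(e^J)$. In both cases $\tr\tilde{c}(e^J)=0$.

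Applying the lemma disposes of the case $k<n$: then $J$ has length $n-k\ge1$, so $\str^{E/S}\tilde{c}(e^I)=0$. For $I=(1,\ldots,n)$ we have $J=\varnothing$, and $\tilde{c}(e^1)\cdots\tilde{c}(e^n)\,\tilde{c}(e^1)\cdots\tilde{c}(e^n)=\bigl(\tilde{c}(e^1)\cdots\tilde{c}(e^n)\bigr)^2$; a short induction moving each factor of the second block leftward (cf.\ Exercise \ref{exc:complex_volume}) shows this equals $(-1)^{n(n-1)/2}\,\mathrm{id}=(-1)^{n/2}\,\mathrm{id}$, using $n(n-1)/2\equiv n/2\pmod 2$ for even $n$. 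Hence $\tilde{c}(\omega_\C)\,\tilde{c}(e^I)=\ii^{n/2}(-1)^{n/2}\,\mathrm{id}=(-\ii)^{n/2}\,\mathrm{id}$, and since $\tr\,\mathrm{id}=2^n$,
\[
\str^{E/S}\tilde{c}(e^I)=\frac{1}{2^{n/2}}(-\ii)^{n/2}\cdot 2^n=2^{n/2}(-\ii)^{n/2},
\]
as required. I expect the only mildly delicate step to be the parity split in the vanishing lemma (the single genuinely non-formal point); the remaining sign bookkeeping, including $(-1)^{n(n-1)/2}$ in the top-degree case, is just an elementary count of transpositions.
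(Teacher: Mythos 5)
Your argument is correct in substance and reaches the same two values, but the mechanism for the vanishing case is genuinely different from the paper's. The paper stays at the level of the supertrace: for $k<n$ it picks $j\notin I$ and writes $\tilde{c}(e^I)=\tfrac12[\tilde{c}(e^j),\tilde{c}(e^j)\tilde{c}(e^I)]'$ as a supercommutator with respect to the grading defined by $\tilde{c}(\omega_\C)$, so vanishing is the general fact that a supertrace kills supercommutators. You instead absorb $\tilde{c}(\omega_\C)$ into the monomial, reduce to the ordinary trace of the complementary monomial $\tilde{c}(e^J)$ with $|J|=n-k$, and kill that trace by conjugation and cyclicity. These exploit the same underlying symmetry at different levels: your route costs a parity case split and the complementary-index bookkeeping; the paper's costs the (mild) verification that the supercommutator identity holds for the primed grading. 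The top-degree computations agree — both reduce to $\tilde{c}(\omega_\C)^2=1$, which is exactly your count $(-1)^{n(n-1)/2}=(-1)^{n/2}$.

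One boundary point needs patching. The case $k<n$ of the lemma includes $k=0$ (the empty monomial, which genuinely occurs later as the constant term when expanding $\mathrm{str}^{E/S}\exp(\tfrac12 A_{ij}\tilde{c}(e^i)\tilde{c}(e^j))$, so it cannot be dismissed). There your complementary index is $J=(1,\ldots,n)$ with $m=n$, which your vanishing lemma as stated ($1\le m<n$) does not cover, so the deduction ``$J$ has length $n-k\ge 1$, hence the supertrace is $0$'' does not literally follow. The repair is already contained in your proof: the restriction $m<n$ is only needed to find $l\notin J$ in the odd case, and since $n$ is even every odd $m\le n$ automatically satisfies $m<n$; your even-$m$ conjugation by $\tilde{c}(e^{j_1})$ yields the sign $(-1)^{m-1}=-1$ for every even $m\ge 2$, including $m=n$. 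So state the vanishing lemma for all $1\le m\le n$ and the argument closes. (The paper's supercommutator trick handles $k=0$ uniformly, which is the one place its formulation is slightly cleaner.)
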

\begin{proof}
First, consider the case $k < n$. Then there is a $j \notin I$. The trick is to realize that we can write $\tilde{c}(e^I)$ as a supercommutator \emph{with respect to the grading induced by $\tilde{c}(\omega_\C)$}: 
$$\tilde{c}(e^I) = \frac12 [\tilde{c}(e_j),\tilde{c}(e_j)\tilde{c}(e^I)]'$$
where the prime denotes the fact that we are using the grading induced by  $\tilde{c}(\omega_\C)$. Hence $\mathrm{str} \tilde{c}(e^I) = 0$. 
For $I = (1, \ldots, n)$, we have 
$$\mathrm{str}^{E/S}\tilde{c}(e^1 \wedge \cdots \wedge e^n) =\frac{1}{(2i)^{n/2}}\mathrm{tr}(\underbrace{\tilde{c}(\omega_\C)\tilde{c}(\omega_\C)}_1) =  \frac{1}{(2i)^{n/2}}\dim E = (-2i)^{n/2}.$$ 
\end{proof}
\begin{lem} 
Let $A$ be an antisymmetric matrix on $\R^n$, with $n=2k$ even, then we have
\begin{equation}
\mathrm{str}^{E/S} \exp\left(\frac12 A_{ij} \tilde{c}(e^i)\tilde{c}(e^j)\right)  =\frac{\mathrm{Pf}(-2iA)}{\hat{A}(-2A)}
\end{equation}
\end{lem}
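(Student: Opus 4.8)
The plan is to reduce $A$ to standard block form, turn the exponential into a product of commuting elementary factors built from $\tilde c$, and then read off the unique monomial that survives the relative supertrace via Lemma~\ref{lem:supertraces}.

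First I would exploit $SO(n)$-invariance of both sides. Every real antisymmetric $A$ can be brought by some $O\in SO(n)$ to the block-diagonal form with $2\times2$ blocks $\left(\begin{smallmatrix}0&-\lambda_j\\ \lambda_j&0\end{smallmatrix}\right)$, $j=1,\dots,k$, where $n=2k$. Passing to the coframe $\tilde e^i=O^i_j e^j$, linearity of $\tilde c$ gives $\{\tilde c(\tilde e^i),\tilde c(\tilde e^j)\}=2\delta^{ij}$ and $\tfrac12A_{ij}\tilde c(e^i)\tilde c(e^j)=\tfrac12(OAO^T)_{ij}\tilde c(\tilde e^i)\tilde c(\tilde e^j)$, while $\tilde c(\omega_\C)$ is unchanged since $\det O=1$; hence the left-hand side is unaffected. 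The right-hand side is likewise invariant because $\Pf$ and $\det{}^{1/2}$ are $SO(n)$-invariant. So I may assume $A$ is already in block form.

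Next, set $T_j:=\tilde c(e^{2j-1})\tilde c(e^{2j})$. Proposition~\ref{prop:tilde_c_properties} gives $[T_i,T_j]=0$ and $T_j^2=-1$, and a one-line computation shows $X:=\tfrac12A_{ij}\tilde c(e^i)\tilde c(e^j)=-\sum_{j=1}^k\lambda_jT_j$. Since the $T_j$ commute, $\exp X=\prod_{j=1}^k\exp(-\lambda_jT_j)=\prod_{j=1}^k\bigl(\cos\lambda_j-T_j\sin\lambda_j\bigr)$, using $T_j^2=-1$. Expanding the product over subsets $S\subseteq\{1,\dots,k\}$, the term indexed by $S$ is a scalar times $\prod_{j\in S}T_j$, which is $\tilde c$ of an ordered multi-index of length $2|S|$; by Lemma~\ref{lem:supertraces} its relative supertrace vanishes unless $|S|=k$, in which case $\prod_jT_j=\tilde c(e^{(1,\dots,n)})$ with coefficient $\prod_j(-\sin\lambda_j)$. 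Therefore $\mathrm{str}^{E/S}\exp X=2^{n/2}(-i)^{n/2}\prod_j(-\sin\lambda_j)=(2i)^{n/2}\prod_j\sin\lambda_j$.

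Finally I would evaluate the right-hand side in the same block form: from the definition of the Pfaffian, $\Pf(A)=(-1)^k\prod_j\lambda_j$, so $\Pf(-2iA)=(-2i)^k(-1)^k\prod_j\lambda_j=(2i)^k\prod_j\lambda_j$; and $-2A$ has eigenvalues $\pm2i\lambda_j$ with $\hat A(\pm2i\lambda_j)=\lambda_j/\sin\lambda_j$, so $\hat A(-2A)=\det{}^{1/2}\hat A(-2A)=\prod_j\lambda_j/\sin\lambda_j$. Dividing yields $(2i)^k\prod_j\sin\lambda_j$, which agrees with the left-hand side as $k=n/2$. I expect the only real difficulty to be bookkeeping: keeping the powers of $2$ and $i$ and the factor $(-1)^k$ consistent across the three sign conventions in play --- the normalization of $\mathrm{str}^{E/S}$, the sign hidden in $\omega_A$ in the definition of $\Pf$, and the reading of $\hat A(-2A)$ as $\det{}^{1/2}$ of the matrix power series --- and making sure the $\cos/\sin$ produced by $T_j^2=-1$ matches the $\sinh$ in $\hat A$ through the $i$ in the eigenvalues of $-2A$.
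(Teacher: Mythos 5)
Your proposal is correct and follows essentially the same route as the paper: block-diagonalize $A$ by $SO(n)$-invariance, write the exponential as a commuting product of $\cos\lambda_j \pm \sin\lambda_j\,\tilde c(e^{2j-1})\tilde c(e^{2j})$ using $T_j^2=-1$, kill all but the top term with the supertrace lemma, and match against $\Pf(-2iA)/\det^{1/2}\hat A(-2A)$ on the block form. The only difference is your opposite sign convention for the $2\times 2$ blocks, which you track consistently through both sides, so the final bookkeeping checks out.
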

\begin{proof}
First one notices that the left hand side is invariant under the adjoint action of $O(n)$, thus we can bring $A$ into block diagonal form 
$$A' =  \begin{pmatrix}
A(\lambda_1) & 0 &\cdots & 0 \\
0 & A(\lambda_2)   & \cdots & 0 \\
\vdots  & \ddots & & \\
0 &  \cdots & & A(\lambda_{k}))
\end{pmatrix} $$with blocks of the form 
$$ A(\lambda) = \begin{pmatrix}
0 & \lambda_i \\
-\lambda_i & 0
\end{pmatrix}.$$
Using $\tilde{c}(e^i)\tilde{c}(e^j) = -\tilde{c}(e^j)\tilde{c}(e^i)$ it follows that $\frac12 A'_{ij}\tilde{c}(e^i)\tilde{c}(e^j) = \sum_{i=1}^k \lambda_i\tilde{c}(e^{2i-1})\tilde{c}(e^{2i})$.  Denoting $J_i = \tilde{c}(e^{2i-1})\tilde{c}(e^{2i})$, we notice that $[J_i,J_k] = 0$ and $J_i^2 = -1$. It follows that 
\begin{align*}
\mathrm{str}^{E/S} \exp\left(\frac12 A_{ij} \tilde{c}(e^i)\tilde{c}(e^j)\right) &= \mathrm{str}^{E/S} \exp\left(\sum_{i=1}^k \lambda_i\tilde{c}(e^{2i-1})\tilde{c}(e^{2i)}\right) \\
\textit{(since $[J_i,J_k]=0$) }&= \mathrm{str}^{E/S} \prod_{i=1}^k  \exp \lambda_i J_i \\
\textit{(since $J_i^2 = -1$) }&= \mathrm{str}^{E/S} \prod_{i=1}^k \cos\lambda_i + \sin \lambda_i J_i  \\
\textit{(By Eq.\eqref{eq:supertraces} )}&= (-2i)^{n/2} \prod_{i=1}^k \sin \lambda_i \\
&= (-2i)^{n/2} \prod_{i=1}^k (-i)\sinh (i\lambda_i)\\
&= (-2i)^{n/2} \underbrace{\left(\prod_{j=1}^k \frac{\sinh i\lambda_j}{i\lambda_j}\right)}_{\det{}^{-1/2}\hat{A}(-2A)}\left(\underbrace{\prod_{j=1}^k\lambda_j}_{\mathrm{Pf}(A)} \right)\\
&= \frac{\mathrm{Pf}(-2iA)}{\det{}^{1/2}(\hat{A}(-2A))}
\end{align*}
\end{proof}
Now the Chern-Gauss-Bonnet theorem follows by letting $A = -\frac{i}{4\pi}R_g(x)$: 
\begin{align*}
\chi(M) &= \int_M \hat{A}(M,g)ch_{E/S} \\
&= \int_M \hat{A}(M,g)\mathrm{str}^{E/S}\exp\left(\frac{\ii}{2\pi}F^{E/S}\right) \\
&= \int_M \mathrm{Pf}(-R_g/(2\pi) = \int_M e(M,g). 
\end{align*}
\section{On the heat kernel proof of the index theorem}
Probably the ``neatest'' proof of the index theorem was given by E. Getzler in \cite{}. We will try to explain the idea of this proof in the following, but we will not present all the analytical details. The main idea of the proof is to analyze the behavior of 
\begin{equation}
f(t) = \mathrm{str}\left(e^{-tD_E^2}\right)
\end{equation}
and establish the three important properties: 
\begin{enumerate}[i)]
\item $\lim_{t \to \infty}f(t) = \ind D_{E}^+$ 
\item $f$ is  independent of $t$, for $t \in (0,\infty)$. 
\item $\lim_{t\to 0} f(t) = \int_M \hat{A}(M,g)ch_{E/S}$.
\end{enumerate}

The main tool in the proof is the heat kernel $e^{-tD_E^2}$ of the Dirac operator that we will explain now. 
\subsection{Some properties of Heat kernels}
\subsubsection{Heat kernel on $\R$}
The heat equation on $\R$ with initial condition $f_0 \in C_c^\infty(\R)$ is 
\begin{equation}\label{eq:HeatEqR}
\begin{cases}
\frac{\de f}{\de t} - \frac{\de f}{\de x^2} &= 0 \\
\lim_{t\to 0} f(t,x) = f_0(x) 
\end{cases}
\end{equation} 
The heat kernel on $\R$ is by definition the fundamental solution of this differential equation, that is, a function 
$$ k\colon \R_{>0} \times \R \times \R \to \R $$ 
with the properties 
\begin{equation}\label{eq:HeatKernelR}
\begin{cases}
\de_t k(t,x,y) - \de_{x}^2 k(t,x,y) &= 0 \\
\lim_{t \to 0} k(t,x,y) &= \delta(x-y) 
\end{cases}
\end{equation}
Here the second requirement is to be understood in the distributional sense, i.e. is equivalent to 
$$\lim_{t \to 0} \int_\R k(t,x,y)f(y) dy = f(x)$$ 
for all $f \in C_c^{\infty}(X)$ and $x \in \R$. Given a function on $\R$ satisfying properties \eqref{eq:HeatKernelR}, a solution to the heat equation \eqref{eq:HeatEqR} can be easily constructed: 
$$f(t,x) = \int_R k(t,x,y) f_0(y) dy.$$ 

On $\R$, the heat kernel can be computed explicitly: 
\begin{prop} The heat kernel on $\R$ is given by 
\begin{equation}
k(t,x,y) = \frac{1}{\sqrt{4\pi t}}\exp\left(-\frac{(x-y)^2}{4t}\right)
\end{equation}
\end{prop}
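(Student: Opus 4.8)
The plan is to first exploit the translation invariance of the heat equation on $\R$: since the equation has constant coefficients, $k(t,x,y)$ should depend only on $x-y$, so it suffices to construct $k(t,x) := k(t,x,0)$ satisfying \eqref{eq:HeatKernelR} with $y=0$ and then set $k(t,x,y) := k(t,x-y)$. To motivate the formula I would take the Fourier transform in the spatial variable: writing $\hat k(t,\xi) = \int_\R k(t,x)e^{-\ii\xi x}\,dx$, the equation $\de_t k = \de_x^2 k$ becomes the ODE $\de_t \hat k = -\xi^2\hat k$ with initial condition $\hat k(0,\xi) = 1$ (the Fourier transform of $\delta$), hence $\hat k(t,\xi) = e^{-t\xi^2}$; inverting via the standard Gaussian integral then produces the claimed expression $k(t,x) = (4\pi t)^{-1/2}\exp(-x^2/(4t))$.

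For a self-contained argument I would then simply verify the two defining properties directly. First, a routine differentiation of $k(t,x) = (4\pi t)^{-1/2}e^{-x^2/(4t)}$ gives $\de_t k = \left(-\tfrac{1}{2t} + \tfrac{x^2}{4t^2}\right)k$ and $\de_x^2 k = \left(-\tfrac{1}{2t} + \tfrac{x^2}{4t^2}\right)k$, so the heat equation holds; by translation the shifted kernel $k(t,x,y)$ satisfies $\de_t k(t,x,y) = \de_x^2 k(t,x,y)$ as well.

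Second, I would check the initial condition in the distributional sense, i.e. that $\{k(t,\cdot,y)\}_{t>0}$ is an approximate identity as $t\to 0^+$. The three points needed are: (i) $k(t,x,y) > 0$; (ii) $\int_\R k(t,x,y)\,dx = 1$ for all $t>0$, which follows from the substitution $u = (x-y)/\sqrt{4t}$ together with $\int_\R e^{-u^2}\,du = \sqrt{\pi}$; and (iii) for each $\varepsilon > 0$, $\int_{|x-y|\geq\varepsilon} k(t,x,y)\,dx \to 0$ as $t \to 0$. Granting these, for $f \in C_c^\infty(\R)$ one writes $\int_\R k(t,x,y)f(y)\,dy - f(x) = \int_\R k(t,x,y)\big(f(y)-f(x)\big)\,dy$ and splits into the regions $|y-x| < \varepsilon$ and $|y-x| \geq \varepsilon$; uniform continuity of $f$ handles the first piece and boundedness of $f$ together with (ii) and (iii) handles the second, yielding $\lim_{t\to 0}\int_\R k(t,x,y)f(y)\,dy = f(x)$.

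I expect the main obstacle to be making the approximate-identity estimate in (iii) and the splitting argument fully rigorous — in particular if one wants convergence uniform in $x$ on compact sets — and, if one follows the Fourier route instead, justifying the Fourier inversion and the interchange of limits; by contrast the differentiation check is entirely mechanical.
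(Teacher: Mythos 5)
Your proof is correct, and it is the standard argument: the Fourier-transform derivation, the mechanical verification that the Gaussian solves $\de_t k = \de_x^2 k$ (your computation of $\de_t k = \de_x^2 k = (-\tfrac{1}{2t}+\tfrac{x^2}{4t^2})k$ checks out, as does the normalization $\int_\R k\,dx = 1$), and the approximate-identity argument for the initial condition. The paper simply leaves this proposition as an exercise, so there is no proof to compare against; your write-up supplies exactly what was asked for, and the remaining details you flag (the tail estimate in (iii) and the $\varepsilon$-splitting) are routine.
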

The proof of this fact is left as an exercise. 
It is remarkable that the proof of point iii) discussed above relies on a similar explicit computation of a heat kernel. 
\subsubsection{Heat kernel on manifolds}
The setup of the heat equation can be vastly generalized. For us the following case will be important. Let $E \to M$ be a hermitian vector bundle over a compact Riemannian manifold $M$, and let $\Delta \colon \Gamma(E) \to \Gamma(E)$ be a generalized Laplacian which is self-adjoint in $L^2(E)$. 
\begin{defn} 
Let $\pi_1,\pi_2 M \times M \to M$ denote the projections to the two factors of $M$, the we define\footnote{This is usually called ``box tensor product'', mostly for want of a better name.}
\begin{equation}
\pi_1^*E \boxtimes \pi_2^*E = \pi_1^*E \otimes \pi_2^*E \to M \times M.
\end{equation}
\end{defn}
\begin{defn}
The space of \emph{smoothing operators} (also called Schwartz kernels) is the space of smooth sections of $E \boxtimes E^*$.
\end{defn}
To a section $s \in \Gamma(E \boxtimes E)$ we associate the operator $K_s\colon \Gamma(E) \to \Gamma(E)$ defined by 
\begin{equation}
(K_s\sigma)(x) = \int_M s(x,y)\sigma(y)dvol_g(y)
\end{equation}
Notice that even for $\sigma \in L^2(E)$, we have $K_s(\sigma) \in \Gamma(E)$, this explains the name smoothing operators.
We quote the following important theorem: 
\begin{thm}[Spectral theorem]
Let $\Delta$ be a self-adjoint generalized Laplacian on $E$. Then 
\begin{enumerate}[i)]
\item The spectrum of $\Delta$ is a discrete subset of $\R_{>0}$.
\item We have $\Delta = \sum_{\lambda \in spec(\Delta)} \lambda P_\lambda$ in the $L^2$-sense, where $P_\lambda$ denoted orthogonal projection to the eigenspace $E_\lambda := \ker(\Delta - \lambda)$. 
\end{enumerate}
\end{thm}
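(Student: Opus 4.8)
The plan is to reduce the statement to the classical spectral theorem for a compact self-adjoint operator on a Hilbert space, the Hilbert space being the $L^2$-completion $L^2(E)$ of $\Gamma(E)$ with respect to $(\cdot,\cdot)_{L^2,E}$. The bridge between $\Delta$ and a compact operator is the resolvent. First I would establish, via G{\aa}rding's inequality, that $\Delta$ is bounded below, i.e.\ $(\Delta u,u)_{L^2} \geq -c_0\|u\|_{L^2}^2$ for some $c_0 \in \R$ and all $u \in \Gamma(E)$; this uses only that $\Delta$ is a generalized Laplacian (its leading symbol is $g(\xi,\xi)\,\mathrm{id}_E$, positive for $\xi \neq 0$) and that $M$ is compact. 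Fixing $c > c_0$, the operator $\Delta + c$ is then a positive elliptic operator.

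Next I would invoke the standard elliptic package on a compact manifold (which these notes quote rather than prove): elliptic regularity together with the a priori estimate $\|u\|_{H^{s+2}} \leq C\big(\|(\Delta+c)u\|_{H^s} + \|u\|_{H^s}\big)$ in Sobolev norms, and the fact that $\Delta + c \colon H^{s+2}(E) \to H^{s}(E)$ is an isomorphism for every $s$. In particular $(\Delta+c)^{-1}$ maps $L^2(E)$ bijectively onto $H^2(E)$, and since $M$ is compact the Rellich embedding $H^2(E) \hookrightarrow L^2(E)$ is compact. Hence $R := (\Delta+c)^{-1}\colon L^2(E) \to L^2(E)$ is a compact operator; it is self-adjoint because $\Delta$ is, positive because $\Delta + c > 0$, and injective.

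Now I would apply the spectral theorem for compact self-adjoint operators to $R$: there is an orthonormal basis of $L^2(E)$ of eigenvectors of $R$, the nonzero eigenvalues $\mu_1 \geq \mu_2 \geq \cdots > 0$ satisfy $\mu_j \to 0$, each eigenspace is finite-dimensional, and, $R$ being injective, $L^2(E) = \overline{\bigoplus_j \ker(R-\mu_j)}$. Translating back, $Ru = \mu u$ with $\mu > 0$ is equivalent to $\Delta u = \lambda u$ with $\lambda = \mu^{-1} - c$, so $\mathrm{spec}(\Delta) = \{\lambda_j := \mu_j^{-1} - c\}_j$; since $\mu_j \to 0^+$ we get $\lambda_j \to +\infty$, so the spectrum is discrete and bounded below (contained in $[-c_0,\infty)$, hence in $[0,\infty)$ when $\Delta$ is nonnegative, e.g.\ $\Delta = D_E^2$). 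Elliptic regularity upgrades each $L^2$-eigenvector to a smooth section, so the $E_\lambda = \ker(\Delta - \lambda) \subset \Gamma(E)$ are finite-dimensional and mutually orthogonal. Letting $P_\lambda$ be the orthogonal projection onto $E_\lambda$, the decomposition above gives $u = \sum_\lambda P_\lambda u$ for all $u \in L^2(E)$, and applying $\Delta$ term by term on the dense subspace $\bigoplus_\lambda E_\lambda$ (equivalently, on the domain $H^2(E)$) yields $\Delta = \sum_{\lambda \in \mathrm{spec}(\Delta)} \lambda P_\lambda$ in the asserted $L^2$-sense.

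I expect the genuine content — and the main obstacle — to sit entirely in the second step: the elliptic a priori estimates, elliptic regularity, and the Rellich compactness theorem for bundles over a compact manifold. These presuppose Sobolev spaces of sections and the calculus of (pseudo)differential operators, precisely the analytic machinery the notes deliberately black-box, so a fully self-contained argument would have to develop that first. Everything downstream — the resolvent trick, G{\aa}rding positivity, and the passage from the compact-operator spectral theorem back to $\Delta$ — is then routine.
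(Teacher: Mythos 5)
The notes do not actually prove this theorem --- it is explicitly quoted without proof --- so there is no ``paper proof'' to compare against. Your outline is the standard and correct route: reduce to the spectral theorem for compact self-adjoint operators by passing to the resolvent $(\Delta+c)^{-1}$, whose compactness on $L^2(E)$ comes from the elliptic estimate $\|u\|_{H^{s+2}}\leq C(\|(\Delta+c)u\|_{H^s}+\|u\|_{H^s})$ together with Rellich, and then translate eigenvalues back via $\lambda=\mu^{-1}-c$ and upgrade eigenvectors to smooth sections by elliptic regularity. You are also right that the genuine content sits in the black-boxed elliptic package (Sobolev spaces of sections, a priori estimates, Rellich), which is exactly the machinery these notes decline to develop.

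One substantive remark: what your argument actually delivers is that $\mathrm{spec}(\Delta)$ is discrete and bounded below (and contained in $[0,\infty)$ when $\Delta\geq 0$, e.g.\ $\Delta=D_E^2$), \emph{not} that it lies in $\R_{>0}$ as the theorem literally states. That stated inclusion cannot be right as written: $0$ is an eigenvalue of $D_E^2$ whenever $\ker D_E\neq 0$, and the notes themselves rely on this later when they write $\lim_{t\to\infty}k(t,x,y)=P_0=P_{\ker\Delta}$ in the heat-kernel discussion. So your more careful conclusion is the correct one, and the discrepancy is a typo in the statement rather than a gap in your proof. The only other thing I would ask you to make explicit is the self-adjointness bookkeeping: $\Delta$ is a priori only symmetric on $\Gamma(E)$, and one should say that the resolvent construction exhibits $H^2(E)$ as the domain of its (unique) self-adjoint extension, so that ``self-adjoint generalized Laplacian'' and the $L^2$-sum $\Delta=\sum_\lambda\lambda P_\lambda$ refer to that extension. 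With those two points flagged, the proposal is sound.
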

By elliptic regularity (see e.g. \cite{Evans2010}), eigenfunctions of $\Delta$ are smooth sections of  $E$ and the eigenspaces are finite-dimensional. Hence $P_\lambda$ has a Schwartz kernel given by 
$$P_\lambda = \sum_{i=1}^{\dim \ker E_\lambda} \psi_i \boxtimes \psi_i^*$$
where $\psi_i \in \Gamma(E)$ span $E_\lambda$ and $\psi_i^* = \langle\psi_i,\cdot\rangle_E \in \Gamma(E^*)$. 
\begin{defn}
For $f$ a measurable function on $\R$, define $$f( \Delta) = \sum_\lambda \in spec(\Delta) f(\lambda)P_\lambda$$
as an operator on $L^2(E)$. 
\end{defn}
This association is called functional calculus. Depending on the properties of the operator $\Delta$ and the function $f$, the resulting operator can have different analytic properties. We are interested in the following situation. 
\begin{prop}
Suppose $f \in C^{\infty}(\R)$ satisfies $\lim_{x \to \infty} x^k f(x) = 0$ for all $k > 0$, then 
$f(\Delta)$ has Schwartz kernel 
$$k_f = \sum_{\lambda \in spec(\Delta)}f(\lambda)P_\lambda.$$
\end{prop}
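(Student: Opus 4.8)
\textit{Proof plan.} The plan is to show that the formal series $\sum_{\lambda\in\mathrm{spec}(\Delta)}f(\lambda)P_\lambda$ converges in the Fr\'echet space $\Gamma(E\boxtimes E^*)$ of smooth sections on $M\times M$ (that is, in every $C^k$-norm), so that its limit $k_f$ is a genuine smoothing operator, and then to identify the operator $K_{k_f}$ attached to $k_f$ with $f(\Delta)$.

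First I would record two quantitative inputs from elliptic theory. \emph{(a) Eigenvalue growth:} by Weyl's law for the self-adjoint generalized Laplacian $\Delta$ on the compact $n$-manifold $M$, the counting function $N(\lambda)=\sum_{\mu\le\lambda}\dim E_\mu$ satisfies $N(\lambda)\le C(1+\lambda)^{n/2}$; in particular, listing the eigenvalues with multiplicity as $\lambda_1\le\lambda_2\le\cdots$, one has $\lambda_j\ge c\,j^{2/n}$ for large $j$. \emph{(b) Uniform estimates on eigenfunctions:} if $\Delta\psi=\lambda\psi$ with $\|\psi\|_{L^2}=1$, iterating the elliptic estimate $\|\psi\|_{H^{s+2}}\le C_s(\|\Delta\psi\|_{H^s}+\|\psi\|_{H^s})=C_s(1+\lambda)\|\psi\|_{H^s}$ gives $\|\psi\|_{H^{2m}}\le C_m(1+\lambda)^m$, and Sobolev embedding $H^s(E)\hookrightarrow C^k(E)$ for $s>k+n/2$ then yields $\|\psi\|_{C^k}\le C_k(1+\lambda)^{a_k}$ with an exponent $a_k$ depending only on $k$ and $n$.

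Next, with an orthonormal basis $\psi^\lambda_1,\dots,\psi^\lambda_{d_\lambda}$ of $E_\lambda$ we have $P_\lambda=\sum_i\psi^\lambda_i\boxtimes(\psi^\lambda_i)^*$, hence $\|P_\lambda\|_{C^k(M\times M)}\le d_\lambda\,\max_i\|\psi^\lambda_i\|_{C^k}^2\le C_k\,d_\lambda\,(1+\lambda)^{2a_k}$, and by (a) this is bounded by $C_k'(1+\lambda)^{b_k}$ for a suitable $b_k$. Since $f$ is rapidly decreasing, $\sum_\lambda|f(\lambda)|(1+\lambda)^{b_k}<\infty$ for every $k$, so $\sum_\lambda f(\lambda)P_\lambda$ converges absolutely in every $C^k$, hence in $\Gamma(E\boxtimes E^*)$; call the limit $k_f$. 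Finally, for $\sigma\in L^2(E)$ the $C^0$-convergence of the kernel series justifies interchanging sum and integral: $(K_{k_f}\sigma)(x)=\int_M k_f(x,y)\sigma(y)\,dvol_g(y)=\sum_\lambda f(\lambda)(P_\lambda\sigma)(x)=(f(\Delta)\sigma)(x)$, the last equality being the definition of the functional calculus. This gives $K_{k_f}=f(\Delta)$, as claimed.

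The main obstacle is the uniform control of eigenfunctions in step (b): one must run the elliptic estimates with constants independent of the eigenvalue and track how Sobolev embedding converts $H^s$-bounds into $C^k$-bounds, so that $\|P_\lambda\|_{C^k}$ grows at most polynomially in $\lambda$. Once that polynomial bound is secured, the rapid decay of $f$ closes the argument essentially for free, and the final identification $K_{k_f}=f(\Delta)$ is a routine interchange of limits. A cleaner but less self-contained alternative would be to invoke the standard fact that $(1+\Delta)^{-N}$ is a smoothing operator for $N$ large and write $f(\Delta)=(1+\Delta)^{-N}\circ\big((1+\Delta)^{N}f(\Delta)\big)$ with the second factor bounded on $L^2$, concluding that $f(\Delta)$ has a smooth kernel and then reading it off spectrally.
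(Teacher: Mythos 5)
The paper states this proposition without proof --- it sits among the explicitly quoted results of this section (the spectral theorem, the heat-kernel existence theorem from \cite{Berline2003}, Lidskii's theorem) --- so there is no argument of the author's to compare yours against. Your proof is correct and is the standard one: the elliptic bootstrap $\|\psi\|_{H^{2m}}\le C_m(1+\lambda)^m\|\psi\|_{L^2}$ for eigenfunctions combined with Sobolev embedding gives polynomial-in-$\lambda$ bounds on $\|P_\lambda\|_{C^k(M\times M)}$, a Weyl-type counting bound controls the multiplicities and the number of terms, the rapid decay of $f$ (which suffices because the spectrum is discrete and bounded below) then yields absolute convergence of $\sum_\lambda f(\lambda)P_\lambda$ in every $C^k$, and the uniform convergence of the kernels legitimizes the interchange of sum and integral that identifies $K_{k_f}$ with the spectrally defined $f(\Delta)$. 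The one point worth flagging is that your step (a) does not need the full Weyl asymptotic: a crude polynomial upper bound on the counting function suffices, and that bound is most cleanly obtained from the trace-class property of $(1+\Delta)^{-N}$ for $N$ large --- essentially the alternative route you sketch at the end --- so you may as well present that as the primary source of the counting estimate rather than as an afterthought.
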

\begin{defn}
The \emph{heat kernel} of $\Delta\colon \Gamma(E) \to \Gamma(E)$ is
\begin{equation}
k(t,x,y) = \sum_{\lambda \in spec(\Delta)} e^{-t\lambda}P_\lambda
\end{equation}
\end{defn}
We quote without proof the following theorem from \cite{Berline2003}.
\begin{thm}\label{thm:HeatKernel}
\begin{enumerate}[i)]
\item The heat kernel is a smooth section of the bundle $\widehat{E \boxtimes E^*} \to \R_{>0} \times M \times M$ defined by 
$$\widehat{E \boxtimes E^*} = \pi^* (E \otimes E^*),$$ where $\pi \colon \R_{>0} \times M \times M \to M \times M$. 
\item The heat kernel is the unique smooth section satisfying 
\begin{equation}
\begin{cases}
\de_t k(t,x,y) + \Delta_x k(t,x,y) &= 0 \\
\lim_{t \to 0} k(t,x,y) &= \delta(x,y)
\end{cases}
\end{equation}
\item $\lim_{t \to \infty} k(t,x,y) = P_0 = P_{\ker \Delta}$. 
\end{enumerate}
\end{thm}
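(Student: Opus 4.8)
This theorem is quoted without proof in the text; let me sketch the route one would take. The plan is to deduce all three assertions from the spectral decomposition $\Delta=\sum_{\lambda}\lambda P_\lambda$ together with two standard facts about the self-adjoint elliptic operator $\Delta$ on the compact manifold $M$: \emph{Weyl's asymptotics}, giving that the eigenvalues $\lambda_0\le\lambda_1\le\cdots$ satisfy $\lambda_k\sim C\,k^{2/\dim M}$, so that the counting function $N(\Lambda)=\#\{k:\lambda_k\le\Lambda\}$ grows polynomially; and the \emph{elliptic a priori estimate} that an $L^2$-normalised eigensection $\psi$ with $\Delta\psi=\lambda\psi$ obeys $\|\psi\|_{C^r(M)}\le C_r(1+\lambda)^{N_r}$, obtained by applying powers of $\Delta$, iterating the basic elliptic estimate, and invoking Sobolev embedding. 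Both are available from standard references (e.g. \cite{Evans2010}) using compactness of $M$.

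\emph{Smoothness (i).} Write $k(t,x,y)=\sum_\lambda e^{-t\lambda}P_\lambda(x,y)$ with $P_\lambda=\sum_i\psi_i\boxtimes\psi_i^*$ over an $L^2$-orthonormal basis of $E_\lambda$. Any derivative of order $\le r$ in $(t,x,y)$ multiplies the $\lambda$-summand by at most a polynomial in $\lambda$ (from $\partial_t$) times $(1+\lambda)^{2N_r}$ (from the eigensection estimate applied to each of the two factors). Since the number of terms with eigenvalue $\le\Lambda$ is $O(\Lambda^{\dim M/2})$ by Weyl, and $e^{-\varepsilon\lambda}$ dominates every power of $\lambda$, the differentiated series converge absolutely and uniformly on each $[\varepsilon,\infty)\times M\times M$ with $\varepsilon>0$. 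As $\R_{>0}=\bigcup_{\varepsilon>0}(\varepsilon,\infty)$, this shows $k$ is a smooth section of $\widehat{E\boxtimes E^*}$ over $\R_{>0}\times M\times M$.

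\emph{Heat equation, initial condition, uniqueness (ii).} The identity $(\partial_t+\Delta_x)k=0$ follows by termwise differentiation, legitimate by the uniform convergence just established, using $\partial_t(e^{-t\lambda}P_\lambda)=-\lambda e^{-t\lambda}P_\lambda$ and $\Delta_x(e^{-t\lambda}P_\lambda)=\lambda e^{-t\lambda}P_\lambda$ (the latter since $P_\lambda$ takes values in the $\lambda$-eigenspace). For the initial condition, $\|e^{-t\Delta}\sigma-\sigma\|_{L^2}^2=\sum_\lambda|e^{-t\lambda}-1|^2\|P_\lambda\sigma\|^2\to0$ by dominated convergence for $\sigma\in\Gamma(E)$; since a smooth $\sigma$ lies in the domain of every power of $\Delta$, elliptic regularity upgrades this to $C^0$-convergence, which is precisely the distributional statement $\lim_{t\to0}\int_M k(t,x,y)\sigma(y)\,dvol_g(y)=\sigma(x)$. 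For uniqueness, if $k'$ is another smooth solution and $u(t)=\int_M k'(t,\cdot,y)\sigma(y)\,dvol_g(y)$, then after replacing $\Delta$ by $\Delta+c$ with $c$ large enough that $\Delta+c\ge0$ (permissible since $\Delta$ is bounded below) one gets $\tfrac{d}{dt}\|u(t)\|_{L^2}^2=-2\langle(\Delta+c)u,u\rangle\le0$, so $u(0)=\sigma=0$ forces $u\equiv0$; as $\sigma$ was arbitrary, the associated operator, hence its kernel $k'$, equals $k$.

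\emph{Long-time limit (iii), and the main obstacle.} By (i) the tail $\sum_{\lambda>0}e^{-t\lambda}P_\lambda$ converges in every $C^r$-norm; there is a spectral gap $\lambda_1>0$ above the bottom of the spectrum, and for $t\ge1$, $\lambda\ge\lambda_1$ one has $e^{-t\lambda}\le e^{-(t-1)\lambda_1}e^{-\lambda}$, so the $C^r$-norm of the tail is at most $e^{-(t-1)\lambda_1}\sum_{\lambda>0}e^{-\lambda}\|P_\lambda\|_{C^r}$, a finite constant times $e^{-(t-1)\lambda_1}$; letting $t\to\infty$ gives $k(t,x,y)\to P_0(x,y)=P_{\ker\Delta}(x,y)$ in $C^\infty$. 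The technical heart is the $C^\infty$-convergence in step (i), which rests on Weyl's law and on the polynomial-in-$\lambda$ sup-norm bounds for eigensections, both substantial results. An alternative that avoids building everything from the spectral theorem is the Levi / Minakshisundaram--Pleijel parametrix construction, patching the Euclidean Gaussian kernel and removing the error by a Volterra iteration; this has the advantage of producing directly the small-$t$ expansion of $k(t,x,x)$ needed for the computation of $\lim_{t\to0}\str(e^{-tD_E^2})$ in the Getzler proof, so it may be the preferable presentation, with its output identified with the spectral series a posteriori.
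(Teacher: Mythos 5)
The paper does not prove this theorem at all: it is explicitly quoted without proof from the reference \cite{Berline2003}, so there is no argument of the author's to compare yours against. As a self-contained outline, your spectral-theoretic route is the standard one and is essentially sound: the two genuine analytic inputs are exactly the ones you isolate (Weyl's law for the counting function and polynomial-in-$\lambda$ $C^r$-bounds on normalised eigensections), and once those are granted, the absolute $C^r$-convergence of $\sum_\lambda e^{-t\lambda}P_\lambda$ on $[\varepsilon,\infty)\times M\times M$ delivers (i), termwise differentiation plus the $L^2\to C^0$ upgrade delivers (ii), and the spectral-gap estimate delivers (iii). Two small points. First, your uniqueness paragraph is garbled as written: you should set $v(t)=\int_M\bigl(k'-k\bigr)(t,\cdot,y)\sigma(y)\,dvol_g(y)$, note that $v$ solves the heat equation with $v(0)=0$, and then run the energy (or Gronwall, since a generalized Laplacian is only bounded below, not nonnegative --- note the paper's claim that $\operatorname{spec}(\Delta)\subset\R_{>0}$ is itself a typo, incompatible with part iii)) estimate on $\|v(t)\|_{L^2}^2$; as stated, ``$u(0)=\sigma=0$'' does not parse. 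Second, your closing remark is well taken: the cited source in fact constructs the heat kernel by the Minakshisundaram--Pleijel parametrix and Volterra series rather than by summing the spectral series, and since the Getzler argument in the next section needs the small-$t$ asymptotic expansion of $k(t,x,x)$ anyway, the parametrix construction is not merely an alternative but the version of the theorem one actually has to prove to complete the index-theorem argument; the spectral series then only needs to be identified with it a posteriori, exactly as you suggest.
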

Finally, we quote the following theorem: 
\begin{thm}[Lidskii theorem]
Let $L\colon L^2(E) \to L^2(E)$ be trace class (i.e. $\tr L < \sum_{\lambda \in spec(L)} \lambda < \infty$) and be represented by the smooth integral kernel $l(x,y)$, then 
\begin{equation}
\tr L = \int_M \tr l(x,x) dvol_g(x)
\end{equation}
\end{thm}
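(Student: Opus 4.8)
The plan is to prove the identity by factoring $L$ through Hilbert--Schmidt operators, so that it becomes an instance of the standard trace identity $\tr(BC)=\langle B,C^{*}\rangle_{HS}$; the key place where smoothness of the kernel $l$ is really used is to make the pointwise diagonal value $l(x,x)$ meaningful and to identify it correctly.

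First I would fix an integer $m>\tfrac n2$ and a self-adjoint non-negative generalized Laplacian $\Delta_{0}$ on $E$ (for instance a connection Laplacian), and set
\begin{equation*}
C:=(1+\Delta_{0})^{-m},\qquad B:=L\,(1+\Delta_{0})^{m},
\end{equation*}
so that $BC=L$ on $L^{2}(E)$. By elliptic theory $C$ is invertible, Hilbert--Schmidt, and for $m>\tfrac n2$ has a continuous kernel $c(x,y)$ (Sobolev embedding, or the Weyl asymptotics of $\mathrm{spec}(\Delta_{0})$). Integrating by parts in the second variable and using that $\Delta_{0}$ is formally self-adjoint, $B$ has the smooth kernel $b(x,y)=(1+\Delta_{0,y})^{m}l(x,y)$, so $B$ too is Hilbert--Schmidt. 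Now the kernel of $L=BC$ is $(x,z)\mapsto\int_{M}b(x,y)c(y,z)\,dvol_g(y)$, which is continuous since $b$ is smooth and $c$ is continuous; as it also represents $L$, it must agree with the given smooth kernel $l$, so in particular $l(x,x)=\int_{M}b(x,y)c(y,x)\,dvol_g(y)$. Combining this with the standard computation $\tr(BC)=\langle B,C^{*}\rangle_{HS}=\int_{M}\int_{M}\tr\big(b(x,y)c(y,x)\big)\,dvol_g(y)\,dvol_g(x)$ — which follows from Parseval on $L^{2}(E\boxtimes E^{*})$ and involves no diagonal restriction — yields
\begin{equation*}
\int_{M}\tr l(x,x)\,dvol_g(x)=\int_{M}\int_{M}\tr\big(b(x,y)c(y,x)\big)\,dvol_g(y)\,dvol_g(x)=\tr(BC)=\tr L .
\end{equation*}

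The step I expect to be the main obstacle is the elliptic input, namely verifying that $(1+\Delta_{0})^{-m}$ has a genuinely continuous Schwartz kernel for $m$ large, since this is what licenses passing between the merely a.e.-defined $L^{2}$-kernel of $L$ and the honest smooth kernel $l$. When $L$ is in addition self-adjoint and non-negative — the only case used in the heat-kernel proof of the index theorem, since $L=e^{-tD_E^{2}}$ already has a smooth kernel by Theorem~\ref{thm:HeatKernel} — one can bypass this and argue directly: the spectral theorem gives an orthonormal eigenbasis $\{\psi_{i}\}$ of $L^{2}(E)$ with $L\psi_{i}=\lambda_{i}\psi_{i}$ and $\sum_{i}\lambda_{i}=\tr L$; the $\psi_{i}$ are smooth by bootstrapping off $l$; and since $l$ is then a continuous positive kernel, Mercer's theorem gives $l(x,y)=\sum_{i}\lambda_{i}\psi_{i}(x)\otimes\psi_{i}^{*}(y)$ with uniform convergence, so integrating the diagonal term by term produces $\int_{M}\tr l(x,x)\,dvol_g=\sum_{i}\lambda_{i}\|\psi_{i}\|^{2}_{L^2}=\tr L$. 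Finally, the $\str$-version needed in the text follows verbatim upon inserting the grading operator, i.e.\ splitting $E=E^{+}\oplus E^{-}$ and taking the difference of the two traces.
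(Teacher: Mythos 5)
The paper does not prove this statement at all --- it is explicitly quoted without proof (``we quote the following theorem''), so there is no in-text argument to compare against. Your proposal is a correct, self-contained proof of the version actually needed here, namely Lidskii's formula for operators with smooth kernels on a compact manifold. Both routes you give are sound. The factorization $L=BC$ with $C=(1+\Delta_0)^{-m}$, $B=L(1+\Delta_0)^m$ is the standard device: for $m>n/2$ the operator $C$ has a continuous kernel and is Hilbert--Schmidt (in fact $m>n/4$ suffices for the latter), $B$ is Hilbert--Schmidt because its kernel $(1+\Delta_{0,y})^m l(x,y)$ is smooth (strictly, one applies the dual connection Laplacian to the $E^*$-leg of $l$, which is what formal self-adjointness licenses), and the identity $\tr(BC)=\langle B,C^*\rangle_{HS}$ is a genuine double integral with no diagonal restriction. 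The only delicate point --- that the a.e.-defined $L^2$ kernel of $BC$ may be replaced by its continuous representative and that this representative coincides with $l$, so that evaluating on the diagonal is legitimate --- is exactly the point you isolate and handle correctly via continuity of $\int_M b(x,y)c(y,z)\,dvol_g(y)$. Your Mercer-theorem shortcut for self-adjoint non-negative $L$ is also valid and is indeed all that the McKean--Singer argument requires, since $\str(e^{-tD_E^2})$ is the difference of the traces of the two non-negative heat operators on $E^\pm$. One remark on scope: what you prove is the smoothing-operator trace formula, not the full Lidskii theorem (equality of the operator trace with the sum of eigenvalues for an arbitrary trace-class operator), but the former is precisely what the text uses, and your argument in fact shows that the trace-class hypothesis in the statement is automatic once the kernel is smooth.
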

\subsection{McKean-Singer formula}
We now start to prove points i)-iii) outlined above. The first two combine into the McKean-Singer formula 
\begin{equation}
\ind D_E^+ = \int_M \mathrm{str} k^E(t,x,x) dvol_g(x).
\end{equation}
Here $k^E$ denotes the heat kernel of $D_E^2$. 
This claim follows from two propositions. 
\begin{prop}
$\lim_{t \to \infty} \mathrm{str} k^E(t,x,x) = \ind D_E^+$
\end{prop}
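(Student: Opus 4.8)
The statement to prove is that $\lim_{t \to \infty} \operatorname{str} k^E(t,x,x) = \operatorname{ind} D_E^+$, where $k^E$ is the heat kernel of $D_E^2$. The plan is to combine the spectral decomposition of the heat kernel (Theorem \ref{thm:HeatKernel}) with Lidskii's theorem and an elementary cancellation argument in the nonzero part of the spectrum. First I would write $D_E^2 = \sum_{\lambda} \lambda P_\lambda$ via the spectral theorem, noting that since $D_E$ is formally self-adjoint and elliptic, $D_E^2$ is a self-adjoint generalized Laplacian with discrete nonnegative spectrum and finite-dimensional eigenspaces. Then $k^E(t,x,y) = \sum_{\lambda \in \operatorname{spec}(D_E^2)} e^{-t\lambda} P_\lambda$, and by Lidskii's theorem applied to $\gamma \circ e^{-tD_E^2}$ (which is trace class because the $e^{-t\lambda}$ decay rapidly and eigenspaces are finite-dimensional),
\[
\int_M \operatorname{str} k^E(t,x,x)\, dvol_g(x) = \operatorname{str}\left(e^{-tD_E^2}\right) = \sum_{\lambda} e^{-t\lambda} \operatorname{str}(P_\lambda).
\]

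Next I would take the limit $t \to \infty$ term by term: for $\lambda > 0$, $e^{-t\lambda} \to 0$, while for $\lambda = 0$ the term is constant equal to $\operatorname{str}(P_0)$. (Interchanging limit and sum is justified by the rapid decay of $e^{-t\lambda}$ and the fact that $\dim E_\lambda$ grows only polynomially in $\lambda$, by Weyl's law / the heat trace asymptotics — or one can simply cite part iii) of Theorem \ref{thm:HeatKernel}, which already states $\lim_{t\to\infty} k^E(t,x,y) = P_{\ker D_E^2}$.) Hence $\lim_{t\to\infty}\operatorname{str} k^E(t,x,x) = \operatorname{str}(P_0|_{(x,x)})$ pointwise, and integrating gives $\operatorname{str}(P_{\ker D_E^2}) = \dim(\ker D_E^2 \cap \Gamma(E^+)) - \dim(\ker D_E^2 \cap \Gamma(E^-))$.

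The final step is to identify $\ker D_E^2$ with $\ker D_E$ and split by chirality. Since $D_E$ is formally self-adjoint, $D_E^2 \psi = 0$ implies $0 = \int_M \langle D_E^2\psi,\psi\rangle = \int_M \langle D_E\psi, D_E\psi\rangle$, so $D_E\psi = 0$; the converse is trivial. Because the grading operator $\gamma$ commutes with $D_E^2$ (as $D_E$ is odd, $D_E^2$ is even), $\ker D_E^2$ inherits the $\mathbb{Z}_2$-grading, and $\ker D_E^2 \cap \Gamma(E^\pm) = \ker D_E^\pm$ in the sense that $D_E^+ \colon \Gamma(E^+) \to \Gamma(E^-)$ has kernel $\ker D_E \cap \Gamma(E^+)$. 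Using $(D_E^+)^* = D_E^-$, the cokernel of $D_E^+$ is $\ker D_E^-$, so
\[
\operatorname{str}(P_{\ker D_E^2}) = \dim\ker D_E^+ - \dim\ker D_E^- = \dim\ker D_E^+ - \dim\operatorname{coker} D_E^+ = \operatorname{ind} D_E^+,
\]
which completes the argument. The only genuinely nontrivial input is the spectral theory quoted in Theorem \ref{thm:HeatKernel} (discreteness of the spectrum, smoothness of the heat kernel, the $t\to\infty$ limit) together with elliptic regularity ensuring harmonic sections are smooth; everything after that is bookkeeping. I expect the main point requiring care, if one does not simply invoke part iii) of Theorem \ref{thm:HeatKernel}, to be the justification of the term-by-term limit, i.e. uniform control of $\sum_{\lambda>0} e^{-t\lambda}\operatorname{str}(P_\lambda)$ as $t\to\infty$ — but since that theorem is quoted, this reduces to a one-line citation.
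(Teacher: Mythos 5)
Your proposal is correct and follows essentially the same route as the paper: both reduce to the $t\to\infty$ statement of Theorem \ref{thm:HeatKernel} iii) (the heat kernel converges to the projection onto the kernel) and both use the formal self-adjointness computation $0=\int_M\langle D_E^2\psi,\psi\rangle=\|D_E^+\psi\|^2$ to identify the relevant kernels; the paper merely splits $\operatorname{str}(e^{-tD_E^2})$ into $\tr e^{-tD_E^-D_E^+}-\tr e^{-tD_E^+D_E^-}$ first, whereas you keep the supertrace of the projection $P_0$ and split by chirality at the end. The two bookkeeping conventions are equivalent, and your extra care about the spectral sum and the integration over $M$ is consistent with (indeed slightly more explicit than) the paper's argument.
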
 
\begin{proof}
The crucial observation is that 
$$\mathrm{str}(e^{-tD_E^2})=\tr e^{-tD_E^-D_E+} - \tr e^{-tD_E^+D_E^-}$$
is the difference of two heat kernels. Hence, by point iii) of Theorem \ref{thm:HeatKernel}, we have 
$$\lim_{t \to \infty} \mathrm{str}k^E(t,x,y) = \dim \ker D_E^-D_E^+ - \dim \ker D_E^+D_E^-.$$
The claim follows if we can prove $\ker D_E^-D_E^+ = \ker D_E^+$. Similarly, we then have $\ker D_E^+D_E^- = \ker D_E^-$ and hence, using $D_E^- = (D_E^+)^*$ 
$\lim{t \to \infty} \mathrm{str}k^E(t,x,y) = \ind D_E^+$. 
To see that  $\ker D_E^-D_E^+ = \ker D_E^+$, notice that $\supseteq$ is clear. For the other inclusion, let $\sigma \in \ker D_E^-D_E^+$, then 
$$ 0  = \int_M \langle_E  D_E^-D_E^+\sigma, \sigma\rangle = \langle D_E^+\sigma, D_E^+\sigma \rangle $$ 
and hence $\sigma \in \ker D_E^+$. 
\end{proof}
\begin{prop}
For $t > 0$, $\mathrm{str}e^{-tD_E^2}$ is independent of $t$ (and equal to $\ind D_E^+$). 
\end{prop}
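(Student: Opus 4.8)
The plan is to show that the derivative of $t \mapsto \str\, e^{-tD_E^2}$ vanishes on $(0,\infty)$. Since $D_E^2$ is a self-adjoint generalized Laplacian, its heat operator $e^{-tD_E^2}$ is trace-class and smoothing, and the same holds for $D_E e^{-tD_E^2}$; in particular all the traces and supertraces below converge and we may differentiate under the (discrete) spectral sum. Concretely, $\str\, e^{-tD_E^2} = \sum_{\lambda}e^{-t\lambda}\str P_\lambda$, where $P_\lambda$ is the orthogonal projection onto the $\lambda$-eigenspace of $D_E^2$, and this series and its termwise $t$-derivative converge uniformly on compact subsets of $(0,\infty)$ because $e^{-t\lambda}$ decays faster than any power of $\lambda$.

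First I would differentiate: $\frac{d}{dt}\str\, e^{-tD_E^2} = -\str\bigl(D_E^2 e^{-tD_E^2}\bigr)$. The key algebraic input is that $D_E$ is \emph{odd} with respect to the $\Z_2$-grading of $E = E^+\oplus E^-$ (it maps $\Gamma(E^\pm)\to\Gamma(E^\mp)$), while $D_E^2$ is \emph{even}, and hence so is $e^{-tD_E^2}$. I would then invoke the basic fact about supertraces: the supertrace of a supercommutator vanishes, i.e. $\str[A,B]_s = 0$ whenever $A,B$ are (say) in the relevant trace-class-times-bounded situation so that both $\str(AB)$ and $\str(BA)$ make sense. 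Writing $D_E^2 e^{-tD_E^2} = D_E\cdot\bigl(D_E e^{-tD_E^2}\bigr)$ and noting that $D_E$ is odd and $D_E e^{-tD_E^2}$ is odd, their product is even and equals one half of the supercommutator $[D_E,\,D_E e^{-tD_E^2}]_s = D_E(D_E e^{-tD_E^2}) + (D_E e^{-tD_E^2})D_E = 2D_E^2 e^{-tD_E^2}$, using that $D_E$ commutes with $e^{-tD_E^2}$. Therefore
\begin{equation}
\frac{d}{dt}\str\, e^{-tD_E^2} = -\str\bigl(D_E^2 e^{-tD_E^2}\bigr) = -\tfrac12\str\,[D_E,\,D_E e^{-tD_E^2}]_s = 0.
\end{equation}
Combined with the previous proposition, which identifies the $t\to\infty$ limit with $\ind D_E^+$, the constant value of $\str\, e^{-tD_E^2}$ is $\ind D_E^+$.

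The one point requiring care — and the main obstacle — is the justification that $\str$ genuinely annihilates this particular supercommutator in the infinite-dimensional setting. The clean way is to pass through Lidskii's theorem: both $D_E^- e^{-tD_E^-D_E^+}$ viewed on $\Gamma(E^+)$ and $D_E^+ e^{-tD_E^+D_E^-}$ viewed on $\Gamma(E^-)$ are trace-class with smooth kernels, and one checks directly from the spectral decompositions of $D_E^+D_E^-$ and $D_E^-D_E^+$ that the nonzero spectra coincide with matching multiplicities, so that $\tr\bigl(D_E^- e^{-tD_E^-D_E^+}\bigr) = \tr\bigl(D_E^+ e^{-tD_E^+D_E^-}\bigr)$; the difference of these is exactly $\str\bigl(D_E e^{-tD_E^2}\bigr)$ up to sign, hence $\frac{d}{dt}\str\, e^{-tD_E^2}$. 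I would phrase the argument using the eigenspace pairing $D_E^+\colon \ker(D_E^-D_E^+-\lambda)\xrightarrow{\ \sim\ }\ker(D_E^+D_E^--\lambda)$ for $\lambda>0$, which is the same spectral bijection already used in the proof of point (i). Everything else is routine bookkeeping with the convergent spectral sums.
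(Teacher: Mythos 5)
Your argument is correct and, once unwrapped, rests on exactly the same key fact as the paper's proof: the intertwining $D_E^+\colon E_\lambda(D_E^-D_E^+)\xrightarrow{\sim}E_\lambda(D_E^+D_E^-)$ for $\lambda\neq 0$, with inverse $\lambda^{-1}D_E^-$. The paper uses this to evaluate $\str e^{-tD_E^2}=\sum_\lambda e^{-t\lambda}\bigl(\dim E_\lambda(D_E^-D_E^+)-\dim E_\lambda(D_E^+D_E^-)\bigr)$ directly: all $\lambda\neq 0$ terms cancel and only $\dim\ker D_E^+-\dim\ker D_E^-=\ind D_E^+$ survives, which is manifestly $t$-independent. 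You instead differentiate and show $\frac{d}{dt}\str e^{-tD_E^2}=-\str(D_E^2e^{-tD_E^2})=0$ via the ``supertrace kills supercommutators'' principle, which you then justify by the same spectral pairing. Your route is the classical McKean--Singer packaging and generalizes well (it works for any odd self-adjoint $D$), but it costs you the extra justification of termwise differentiation and of the supercommutator identity in the trace-class setting, and it does not by itself identify the constant value, so you still need the previous proposition for the $t\to\infty$ limit; the paper's version gets the value for free. One slip to fix in your last paragraph: the compositions $D_E^-e^{-tD_E^-D_E^+}$ and $D_E^+e^{-tD_E^+D_E^-}$ are not defined (e.g.\ $D_E^-$ has domain $\Gamma(E^-)$ but $e^{-tD_E^-D_E^+}$ lands in $\Gamma(E^+)$); the operators whose traces you want to compare are $D_E^-D_E^+e^{-tD_E^-D_E^+}$ on $\Gamma(E^+)$ and $D_E^+D_E^-e^{-tD_E^+D_E^-}$ on $\Gamma(E^-)$, whose difference is $\str\bigl(D_E^2e^{-tD_E^2}\bigr)$ (not $\str\bigl(D_Ee^{-tD_E^2}\bigr)$, which is the supertrace of an odd operator and vanishes trivially). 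With that correction the eigenvalue-by-eigenvalue cancellation goes through because the factor $\lambda$ kills the $\lambda=0$ contributions.
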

\begin{proof}
Again, we have 
$$\mathrm{str}(e^{-tD_E^2})=\tr e^{-tD_E^-D_E+} - \tr e^{-tD_E^+D_E^-} = \sum_{\lambda \in spec(D_E^-D_E^+)}e^{-t\lambda}\dim E_\lambda -\sum_{\lambda' \in spec(D_E^-D_E^+)}e^{-t\lambda'}\dim E_\lambda'. $$
The proof of the proposition now follows from the observations that $spec(D_E^-D_E^+) - \{0\} = spec(D_E^+D_E^-) -\{0\}$ and that $E_{\lambda}(D_E^-D_E^+) \cong E_{\lambda}(D_E^+D_E^-)$. Indeed, if $D_E^-D_E^+ \psi = \lambda \psi$ for some $\lambda \neq 0$, then 
$$(D_E^+D_E^-)D_E^+\psi =  D_E^+(D_E^-D_E^+ \psi) = \lambda D_E^\psi.$$
This shows $$\lambda \in spec(D_E^-D_E^+) -\{0\} \Rightarrow\lambda \in spec(D_E^+D_E^-) -\{0\},$$ and of course the other implication is shows exactly in the same way. For the second observation, notice that $D_E^+ \colon E_\lambda (D_E^-D_E^+)\to E_\lambda (D_E^+D_E^-)$ is invertible with inverse $\lambda^{-1}D_E^-$. 
\end{proof}
\subsection{Asymptotics of the heat kernel}
Step iii) in the proof outline above is considerably more involved than the first two steps. It was also establish later historically, the McKean-Singer formula originates from \cite{McKean1967} while the asymptotics of the heat kernel where first used in \cite{Patodi1971, Patodi1971a} to prove the local Index theorem. We follow here the proof by Getzler (\cite{Getzler1986}). Again, the plan of attack can roughly be divided into 3 steps: 
\begin{enumerate}[Step 1:]
\item Reduce to subsets of $\R^n$,
\item Rescaling, 
\item Applying explicit formulas.
\end{enumerate}
\subsubsection{Reducing to subsets of $\R^n$}
The first observation is that we have reduced the proof of the index theorem to completely local (even pointwise) statement. Thus we fix now a point $x_0 \in M$. Choosing an orthonormal basis $e_1,\ldots,e_n$ of $T_{x_0}M$, we obtain \emph{geodesic coordinates} $\phi(x^1,\ldots,x^n)$. in a neighbourhood $U = \exp(B_R(0))$ (here $\exp\colon T_{x_0}M \to M$ denotes the exponential map of the Levi-Civita connection). Thus we obtain the local coordinate frames $\de_i$ for the tangent bundle and $dx^i$ for the cotangent bundle.  Components in that frame will be denoted $i,j,\ldots$. In these coordinates we have 
\begin{equation}
g_{ij}(x) = \delta_{ij} + O(|x|^2). 
\end{equation} We also obtain local orthonormal frames for $TM$ (resp. $T^*M$) by parallel transporting the frames $e_a$ (resp $e^a$). Components in that frame will be denoted $a,b,\ldots$.  Shrinking $U$ if necessary, we assume that the bundle $E$ can be trivialized on $U$: 
\begin{equation}
\restr{E}{U} \cong \phi(U) \times E_{x_0} \cong \phi(U) \times \Delta_n \times W
\end{equation}
Here we use the fact that\footnote{This follows immediately from the fact that $\Delta_n$ is the unique irrep of $Cl_n^c$ and a little representation theory (see e.g. \cite{Etingof2011}). $W$ is just the multiplicity module of $\Delta_n$. } $E_{x_0}$ is a representation of $Cl_n^c$ and thus can be written as $E_{x_0} = \Delta_n \otimes W$, with trivial Clifford action on the twist $W$. This decomposition induces a decomposition $$\nabla^E = \tilde{\nabla}^g \otimes 1 + 1 \otimes \nabla^W$$ and we have 
\begin{equation}
F^{E/S} = F^W. 
\end{equation}
Over $U$, we can write $\tilde{\nabla}^g = d + \Gamma_idx^i$ and $\nabla^W = d + A_idx^i$. The compatibility of the Clifford connection and Clifford multiplication implies (see \cite{Atiyah1973})
\begin{align*}
\Gamma_i &= \frac12\Gamma_{iab}c(e^a)c(e^b) = -\frac12R_{ijab}(0)x^jc(e^a)c(e^b) + O(|x|^2) \\
A_i &= -F_{ij}^W(0)x^j + O(|x|)^2.
\end{align*}
We can now look at the heat kernel of the operator $D_E^2$ on $\phi(U)$, where it becomes a function 
$$k^E(t,x,y) \in C^{\infty}((0,\infty) \times \phi(U) \times \phi(U)) \otimes Cl_n^c \otimes W.$$
We can identify $Cl_n^c \cong \bigwedge\R^n \otimes \C$, this induces a non-commutative product $\circ$ on $\bigwedge \R^n$. The spinors can be embedded into the exterior algebra according to the discussion of Proposition \ref{prop:spinors} (in the even-dimensional case, but a similar story is possible also in the odd-dimensional case). Under this identification we can write 
\begin{equation}
k^E_t(x,0) = \sum_I a_I(t,x) c(e^I),
\end{equation}
where $a_I(t,x) \colon (0,\infty) \times \phi(U) \to \End W$ and $c$ is the Clifford action of the exterior algebra on itself defined by $c(e^i) = e^i \wedge  - \iota_{e_i}$. 
\subsubsection{Rescaling}
Central to the proof is the idea of rescaling: Instead of taking the limit $k^E(t,0,0)$ as $t\to 0$, we consider the limit of $\varepsilon \to 0$ of
$k^E(\varepsilon t, \varepsilon^{1/2} x, 0)$. The main realization of Getzler is that in this limit we can also rescale the Clifford action $c(e^i)$ to 
\begin{equation}
c_{\varepsilon}(e^i):=\varepsilon^{-1/2} e^i \wedge -  \varepsilon^{1/2}\iota_{e_i}
\end{equation}
and accordingly we obtain a rescaled product $\circ_\varepsilon$.
Thus, in the limit $\epsilon \to 0$, the product $\circ_\varepsilon$ gets dominated by the commutative product $\wedge$. 
We now give the definition of the rescaling: 
\begin{defn}
The rescaled heat kernel $$\delta_\varepsilon k^E \in C^{\infty}((0,\infty) \times \phi(U) \times \phi(U)) \otimes \bigwedge_\C\R^n \otimes W$$ is given by 
\begin{equation}
(\delta_{\varepsilon}k^E)(t,x,y) = \sum_I a_I(\varepsilon t, \varepsilon^{1/2}x)c_{\varepsilon}(e^I).
\end{equation}
\end{defn}
The next lemma says that we can exchange the limits of $t \to 0$ with the rescaling:
\begin{lem}
$$\lim_{t\to 0} \mathrm{str} k_t^E(0,0) = (-2i)^{n/2} [\lim_{\varepsilon \to 0} \varepsilon^{n/2}(\delta_\varepsilon k^E)(t,x)]_{(n)}$$
where $[\cdot]_{(n)}$ denotes the degree $n$ component in $\bigwedge_\C \R^n$. 
\end{lem}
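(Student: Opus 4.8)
The plan is to track carefully how the supertrace $\mathrm{str}\, k^E_t(0,0)$ is recovered from the expansion $k^E_t(x,0) = \sum_I a_I(t,x)\, c(e^I)$ and to interchange the $t\to 0$ limit with the Getzler rescaling. First I would recall from Claim \ref{clm:gradingoperators} and its corollary that on $E = \bigwedge{}^\bullet T^*M \otimes \C \cong \bigwedge{}_\C\R^n \otimes W$ the relative supertrace is $\mathrm{str}^{E/S}(F) = \frac{1}{2^{n/2}}\mathrm{str}'(F)$ where $\mathrm{str}'$ is induced by the grading operator $\tilde c(\omega_\C)$, and that by Lemma \ref{lem:supertraces} the only monomial $\tilde c(e^I)$ with nonzero (relative) supertrace is $I = (1,\ldots,n)$, contributing $2^{n/2}(-i)^{n/2}$. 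The point I want to extract is that, \emph{up to the normalization $\dim W \cdot(\text{const})$}, computing $\mathrm{str}\,k^E_t(0,0)$ amounts to picking out the coefficient $a_{(1,\ldots,n)}(t,0)$ of the top Clifford monomial $c(e^1\cdots e^n)$, traced over $\End W$. So the first key step is to establish the \emph{Berezin-type formula}
\begin{equation}
\mathrm{str}\,k^E_t(0,0) = (-2i)^{n/2}\,\mathrm{tr}_W\!\big(a_{(1,\ldots,n)}(t,0)\big) + (\text{lower-degree terms contribute }0),
\end{equation}
which follows by linearity from Lemma \ref{lem:supertraces} applied to each $c(e^I)$ together with $\mathrm{str}(c(e^I)\otimes B) = \mathrm{str}^{E/S}(c(e^I))\,\mathrm{tr}_W(B)$ for $B\in\End W$.

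Next I would bring in the rescaling. By definition $(\delta_\varepsilon k^E)(t,x,0) = \sum_I a_I(\varepsilon t,\varepsilon^{1/2}x)\,c_\varepsilon(e^I)$ with $c_\varepsilon(e^i) = \varepsilon^{-1/2}e^i\wedge - \varepsilon^{1/2}\iota_{e_i}$. The crucial observation is that $c_\varepsilon(e^I)$ for $|I| = k$ has the form $\varepsilon^{-k/2}\,e^I + (\text{terms of exterior degree} < k)$, so that the \emph{degree-$n$ component} in $\bigwedge_\C\R^n$ only receives the leading $\varepsilon^{-n/2}$ piece from $c_\varepsilon(e^{1\cdots n})$ and nothing from any $|I| < n$. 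Hence
\begin{equation}
\big[\varepsilon^{n/2}(\delta_\varepsilon k^E)(t,x,0)\big]_{(n)} = a_{(1,\ldots,n)}(\varepsilon t,\varepsilon^{1/2}x)\, e^{1}\wedge\cdots\wedge e^{n},
\end{equation}
and evaluating at $x=0$ and letting $\varepsilon\to 0$ gives exactly $\lim_{t\to 0} a_{(1,\ldots,n)}(t,0)$ (with values in $\End W$; the $\mathrm{tr}_W$ is understood, or one can carry the endomorphism through and trace at the end). Combining this with the Berezin formula from the first step yields the claimed identity. I would present the argument with $x$ kept general and set $x=0$ only at the end, since the statement as written has a free $x$ on the right-hand side that is then implicitly evaluated; I should double check the precise convention in the surrounding text, but the mechanism is the same.

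The main obstacle, I expect, is \emph{not} the algebra of extracting top-degree components but the analytic input needed to justify that the limit $\lim_{t\to 0}a_I(t,x)$ exists and that the interchange of $\lim_{t\to0}$ with $[\,\cdot\,]_{(n)}$ and with $\mathrm{str}$ is legitimate. Concretely one needs the Minakshisundaram--Pleijel-type asymptotic expansion $k^E_t(x,0) \sim (4\pi t)^{-n/2}e^{-|x|^2/4t}\sum_{j\geq 0} t^j \Phi_j(x)$ with smooth $\Phi_j$ valued in $Cl_n^c\otimes\End W$, from which the coefficients $a_I(t,x)$ inherit controlled asymptotics; this is exactly the content of the heat-kernel theory quoted as Theorem \ref{thm:HeatKernel} and the references therein, so I would cite it rather than reprove it. Given that, the proof is essentially bookkeeping: expand everything in the Clifford/exterior basis, use Lemma \ref{lem:supertraces}, and match powers of $\varepsilon$. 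I would write it as a short lemma-plus-two-paragraph argument, flagging the one genuinely substantive point — that only the top exterior-degree term survives both operations — as the heart of the matter.
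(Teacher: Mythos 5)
Your proposal is correct and follows essentially the same route as the paper's own (much terser) proof: identify $\mathrm{str}\,k^E_t(0,0)$ with $(-2i)^{n/2}a_{(1,\ldots,n)}(t,0)$ via the vanishing of supertraces of lower monomials, and observe that after multiplying by $\varepsilon^{n/2}$ only the top Clifford monomial survives in the degree-$n$ exterior component as $c_\varepsilon$ degenerates to wedge multiplication. Your additional bookkeeping on the $\varepsilon$-scaling of $c_\varepsilon(e^I)$ and your flagging of the analytic interchange-of-limits issue are refinements the paper leaves implicit, not a different argument.
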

\begin{proof}
On the one hand, we have 
$\lim_{t\to 0} \mathrm{str} k_t^E(0,0) = \lim_{t \to 0}(-2i)^{n/2}a_{(1,\ldots,n)}(t,0)$
since the supertraces of all other components vanish (similar to Lemma \ref{lem:supertraces}). On the other hand, in the limit as $\varepsilon \to 0$, the Clifford action approaches the exterior multiplication, so that the degree $n$ term becomes $\lim_{\varepsilon \to 0}a_{(1,\ldots,n)}(\varepsilon t, \varepsilon^{1/2}x)$.
\end{proof}
This is, in fact, the key ingredient of the proof. We will only sketch the proof of the remaing ingredients, refering to the literature \cite{Getzler1986, Nicolaescu2013, Dai2015} for more details. 
\begin{prop}
The rescaled heat kernel $\delta_\varepsilon k$ is the heat kernel of the rescaled Dirac operator 
$$D_\varepsilon^2 = \delta_\varepsilon D_E^2 \delta_\varepsilon^{-1}.$$
As $\varepsilon \to 0$, this Dirac operator approaches 
\begin{equation}
D_0 = -\sum \left(\de_i - \frac14 \Omega_{ij}x_j\right)^2 + F \wedge \label{eq:D0}
\end{equation}
where $\Omega_{ij} = \frac12 R_{ijab}e^a\wedge e^b$.\end{prop}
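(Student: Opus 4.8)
The statement is a combination of two claims: first, that the rescaled heat kernel $\delta_\varepsilon k$ is the heat kernel of the conjugated operator $D_\varepsilon^2 = \delta_\varepsilon D_E^2 \delta_\varepsilon^{-1}$; second, that this family of operators has a limit as $\varepsilon \to 0$ given by the Getzler operator $D_0$ in \eqref{eq:D0}. The first claim is essentially formal: conjugating a parabolic operator by an invertible (scaling) operator conjugates its fundamental solution the same way, so I would check that the rescaling map $\delta_\varepsilon$ intertwines the heat equation $\de_t + D_E^2$ with $\de_t + D_\varepsilon^2$ and the initial condition $\delta(x,y)$ with its rescaled version, using uniqueness of the heat kernel from Theorem \ref{thm:HeatKernel}. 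The scaling in $t$ and $x$ accounts for the $\de_t$ and the second-order part, while the Clifford rescaling $c \mapsto c_\varepsilon$ is exactly what is needed to keep track of the $\bigwedge^\bullet \R^n$-degree.

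For the second claim, the plan is to write $D_E^2$ explicitly in the geodesic coordinates and the synchronous frame fixed above, using the Weitzenböck formula (Theorem \ref{thm:Weitzenboeck}): $D_E^2 = (\nabla^E)^*\nabla^E + \tfrac{r(g)}{4} + c(F^{E/S})$, with $F^{E/S} = F^W$ after the local splitting $E \cong \Delta_n \otimes W$. I would then substitute the Taylor expansions recorded in the excerpt,
\[
g_{ij}(x) = \delta_{ij} + O(|x|^2), \quad \Gamma_i = -\tfrac12 R_{ijab}(0)x^j c(e^a)c(e^b) + O(|x|^2), \quad A_i = -F^W_{ij}(0)x^j + O(|x|^2),
\]
and conjugate by $\delta_\varepsilon$. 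The key bookkeeping point is the assignment of weights: $\de_i$ scales like $\varepsilon^{-1/2}$, $x^j$ like $\varepsilon^{1/2}$, so a term $x^j \de_i$ is weight-neutral, while the quadratic Clifford term $c(e^a)c(e^b)$ carries weight $\varepsilon^{-1}$ before rescaling but, written in terms of $c_\varepsilon$, contributes $e^a \wedge e^b$ at order $\varepsilon^0$. One checks that the scalar curvature term $r(g)/4$ and all $O(|x|)$ error terms acquire strictly positive powers of $\varepsilon$ and hence drop out, whereas the surviving pieces assemble into the harmonic-oscillator-type operator $-\sum_i(\de_i - \tfrac14 \Omega_{ij}x^j)^2$ (from the connection Laplacian with the curvature term in the spin connection) plus the wedge multiplication $F \wedge$ (the limit of $c(F^W)$), with $\Omega_{ij} = \tfrac12 R_{ijab}e^a \wedge e^b$.

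The main obstacle is not the algebra of the leading term but the analytic justification that the rescaled heat kernels $\delta_\varepsilon k$ actually \emph{converge} — in an appropriate $C^\infty$ sense on compact subsets of $(0,\infty)\times \phi(U)\times\phi(U)$ — to the heat kernel of $D_0$, and that one may interchange $\lim_{\varepsilon\to 0}$ with the operations (taking supertrace, extracting the top degree) used in the preceding lemma. This requires uniform (in $\varepsilon$) Gaussian-type estimates on $\delta_\varepsilon k$ and its derivatives, e.g. via Duhamel's principle and the parametrix construction, treating $D_\varepsilon^2 - D_0$ as a small perturbation with coefficients that are $O(\varepsilon^{1/2})$ on the relevant scale. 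I would quote these estimates from \cite{Getzler1986} or \cite{Berline2003} rather than reproving them, and present here only the formal weight count that identifies $D_0$, noting that $D_0$ is precisely Mehler's operator whose heat kernel is known in closed form — which is what makes Step 3 of the outline possible.
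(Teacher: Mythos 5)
Your proposal follows exactly the paper's (very terse) argument: the first claim via uniqueness of the heat kernel under conjugation by $\delta_\varepsilon$, and the second via the Weitzenb\"ock formula combined with the recorded Taylor expansions of $g_{ij}$, $\Gamma_i$, $A_i$ and the weight count of the rescaling. Your additional remarks on the analytic justification of the convergence $\delta_\varepsilon k \to k^0$ correspond to the subtlety the paper itself defers to Getzler's papers, so nothing is missing relative to the source.
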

\begin{proof}
The first part is a simple computation (using uniqueness of the heat kernel). The second part follows from the Weitzenb\"ock or Lichnerowicz formula (Theorem \ref{thm:Weitzenboeck})
$$D_E^2 = (\nabla^E)^*(\nabla^E) + F^{E/S} + s(g)/4$$
from the explicit formulas before and the explicit action of the rescaling.
\end{proof}
Now one can make use of the fact that the operator \eqref{eq:D0} defines a generalized harmonic oscillator, for which there is an explicit formula for the heat kernel:
\begin{prop}[Mehler's formula]
The heat kernel for the operator \eqref{eq:D0}
is given by 
\begin{equation}
k^0(t,x,0) = (4\pi t)^{-1/2}\hat{A}(t\Omega/2)\exp\left(tF - \frac{1}{4t}\left(\frac{t\Omega/2}{\tanh t\Omega/2 }\right)_{ij}x^ix^j\right)
\end{equation}
\end{prop}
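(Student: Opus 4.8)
The plan is to reduce Mehler's formula to a product of explicit one- (or rather two-) dimensional computations and then verify the resulting expression solves the heat equation for $D_0$ with the correct initial condition. The key structural observation is that $\Omega_{ij} = \frac12 R_{ijab}e^a\wedge e^b$ and $F = F^W$ are \emph{even} elements of $\bigwedge\R^n \hat\otimes \End W$, hence commute with each other and with everything in sight, and moreover are nilpotent; so throughout the computation they behave like mutually commuting ``constant'' parameters, and every power series in them (in particular $\hat A(t\Omega/2)$ and $\frac{t\Omega/2}{\tanh t\Omega/2}$) is a finite sum. Since $F\wedge$ commutes with the differential operator $-\sum_i(\de_i - \frac14\Omega_{ij}x_j)^2$, it simply contributes a multiplicative factor $\exp(tF)$ to the heat kernel, and we may ignore it from now on, restoring it at the end.

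Next I would diagonalize $\Omega$: regarding $\Omega_{ij}$ as a (form-valued) antisymmetric matrix, bring it by an orthogonal change of the frame $e_1,\dots,e_n$ into block-diagonal form with $2\times2$ blocks $\begin{pmatrix}0 & \lambda_j\\ -\lambda_j & 0\end{pmatrix}$, $j=1,\dots,n/2$. This is legitimate because the left-hand side transforms equivariantly and the $\lambda_j$ may be treated as formal (nilpotent) scalars, just as in the proof of Lemma~\ref{lem:supertraces}. The operator $D_0$ then splits as a sum of commuting operators, one for each coordinate pair $(x^{2j-1},x^{2j})$, each of the form of a planar ``harmonic oscillator in a magnetic field'' $-\bigl(\de_1 - \tfrac{\lambda}{4}x_2\bigr)^2 - \bigl(\de_2 + \tfrac{\lambda}{4}x_1\bigr)^2$, and correspondingly the heat kernel factorizes as a product over blocks. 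So it suffices to compute the heat kernel of one such $2$-dimensional block.

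For a single block I would make the Gaussian ansatz
\begin{equation}
k(t,x,0) = \frac{a(t)}{4\pi t}\exp\!\left(-\frac{b(t)}{4t}\,|x|^2\right),
\end{equation}
plug it into $\de_t k + D_0^{\mathrm{block}}k = 0$, and collect terms by powers of $|x|^2$ to obtain a pair of coupled ODEs: a Riccati equation for $b$ whose solution, with the normalization $b(t)\to1$ as $t\to0$ forced by $k(t,\cdot,0)\to\delta$, is $b(t) = t\lambda \coth(t\lambda)$ (equivalently the $(ij)$-entry of $\frac{t\Omega/2}{\tanh(t\Omega/2)}$ after reassembling the blocks), and a linear equation for $a$ solved by $a(t) = \dfrac{t\lambda/2}{\sinh(t\lambda/2)}$, which is precisely the block contribution to $\hat A(t\Omega/2)$. (One small refinement: because of the magnetic cross-term one should allow the ansatz to carry the slightly more general quadratic form coming from the block structure; this does not change the conclusion, only the bookkeeping.) Taking the product over the $n/2$ blocks turns $\prod_j a_j(t)$ into $\hat A(t\Omega/2)$, the Gaussian exponents add up to the quadratic form $-\frac1{4t}\bigl(\frac{t\Omega/2}{\tanh t\Omega/2}\bigr)_{ij}x^ix^j$, and the $n/2$ factors of $(4\pi t)^{-1}$ combine with the one-dimensional Gaussians to give the overall $(4\pi t)^{-1/2}$ (after the usual conventions matching the scalar heat kernel on $\R$). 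Restoring $\exp(tF)$ yields the stated formula; finally one invokes uniqueness of the heat kernel (Theorem~\ref{thm:HeatKernel}(ii)) to conclude that this explicit solution \emph{is} $k^0$.

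\textbf{Main obstacle.} The genuinely delicate point is the bookkeeping in the rescaled/formal setting: one must be sure that the ``eigenvalue'' diagonalization of $\Omega$, the solution of the Riccati ODE, and the identification of $\prod_j \frac{\lambda_j t/2}{\sinh(\lambda_j t/2)}$ with $\hat A(t\Omega/2)$ all make sense when $\Omega$ is not a number but a nilpotent even form, and that nothing is lost by truncating the resulting finite power series at degree $n$. Everything else is a routine Gaussian heat-kernel computation, but getting the normalization constant and the $\hat A$-factor exactly right (rather than up to a universal constant) requires care with the $2\pi$'s and with the half-integer powers.
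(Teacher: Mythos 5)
Your proposal is correct and follows essentially the same route as the paper: factor out $\exp(tF)$ using that $F\wedge$ commutes with the differential part, block-diagonalize $\Omega$ to reduce to an explicit low-dimensional Gaussian computation, and conclude by uniqueness of the heat kernel. The only difference is presentational — you derive the block kernel from a Gaussian ansatz and a Riccati ODE, whereas the paper quotes the classical one-dimensional Mehler kernel for $-\frac{d^2}{dx^2}+ax^2$ and then "diagonalizes the action" — and the normalization subtleties you flag (the powers of $4\pi t$ and the argument $t\Omega$ versus $t\Omega/2$ of $\hat A$) are indeed the delicate bookkeeping points, on which the paper's own sketch is not fully consistent either.
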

\begin{proof}
One considers first the one-dimensional case $H = -\frac{d^2}{dx^2} + ax^2$. Then, the heat kernel is given by 
\begin{equation}
k(t,x,y) = \frac{1}{(4\pi t)^{n/2}}\left(\frac{2at}{\sinh 2at}\right)^{1/2}\exp\left(-\frac{1}{4t}\frac{2at}{\sinh 2at}(\cosh(2at)(x^2+y^2) - 2xy)\right).
\end{equation}
Passing to the multi-dimensional case, one considers 
$$ H = -\sum\left(\de_i - \frac14 \Omega_{ij}x_j\right)^2 $$
and one can show (diagonalizing the action) that the heat kernel of $H$ satisfies 
$$k(t,x,0) = (4\pi t)^{-n/2}\hat{A}(t\Omega)\exp\left(-\frac{1}{4t}\left(\frac{t\Omega/2}{\tanh t\Omega/2 }\right)_{ij}x^ix^j\right).$$
Now the claim follows from the fact that $H$ and $F\wedge $ commute and that the heat kernel of $F$ is simply $\exp(tF)$. 
\end{proof}
Putting everything together\footnote{There is another subtlety: Does the convergence of operators $D_\varepsilon \to D_0$ imply the convergence of the heat kernels $\delta_{\varepsilon}k^E \to k^0$? Again, for a precise analysis we refer to Getzler's papers \cite{Getzler1983,Getzler1986}.}, the index theorem follows: The term of order $t^{n/2}$ in $\hat{A}(t\Omega)\exp(tF)$ is precisely $[\hat{A}(\Omega)\exp F]_{(n)}$. We then compute 
\begin{align*}
\ind D_E^+ &= \int_M \lim_{t \to 0} \mathrm{str} k^E(t,x,x) \\
&= \int_M \lim_{t \to 0} k^0(t,0,0)(x) \\
&= \int_M \hat{A}(M,g)ch_{E/S}
\end{align*}
\printbibliography
\end{document}